	\def\?[#1]{\textbf{[#1]}\marginpar{\Large{\textbf{??}}}}%
\numberwithin{equation}{section}
\newcommand{\be}{\begin{equation}}
\newcommand{\ee}{\end{equation}}
\newcommand{\ov}{\overline}
\renewcommand{\Re}{\mathop{\rm Re}\nolimits}
\renewcommand{\Im}{\mathop{\rm Im}\nolimits}
\theoremstyle{plain}
\newtheorem{thm}{Theorem}[section]
\newtheorem{prop}{Proposition}[section]
\newtheorem{cor}[prop]{Corollary}
\newtheorem{lem}[prop]{Lemma}
\newtheorem{definition}[prop]{Definition}
\theoremstyle{definition}
\newtheorem{rem}[prop]{Remark}
\numberwithin{equation}{section}
\def\squarebox#1{\hbox to #1{\hfill\vbox to #1{\vfill}}}
\newcommand{\noi}{\noindent}
\title[Sharp decay rate for the damped wave equation with convex-shaped damping]
{Sharp decay rate for the damped wave equation with convex-shaped damping}
\author{Chenmin Sun}
\address{CY Cergy-Paris Universit\'e, Laboratoire de Math\'ematiques AGM, UMR  8088 du CNRS, 2 av. Adolphe Chauvin
	95302 Cergy-Pontoise Cedex, France }
\email{chenmin.sun@cyu.fr}
\def\11{{\rm 1~\hspace{-1.4ex}l} }
\def\R{\mathbb R}
\def\C{\mathbb C}
\def\Z{\mathbb Z}
\def\N{\mathbb N}
\def\T{\mathbb T}
\begin{document}

	\begin{abstract}
We revisit the damped wave equation on two-dimensional torus where the damped region does not satisfy the geometric control condition. It was shown in \cite{AL14} that, for sufficiently regular damping, the damped wave equation is stale at a rate sufficiently close to $t^{-1}$. 
We show that if the damping vanishes like a H\"older function $|x|^{\beta}$, and in addition, the boundary of the damped region is locally strictly convex with positive curvature, the wave is stable at rate $t^{-1+\frac{2}{2\beta+7}}$, which is better than the known optimal decay rate $t^{-1+\frac{1}{\beta+3}}$ for strip-shaped dampings of the same H\"older regularity. Moreover, we show by example that the  decay rate is optimal. This illustrates the fact that the sharp energy decay rate depends not only on the order of vanishing of the damping, but also on the shape of the damped region. The main ingredient of the proof is the averaging method (normal form reduction) developed by Hitrik and Sj\"ostrand (\cite{Hi1}\cite{Sj}).
\end{abstract}   
	
	\maketitle 

\section{Introduction} 

\subsection{Background}
Let $(M,g)$ be a compact Riemannian manifold with the Beltrami-Laplace operator $\Delta$. Consider the damped wave equation 
\begin{equation}\label{DWave1} 
\begin{cases}
& \partial_t^2u-\Delta u+a(z)\partial_tu=0,\; \text{ in }\R_+\times M,\\
& (u,\partial_tu)|_{t=0}=(u_0,u_1),\; \text{ in }M,
\end{cases}	
\end{equation}
where $a(z)\geq 0$ is the damping. 
The well-posedness of \eqref{DWave1} is a consequence of the Lumer-Philips theorem and the maximal dissipative property of the generator
\begin{align}\label{matrixform} 
\mathcal{L}=\left(\begin{matrix}
0  &\mathrm{Id} \\
\Delta  &-a(z)
\end{matrix}
\right)
\end{align}
on the Hilbert space $\mathcal{H}:=H^1(M)\times L^2(M)$. 
For a solution $(u,\partial_tu)\in H^1(M)\times L^2(M)$, the energy  defined by
$$ E[u](t):=\frac{1}{2}\|\nabla u(t)\|_{L^2(M)}^2+\frac{1}{2}\|\partial_tu(t)\|_{L^2(M)}^2$$is decreasing in time:
$$ \frac{d}{dt}E[u](t)=-\int_{M}a|\partial_tu|^2\leq 0.
$$
A basic question is the decay rate of the energy as $t\rightarrow+\infty$.

It was proved by Rauch-Taylor \cite{RaT} ($\partial M=\emptyset$) and by Bardos-Lebeau-Rauch \cite{BLR} ($\partial M\neq \emptyset$) that, for continuous damping $a\in C(\ov{M})$, if the set  $\omega=\{a>0\}$ verifies the geometric control condition (GCC), then there exists $\alpha_0>0$ such that the uniform stabilization holds:
\begin{align}\label{unifomr} 
 E[u](t)\leq E[u](0)e^{-\alpha_0t},\;\forall t\geq 0.
\end{align}

If (GCC) for $\omega=\{a>0\}$ is not satisfied, there are very few cases that the uniform stabilization \eqref{unifomr} holds (see \cite{BG17} and \cite{Zh})\footnote{If the trapped rays are all grazing on the boundary $\partial\{a>0\}$, the uniform stabilization \eqref{unifomr} can be characterized (see \cite{BG17} and \cite{Zh}) by relative positions of the grazing trapped rays and ${a>0}$.}. Lebeau \cite{Le} constructed examples with arbitrary slowly decaying initial data in the energy space $H^1(M)\times L^2(M)$. Nevertheless, if the initial data is more regular, say in $H^2(M)\times H^1(M)$, the uniform decay rate $1/\log(1+t)$ holds (\cite{Le}). Since then, intensive research activities focus on possible improvement of the logarithmic decay rate for regular initial data, in special geometric settings.

Beyond (GCC), the determination of better decay rate for special manifolds $M$ and special dampings depends on at least the following factors from the  existing literature:
\begin{itemize}
	\item[(a)] The dynamical properties for the geodesic flow of the underlying manifold $M$.
	\item[(b)] The dimension of the trapped rays as well as relative positions between trapped rays and the boundary $\partial\{a>0\}$ of the damped region.
	\item[(c)] Regularity and the vanishing properties of the damping $a$ near $\partial\{a>0\}$.
\end{itemize}
It is known that the energy decay rate is linked to the averaged function along the geodesic flow $\varphi_t$: 
$$ \rho\mapsto \mathcal{A}_T(a)(\rho):=\frac{1}{T}\int_0^Ta\circ\varphi_t(\rho)dt,\quad \rho\in T^*M.
$$
Indeed, (GCC) is equivalent to the lower bound $\mathcal{A}_T(\rho)\geq c_0>0$ for some $T>0$ large enough on the sphere bundle $S^*M$. Roughly speaking, when the geodesic flow is ``unstable'', one may improve the energy decay rate (see \cite{No} for more detailed explanation and references therein). As an illustration of (a), when $M$ is a compact hyperbolic surface, Jin \cite{Ji} shows the exponential energy decay rate for regular data living in $H^{2}(M)\times H^1(M)$. In this direction, we refer also \cite{BuC},\cite{Ch},\cite{CSVW},\cite{Riv} and references therein.

The polynomial decay rate is the intermediate situation between the logarithmic decay rates and the exponential decay rates, exhibited in less chaotic geometry like the flat torus and bounded domains (see \cite{LiR}\cite{BuHi}\cite{Ph07}\cite{AL14} and references therein), where the generalized geodesic flows are unstable. We refer \cite{LLe},\cite{BZu} for other situations of polynomial stabilization, where the undamped region is a submanifold.

We point out that the factor (a) is almost decisive for the observability (and exact controllability) of wave and Schr\"odinger equations.
Comparing with the observability for the wave equation where (GCC) is the only criteria (see \cite{BLR} \cite{BG97}), the stabilization problem is more complicated. Indeed, it was shown in \cite{AL14} (Theorem 2.3) that the observability for the Schr\"odinger semigroup in some time $T>0$ implies automatically that the damped wave is stable at rate $t^{-\frac{1}{2}}$. However, this decay rate is not optimal in general. On the two-dimensional torus, if the damping function is regular enough and vanishing nicely, the decay rate can be very close to $t^{-1}$ ( \cite{BuHi}\cite{AL14}). Even when the damping is the indicator of a vertical (or horizontal) strip, the optimal decay rate is known to be $t^{-\frac{2}{3}}$ (the lower bound was obtained by Nonnenmacher in \cite{AL14} and the upper bound was obtained in \cite{St}). These results provide evidences of factors (b) and (c) mentioned previously.
As explained in \cite{AL14}, the significant difference to the controllability problem is that, there is no general monotonicity property of the type: $a_1\leq a_2$ implies the decay rate associated to $a_2$ is better (or worse) than the decay rate associated to $a_1$. 

In this article, we revisit the polynomial stabilization for wave equations on flat torus. Our main result reveals that, with the same vanishing order, the curvature of the boundary of the damped region also affects the energy decay rate of damped wave equations.

\subsection{The main result}
We concern the polynomial decay rate for \eqref{DWave1} on the two-dimensional flat torus 
 $M=\T^2:=\R^2/(2\pi\Z)^2$:
\begin{align}\label{DWaveT2} 
	\begin{cases}
		& \partial_t^2u-\Delta u+a(z)\partial_tu=0,\; \text{ in }\R_+\times \T^2,\\
		& (u,\partial_tu)|_{t=0}=(u_0,u_1),\; \text{ in }\T^2.
	\end{cases}	
\end{align}
To present the main result, we introduce some definitions.
\begin{definition}
	We say that \eqref{DWaveT2} is \emph{stable at rate $t^{-\alpha}$}, if there exists $C>0$, such that all the solution $u$ with initial data $(u_0,u_1)\in \mathcal{H}^2:=H^2(\T^2)\times H^1(\T^2)$ satisfies
$$ (E[u](t))^{\frac{1}{2}}\leq Ct^{-\alpha}\|(u_0,u_1)\|_{\mathcal{H}^2}.
$$
We say that the rate $t^{-\alpha}$ is optimal, if
moreover
$$ \limsup_{t\rightarrow+\infty}t^{\alpha}\!\!\!\!\sup_{0\neq (u_0,u_1)\in\mathcal{H}^2}\frac{(E[u](t))^{\frac{1}{2}}}{\|(u_0,u_1)\|_{\mathcal{H}^2}}>0.
$$
\end{definition} 

Next we introduce the class of damping that we will consider:
\begin{definition}
Let $m,k\in\N$, $\sigma>0$ and $k\sigma<1$. The function class $\mathcal{D}^{m,k,\sigma}(\T^d)$ is defined by:
$$ \mathcal{D}^{m,k,\sigma}:=\{f\in C^m(\T^d): |\partial^{\alpha}f|\lesssim_{\alpha,\sigma} |f|^{1-|\alpha|\sigma},\forall |\alpha|\leq k \}.
$$
\end{definition}
Note that $\mathcal{D}^{m,k,\sigma_1}\subset\mathcal{D}^{m,k,\sigma_2}$, if $\sigma_1<\sigma_2$ and $k\sigma_2<1$. 
This class contains non-negative functions which vanish like H\"older functions. One typical example is 
$$ a_1(z)=b(z)(\max\{0,0.1-|z| \})^{\frac{1}{\sigma}}\in\mathcal{D}^{m,m,\sigma}(\T^d),
$$
where $\sigma<\frac{1}{m}$,  $b\in C^{m}(\T^d)$ and $\inf_{\T^d}b>0$. The associated damped region is $\{z\in\T^d:a_1(z)>0 \}=\{z\in\T^d: |z|<0.1 \}$ is a disc. Another example is the strip-shaped damping
$ a_2(z)=a_2(x)
$ such that $\{z\in\T^2: a_2(z)>0\}:=(-0.1,0.1)_x\times\T_y$ and for some $m\geq 4$,
$$ \frac{d^m}{dx^m}a_2(x)\leq 0 \text{ near } x=0.1 \text{ and } \frac{d^m}{dx^m}a_2(x)\geq 0 \text{ near } x=-0.1.
$$
It was shown in Lemma 3.1 of \cite{BuHi} that $a_2\in \mathcal{D}^{m,m,\frac{1}{m}}(\T^2)$.

For $a\in \mathcal{D}^{m,k,\sigma}$, we denote by $\Sigma_a:=\partial\{a(z)>0\}$.  Let $\T_{A,B}^2:=\R^2/(2\pi A\times 2\pi B)$ be a general flat torus defined via the covering map $\pi_{A,B}:\R^2\mapsto \T_{A,B}^2$.
\begin{definition}
An open set $\omega\subset\T_{A,B}^2$  is said to be \emph{locally strictly convex with positive curvature}, if the boundary of each component of $\pi_{A,B}^{-1}(\omega)\subset\R^2$ is $C^2$ and has strictly positive curvature, as a closed curve in $\R^2$. Sometimes, we also say that the boundary is locally strictly convex.
\end{definition}
Our main result is the following:
\begin{thm}\label{Thm:main}
Let $\beta>4$, $m\geq 10$. Assume that $a\geq 0,$ $ a\in\mathcal{D}^{m,2,\frac{1}{\beta}}$ and the open set $\omega:=\{z\in\T^2: a(z)>0 \}$ is locally strictly convex with positive curvature. Assume that $a(z)$ is locally H\"older of order $\beta$ near $\partial\omega$, in the sense that there exists $R_0>1$, such that
$$ \frac{1}{R_0}\mathrm{dist}(z,\partial\omega)^{\beta}\leq a(z)\leq R_0\mathrm{dist}(z,\partial\omega)^{\beta},\; \text{ for }z\in\omega \text{ near }\partial\omega.
$$
Then the damped wave equation \eqref{DWaveT2} is stable at rate $t^{-1+\frac{2}{2\beta+7}}$.Moreover, the decay rate $t^{-1+\frac{2}{2\beta+7}}$ is optimal in the following sense: there exists a damping $a_0(z)$, satisfying all the hypothesis above with $\beta=m\geq 10$, such that the associated damped wave equation \eqref{DWaveT2} cannot be stable at rate $t^{-1+\frac{2}{2\beta+7}-\epsilon}$, for any $\epsilon>0$.
	
\end{thm}

\begin{rem}
As a comparison, if $a(z)=a(x)$ depends only on one direction (supported on the vertical strip $\omega$) and is locally H\"older of order $\beta$ near $\partial\omega$, the optimal stable rate is $t^{-1+\frac{1}{\beta+3}}$ (see \cite{Kl}\cite{DKl}) which is worse than $t^{-1+\frac{2}{2\beta+7}}$. Our result provides examples that, with the same local H\"older regularity, smaller damped regions better stabilize the wave equation. To the best knowledge of the author, Theorem \ref{Thm:main} also provides the first example where not only the vanishing order of the damping can affect the stable rate, but also the shape of the boundary of the damped region.
\begin{center} 
	\begin{tikzpicture}[scale=1.4]
		\draw (0,0) rectangle (4,2); 
		\fill[yellow!70!black] (1.6,0)--(3.2,0)--(3.2,2)--(1.6,2);
		\fill[red!90!black] (2.4,1) circle[radius=0.4];
		\draw (2.4,1.65)
		node[above]{$\omega_2$};
		\draw (2.4,0.8) node[above]{\small{$\omega_1$}};
		\draw (1.8,-1)		node[above]{\small{
		$a_1(z)=(0.1-|z|)_+^{\beta},\; a_2(z)=(0.5-|x|)_+^{\beta}$}};
		\draw (0.8,1)		node[above]{\tiny{$\T^2=\R^2/(2\pi\Z)^2$}};
		\draw (1.8,-0.5)
		node[above]{\small{The damping $a_1$ generates better decay rate than $a_2$  } };
	\end{tikzpicture}
\end{center}
\end{rem}

\begin{rem}
In the case of rectangular H\"older regular damping $a(z)=a(x)$, the optimal decay rate is shown to be $t^{-1+\frac{1}{\beta+3}}$ (\cite{Kl}\cite{DKl}) for all $\beta\geq 0$. In our result, the regularity assumptions $\beta>4,m\geq 10$ in the first part and $\beta\geq 10$ in the second part of Theorem \ref{Thm:main} might be relaxed, as far as tools of microlocal analysis can still be applied (for example, the paradifferential calculus for low-regularity symbols).
We leave open the problem to determine whether the optimal decay rate $t^{-1+\frac{2}{2\beta+7}}$ in Theorem \ref{Thm:main} can be extended to rough dampings (i.e. small $\beta\geq 0$), as well as the case where $a(z)$ is an indicator function for a strictly convex subset.
\end{rem}

\begin{rem}
	It was shown in \cite{AL14} that, when the damping satisfies $|\nabla a|\leq Ca^{1-\frac{1}{\beta}}$ for large enough $\beta$, then \eqref{DWaveT2} is stable at rate $t^{-1+\frac{4}{\beta+4}}$ (Theorem 2.6 of \cite{AL14}). With an additional assumption on $|\nabla^2 a|\leq Ca^{1-\frac{2}{\beta}}$, our proof of Theorem \ref{Thm:main} essentially provides an alternative proof of this rougher stable rate. Indeed, we need one more condition for $|\nabla^2 a|$ only to perform the normal form reduction in the Section \ref{normalformreduction}. Once reduced to the one-dimensional setting, we are able to apply the same argument of Burq-Hitrik \cite{BuHi}.
\end{rem}


\begin{rem}
For the reason of exhibition, we have used a contradiction argument and the notion of semiclassical defect measures in the proof of Theorem \ref{Thm:main}. Comparing to the argument of \cite{AL14}, we do not make use of second semiclassical measures. 
\end{rem}

Finally we give some microlocal interpretation of Theorem \ref{Thm:main}. It is known that the decay rate of the damped wave equation is related to the time average along geodesics (see \cite{No}). As the damping has conormal singularities at the boundary $\partial\omega$ of the damped region, the decay rate depends more precisely on the reflected and transmitted energy of waves concentrated on trapped rays. If along the conormal direction, the damping is more regular, its interaction with transversal free waves
is weaker (analog to the high-low frequency interaction), hence the transmission effect is stronger than the reflection, and consequently, the decay rate is better. When the boundary $\partial\omega$ of the damped region is convex, the average of the damping along any direction gains $\frac{1}{2}$ local H\"older regularity, near the vanishing points along the transversal direction (see Proposition \ref{convexaveraging} for details). This heuristic indicates that, with the same local H\"older regularity near the boundary (of the damped region), the convex-shaped damping  has better stable rate for \eqref{DWaveT2}, than strip-shaped dampings (that are invariant along one direction).

\subsection{Resolvent estimate}

The proof of Theorem \ref{Thm:main} relies on Borichev-Tomilov's criteria of the polynomial semi-group decay rate and the corresponding resolvent estimate for $\mathcal{L}$ given in \eqref{matrixform}:
\begin{prop}[\cite{BoT}]\label{thm:resolvent}
	We have Spec$(\mathcal{L})\cap i\R=\emptyset$. Then, the following statements are equivalent:
\begin{align*}
&(a) \hspace{0.3cm}		\big\|(i\lambda-\mathcal{L})^{-1}\big\|_{\mathcal{L}(\mathcal{H})}\leq C|\lambda|^{\frac{1}{\alpha}} \text{ for all }\lambda\in\R, |\lambda|\geq 1;\\
&(b) \hspace{0.3cm}
\text{The damped wave equation \eqref{DWave1} is stable at rate }t^{-\alpha}. 
\end{align*}
\end{prop}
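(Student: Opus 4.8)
The plan is to obtain the equivalence as an instance of the abstract Borichev--Tomilov theorem \cite{BoT}: if $(e^{t\mathcal{A}})_{t\ge 0}$ is a bounded $C_0$-semigroup on a Hilbert space, $i\mathbb{R}\subset\rho(\mathcal{A})$, and $\alpha>0$, then $\|e^{t\mathcal{A}}\mathcal{A}^{-1}\|=O(t^{-\alpha})$ as $t\to\infty$ if and only if $\|(i\lambda-\mathcal{A})^{-1}\|=O(|\lambda|^{1/\alpha})$ as $|\lambda|\to\infty$. So there are three points to settle: (i) $\mathcal{A}$ generates a bounded $C_0$-semigroup on $\mathcal{H}$; (ii) $\Spec(\mathcal{A})\cap i\mathbb{R}=\emptyset$; and (iii) the decay $\|e^{t\mathcal{A}}\mathcal{A}^{-1}\|_{\mathcal{L}(\mathcal{H})}=O(t^{-\alpha})$ is equivalent to statement (b). Point (i) was already recorded before the proposition: $\mathcal{A}$ is maximal dissipative for the energy inner product, hence by Lumer--Phillips it generates a contraction semigroup. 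Here one should understand $\mathcal{H}$ modulo the one-dimensional subspace $\mathbb{C}\cdot(1,0)$ of constants in the first slot --- on this quotient the energy $\|\nabla u\|_{L^2}^2+\|\partial_t u\|_{L^2}^2$ is a genuine norm, $\mathcal{A}$ is invertible, and $E[u]$ is unchanged.

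For (ii) I would first note that, since $\mathbb{T}^2$ is compact, the inclusion $H^2\times H^1\hookrightarrow H^1\times L^2$ is compact, so $(\lambda_0-\mathcal{A})^{-1}$ (which maps into $\mathcal{D}(\mathcal{A})=H^2\times H^1$) is a compact operator for any $\lambda_0\in\rho(\mathcal{A})$; consequently $\Spec(\mathcal{A})$ is a discrete set of eigenvalues, contained in $\{\Re z\le 0\}$ by contractivity. It remains to exclude eigenvalues on $i\mathbb{R}$. If $\mathcal{A}(u,v)=i\lambda(u,v)$ with $(u,v)\neq 0$, then $v=i\lambda u$ and $\Delta u+\lambda^2 u=i\lambda\,a u$; pairing with $u$ in $L^2(\mathbb{T}^2)$ gives $-\|\nabla u\|_{L^2}^2+\lambda^2\|u\|_{L^2}^2=i\lambda\int_{\mathbb{T}^2}a|u|^2$, and comparing imaginary parts yields $\lambda\int_{\mathbb{T}^2}a|u|^2=0$. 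If $\lambda\neq 0$ this forces $u\equiv 0$ on the open set $\omega=\{a>0\}$; then $au\equiv 0$, so $u$ solves $\Delta u+\lambda^2 u=0$ on all of $\mathbb{T}^2$, hence is a trigonometric polynomial $\sum_{|k|^2=\lambda^2}c_k e^{ik\cdot z}$, which is real-analytic --- vanishing on the open set $\omega$ it must vanish identically, so $v\equiv 0$ as well, contradicting $(u,v)\neq 0$. If $\lambda=0$ then $v\equiv 0$ and $\Delta u=0$, so $u$ is constant, i.e. $(u,v)=0$ in the quotient. Hence $\Spec(\mathcal{A})\cap i\mathbb{R}=\emptyset$.

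For (iii): since $a\in C^m\subset L^\infty(\mathbb{T}^2)$ and $H^1(\mathbb{T}^2)\hookrightarrow L^2(\mathbb{T}^2)$, one has $\mathcal{D}(\mathcal{A})=\{(u,v): v\in H^1,\ \Delta u-av\in L^2\}=H^2(\mathbb{T}^2)\times H^1(\mathbb{T}^2)=\mathcal{H}^2$; moreover, because $0\in\rho(\mathcal{A})$ from (ii), the graph norm on $\mathcal{D}(\mathcal{A})$ is equivalent to $\|\mathcal{A}\,\cdot\|_{\mathcal{H}}$, which by elliptic regularity is equivalent to $\|\cdot\|_{\mathcal{H}^2}$. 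Writing the solution of \eqref{DWaveT2} with data $(u_0,u_1)$ as $(u,\partial_t u)(t)=e^{t\mathcal{A}}(u_0,u_1)$ and using $(E[u](t))^{1/2}=\tfrac{1}{\sqrt2}\|(u,\partial_t u)(t)\|_{\mathcal{H}}$, statement (b) says exactly $\|e^{t\mathcal{A}}(u_0,u_1)\|_{\mathcal{H}}\le C t^{-\alpha}\|(u_0,u_1)\|_{\mathcal{H}^2}$ for all $(u_0,u_1)\in\mathcal{H}^2=\mathcal{D}(\mathcal{A})$; substituting $(u_0,u_1)=\mathcal{A}^{-1}(w_0,w_1)$ and using the norm equivalence, this is the same as $\|e^{t\mathcal{A}}\mathcal{A}^{-1}\|_{\mathcal{L}(\mathcal{H})}=O(t^{-\alpha})$. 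Plugging this into the Borichev--Tomilov dichotomy with exponent $\alpha$ gives the equivalence of (a) and (b).

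The generation of the semigroup and the norm bookkeeping in (iii) are routine; the one substantive ingredient is (ii). On a general manifold, excluding imaginary eigenvalues would require an Aronszajn-type unique continuation statement for $\Delta u=Vu$ with $u$ vanishing on an open set, but on the flat torus it reduces to the analyticity of Laplace eigenfunctions. The only real care needed is with the zero Fourier mode: since $(1,0)\in\ker\mathcal{A}$, the claim $\Spec(\mathcal{A})\cap i\mathbb{R}=\emptyset$ is to be read on the quotient by constants, which is the natural setting in which the energy is a norm and $\mathcal{A}$ is invertible.
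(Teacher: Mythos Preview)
Your proof is correct and follows the same route the paper implicitly takes: the proposition is simply quoted from Borichev--Tomilov \cite{BoT} (and \cite{AL14}), with no argument given in the paper itself. You have filled in exactly the details one would expect --- boundedness of the semigroup, absence of imaginary spectrum via analyticity of torus eigenfunctions, and the identification of $\mathcal{D}(\mathcal{A})$ with $\mathcal{H}^2$ --- and you were right to flag the quotient by constants, a point the paper leaves tacit.
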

By Proposition \ref{thm:resolvent} and Proposition 2.4 of \cite{AL14}, the proof of Theorem \eqref{Thm:main} can be reduced to the following semiclassical resolvent estimate:
\begin{thm}\label{thm:resolvent1}
Let $\beta>4$, $m\geq 10$. Assume that $a\geq 0,$ $ a\in\mathcal{D}^{m,2,\frac{1}{\beta}}$ and the open set $\omega:=\{z\in\T^2: a(z)>0 \}$ is locally strictly convex with positive curvature. Assume that $a(z)$ is locally H\"older of order $\beta$ near $\partial\omega$, in the sense that there exists $R_0>1$, such that
$$ \frac{1}{R_0}\mathrm{dist}(z,\partial\omega)^{\beta}\leq a(z)\leq R_0\mathrm{dist}(z,\partial\omega)^{\beta},\; \text{ for }z\in\omega \text{ near }\partial\omega.
$$
Then there exist $h_0\in(0,1)$ and $C_0>0$, such that for all $0<h\leq h_0$,
$$ \|(-h^2\Delta-1+iha(z))^{-1}\|_{\mathcal{L}(L^2(\T^2))}\leq C_0h^{-2-\frac{2}{2\beta+5}}.
$$
Moreover, there exists a damping function $a_0(z)$, satisfying the hypothesis above with $\beta=m\geq 10$, such that
$$ \|(-h^2\Delta-1+iha_0(z))^{-1}\|_{\mathcal{L}(L^2(\T^2))}\geq Ch^{-2-\frac{2}{2\beta+5}}.
$$

\end{thm}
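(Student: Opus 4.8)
The plan is to argue by contradiction using semiclassical defect measures, and to reduce the two-dimensional resolvent estimate — microlocally near each trapped direction — to a one-dimensional resolvent estimate for the \emph{averaged} damping, whose local H\"older order is improved from $\beta$ to $\beta+\tfrac12$ thanks to convexity. Since the one-dimensional analysis of Burq--Hitrik for a damping locally H\"older of order $\gamma$ produces the resolvent power $h^{-2-\frac{1}{\gamma+2}}$ (equivalently the optimal rate $t^{-1+\frac{1}{\gamma+3}}$ recorded for strips), choosing $\gamma=\beta+\tfrac12$ gives exactly $h^{-2-\frac{1}{\beta+5/2}}=h^{-2-\frac{2}{2\beta+5}}$, the target bound.

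First I would set up the contradiction: if the estimate fails, there are $h_n\to0$ and $u_n\in L^2(\T^2)$ with $\|u_n\|_{L^2}=1$ and $\|(-h_n^2\Delta-1+ih_na)u_n\|_{L^2}=o\big(h_n^{2+\frac{2}{2\beta+5}}\big)$. Pairing the equation with $u_n$ and taking imaginary parts yields $\int_{\T^2}a\,|u_n|^2=o\big(h_n^{1+\frac{2}{2\beta+5}}\big)$, while an elliptic estimate confines $u_n$ to $\{|\xi|=1\}$. Along a subsequence $u_n$ has a semiclassical measure $\mu$, a probability measure on the cosphere bundle, invariant under the geodesic flow $\varphi_t(z,\xi)=(z+t\xi,\xi)$, with $\int a\,d\mu=0$; hence $\supp\mu$ lies in the trapped set. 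On $\T^2$ a geodesic of irrational slope equidistributes and meets $\omega$, and a geodesic of primitive rational direction $\nu\in\ZZ^2$ is closed with parallel translates that are $\tfrac{2\pi}{|\nu|}$-dense, so it meets $\omega$ once $|\nu|$ exceeds a constant depending only on $\omega$. Therefore $\supp\mu$ lies over a \emph{finite} set of directions $\pm\widehat\nu_1,\dots,\pm\widehat\nu_N$, and for each $\nu_j$ the trapped closed geodesics are exactly those missing the ``shadow'' of $\omega$ in that direction, an interval whose endpoints correspond to geodesics tangent to $\partial\omega$.

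Next I would split the analysis microlocally. Over directions bounded away from $\widehat\nu_1,\dots,\widehat\nu_N$ the measure $\mu$ vanishes, so that piece of $u_n$ carries $o(1)$ mass; it remains to treat, for each $j$, the piece $u_n^{(j)}$ frequency-localized near $\widehat\nu_j$. Fix $j$ and apply an $SL(2,\ZZ)$ change of variables so that $\nu_j=(0,1)$. The transversal average $\la a\ra(x):=\frac{1}{2\pi}\int_0^{2\pi}a(x,y)\,dy$ vanishes exactly outside the shadow, and by \propref{convexaveraging} it is locally H\"older of order $\beta+\tfrac12$ near its zero set — the extra $\tfrac12$ coming from the fact that a vertical line near a tangency point cuts a chord of $\omega$ of length $\sim\dist(x,\partial(\text{shadow}))^{1/2}$ on which $a\sim\dist(x,\partial(\text{shadow}))^{\beta}$. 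Then, using the averaging method of Hitrik--Sj\"ostrand, I would conjugate $-h_n^2\Delta-1+ih_na$ by a uniformly bounded pseudodifferential operator $e^{-iQ_n/h_n}$ (an iterated normal form, legitimate because $a\in\mathcal{D}^{m,2,\frac{1}{\beta}}$ with $m\ge10$, which supplies the exotic symbol calculus and the $C^2$ conormal control of the Fourier coefficients of $a$ in $y$) into $-h_n^2\Delta-1+ih_n\la a\ra(x)+ih_nR_n$ with $R_n$ negligible on the relevant frequency range.

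Finally I would decompose the conjugated piece dyadically in the transversal frequency $|\xi_1|$: at scales with $|\xi_1|$ above a threshold the rays tilt into $\omega$ in time $\sim|\xi_1|^{-1}$, contributing geometrically summable, lower-order terms; the remaining piece, genuinely concentrated on the trapped closed geodesics, is — modulo acceptable errors — the one-dimensional operator $-h_n^2\partial_x^2-1+ih_n\la a\ra(x)$ on $\T_x$, with $\xi_2\sim h_n^{-1}$ as spectral parameter and damping supported on an arc and vanishing to local H\"older order $\beta+\tfrac12$ at its endpoints. Invoking the Burq--Hitrik one-dimensional resolvent estimate bounds this inverse by $h_n^{-2-\frac{1}{\beta+5/2}}=h_n^{-2-\frac{2}{2\beta+5}}$; together with the lower-order contributions and the $o(1)$ piece away from the trapped directions, this contradicts the assumed blow-up and proves \thmref{thm:resolvent1}. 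The hard part will be the normal form step: running the Hitrik--Sj\"ostrand iteration in this finite-regularity, conormal setting — keeping the accumulated symbol errors controlled in an $S_{1/2}$-type calculus while preserving the H\"older order $\beta+\tfrac12$ of the averaged damping supplied by \propref{convexaveraging} — and then patching the transversal-frequency scales so that only the one-dimensional model dictates the power of $h$; the hypotheses $\beta>4$ and $m\ge10$ are precisely what make this bookkeeping close.
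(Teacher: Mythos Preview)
Your outline is essentially the paper's proof: contradiction via defect measures, reduction to finitely many rational trapped directions, an averaging normal form, and a one-dimensional resolvent estimate with the improved H\"older order $\beta+\tfrac12$ supplied by Proposition~\ref{convexaveraging}. Two corrections are worth making. First, the sharp one-dimensional bound $h^{-2-\frac{1}{\gamma+2}}$ for a damping locally H\"older of order $\gamma$ is due to Kleinhenz and Datchev--Kleinhenz (recorded here as Theorem~\ref{ProceedingAMS}), not Burq--Hitrik. Second, the normal form is not a single unitary conjugation $e^{-iQ/h}$: the paper conjugates by $e^{G_h}$ with $G_h=\mathrm{Op}_h^w(g)$ \emph{self-adjoint} (so $e^{G_h}$ is bounded invertible but not unitary), and this first step replaces $iha$ by $ih\mathcal{A}(a)$ while leaving a remainder $[h^2D_x^2,G_h]$ that is only $O(h^2)$, not $o(h^{2+\delta})$; a second averaging (Proposition~\ref{2averaging}) then converts this remainder into a subprincipal term $ih\,\kappa(x)\,\mathcal{A}(a)^{1/2}b(hD_y)hD_x$ which must be carried into the reduced equation and handled by a slightly generalized one-dimensional estimate (Proposition~\ref{1DresolventHolder}) before the Datchev--Kleinhenz input can be invoked.
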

As a comparison, let us recall the main resolvent estimate in \cite{DKl}\cite{Kl}, corresponding to the optimal energy decay rate $t^{-1+\frac{1}{\beta+3}}$ when the damping $a(z)$ depends only on $x$ variable and is locally H\"older of order $\gamma$:
\begin{thm}[\cite{DKl}\cite{Kl}]\label{ProceedingAMS} 
	Let $\gamma\geq 0$. Assume that $W=W(x)\geq 0$ and $\{W>0\}$ is disjoint unions of vertical strips $(\alpha_j,\beta_j)_x\times\T_y$ and $\{W\geq 0\}\neq \T^2$. Assume moreover that for each $j\in\{1,\cdots,l\}$,
	$$ C_1V_j(x)\leq W(x)\leq C_2V_j(x) \text{ on } (\alpha_j,\beta_j),
	$$
	where $V_j(x)>0$ are continuous functions on $(\alpha_j,\beta_j)$, satisfying
	\begin{equation}
		V_j(x)=\left\{
		\begin{aligned}
			&(x-\alpha_j)^{\gamma},\; \alpha_j<x<\frac{3\alpha_j+\beta_j}{4}, \\
			&(\beta_j-x)^{\gamma},\; \frac{\alpha_j+3\beta_j}{4}<x<\beta_j.
		\end{aligned}
		\right.
	\end{equation}
	Then there exist $h_0\in(0,1)$ and $C>0$ such that for all $0<h<h_0$,
	$$ \|(-h^2\Delta-1+ihW(x))^{-1}\|_{\mathcal{L}(L^2)}\leq Ch^{-2-\frac{1}{\gamma+2}}.
	$$
	Furthermore, the above resolvent estimate is optimal, in the sense that there exists quasi-modes $(u_h)_{0<h\leq h_0}$, such that
	$$ \|u_h\|_{L^2}=1,\quad \|(-h^2\Delta-1+ihW(x))u_h\|_{L^2(\T^2)}=O(h^{2+\frac{1}{\gamma+2}}).
	$$
\end{thm}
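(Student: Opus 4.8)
The plan is to reduce the estimate to a one-dimensional family by Fourier decomposition in $y$ and then to treat that family by a turning-point (generalized Airy) analysis. Writing $u=\sum_{k\in\ZZ}u_k(x)e^{iky}$ and $f:=(-h^2\Delta-1+ihW(x))u=\sum_k f_k(x)e^{iky}$, one has $(-h^2\partial_x^2-\tau_k+ihW(x))u_k=f_k$ with $\tau_k:=1-h^2k^2$; since the summands are $L^2(\TT^2)$-orthogonal it suffices to prove, uniformly in $\tau\in(-\infty,1]$, that
\[
\big\|(-h^2\partial_x^2-\tau+ihW(x))^{-1}\big\|_{L^2(\TT)}\le C_0\,h^{-2-\frac1{\gamma+2}}
\]
for $Q_\tau:=-h^2\partial_x^2-\tau+ihW(x)$ on $L^2(\TT)$. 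Pairing $Q_\tau v=g$ with $\bar v$ gives the two identities $h\|W^{1/2}v\|^2=\Im\langle g,v\rangle\le\|g\|\,\|v\|$ and $h^2\|v'\|^2-\tau\|v\|^2=\Re\langle g,v\rangle$; the first already controls $\|v\|$ on each strip $(\alpha_j,\beta_j)$ at positive distance from its endpoints, where $W$ is bounded below.

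Next I would split on the size of $\tau$. If $\tau\le-c_0<0$ the real-part identity gives $\|Q_\tau^{-1}\|\le c_0^{-1}$. If $c_0\le\tau\le1$ the characteristic set $\xi=\pm\sqrt\tau$ is bounded away from $0$, the one-dimensional flow $x\mapsto x+2t\sqrt\tau$ sweeps $\TT$ in bounded time and meets $\{W>0\}$, so a standard one-dimensional propagation/multiplier argument in the spirit of \cite{BLR}, together with the imaginary-part identity, gives a bound $\|Q_\tau^{-1}\|\le Ch^{-1-\epsilon}$, far better than the target. The real difficulty is the grazing regime $0\le\tau\ll1$: there $\sqrt\tau$ is small, a ray spends time $\sim\tau^{-1/2}$ inside a single undamped interval $I_0:=(\beta_j,\alpha_{j+1})$ before reaching a strip, and even when it does it first crosses the region near an edge where $W$ is tiny.

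In that regime the scheme would be: (a) use the imaginary-part identity to bound $\|v\|$ on $\{W\ge c\}$ by $\lesssim h^{-1/2}\|g\|^{1/2}\|v\|^{1/2}$; (b) propagate this through the boundary layer at each edge; (c) reconstruct $v$ on $I_0$, where $-h^2v''-\tau v=g$, from its Cauchy data at the two endpoints via the explicit fundamental solution of $-h^2\partial_x^2-\tau$. For (b), near a right edge $x=\beta_j$ one has $W\asymp(\beta_j-x)^\gamma$ and, crucially, $W$ is \emph{monotone} there --- exactly the hypothesis $W\asymp V_j$; writing $r=\beta_j-x$ and rescaling $r=\ell\sigma$ with $\ell=\ell(\tau,h)$ balancing the three natural scales, the $I_0$-wavelength $h/\sqrt\tau$, the turning point $(\tau/h)^{1/\gamma}$ and the universal layer width $h^{1/(\gamma+2)}$, the operator $Q_\tau$ becomes in the layer $\asymp h^{2(\gamma+1)/(\gamma+2)}$ times the $h$-independent generalized Airy operator $-\partial_\sigma^2+i\sigma^\gamma$ (this step uses control of $W'$ near the edge). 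A priori bounds for the model operator --- boundedness of its resolvent at real spectral parameters and a quantitative bound on its non-normal amplification --- transfer back through WKB in the oscillatory zone $\tau\gg hW$ and the evanescent zone $\tau\ll hW$, the monotonicity of $W$ making the Wronskian/flux identities across the layer come with boundary terms of a definite sign. The three scales coincide exactly at $\tau\asymp h^{2(\gamma+1)/(\gamma+2)}$, with $\ell\asymp h^{1/(\gamma+2)}$, and a careful accounting of the $\sim h^{-1/(\gamma+2)}$ oscillations of $v$ on $I_0$ together with the amplification accumulated across the layer produces exactly the power $h^{-2-1/(\gamma+2)}$.

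For the optimality I would run this matched asymptotics in reverse: take $v_h$ a standing wave $\sin(\sqrt\tau(x-\beta_j)/h+\theta)$ on $I_0$, glued at the two endpoints of $I_0$ to the exact decaying solution of the generalized Airy model and continued by evanescent WKB into the adjacent strips, where it drops below any power of $h$ and is cut off; $\theta$ and $\tau$ are tuned to approximately satisfy the resulting complex quantization condition, using the freedom to choose $h_j\downarrow0$ with integers $k_j$ such that $h_j^2k_j^2=1-\tau_j$. The residual $\|(-h^2\partial_x^2-\tau_j+ihW)v_{h_j}\|$ then only comes from the mismatch of $W$ with its model near the edges, the exponentially small cutoffs, and the impossibility of solving the complex quantization exactly; optimizing these should give $O(h^{2+1/(\gamma+2)})$, and $u_h(x,y)=v_{h_j}(x)e^{ik_jy}$ is the quasimode on $\TT^2$. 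The hard part, in both directions, is the same: a sharp analysis of $Q_\tau$ near the grazing set $\{\xi=0\}$, where the operator is neither elliptic nor of real principal type and the damping vanishes exactly there; extracting the exponent $-2-\tfrac1{\gamma+2}$ rather than a lossier one demands quantitative control of the non-normal (pseudospectral) amplification through the generalized-Airy layer, which --- made possible by the monotonicity of $W$ near $\partial\omega$ --- is the technical heart.
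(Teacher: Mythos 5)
Your plan takes a genuinely different route from the paper's. In Appendix A the paper reproduces the proof of Proposition \ref{1DHolderrough} (essentially the Datchev--Kleinhenz argument) via a Morawetz--type multiplier inequality: after the Fourier reduction in $y$, the solution is split by a cutoff $\chi_h$ into a piece $v_2$ supported where $W\gtrsim h^{\gamma/(\gamma+2)}$ (which is killed directly by the imaginary-part identity) and a piece $v_1$ supported in the boundary layer of width $h^\delta=h^{1/(\gamma+2)}$ plus the undamped gaps; the latter is estimated either by the geometric-control black box (when $\lambda\ge h^{-\delta/2}$) or by pairing against a piecewise-linear weight $\Phi_h$ whose derivative jumps to $h^{-\delta}$ on the boundary layers, yielding a commutator identity whose terms are controlled purely by $\|W^{1/2}v\|$, $\|r\|$, and the apriori bounds. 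This is a pure energy/multiplier argument; no WKB, no matched asymptotics, no model resolvent.

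Your proposal instead pursues a turning-point matched-asymptotics scheme with the generalized Airy model $-\partial_\sigma^2+i\sigma^\gamma$. You identify the right reduction, the correct exponents and the critical balance $\tau\asymp h^{2(\gamma+1)/(\gamma+2)}$, $\ell\asymp h^{1/(\gamma+2)}$, and your construction of the optimal quasimode by gluing a standing wave on the undamped interval to decaying Airy-type solutions is essentially the content of the sharpness in the cited references. However, there is a genuine gap in the upper bound: your boundary-layer rescaling and the Wronskian/flux sign considerations explicitly invoke ``control of $W'$'' and the ``monotonicity of $W$ near $\partial\omega$.'' The hypothesis of the theorem is only a two-sided $L^\infty$ comparison $C_1V_j\le W\le C_2V_j$ with a monotone model $V_j$; $W$ itself is merely continuous and may oscillate arbitrarily within that envelope, so neither $W'$ nor monotonicity of $W$ is available, and the replacement of $Q_\tau$ by the $h$-independent model operator in the layer is not justified. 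This is precisely what the Morawetz-weight argument avoids: the weight $\Phi_h$ is built from a fixed smooth comparison function $V_0$, not from $W$, so only the two-sided bounds on $W$ enter. To salvage your approach one would either have to strengthen the hypotheses (e.g.\ assume $W\in\mathcal D^{m,1,1/\gamma}$ near the edges, as is done for the damping $a$ elsewhere in the paper) or replace the pointwise WKB matching by an averaged/comparison form that only uses $W\asymp V_j$ --- which in effect converges back to the multiplier estimate.
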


In the rest of the article, we will prove Theorem \ref{thm:resolvent1}. The proof is based on a contradiction argument. This leads to the fact that the semiclassical measure associated to quasi-modes $(u_h)_{h>0}$ of order $o(h^{2+\frac{2}{2\beta+5}})$ is non-zero along finitely many closed trajectories with periodic directions on the phase space. On the other hand, it turns out that the restriction of semiclassical measure to any periodic direction is zero, which leads to a contradiction. This analysis follows from a second microlocalization procedure and will be achieved in three major steps:
\begin{itemize}
	\item In Section \ref{sec:high}, using the positive commutator method, we show that the semiclassical measure corresponding to the transversal high frequency part of scale $\gtrsim h^{-\frac{1}{2}-\frac{1}{2\beta+5}}$ is zero.
	\item In Section \ref{normalformreduction}, we deal with scales for transversal low frequencies. Using the averaging method, we transfer quasi-modes $(u_h)_{h>0}$ to new quasi-modes $(v_h)_{h>0}$, satisfying new equations that commute with the vertical derivative. This allows us to reduce the problem to the one-dimensional setting. \emph{This is the key part of the proof}, for which we need several elementary properties of the averaging operator, presented in Section \ref{function}.
	\item In Section \ref{sec:1D}, we prove the reduced one-dimensional resolvent estimate (Proposition \ref{1DresolventHolder}). 
\end{itemize}
Furthermore, we prove in Section \ref{lowerbound} the lower bound in Theorem \ref{thm:resolvent1}.
At the end of this article, we add two appendices. In Appendix A, we reproduce the proof of Theorem \ref{ProceedingAMS} in order to be self-contained and to fix some gaps in the paper of \cite{DKl}. In Appendix B, we review several technical results about the semiclassical pseudo-differential calculus, needed in Section \ref{normalformreduction}.

\subsection*{Acknowledgment}
The author is supported by the program: ``Initiative d'excellence Paris
Seine'' of CY Cergy-Paris Universit\'e and the ANR grant ODA (ANR-18-CE40- 0020-01). The author would like to thank anonymous referee's suggestions that help to improve this article. 


\section{Contradiction argument and the first microlocalization}

\subsection{A priori estimate and the contradiction argument}
We will adapt basic conventions for notations in the semiclassical analysis (\cite{EZB}). Denote by $P_h=-h^2\Delta-1$ and we fix the parameter $\sigma=\frac{1}{\beta}$ throughout this article. We denote by $\delta_h$ a small parameter such that $\delta_h\rightarrow 0$ as $h\rightarrow 0$. We denote by $\hbar=h^{\frac{1}{2}}\delta_h^{\frac{1}{2}}$ a second semiclassical parameter. For the proof of Theorem \ref{thm:resolvent1}, we fix $\delta_h=h^{\frac{2}{2\beta+5}}.$
Theorem \ref{thm:resolvent1} is the consequence of the following key proposition:
\begin{prop}\label{resolvent1}
Let $u_h$ be a sequence of quasi-modes of width $h^{2}\delta_h$ with $\delta_h=h^{\frac{2}{2\beta+5}}$ i.e.
$$ (P_h+iha)u_h=f_h=o_{L^2}(h^{2}\delta_h).
$$
Then if $u_h=O_{L^2}(1)$, we have $u_h=o_{L^2}(1)$.
\end{prop}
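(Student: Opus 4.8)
\textbf{Proof proposal for Proposition \ref{resolvent1}.}

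The plan is to argue by contradiction. Suppose that $u_h = O_{L^2}(1)$ but $\|u_h\|_{L^2}\not\to 0$; after passing to a subsequence we may assume $\|u_h\|_{L^2}=1$ (after renormalizing, the right-hand side is still $o_{L^2}(h^2\delta_h)$ up to a bounded factor). First I would extract a semiclassical defect measure $\mu$ on $T^*\T^2$ associated to $(u_h)$. Pairing the equation $(P_h+iha)u_h=f_h$ against $u_h$ and taking imaginary parts gives the standard absorption estimate $\int a|u_h|^2 = \Re\langle f_h,u_h\rangle/h = o(h\delta_h)$, which forces $\mu$ to vanish on $\{a>0\}$, and in particular $\mu$ is supported on $\{|\xi|^2=1\}\cap\{a=0\}$, so $\|u_h\|_{L^2(\{a>0\})}=o(1)$ and the mass of $\mu$ lives over the undamped region. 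Standard propagation (the real part of the equation gives $\mathrm{Op}_h(|\xi|^2-1)u_h = o_{L^2}(h)$ modulo the damping term) shows $\mu$ is invariant under the geodesic flow. Combined with invariance and the support condition, every point in $\mathrm{supp}\,\mu$ lies on a geodesic that never enters $\{a>0\}$; since $\omega$ is open and its complement in $\T^2$ does not satisfy GCC only along rational (periodic) directions, a standard argument (as in Macia--Burq--Zworski type results on the torus, or Anantharaman--Macia) shows $\mu$ is a sum of measures each carried by an invariant torus in a single \emph{periodic} (rational) direction $v$. So it suffices to show that, for each such trapped periodic direction $v$, the restriction of $\mu$ to the corresponding invariant set is zero; this reduces everything to a fixed periodic direction, which WLOG we take to be $\mathrm{e}_2$ after the isometric change of coordinates described in Section \ref{changingcoordinate}.

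Next I would carry out the second microlocalization in the transversal ($x$) variable at the scale $\hbar = h^{1/2}\delta_h^{1/2}$, i.e. split $u_h$ according to whether $|D_x|$ (the dual variable to $x$, near $\xi=0$ along the trapped direction) is $\gtrsim h^{-1/2}\delta_h^{-1/2}$ or $\lesssim$ it. For the transversal high-frequency part (scale $\gtrsim h^{-1/2-1/(2\beta+5)}$), I would invoke the positive commutator estimate of Section \ref{sec:high} to conclude that this part carries no mass, so its contribution to $\mu$ is zero. For the transversal low-frequency part, I would apply the averaging / normal form reduction of Section \ref{normalformreduction}: conjugating by a suitable Fourier integral operator (constructed from the averaging operator $\mathcal{A}(\cdot)_{\mathrm{e}_2}$, using the estimates in Lemmas \ref{Jensen}, \ref{comparison}, \ref{dampingproperty} to control the conormal singularities of $a$ under the required number of normal-form steps — this is where $a\in\mathcal{D}^{m,2,1/\beta}$ and $m\ge 10$ are used) transforms $(u_h)$ into new quasi-modes $(v_h)$ solving an equation whose damping term is (essentially) $\mathcal{A}(a)_{\mathrm{e}_2}(x)$, commuting with $D_y$. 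By the crucial geometric Proposition \ref{convexaveraging}, the convexity of $\omega$ means $\mathcal{A}(a)_{\mathrm{e}_2}$ is locally H\"older of order $\beta + 1/2$ (i.e. lies in $\mathcal{D}^{m,2,\frac{2\sigma}{\sigma+2}}$ with $\sigma=1/\beta$, so the new H\"older order is $\frac{\sigma+2}{2\sigma}=\beta/2+1$... precisely $\frac{1}{\sigma}+\frac12 = \beta+\frac12$) — strictly better than the original.

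Then, having decomposed in the $y$-frequency (which is now conserved), the problem becomes a family of one-dimensional semiclassical resolvent problems parametrized by the $y$-frequency, to which I would apply the one-dimensional resolvent estimate of Proposition \ref{1DresolventHolder} in Section \ref{sec:1D}. The point is a bookkeeping of exponents: the one-dimensional estimate for a damping H\"older of order $\gamma$ gives a loss $h^{-2-1/(\gamma+2)}$ (as in Theorem \ref{ProceedingAMS}), and with the improved order $\gamma = \beta + 1/2$ one gets the loss $h^{-2-1/(\beta+5/2)} = h^{-2-2/(2\beta+5)}$ — matching exactly the width $h^2\delta_h$ with $\delta_h = h^{2/(2\beta+5)}$ of the assumed quasi-modes, so that the existence of a non-trivial $(v_h)$ at this scale is contradicted. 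Tracking this through the normal form (the error terms produced at each step must be $o(h^2\delta_h)$ in the appropriate norm) yields $\|v_h\|_{L^2}=o(1)$ on the relevant microlocal region, hence the restriction of $\mu$ to the trapped direction $\mathrm{e}_2$ vanishes. Summing over the finitely many trapped periodic directions gives $\mu\equiv 0$, i.e. $\|u_h\|_{L^2}\to 0$, contradicting $\|u_h\|_{L^2}=1$.

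\textbf{Main obstacle.} The hard part is the normal form reduction of Section \ref{normalformreduction}: one must conjugate away the $y$-dependence of the damping down to an error that is genuinely smaller than the tiny width $h^2\delta_h$, while the damping is only H\"older (conormal with limited regularity) across $\partial\omega$. Controlling the finitely many iterates of the averaging procedure requires the quantitative estimates on $F$ and on the derivatives of $\mathcal{A}(a)_v$ from Section \ref{function} together with the pseudodifferential calculus of Appendix B adapted to the exotic symbol class at scale $\hbar$, and matching the resulting error budget against $\delta_h = h^{2/(2\beta+5)}$ is where the precise hypotheses $\beta > 4$ and $m \ge 10$ are consumed.
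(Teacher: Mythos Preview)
Your proposal is correct and follows essentially the same approach as the paper: contradiction argument with normalized quasi-modes, extraction of a semiclassical defect measure supported on trapped periodic directions, splitting at the second-microlocal scale $\hbar=h^{1/2}\delta_h^{1/2}$, positive commutator for the transversal high-frequency piece, averaging/normal form for the low-frequency piece to reduce to a one-dimensional problem with the improved H\"older exponent $\beta+\tfrac12$ from Proposition~\ref{convexaveraging}, and then Proposition~\ref{1DresolventHolder} to close. The only point worth flagging is that the paper performs the normal form in \emph{two} successive steps (Propositions~\ref{1averaging} and~\ref{2averaging}): the first averaging replaces $a$ by $\mathcal{A}(a)$ but leaves an anti-selfadjoint commutator $[h^2D_x^2,G_h]$ of size $O(h^2)$ that is not yet $o(h^2\delta_h)$, and a second conjugation is needed to average this residual term as well; your sketch compresses these into a single ``conjugation by a suitable FIO,'' which hides the main technical difficulty you correctly identify at the end.
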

The proof of Proposition \ref{resolvent1} will occupy the rest of this article. We argue by contradiction. First we delete all the zero elements in a given sequence of $u_h$. Then, up to extracting a subsequence and renormalization, we may assume that 
\begin{align}\label{contradiciton}
\|u_h\|_{L^2(\T^2)}=1.
\end{align}
 The following a priori estimate is simple: 
\begin{lem}\label{apriori}
We have the following a priori estimates:
\begin{itemize}
	\item[(a)] $ \|a^{1/2}u_h\|_{L^2}=o(h^{\frac{1}{2}}\delta_h^{\frac{1}{2}})=o(\hbar)$.
	\item[(b)]  $\|h\nabla u_h\|_{L^2}^2-\|u_h\|_{L^2}^2=o(h^{2}\delta_h)$.
\end{itemize}
\end{lem}
\begin{proof}
 Multiplying the equation $(P_h+iha)u_h=f_h$ by $\ov{u}_h$, and integrating by part, we get
 $$ \|h\nabla u_h\|_{L^2}^2-\|u_h\|_{L^2}^2+ih(au_h,u_h)_{L^2}=(f_h,u_h)_{L^2}.
 $$ 
 Taking the imaginary part and the real part, we obtain (a) and (b), with respectively. 
\end{proof}

Since the sequence $(u_h)_{h>0}$ is bounded $L^2(\T^2)$, there exist a subsequence, still denoted as $(u_h)_{h>0}$, and a Radon measure $\mu$ on $T^*\T^2$, such that 
for any symbol $a\in C_c^{\infty}(T^*\T^2)$, there holds
\begin{align}\label{def:measure}
	\lim_{h\rightarrow 0}(\mathrm{Op}_h(a)u_h,u_h)_{L^2}=\langle\mu,a\rangle.
\end{align}
Below, we will denote $\mu$ the semiclassical defect measure associated to this  subsequence $(u_h)_{h>0}$.
 For the proof of this existence of semi-classical measure, one may consult Chapter 5 of \cite{EZB}.

\begin{lem}\label{firstconcentration}
 we have 
 $$\mathrm{supp}(\mu)\{(z,\zeta)\in T^*\T^2: |\zeta|=1\} \text{ and } \mu|_{\omega\times\R^2}=0,$$
 where $\omega=\{z\in\T^2:a>0\}$.
\end{lem}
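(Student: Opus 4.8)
The two assertions are standard properties of the semiclassical defect measure of a quasi-mode for the damped wave operator, and the plan is to extract them directly from the equation $(P_h+iha)u_h=f_h$ and the a priori bounds of Lemma~\ref{apriori}, using only the Weyl symbolic calculus together with the normalization \eqref{contradiciton}.

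\emph{Support on the characteristic set.} First I would rewrite the equation as $P_hu_h=f_h-ihau_h$, which gives $\|P_hu_h\|_{L^2}\le\|f_h\|_{L^2}+h\|a\|_{L^\infty}\|u_h\|_{L^2}=O(h)$. Since $-h^2\Delta-1=\mathrm{Op}_h(|\zeta|^2-1)$ has an exact polynomial symbol, for any $b\in C_c^\infty(T^*\T^2)$ the composition formula yields $\mathrm{Op}_h(b)P_h=\mathrm{Op}_h\big(b\,(|\zeta|^2-1)\big)+hR_h$, where $R_h$ is the quantization of a fixed compactly supported symbol, so $\|R_h\|_{L^2\to L^2}=O(1)$. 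Applying this operator identity to $u_h$, pairing against $u_h$, and letting $h\to0$: the left-hand side is bounded by $\|\mathrm{Op}_h(b)\|_{L^2\to L^2}\,\|P_hu_h\|_{L^2}\,\|u_h\|_{L^2}=O(h)\to0$, while the right-hand side converges to $\langle\mu,b(|\zeta|^2-1)\rangle$ (by \eqref{def:measure}, the term $hR_h$ contributing only $O(h)$). Hence $\langle\mu,b(|\zeta|^2-1)\rangle=0$ for every $b$, so the distribution $(|\zeta|^2-1)\mu$ vanishes; dividing by $|\zeta|^2-1$ over the open set $\{|\zeta|\ne1\}$ gives $\mathrm{supp}(\mu)\subset\{|\zeta|=1\}$.

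\emph{Vanishing over the damped region.} Let $b\in C_c^\infty(T^*\T^2)$ with projection to $\T^2$ contained in $\omega$, and choose $\varphi\in C_c^\infty(\omega)$ with $\varphi\equiv1$ on a neighborhood of that projection. Then $\varphi\,\mathrm{Op}_h(b)=\mathrm{Op}_h(b)$ exactly: in the Moyal product $\varphi\#b$ the leading term is $\varphi b=b$, and every higher term is a multiple of $\partial_z^\alpha\varphi\cdot\partial_\zeta^\alpha b$ with $|\alpha|\ge1$, which vanishes identically since it is zero wherever $\varphi\equiv1$ and zero wherever $b=0$, and these two sets cover $T^*\T^2$. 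Since $\mathrm{supp}(\varphi)$ is a compact subset of $\omega=\{a>0\}$ and $a$ is continuous, there is $c_\varphi>0$ with $a\ge c_\varphi$ on $\mathrm{supp}(\varphi)$, so Lemma~\ref{apriori}(a) gives $\|\varphi u_h\|_{L^2}^2\le c_\varphi^{-1}\|\varphi\|_{L^\infty}^2\,\|a^{1/2}u_h\|_{L^2}^2=o(1)$. Consequently $|(\mathrm{Op}_h(b)u_h,u_h)_{L^2}|=|(\mathrm{Op}_h(b)u_h,\varphi u_h)_{L^2}|\le\|\mathrm{Op}_h(b)\|_{L^2\to L^2}\,\|u_h\|_{L^2}\,\|\varphi u_h\|_{L^2}=o(1)$, hence $\langle\mu,b\rangle=0$. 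As $b$ ranges over all symbols localized over $\omega$, this is precisely $\mu|_{\omega\times\R^2}=0$.

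Neither step presents a genuine obstacle; both are routine applications of the semiclassical calculus. The only point worth stating with some care is the exact identity $\varphi\,\mathrm{Op}_h(b)=\mathrm{Op}_h(b)$ used in the second step (the weaker statement $\mathrm{Op}_h(b)=\mathrm{Op}_h(b)\varphi+O_{L^2\to L^2}(h^\infty)$ would also suffice), which rests on $\varphi$ being identically $1$ on an open set containing the spatial support of $b$.
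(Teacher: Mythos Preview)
Your proof is correct and follows the standard argument that the paper merely cites (Theorem~5.4 of \cite{EZB}): both rely on the fact that $P_hu_h=O_{L^2}(h)$ forces the measure onto the characteristic set, and that $a^{1/2}u_h=o(1)$ kills the measure over $\omega$. One small remark: for the standard quantization used in the paper, the identity $\varphi\,\mathrm{Op}_h(b)=\mathrm{Op}_h(\varphi b)$ is immediate from the integral formula (multiplication by $\varphi(x)$ just enters the symbol), so the Moyal discussion is not needed---and in fact your indices correspond to the Weyl expansion, where the identity is only up to $O(h^\infty)$; as you note, either version suffices.
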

\begin{proof}
This property follows from the standard elliptic regularity which only requires quasi-mode for $P_h$ of order $O_{L^2}(h)$. The damping term $ihau_h$ can be roughly treated as an error $O_{L^2}(h)$. For example, one can consult Theorem 5.4 of \cite{EZB} for a proof.
\end{proof}
Let $\varphi_t$ be the geodesic flow on $T^*\T^2$. We recall the following invariant property of the semiclassical measure:
\begin{lem}\label{firstpropagation} 
The semiclassical measure $\mu$ is invariant by the flow $\varphi_t$, i.e.
$$ \varphi_t^*\mu=\mu.
$$
\end{lem}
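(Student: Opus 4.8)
The plan is to establish the propagation of the semiclassical defect measure $\mu$ along the geodesic flow $\varphi_t$ directly from the quasi-mode equation $(P_h + iha)u_h = f_h = o_{L^2}(h^2\delta_h)$. First I would test the equation against $\op_h(b)u_h$ for an arbitrary symbol $b \in C_c^\infty(T^*\T^2)$ and take the imaginary part (or, equivalently, compute $\tfrac{i}{h}$ times the commutator term). The key algebraic identity is that
\[
\frac{i}{h}\big([P_h, \op_h(b)]u_h, u_h\big)_{L^2} = \big(\op_h(\{|\zeta|^2, b\})u_h, u_h\big)_{L^2} + O_{L^2\to L^2}(h),
\]
since $P_h = -h^2\Delta - 1 = \op_h(|\zeta|^2 - 1)$ is exactly quantized and the symbol $|\zeta|^2 - 1$ is smooth, so the principal symbol of the commutator is $\tfrac{h}{i}\{|\zeta|^2, b\}$ with the usual semiclassical expansion; the Poisson bracket with the constant $-1$ drops out. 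Here $\{|\zeta|^2, b\} = 2\zeta\cdot\partial_z b$ generates the geodesic flow $\varphi_t$ on $T^*\T^2$.

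Next I would bound the remaining contributions. Pairing the equation $(P_h + iha)u_h = f_h$ with $\op_h(b)u_h$ and subtracting the conjugate pairing of $\op_h(b)^* u_h$ with the equation, one gets
\[
\frac{i}{h}\big([P_h, \op_h(b)]u_h, u_h\big)_{L^2} = -\,i\big((a\,\op_h(b) + \op_h(b)^* a)u_h, u_h\big)_{L^2} + \frac{1}{h}\big[(f_h, \op_h(b)u_h) - (\op_h(b)u_h, f_h)\big] + o(1).
\]
The $f_h$ term is $O(h^{-1}\cdot h^2\delta_h \cdot \|u_h\|^2) = O(h\delta_h) = o(1)$ using $\|f_h\|_{L^2} = o(h^2\delta_h)$, $\|u_h\|_{L^2} = 1$, and the uniform boundedness of $\op_h(b)$ on $L^2$. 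For the damping term, since $a \geq 0$ I would write $a\,\op_h(b) + \op_h(b)^* a$ in terms of $a^{1/2}$: it suffices to observe $|(a\,\op_h(b)u_h, u_h)| \le \|a^{1/2}u_h\|_{L^2}\,\|a^{1/2}\op_h(b)u_h\|_{L^2} \lesssim \|a^{1/2}u_h\|_{L^2}(\|a^{1/2}u_h\|_{L^2} + \|u_h\|_{L^2})$, where I have commuted $a^{1/2}$ past $\op_h(b)$ modulo $O(h)$ using that $a^{1/2}$ has enough regularity (here the class $\mathcal{D}^{m,2,1/\beta}$ with $\beta > 4$ gives $a^{1/2} \in C^1$ with controlled derivatives). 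By Lemma~\ref{apriori}(a), $\|a^{1/2}u_h\|_{L^2} = o(\hbar) = o(1)$, so this whole term is $o(1)$.

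Putting these together and letting $h \to 0$ yields $\langle \mu, \{|\zeta|^2, b\}\rangle = 0$ for every $b \in C_c^\infty(T^*\T^2)$, which is precisely the infinitesimal invariance statement; since the Hamiltonian flow of $|\zeta|^2$ is complete on $T^*\T^2$ and agrees with $\varphi_t$ (up to the standard time reparametrization on the energy shell $|\zeta| = 1$, on which $\mu$ is supported by Lemma~\ref{firstconcentration}), integrating the infinitesimal invariance gives $\varphi_t^*\mu = \mu$. The main subtlety — though it is routine rather than genuinely hard — is controlling the commutator $[a^{1/2}, \op_h(b)]$ and the non-smoothness of $a$: one cannot treat $a$ as a standard symbol, so I would either localize away from $\omega$ (where $\mu$ already vanishes and $a$ is irrelevant) or use the finite regularity of $a^{1/2}$ together with the $\mathcal{D}^{m,2,1/\beta}$ bounds; this is exactly the kind of estimate the paper develops in its pseudodifferential appendix, so I would invoke it rather than redo it.
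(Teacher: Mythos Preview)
Your argument is correct and is precisely the ``standard propagation argument'' that the paper invokes by reference to Theorem~5.5 of \cite{EZB}, but the paper reaches the conclusion by a shorter route. Rather than carrying the damping term through the commutator computation and estimating $(a\,\op_h(b)u_h,u_h)$ via Cauchy--Schwarz and the commutator $[a^{1/2},\op_h(b)]$, the paper simply absorbs $iha\,u_h$ into the right-hand side: from Lemma~\ref{apriori}(a) one has $\|a\,u_h\|_{L^2}\le \|a^{1/2}\|_{L^\infty}\|a^{1/2}u_h\|_{L^2}=o(\hbar)$, hence $P_h u_h = f_h - iha\,u_h = o_{L^2}(h)$, and then the black-box propagation result for $P_h$-quasimodes of order $o(h)$ applies directly. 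This sidesteps any regularity issue for $a^{1/2}$ and any commutator estimate; your more explicit route works too, but the extra care with $[a^{1/2},\op_h(b)]$ is not needed at this stage.
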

\begin{proof}
This property holds true for quasi-mode of $P_h$ of order $o_{L^2}(h)$. From (a) of Lemma \ref{apriori}, we have $f_h-ihau_h=o_{L^2}(h)$. The proof then follows from a standard propagation argument (see for example Theorem 5.5 of \cite{EZB}). 
\end{proof}

\subsection{Reducing to periodic trapped directions}\label{changingcoordinate} 

In order to perform the finer analysis near trapped rays, we need to do a change of coordinate, following \cite{BZ4} and \cite{AL14}. 
The spirit of the second-microlocalization here is also close to the work \cite{AM14}.

 By identifying $\T^2=\R^2/(2\pi\Z)^2$, we decompose $\mathbb{S}^1$ as rational directions
$$ \mathcal{Q}:=\{\zeta\in\mathbb{S}^1: \zeta=\frac{(p,q)}{\sqrt{p^2+q^2}},\; (p,q)\in\Z^2,\; \mathrm{gcd}(p,q)=1 \}
$$
and irrational directions $\mathcal{R}:=\mathbb{S}^1\setminus\mathcal{Q}$. Since the orbit of an irrational direction is dense, by Lemma \ref{firstconcentration} and Lemma \ref{firstpropagation}, we have $$\mu=\mu|_{\T^2\times\mathcal{Q}}=\sum_{\zeta_0\in\mathcal{Q}}\mu_{\zeta_0}.$$
It suffices to show that, for each $\zeta_0=\frac{(p_0,q_0)}{\sqrt{p_0^2+q_0^2}} \in\mathcal{Q}$, the restricted measure $\mu_{\zeta_0}$ is zero\footnote{In fact, we only need to consider finitely many $\zeta_0\in\mathcal{Q}$, since when $p_0^2+q_0^2$ is large enough, the associated periodic direction is close to an irrational direction and the trajectory will eventually enter $\omega$.  }. Denote by $\Lambda_0$, the rank 1 submodule of $\Z^2$ generated by $\Xi_0=(p_0,q_0)$. Denote by $$\Lambda_0^{\perp}:=\{\zeta\in\R^2: \zeta\cdot \Xi_0=0 \}
$$
the dual of the submodule $\Lambda_0$. Denote by
$$ \T_{\Xi_0}^2:=(\R\Lambda_0/(2\pi\Lambda_0))\times (\Lambda_0^{\perp}/(2\pi\Z)^2\cap\Lambda_0^{\perp}).
$$
Then we have a natural smooth covering map 
$ \pi_{\Xi_0}:\T_{\Xi_0}^2\mapsto \T^2
$
of degree $p_0^2+q_0^2$. The pullback of a $2\pi\times 2\pi$ periodic function $f$ satisfies
$$ (\pi_{\Xi_0}^*f)(X+k\tau,Y+l\tau)=(\pi_{\Xi_0}^*f)(X,Y),\quad k,l\in\Z, (X,Y)\in\R^2,
$$
where
$ \tau=2\pi\sqrt{p_0^2+q_0^2}.
$ 
	\begin{center} 
	\begin{tikzpicture}[scale=1.5]
	\draw[step=0.5,help lines,green!90!black] (-1.5,-1.5) grid (1.5, 1.5); 
	\fill[blue!20] (-1,0)--(0,1.5)--(1.5,0.5)--(0.5,-1);
	\draw[dashed]
	(-2.5,1)--(2,-2);
	\draw (0,0)
	node[above]{\small{$\T_{\Xi_0}^2$} };
	\draw (0,-0.3)
	node[above]{\tiny{$\Xi_0=(3,-2)$}};
	\draw (2,-2)
	node[above]{\small{$\Lambda_0$}};
	\draw[dashed]
	(-2,-1.5)--(0.25,1.875);
	\draw (-0.2,1.5)
	node[above]{\small{$\Lambda_0^{\perp}$ }};
	\end{tikzpicture}
\end{center}

By pulling back to the torus $\T_{\Xi_0}^2$, we can identify the sequence $(u_h)\subset L^2(\T^2)$ as $(\pi_{\Xi_0}^{*}u_h) \subset L^2(\T_{\Xi_0}^2)$ and in this new coordinate system, $\zeta_0=\frac{\Xi_0}{|\Xi_0|}=(0,1)$. The semi-classical defect measure $\mu$ on $T^*\T^2$ is the pushforward of the semi-classical measure associated to $(\pi_{\Xi_0}^{*}u_h)$. 
Since the period of the torus $\T_{\Xi_0}^2$ has no influence of the analysis in the sequel\footnote{As we fix one periodic direction $\zeta_0$ and consider the semi-classical limit $h\rightarrow 0$, one does not need to worry about the fact that the period $2\pi\sqrt{p_0^2+q_0^2}$ may be very large. }, we will still use the notation $\T^2$ to stand for $\T_{\Xi_0}^2$, the variables $z=(x,y), \zeta=(\xi,\eta)$ to stand for variables $Z=(X,Y), \Xi$ on $T^*T_{\Xi_0}^2$, and assuming the period to be $2\pi$ for simplicity. The only thing that will change is that the pre-image of the damping $\pi_{\Xi_0}^{-1}(\omega)$ is now a disjoint union of $p_0^2+q_0^2$ copies of $\omega$ on $\T_{\Xi_0}^2$, and each component is still strictly convex with positive curvature. For this reason, in the hypothesis of Proposition \ref{convexaveraging}, we assume that the boundary of $\{a>0\}$ is made of disjoint unions of strictly convex curves.



\section{Analysis of the transversal high frequencies}\label{sec:high}

 Recall that $\zeta_0=(0,1)$, and our goal is to show that $\mu\mathbf{1}_{\zeta=\zeta_0}=0$. In this section, we deal with the transversal high frequencies of size $\mathcal{O}(\hbar^{-1})$ and use the positive commutator method to show these portions are propagated into the flowout of the damped region $\omega$.

 For the geodesic flow $\varphi_t$ on $T^*\T^2$ and $\zeta\in\mathbb{S}^1$, we denote by $\varphi_t(\cdot,\zeta): \T^2\rightarrow \T^2$ the projection of the flow map $\varphi_t$.
 By shifting the coordinate, we may assume $$\omega_0:=I_0\times\T_y\subset \bigcup_{t\in[0,2\pi]}\varphi_t(\cdot,\zeta_0)(\omega),
 $$
 where $$I_0= (-\sigma_0,\sigma_0)\subset \pi_1(\{z:a(z)\geq c_0>0\}),\; \text{ for some $\sigma_0<\frac{\pi}{100}$}
 $$ and $\pi_1:\T^2\mapsto \T_x$ the canonical projection. Therefore, there exist $\epsilon_0>0, c_0>0$ sufficiently small and $T_0>0$, such that for any $|\zeta|=1, z_0\in\omega_0, |\zeta-\zeta_0|\leq \epsilon_0$,
 \begin{align}\label{GCCverticle} 
  \int_0^{T_0}(a\circ\varphi_t)(z_0,\zeta)dt\geq c_0>0.
 \end{align}
\begin{center} 
	\begin{tikzpicture}[scale=1.5]
\draw (0,0) rectangle (4,2); 
\fill[green!70!black] (2,0)--(2.8,0)--(2.8,2)--(2,2);
\fill[yellow!90!black] (2.4,1) circle[radius=0.7];
\draw (2.4,1.65)
node[above]{$\omega_0$};
\draw (2.4,0.8) node[above]{\small{supp$(a)$}};
\draw[->>] (0.5,0)--(0.5,0.5);
\draw[->>] (0.5,0.5)--(0.5,1);
\draw[->>] (0.5,1)--(0.5,1.5);
\draw[->>] (0.5,1.5)--(0.5,2);
\draw (1.2,1)
node[above]{\small{$\zeta_0=(0,1)$}};
\end{tikzpicture}
\end{center}

To microlocalize the solution near $\zeta_0$, we pick $\psi_0\in C_c^{\infty}(\R)$ and consider $u_h^1:=\psi_0\big(\frac{hD_x}{\epsilon_0}\big)u_h$, then
$$ (P_{h}+iha)u_h^{1}=f_h^1:=\psi_0\big(\frac{hD_x}{\epsilon_0}\big)f_h-ih\big[\psi_0\big(\frac{hD_x}{\epsilon_0}\big),a\big]u_h.
$$
Let $\mu_1$ be the semiclassical measure of $u_h^1$, then $\mu_1=|\psi\big(\frac{\xi}{\epsilon_0}\big)|^2\mu$.
\begin{lem}\label{u_h^1}
We have
$$ \|a^{\frac{1}{2}}u_h^1\|_{L^2}=o(h^{\frac{1}{2}}\delta_h^{\frac{1}{2}}),\quad \|f_h^1\|_{L^2}=o(h^{2}\delta_h).
$$
\end{lem}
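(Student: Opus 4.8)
The plan is to propagate the a priori estimates of Lemma \ref{apriori} through the Fourier multiplier $\psi_0(hD_x/\epsilon_0)$ by controlling the commutator $[\psi_0(hD_x/\epsilon_0),a]$. First I would observe that $\psi_0(hD_x/\epsilon_0)$ is a semiclassical pseudodifferential operator of order zero, bounded uniformly on $L^2$, so the second claim $\|f_h^1\|_{L^2}=o(h^2\delta_h)$ follows immediately for the first term $\psi_0(hD_x/\epsilon_0)f_h$ from the quasi-mode hypothesis $f_h=o_{L^2}(h^2\delta_h)$. For the commutator term $ih[\psi_0(hD_x/\epsilon_0),a]u_h$, the standard symbol calculus gives $[\psi_0(hD_x/\epsilon_0),a]=\frac{h}{i\epsilon_0}\psi_0'(hD_x/\epsilon_0)\,\partial_x a+O_{L^2\to L^2}(h^2)$; however, since $a$ is only $C^m$ with conormal-type singularities at $\partial\omega$ (not Schwartz), I would instead exploit the structure $|\partial_x a|\lesssim a^{1-\sigma}$ coming from $a\in\mathcal{D}^{m,2,\frac1\beta}$ together with the control $\|a^{1/2}u_h\|_{L^2}=o(\hbar)$ from Lemma \ref{apriori}(a). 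The cleanest route is: write the commutator so that it factors through multiplication by a power of $a$ (or a smooth function supported near $\{a>0\}$ that is bounded by a power of $a$) composed with a bounded operator, then use $\|a^{1/2}u_h\|_{L^2}=o(\hbar)=o(h^{1/2}\delta_h^{1/2})$. Each factor of $h$ from the commutator expansion, combined with $o(h^{1/2}\delta_h^{1/2})$ from the damping, yields $o(h^{3/2}\delta_h^{1/2})=o(h^2\delta_h)$ precisely when $\delta_h=h^{2/(2\beta+5)}$ makes $h^{3/2}\delta_h^{1/2}\le h^2\delta_h$, i.e.\ $\delta_h^{1/2}\le h^{1/2}\cdot$(something) — this is where the choice of $\delta_h$ enters and must be checked carefully.

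For the first claim $\|a^{1/2}u_h^1\|_{L^2}=o(h^{1/2}\delta_h^{1/2})$, I would write $a^{1/2}u_h^1=a^{1/2}\psi_0(hD_x/\epsilon_0)u_h=\psi_0(hD_x/\epsilon_0)(a^{1/2}u_h)+[a^{1/2},\psi_0(hD_x/\epsilon_0)]u_h$. The first piece is bounded in $L^2$ by $\|a^{1/2}u_h\|_{L^2}=o(h^{1/2}\delta_h^{1/2})$ directly, using the uniform $L^2$-boundedness of $\psi_0(hD_x/\epsilon_0)$. For the commutator $[a^{1/2},\psi_0(hD_x/\epsilon_0)]$, the issue is that $a^{1/2}$ is less regular than $a$ (only Hölder of order $\beta/2$), but the hypothesis $a\in\mathcal{D}^{m,2,1/\beta}$ is tailored so that $a^{1/2}$ still has the bound $|\partial_x(a^{1/2})|=\frac12 a^{-1/2}|\partial_x a|\lesssim a^{1/2-\sigma}\le a^{1/2}\cdot a^{-\sigma}$ near $\partial\omega$ — one must be a little careful since $a^{-\sigma}$ blows up, but because $\sigma=1/\beta<1/2$ the exponent $1/2-\sigma>0$ and $|\partial_x(a^{1/2})|$ actually stays bounded. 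Thus $a^{1/2}\in C^1$ (indeed in a suitable symbol class after the conormal structure is used), and the commutator is $O_{L^2\to L^2}(h)$, so $\|[a^{1/2},\psi_0(hD_x/\epsilon_0)]u_h\|_{L^2}=O(h)\|u_h\|_{L^2}=O(h)=o(h^{1/2}\delta_h^{1/2})$ since $\delta_h\to 0$ but $h^{1/2}\gg h$... wait, $O(h)$ versus $o(h^{1/2}\delta_h^{1/2})$: one needs $h\le h^{1/2}\delta_h^{1/2}$, i.e.\ $h^{1/2}\le\delta_h^{1/2}$, i.e.\ $h\le\delta_h$, which holds since $\delta_h=h^{2/(2\beta+5)}\gg h$.

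The main obstacle I anticipate is the bookkeeping of the singular symbol $a$ (and its square root) in the pseudodifferential commutator estimates: since $a$ is not smooth at $\partial\omega$, one cannot blindly invoke the standard calculus, and must instead either split $a$ into a smooth part away from $\partial\omega$ plus a conormal part handled via the $\mathcal{D}^{m,k,\sigma}$ bounds, or invoke the specialized symbol classes developed in Appendix B of the paper. Getting the powers of $h$ and $\delta_h$ to line up — verifying that every error term is genuinely $o(h^2\delta_h)$ with the specific $\delta_h=h^{2/(2\beta+5)}$ — is the delicate quantitative point, and the condition $\beta>4$ (equivalently $\sigma<1/4$, needed for $a^{1/2}\in\mathcal{D}^{m,2,\cdot}$-type bounds and for two derivatives to be admissible) is what makes it work.
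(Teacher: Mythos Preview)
Your bookkeeping for $\|f_h^1\|_{L^2}$ has an error: you count only one factor of $h$ (from the commutator expansion $[\psi_0,a]=O(h)$) but forget the second, explicit factor of $h$ in $ih[\psi_0,a]$. The correct count is $h^2$ times the principal part $\epsilon_0^{-1}\mathrm{Op}_h(\psi_0'(\xi/\epsilon_0)\partial_x a)u_h$, plus an $O(h^3)$ remainder. Bounding the principal part by $o(\hbar)+O(h^{1/2})$ then yields $o(h^{5/2}\delta_h^{1/2})+O(h^{5/2})$, and both of these are $o(h^2\delta_h)$ because $\delta_h=h^{2/(2\beta+5)}\gg h^{1/2}$. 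Your stated bound $o(h^{3/2}\delta_h^{1/2})$ is \emph{larger} than $h^2\delta_h$, so the inequality you try to check indeed fails --- you correctly sensed something was off, but the fix is simply the missing $h$, not anything to do with the specific value of $\delta_h$.

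The paper also takes a somewhat different route for both parts. For the commutator term it does not try to factor through $a^{1/2}$ (which would require handling the non-smooth quotient $\partial_x a/a^{1/2}$ via the special calculus of Appendix~B, as is done later in Lemma~\ref{newerro}); instead it uses the elementary pointwise inequality $|\nabla a|^2\le 2\|a\|_{W^{2,\infty}}a$, valid for any nonnegative $C^2$ function, so that $Ca-|\epsilon_0^{-1}\psi_0'(\xi/\epsilon_0)\partial_x a|^2\ge 0$, and then applies the sharp G$\mathring{\mathrm{a}}$rding inequality to bound $\|\mathrm{Op}_h(\psi_0'\partial_x a)u_h\|$ by $C\|a^{1/2}u_h\|+Ch^{1/2}$. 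Once $\|f_h^1\|=o(h^2\delta_h)$ is established, the first claim $\|a^{1/2}u_h^1\|=o(\hbar)$ follows not by commuting $a^{1/2}$ past $\psi_0$ but simply by repeating the energy identity of Lemma~\ref{apriori}(a) for $u_h^1$. Your commutator route for this first claim is also correct, since $a^{1/2}\in W^{1,\infty}$ makes $[a^{1/2},\psi_0(hD_x/\epsilon_0)]=O_{\mathcal L(L^2)}(h)$ and $h=o(\hbar)$; it just does more work than necessary.
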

\begin{proof}
It suffices to show that $f_h^1=o_{L^2}(h^{2}\delta_h)$, and the first assertion follows from the same proof of (a) of Lemma \ref{apriori}. By the symbolic calculus,
$$ i\big[\psi_0\big(\frac{hD_x}{\epsilon_0}\big),a\big]=\epsilon_0^{-1}h\mathrm{Op}_h\big(\psi_0'\big(\frac{\xi}{\epsilon_0}\big)\partial_xa \big)+O_{L^2}(\epsilon_0^{-2}h^2).
$$ 
From the pointwise inequality for the non-negative function $a$:
\begin{align}\label{pointwise} 
|\nabla a(x)|^2\leq 2\|a\|_{W^{2,\infty}}a(x),
\end{align}
we have, for some $C>0$,
$$ Ca-\big|\epsilon_0^{-1}\psi_0'\big(\frac{\xi}{\epsilon_0}\big)\partial_xa\big|^2\geq 0 \text{ on }T^*\T^2.
$$
Therefore, by the sharp G$\mathring{\mathrm{a}}$rding inequality, we get
$$
 \big\|\epsilon_0^{-1}\mathrm{Op}_h\big(\psi_0'\big(\frac{\xi}{\epsilon_0}\big)\partial_xa \big)u_h\big\|_{L^2}\leq C|(au_h,u_h)_{L^2}|^{\frac{1}{2}}+Ch^{\frac{1}{2}}\|u_h\|_{L^2}.
$$
Together with (a) of Lemma \ref{apriori}, this implies that
$$ \big\|ih\big[\psi_0\big(\frac{hD_x}{\epsilon_0}\big),a\big]u_h\big\|_{L^2}=o(h^{\frac{5}{2}})=o(h^2\delta_h).
$$
The proof of Lemma \ref{u_h^1} is complete.
\end{proof}

Recall that $\hbar=h^{\frac{1}{2}}\delta_h^{\frac{1}{2}}$. Let $\psi\in C_c^{\infty}(\R)$, and consider 
\begin{align}\label{vhwh}
v_h=\psi(\hbar D_x)u_h^1,\quad  w_h=(1-\psi(\hbar D_x))u_h^1.\end{align} 
In this decomposition, $w_h$ corresponds to the transversal high frequency part, while $v_h$ corresponds to the transversal low frequency part for which will be treated in next sections.
Note that
\begin{align}\label{eq:vh}
(P_h+iha)v_h=\psi(\hbar D_x)f_h^1-ih[\psi(\hbar D_x),a]u_h^1=:r_{1,h}
\end{align} 
and
\begin{align}\label{eq:w_h}
(P_h+iha)w_h=(1-\psi(\hbar D_x))f_h^1+ih[\psi(\hbar D_x),a]u_h^1=:r_{2,h}.
\end{align}
We need to show that the commutator term $h[\psi(\hbar D_x),a]u_h^1$ can be viewed as the remainder:
\begin{lem}\label{newerro} 
We have
$$ \|r_{1,h}\|_{L^2}+\|r_{2,h}\|_{L^2}=o(h^{2}\delta_h)=o(h\hbar^2).
$$
Consequently, from Lemma \ref{apriori},
$$  \|a^{\frac{1}{2}}v_h\|_{L^2}+\|a^{\frac{1}{2}}w_h\|_{L^2}=o(h^{\frac{1}{2}}\delta_h^{\frac{1}{2}})=o(\hbar).
$$
\end{lem}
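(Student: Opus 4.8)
The plan is to reduce everything to a bound on the single commutator $h[\psi(\hbar D_x),a]u_h^1$, which occurs with opposite signs in $r_{1,h}$ and $r_{2,h}$. Since $\psi(\hbar D_x)$ and $1-\psi(\hbar D_x)$ are Fourier multipliers bounded on $L^2(\T^2)$ uniformly in $\hbar$,
\[
\|r_{1,h}\|_{L^2}+\|r_{2,h}\|_{L^2}\lesssim \|f_h^1\|_{L^2}+\big\|h[\psi(\hbar D_x),a]u_h^1\big\|_{L^2},
\]
and $\|f_h^1\|_{L^2}=o(h^2\delta_h)$ by Lemma~\ref{u_h^1}; so, recalling that $h\hbar^2=h^2\delta_h$, everything follows once we show
\be\label{newerrokey}
\big\|[\psi(\hbar D_x),a]u_h^1\big\|_{L^2}=o(\hbar^2).
\ee
The second assertion is then immediate: taking the $L^2(\T^2)$ inner product of $(P_h+iha)v_h=r_{1,h}$ with $v_h$ and using that $P_h=-h^2\Delta-1$ is self-adjoint, the imaginary part gives $h\|a^{1/2}v_h\|_{L^2}^2=\Im(r_{1,h},v_h)_{L^2}\le\|r_{1,h}\|_{L^2}\|v_h\|_{L^2}$; since $\|v_h\|_{L^2}\le\|\psi\|_{L^\infty}\|u_h\|_{L^2}=O(1)$ this yields $\|a^{1/2}v_h\|_{L^2}=o(h^{1/2}\delta_h^{1/2})=o(\hbar)$, and the same computation with \eqref{eq:w_h} and $\|w_h\|_{L^2}=O(1)$ handles $w_h$.

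The delicate point, and the reason the argument of Lemma~\ref{u_h^1} does not carry over, is that \eqref{newerrokey} is borderline: the crude expansion $[\psi(\hbar D_x),a]=\tfrac{\hbar}{i}\mathrm{Op}_\hbar(\psi'(\xi)\partial_x a)+O_{L^2}(\hbar^2)$ only yields $O(\hbar^2)$ when applied to the $O(1)$ function $u_h^1$, and inserting the sharp G{\aa}rding inequality (as for Lemma~\ref{u_h^1}) costs a further factor $\hbar^{1/2}$. Instead I would Taylor-expand the commutator \emph{keeping the multiplication operators on the inside}. Treating $y$ as a parameter and Fourier transforming in $x$, $\widehat{[\psi(\hbar D_x),a]g}(\xi)=\int\widehat a(\xi-\eta)\big(\psi(\hbar\xi)-\psi(\hbar\eta)\big)\widehat g(\eta)\,d\eta$; expanding $\psi(\hbar\xi)-\psi(\hbar\eta)$ to second order around $\hbar\xi$ in powers of $\hbar(\xi-\eta)$ and using $(\xi-\eta)^k\widehat a(\xi-\eta)=i^{-k}\widehat{\partial_x^k a}(\xi-\eta)$, one reaches an identity
\be\label{commexpansion}
[\psi(\hbar D_x),a]g=\frac{\hbar}{i}\,\psi'(\hbar D_x)\big[(\partial_x a)\,g\big]+\frac{\hbar^2}{2}\,\psi''(\hbar D_x)\big[(\partial_x^2 a)\,g\big]+\hbar^3\,\mathcal R_\hbar g,
\ee
where $\mathcal R_\hbar$ (built from $\widehat{\partial_x^3 a}$ and $\psi'''$) is bounded on $L^2(\T^2)$ uniformly in $\hbar$ --- a routine Fourier-multiplier estimate, e.g.\ $\|\mathcal R_\hbar\|_{L^2\to L^2}\lesssim\|\widehat{\psi'''}\|_{L^1(\R)}\|\partial_x^3 a\|_{L^\infty}$, finite because $m\ge 10$ so that $a\in C^3$.

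It then remains to estimate the three terms of \eqref{commexpansion} with $g=u_h^1$. The pointwise inequality \eqref{pointwise} gives $|\partial_x a|\le|\nabla a|\le(2\|a\|_{W^{2,\infty}})^{1/2}a^{1/2}$, while $a\in\mathcal D^{m,2,1/\beta}$ together with $\beta>4$ (hence $1-\tfrac{2}{\beta}\ge\tfrac12$) gives $|\partial_x^2 a|\lesssim a^{1-2/\beta}\le\|a\|_{L^\infty}^{1/2-2/\beta}\,a^{1/2}$. Combining these with the uniform $L^2$-bounds on $\psi'(\hbar D_x),\psi''(\hbar D_x)$, with $\|a^{1/2}u_h^1\|_{L^2}=o(h^{1/2}\delta_h^{1/2})=o(\hbar)$ from Lemma~\ref{u_h^1}, and with $\|u_h^1\|_{L^2}=O(1)$, one obtains
\[
\big\|[\psi(\hbar D_x),a]u_h^1\big\|_{L^2}\lesssim \hbar\|a^{1/2}u_h^1\|_{L^2}+\hbar^2\|a^{1/2}u_h^1\|_{L^2}+\hbar^3\|u_h^1\|_{L^2}=\hbar\cdot o(\hbar)+O(\hbar^3)=o(\hbar^2),
\]
which is \eqref{newerrokey}. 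I expect the main obstacle to be the organization of the expansion \eqref{commexpansion}: one needs the multiplications $(\partial_x a)g$ and $(\partial_x^2 a)g$ genuinely inside $\psi'(\hbar D_x),\psi''(\hbar D_x)$ and a third-order remainder that is an honest bounded operator, so that the gain $\|a^{1/2}u_h^1\|_{L^2}=o(\hbar)$ can be fed in directly rather than through G{\aa}rding --- and it is exactly the control of $\nabla^2 a$ (the hypothesis $a\in\mathcal D^{m,2,1/\beta}$, not merely $\mathcal D^{m,1,1/\beta}$) that renders the second-order term harmless.
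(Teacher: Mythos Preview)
Your proof is correct and takes a cleaner route than the paper. Both arguments reduce to showing $\|[\psi(\hbar D_x),a]u_h^1\|_{L^2}=o(\hbar^2)$, and both observe that the naive expansion plus sharp G{\aa}rding (as in Lemma~\ref{u_h^1}) only yields $O(\hbar^{3/2})$, one half-power short. The paper then expands $i[\psi(\hbar D_x),a]$ via Weyl calculus, invokes the special symbolic-calculus Lemma~\ref{symbolicspecial} to factor $a^{1/2}$ \emph{to the right} of the principal term (so that it lands on $u_h^1$), applies sharp G{\aa}rding to the second-order term to get $O(\hbar^{5/2})$, and finishes with an interpolation step using $|\partial_x(a^{1/2})|\lesssim a^{1/2-\sigma}$. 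Your approach bypasses all of this: the Fourier-side Taylor expansion \eqref{commexpansion} places $(\partial_x a)g$ and $(\partial_x^2a)g$ \emph{inside} the Fourier multipliers from the start, so the pointwise bounds $|\partial_xa|,|\partial_x^2a|\lesssim a^{1/2}$ feed directly into $\|a^{1/2}u_h^1\|=o(\hbar)$, with no G{\aa}rding, no Lemma~\ref{symbolicspecial}, and no interpolation. This is more elementary and more transparent; the price is that the third-order remainder $\mathcal R_\hbar$ must be an honest bounded operator, which requires one more derivative on $a$ than the paper uses --- harmless here since $m\ge 10$.

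One small correction: the bound you quote, $\|\mathcal R_\hbar\|\lesssim\|\widehat{\psi'''}\|_{L^1}\|\partial_x^3a\|_{L^\infty}$, has the hats on the wrong factors. The Fourier-side kernel of $\mathcal R_\hbar$ is dominated by $\|\psi'''\|_{L^\infty}|\widehat{\partial_x^3 a}(\xi-\eta,y)|$, so the Schur bound that actually comes out is $\|\mathcal R_\hbar\|\lesssim\|\psi'''\|_{L^\infty}\sup_y\|\widehat{\partial_x^3 a}(\cdot,y)\|_{\ell^1(\Z)}$, finite because $a\in C^m\subset C^5$.
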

\begin{proof}
 According to the symbolic calculus,
$$ i[\psi(\hbar D_x),a]=\hbar\mathrm{Op}_{\hbar}(\psi'(\xi)\partial_xa)+C\hbar^2\mathrm{Op}_{\hbar}(\partial_{\xi}^2\psi\cdot\partial_x^2a)+\mathcal{O}_{\mathcal{L}(L^2)}(\hbar^3).
$$
Using the fact that $a\in \mathcal{D}^{m,2,\sigma}$ and $\sigma<\frac{1}{4}$, we have $a^{\frac{1}{2}}\in C_c^2(\T^2)$. Applying the special symbolic calculus Lemma \ref{symbolicspecial} (b) with $\kappa=\partial_x(a^{\frac{1}{2}}), b_2=a^{\frac{1}{2}}$ and $\varphi=\psi'$, we have
\begin{align*} \frac{1}{2}\mathrm{Op}_{\hbar}(\psi'(\xi)\partial_xa)=&\mathrm{Op}_{\hbar}(\psi'(\xi)\partial_x(a^{\frac{1}{2}}) )a^{\frac{1}{2}}-\frac{1}{i}\hbar\mathrm{Op}_{\hbar}(\psi''(\xi)\partial_x(a^{\frac{1}{2}})\cdot \partial_x(a^{\frac{1}{2}}))+\mathcal{O}_{\mathcal{L}(L^2)}(\hbar^2).
\end{align*}
Applying Lemma \ref{symbolicspecial} (a) with $\kappa=b_1=\partial_x(a^{\frac{1}{2}}), \varphi=\psi''$, we have
\begin{align*}
-\frac{\hbar}{i}\mathrm{Op}_{\hbar}(\psi''(\xi)\partial_x(a^{\frac{1}{2}})\cdot \partial_x(a^{\frac{1}{2}}) )=&-\frac{\hbar}{i}\mathrm{Op}_{\hbar}(\psi''(\xi)\partial_x(a^{\frac{1}{2}}) )\partial_x(a^{\frac{1}{2}})+\mathcal{O}_{\mathcal{L}(L^2)
}(\hbar^2).
\end{align*}

Since $\psi$ is only a function of $\xi$ and $a^{-\frac{1}{2}}\partial_xa\in L^{\infty}$, we have
$$ \big\|\mathrm{Op}_{\hbar}\big(\psi'(\xi)\frac{\partial_x a}{a^{\frac{1}{2}}}\big)a^{\frac{1}{2}}u_h^1\big\|_{L^2(\T^2)}=\big\|(a^{-\frac{1}{2}}\partial_xa)\psi'(hD_x)(a^{\frac{1}{2}}u_h^1) \big\|_{L^2(\T^2)}
\leq C\|a^{\frac{1}{2}}u_h^1\|_{L^2}=o(\hbar).
$$ 
 
For the term $\mathrm{Op}_{\hbar}(\partial_{\xi}^2\psi\partial_x^2a)$, since $|\partial_x^2a|\lesssim a^{1-2\sigma}\lesssim a^{\frac{1}{2}}$, by the sharp G$\mathring{\mathrm{a}}$rding inequality, 
$$ \Re\big(\mathrm{Op}_{\hbar}(Ca-|\psi''(\xi)\partial_x^2a|^2)u_h^1,u_h^1\big)_{L^2}\geq -C\hbar\|u_h^1\|_{L^2}^2.
$$
Thus $\|\hbar^2\mathrm{Op}_{\hbar}(\psi''(\xi)\partial_x^2a)u_h^1\|_{L^2}=O(\hbar^{\frac{5}{2}})$.
Therefore, by Calder\'on-Vaillancourt (Theorem \ref{CalderonVaillancourt}), we can write
$$ ih[\psi(\hbar D_x),a]=h\hbar A_{\hbar}a^{\frac{1}{2}}+h\hbar^2B_{\hbar}\partial_x(a^{\frac{1}{2}})
+\mathcal{O}_{\mathcal{L}(L^2)}(h\hbar^3),
$$
with $A_{\hbar}, B_{\hbar}$ bounded operators on $L^2$, uniformly in $\hbar$. Since $|\partial_x(a^{\frac{1}{2}})|\lesssim a^{\frac{1}{2}-\sigma}$, by (a) of Lemma \ref{apriori} and the interpolation, we get
$$ \|ih[\psi(\hbar D_x),a]u_h^1\|_{L^2}=o(h\hbar^2)+O(h\hbar^3).
$$
This completes the proof of Lemma \ref{newerro}.
\end{proof}

\begin{rem}
Compared to \cite{AL14} where the damping only satisfies $a\in W^{k_0,\infty}(\T^2)$ and $|\nabla a|\lesssim a^{1-\sigma}$, our assumption $a\in\mathcal{D}^{m,2,\sigma}$ is slightly stronger, in order to ensure the commutator of the damping term is still a remainder. 
Indeed, here we chose $\hbar=h^{\frac{1}{2}}\delta_h^{\frac{1}{2}}\ll h^{\frac{1}{2}}$ while in \cite{AL14}, the authors chose $\hbar=h^{\alpha},\alpha<\frac{1}{3}$ (see the sentence after Proposition 7.2 of \cite{AL14}). The use of the sharp G$\mathring{\mathrm{a}}$rding as in \cite{AL14} would not get $o(h\hbar^2)$ for the remainders $r_{1,h},r_{2,h}$. We also remark that a direct application of the Calder\'on-Vaillancourt theorem in the symbolic calculus requires $a^{\frac{1}{2}}\in W^{3,\infty}$. Since we assume only $a\in\mathcal{D}^{m,2,\sigma}$, (thus $a^{\frac{1}{2}}\in W^{2,\infty}$), we need to exploit the special structure of the commutator $[\psi(\hbar D_x),a]$ and apply the special symbolic calculus Lemma \ref{symbolicspecial}. 
\end{rem}
Recall that $\omega_0=I_0\times\T_y$. 
\begin{lem}\label{yellow-green}
We have
$$ \|w_h\mathbf{1}_{\omega_0}\|_{L^2}+\|h\nabla w_h\mathbf{1}_{\omega_0}\|_{L^2}=O(h^{\frac{1}{2}}),\text{ as } h\rightarrow 0.
$$
\end{lem}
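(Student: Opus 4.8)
The plan is to prove the two bounds separately. I would fix a cutoff $\chi_1\in C_c^\infty(\T_x)$ with $0\le\chi_1\le1$, $\chi_1\equiv1$ on $I_0$ and $\supp\chi_1$ contained in a slightly larger interval on which \eqref{GCCverticle} still holds (it is an open condition); since $\omega_0=I_0\times\T_y$ this gives $\mathbf 1_{\omega_0}\le\chi_1^2$, so it suffices to prove $\|\chi_1 w_h\|_{L^2}=O(h^{1/2})$ and $\|\chi_1 h\nabla w_h\|_{L^2}=O(h^{1/2})$. The second bound reduces to the first: pairing \eqref{eq:w_h}, i.e.\ $-h^2\Delta w_h=(1-iha)w_h+r_{2,h}$, with $\chi_1^2\ov w_h$ and integrating by parts, the damping term drops out of the real part and the commutator $[h^2\Delta,\chi_1^2]$ is absorbed after Cauchy--Schwarz, giving
\[
\|\chi_1 h\nabla w_h\|_{L^2}^2\ \lesssim\ \|\chi_1 w_h\|_{L^2}^2+h^2\|w_h\|_{L^2}^2+\|r_{2,h}\|_{L^2}\|w_h\|_{L^2},
\]
which by Lemma~\ref{newerro} and $\|w_h\|_{L^2}=O(1)$ is $O(h)+\|\chi_1 w_h\|_{L^2}^2$.

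For the $L^2$ bound I would use a positive-commutator (propagation) argument along the geodesic flow. First, $w_h=(1-\psi(\hbar D_x))\psi_0(hD_x/\epsilon_0)u_h$ is exactly frequency-localized to $\{|\xi|\le C\epsilon_0\}$, and from \eqref{eq:w_h} with Lemma~\ref{newerro} one has $P_hw_h=r_{2,h}-ihaw_h=O_{L^2}(h)$; by the elliptic estimate $w_h$ is then $O(h)$-microlocalized (in $L^2$) to the two compact sets $\Gamma_\pm:=\{|\zeta|=1,\ |\xi|\le C\epsilon_0,\ \pm\eta>0\}$, on each of which $\eta$ has a fixed sign and near which the Hamilton flow of $p:=|\zeta|^2$ points, up to $O(\epsilon_0)$, in the $\pm y$ direction. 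Using \eqref{GCCverticle} I would construct $q=q_++q_-\in C_c^\infty(T^*\T^2)$, with $q_\pm$ supported in a small neighborhood of $\Gamma_\pm$, such that on a neighborhood of $\Gamma_+\cup\Gamma_-$
\[
\{p,q\}=2\zeta\cdot\partial_z q\ \ge\ c_1\,\chi_1(x)^2\,\theta(\zeta)-C_1\,\chi(z),
\]
where $\theta\ge0$ equals $1$ near $\Gamma_+\cup\Gamma_-$ and $\chi\in C_c^\infty(\T^2)$ satisfies $\mathbf 1_{\{a\ge c_0\}}\le\chi\le\tfrac2{c_0}a$. This is the standard escape-function construction: parametrize each vertical circle $\{x\}\times\T_y$, $x\in\supp\chi_1$, and let $q_\pm$ increase along it except on the arc over $\{a\ge c_0\}$, whose existence is precisely \eqref{GCCverticle}.

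With $Q=\mathrm{Op}_h(q)$ (self-adjoint) I would evaluate $\tfrac ih\langle[P_h,Q]w_h,w_h\rangle$ in two ways. Since $|\zeta|^2$ is quadratic, $\tfrac ih[P_h,Q]=\mathrm{Op}_h(\{p,q\})$ exactly, so the sharp G\aa rding inequality (applied where the above symbol bound holds) and the microlocalization of $w_h$ give
\[
\tfrac ih\langle[P_h,Q]w_h,w_h\rangle\ \ge\ c_1\|\chi_1 w_h\|_{L^2}^2-C\|\chi^{1/2}w_h\|_{L^2}^2-O(h)\|w_h\|_{L^2}^2\ \ge\ c_1\|\chi_1 w_h\|_{L^2}^2-o(\hbar^2)-O(h),
\]
using $\|\chi^{1/2}w_h\|_{L^2}^2\le\tfrac2{c_0}\|a^{1/2}w_h\|_{L^2}^2=o(\hbar^2)$ from Lemma~\ref{newerro}. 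On the other hand, using \eqref{eq:w_h} and the self-adjointness of $P_h$,
\[
\tfrac ih\langle[P_h,Q]w_h,w_h\rangle=\tfrac ih\big(\langle Qw_h,r_{2,h}\rangle-\langle Qr_{2,h},w_h\rangle\big)-2\langle Qw_h,aw_h\rangle+\langle[a,Q]w_h,w_h\rangle,
\]
and each term is $O(h)$: $\tfrac1h\|r_{2,h}\|_{L^2}\|w_h\|_{L^2}=o(h\delta_h)$; $|\langle Qw_h,aw_h\rangle|\le\|a^{1/2}Qw_h\|_{L^2}\|a^{1/2}w_h\|_{L^2}=o(\hbar^2)$ since $a^{1/2}\in C^1$ forces $[a^{1/2},Q]=O_{L^2}(h)$; and $|\langle[a,Q]w_h,w_h\rangle|=O(h)$ since $a\in C^1$. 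Comparing the two expressions yields $\|\chi_1 w_h\|_{L^2}^2=O(h)$, and the gradient bound then follows from the first paragraph.

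The hard part is the escape-function construction, which must be performed on the compact, periodic fiber $\T_y$, where no globally monotone function exists: monotonicity can fail only on a single arc, which has to be placed over $\{a\ge c_0\}$, and this is exactly what the geometric control condition \eqref{GCCverticle} guarantees. One must also be careful that $w_h$ is only frequency-localized in $\xi$, so one first confines attention to a neighborhood of the characteristic set $\{|\zeta|=1\}$, where $\eta$ has a definite sign. All of this is classical (cf.\ \cite{BuHi},\cite{AL14},\cite{BZ4}); the loss to order $h^{1/2}$---rather than something smaller---is forced by the $O(h)\|w_h\|_{L^2}^2$ commutator remainder together with $\|w_h\|_{L^2}=O(1)$.
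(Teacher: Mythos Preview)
Your proposal is correct and takes essentially the same approach as the paper: both use the classical propagation/positive-commutator argument along the geodesic flow, exploiting \eqref{GCCverticle} to transfer mass from $\omega_0$ into the region $\{a\ge c_0\}$, where it is controlled by $\|a^{1/2}w_h\|_{L^2}^2=o(\hbar^2)$. The only cosmetic difference is that the paper packages the propagation via time-evolved symbols $b_s=\chi_0(\zeta)\chi_1(x-s\xi)\chi_2(y-s\eta)$ and integrates $\tfrac{d}{ds}\langle\mathrm{Op}_h(b_s)w_h,w_h\rangle$ from $s=0$ to $s=T_1$, whereas you build a single escape function $q$ with $\{p,q\}\ge c_1\chi_1^2\theta-C_1\chi$; these are equivalent formulations of the same idea. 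Your explicit reduction of the gradient bound to the $L^2$ bound (via pairing \eqref{eq:w_h} with $\chi_1^2\ov w_h$) is a clean touch that the paper leaves implicit.
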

\begin{proof}
The proof follows from the classical propagation argument, using the geometric control condition. Take small intervals $I_0'\subset\T_x, I_1=(\sigma_1,\sigma_2)\subset\T_y$, such that $I_0\subset I_0'$ and  $\omega_1=I_0'\times I_1\subset \{a\geq \delta_0 \}$ for some $\delta_0>0$. For any $z_0=(x_0,y_0)\in \omega_0$, by the geometric control condition, there exist $T_1>0, \delta_1>0, \delta_2>0$, and the small neighborhood  $U=(x_0-\delta_1,x_0+\delta_1)\times (y_0-\delta_1,y_0+\delta_1)$ of $z_0$, such that for all  $|\zeta-\zeta_0|\leq \epsilon_0, z\in U$, we have
$$ z+s\zeta \in\omega_1,\quad s\in[T_1-\delta_2,T_1+\delta_2].
$$
In particular, $a(z+s\zeta)\geq \delta_0$.
Without loss of generality, we assume that $\pi>\sigma_2>\sigma_1>y_0+\delta_1>y_0-\delta_1>-\pi$.
Pick two cutoffs $\chi_1(x),\chi_2(y)\geq 0$, supported in $(x_0-\delta_1,x_0+\delta_1)$, $(y_0-\delta_1,y_0+\delta_1)$  and equal to $1$ on $(x_0-\delta_1/2,x_0+\delta_1/2),(y_0-\delta_1/2,y_0+\delta_1/2)$, respectively.
 Let $\chi_0\in C_c^{\infty}(\R)$ be a cutoff near $|\zeta-\zeta_0|\leq \epsilon_0$. For any $s\geq 0$, define the symbol
$$ b_s(z,\zeta):=\chi_0(\zeta)\cdot(\chi_1\otimes\chi_2)\circ\varphi_{-s}(z,\zeta) =\chi_0(\zeta)\chi_1(x-s\xi)\chi_2(y-s\eta).
$$
\begin{center} 
	\begin{tikzpicture}[scale=1.5]
	\draw (0,0) rectangle (4,2); 
	\fill[green!70!black] (2,0)--(2.8,0)--(2.8,2)--(2,2);
	\fill[blue!20]
	(1.9,1.3)--(2.9,1.3)--(2.9,0.7)--(1.9,0.7);
	\draw (2.4,1.65)
	node[above]{$\omega_0$};
	\draw (2.4,0.8) node[above]{\small{$\omega_1$}};
	\fill[red!20]
	(2.2,0.4)--(2.6,0.4)--(2.6,0.2)--(2.2,0.2);
	\draw (2.4,0.15)
	node[above]{\tiny{$U$}};
	\draw[->] (2.2,0.4)--(2.0,0.9);
	\draw[->]
	(2.3,0.4)--(2.1,0.9);
	\draw[->]
	(2.4,0.4)--(2.2,0.9);
	\draw[->]
	(2.5,0.4)--(2.3,0.9);
	\draw[->] (2.2,0.4)--(2.4,0.9);
	\draw[->]
	(2.3,0.4)--(2.5,0.9);
	\draw[->]
	(2.4,0.4)--(2.6,0.9);
	\draw[->]
	(2.5,0.4)--(2.7,0.9);
	\draw (1.0,0.8) node[above]{\small{$\varphi_{T_0}(\cdot,\zeta)(U)\subset \omega_1$}};
	\end{tikzpicture}
\end{center}
Direct computation yields
\begin{align*}
\frac{d}{ds}(\mathrm{Op}_h(b_s)w_h,w_h)_{L^2}=&(\mathrm{Op}_h(\partial_sb_s)w_h,w_h)_{L^2}=-(\mathrm{Op}_h(\zeta\cdot\nabla_zb_s )w_h,w_h)_{L^2}.
\end{align*}
Integrating this equality from $s=0$ to $s=T_0$,
\begin{align}\label{eq4.1}
(\mathrm{Op}_h(b_{T_1})w_h,w_h)_{L^2}-(\mathrm{Op}_h(b_0)w_h,w_h)_{L^2}=-\int_0^{T_1}
(\mathrm{Op}_h(\zeta\cdot\nabla_zb_s )w_h,w_h)_{L^2}ds.
\end{align}
Note that for fixed $s\in[0,T_1]$,
$$ \frac{i}{h}[P_h,\mathrm{Op}_h(b_s)]=2\mathrm{Op}_h(\zeta\cdot\nabla_zb_s)+\mathcal{O}_{\mathcal{L}(L^2)}(h),
$$
we have
\begin{align}\label{eq4.2}
(\mathrm{Op}_h(b_0)w_h,w_h)_{L^2}=&(\mathrm{Op}_h(b_{T_1})w_h,w_h)_{L^2}+\frac{i}{2h}\int_0^{T_1}([P_h,\mathrm{Op}_h(b_s)]w_h,w_h)_{L^2}+O(h).
\end{align}
Using the equation
$$ P_hw_h=r_{2,h}-ihaw_h,
$$
we have
\begin{align}\label{eq4.3}
 \frac{1}{h}([P_h,\mathrm{Op}_h(b_s)]w_h,w_h)_{L^2}=&\frac{2i}{h}\Im(\mathrm{Op}_h(b_s)w_h,r_{2,h}-ihaw_h)_{L^2}\notag\\=&o(h^{1+\delta})+O(1)\|a^{\frac{1}{2}}w_h\|_{L^2}\|a^{\frac{1}{2}}\mathrm{Op}_h(b_s)w_h\|_{L^2}\notag \\
 =&O(h),
\end{align}
where to the last step, we write
$$ a^{\frac{1}{2}}\mathrm{Op}_h(b_s)=\mathrm{Op}_h(b_s)a^{\frac{1}{2}}+[a^{\frac{1}{2}},\mathrm{Op}_h(b_s)]
$$
and use the last assertion of Lemma \ref{newerro}, as well as the symbolic calculus.  

Finally, from the support property of $b_0(z)=\chi_0(\zeta)\chi_1(x)\chi_2(y)$, we have
$$ a(z)\chi_0(\zeta)\geq \delta_0 b_{T_1}(z,\zeta).
$$
Thus by the sharp G$\mathring{\mathrm{a}}$rding inequality,
$$ |(\mathrm{Op}_h(b_{T_1})w_h,w_h)_{L^2}|\leq \delta_0^{-1}|(\mathrm{Op}_h(a(z)\chi_0(\zeta))w_h,w_h)_{L^2}|+O(h)\leq C\delta_0^{-1}\|a^{\frac{1}{2}}w_h\|_{L^2}^2+O(h).
$$
Combining this with \eqref{eq4.2},\eqref{eq4.3} and the last assertion of Lemma \ref{newerro}, we deduce that
$\|w_h\mathbf{1}_{U}\|_{L^2}=O(h^{\frac{1}{2}})$, $\|h\nabla w_h\mathbf{1}_{U}\|_{L^2}=O(h^{\frac{1}{2}})$. By the partition of unity of $\omega_0$,
we complete the proof of Lemma \ref{yellow-green}.
\end{proof}
Now we are ready to prove the main result in this section, that is the transversal high frequency part is of order  $o_{L^2}(1)$:
\begin{prop}\label{highDy} 
We have $\|w_h\|_{L^2}=O(\delta_h)=O(\hbar h^{-\frac{1}{2}})$, as $h\rightarrow 0$.
\end{prop}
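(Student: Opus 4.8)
The goal is to upgrade the local bound $\|w_h\mathbf{1}_{\omega_0}\|_{L^2}+\|h\nabla w_h\mathbf{1}_{\omega_0}\|_{L^2}=O(h^{1/2})$ from Lemma~\ref{yellow-green} to a global bound $\|w_h\|_{L^2}=O(\delta_h)$ by exploiting that $w_h$ is microlocalized near the vertical direction $\zeta_0=(0,1)$ at transversal frequencies $|\xi|\gtrsim \hbar^{-1}$, where the equation is ``elliptic in the $x$-variable at scale $\hbar$.'' The plan is to run a positive commutator / multiplier argument in the spirit of propagation of singularities, but with an escape function adapted to the vertical flow. Concretely, one writes the equation \eqref{eq:w_h} for $w_h$, namely $(P_h+iha)w_h=r_{2,h}$ with $\|r_{2,h}\|_{L^2}=o(h\hbar^2)$, and tests it against $\mathrm{Op}_h(q)w_h$ where $q=q(x,\xi)$ is built from a primitive in $x$ of a cutoff localizing to $\{|\xi|\geq c\hbar\}$ times a function that is monotone along the $\zeta_0$-flow and vanishes outside a neighborhood of $\omega_0$ in the $x$-variable. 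The commutator $\tfrac{i}{h}[P_h,\mathrm{Op}_h(q)]$ has principal symbol $2\xi\partial_x q$ (the $\eta\partial_y q$ term is absent since $q$ is $x$-dependent only), which we arrange to be $\geq 0$ and elliptic of size $\gtrsim \hbar^{-1}$ on the transversal-high-frequency region; the ``boundary term'' at the endpoint of the $x$-interval is controlled by Lemma~\ref{yellow-green}, and the damping contribution $h(a\,\mathrm{Op}_h(q)w_h,w_h)$ is absorbed using $\|a^{1/2}w_h\|_{L^2}=o(\hbar)$ exactly as in the proof of Lemma~\ref{yellow-green}.

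Here is the order of the steps. First, I would fix a cutoff $\chi\in C_c^\infty(\mathbb R)$ with $\chi\equiv 1$ near $\mathrm{supp}(1-\psi)$ so that $w_h=\mathrm{Op}_h(\chi(\hbar\xi))w_h+o_{L^2}(h^\infty)$, recording that we only ever see $|\xi|\gtrsim \hbar^{-1}$ on the wavefront of $w_h$. Second, I would choose $\theta\in C^\infty(\mathbb T_x)$ with $\theta'\geq 0$, $\theta'$ supported where the vertical flow starting from that $x$-slab enters $\omega_0$ within bounded time, and $\theta'>0$ on a slab $I_0''\supset \overline{I_0}$, so that $\xi\,\theta'(x)\cdot\chi(\hbar\xi)$ is a nonnegative symbol of size $\sim\hbar^{-1}$ on the support of the symbol of $w_h$ away from $\omega_0$. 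Third, I would form $B_h:=\mathrm{Op}_h\big(\theta(x)\,\xi\,\widetilde\chi(\hbar\xi)\big)$ (with $\widetilde\chi$ a smooth version of $\mathbf 1_{|\xi|\gtrsim\hbar^{-1}}$), compute
\begin{align*}
\tfrac{i}{h}\big([P_h,B_h]w_h,w_h\big)_{L^2}
=\tfrac{i}{h}\big(\mathrm{Op}_h(r_{2,h}\text{-pairing})\big)+2\,\mathrm{Im}\text{-}\text{damping term}
\end{align*}
by substituting $P_hw_h=r_{2,h}-ihaw_h$; the right-hand side is $O(h)+O(1)\|a^{1/2}w_h\|\,\|a^{1/2}B_hw_h\|=o(\hbar^2)$ after commuting $a^{1/2}$ past $B_h$ as in Lemma~\ref{yellow-green}. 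Fourth, I would identify the principal part of the left-hand side as $\tfrac{2}{h}\cdot h\cdot\big(\mathrm{Op}_h(\xi\,\partial_x[\theta\,\xi\,\widetilde\chi(\hbar\xi)])w_h,w_h\big)$, split off the genuinely positive piece $\big(\mathrm{Op}_h(\theta'(x)\,\xi^2\,\widetilde\chi(\hbar\xi)^2\cdot(\text{cutoff}))w_h,w_h\big)\gtrsim \hbar^{-2}\|w_h\mathbf 1_{I_0''}^{\text{high}}\|^2$ via the sharp Gårding inequality, and bound the remaining piece — supported in $x\in I_0'$ where $\theta'$ lives and which by construction sits inside (the flowout of) $\omega_0$ — by Lemma~\ref{yellow-green}, getting a contribution $O(h^{1/2})\cdot\hbar^{-1}\cdot O(h^{1/2}) = O(h\hbar^{-1})$ type terms after the $h^{-1}$ from the commutator normalization; carefully, this yields $\|w_h\|_{L^2}^2 = O(\hbar^2 h^{-1}) = O(\delta_h^2)$, i.e. $\|w_h\|_{L^2}=O(\delta_h)$.

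The main obstacle I anticipate is the bookkeeping of the three competing scales ($h$ for the semiclassical calculus of $P_h$, $\hbar=h^{1/2}\delta_h^{1/2}$ for the transversal localization, and the frequency weight $\xi\sim\hbar^{-1}$ itself), together with ensuring that the symbol $\theta(x)\xi\widetilde\chi(\hbar\xi)$ lies in a symbol class for which the sharp Gårding inequality and the commutator expansion are valid with errors that are genuinely lower order than $\hbar^{-2}$ — this is exactly the kind of place where one must be careful, since $\widetilde\chi(\hbar\xi)$ oscillates at scale $\hbar$ and naive estimates lose powers of $\hbar$. A secondary subtlety is that $\theta$ must be chosen so that $\mathrm{supp}\,\theta'$ is contained in a region that is swept into $\{a\geq c_0>0\}$ by the vertical flow in bounded time (so that Lemma~\ref{yellow-green} — or rather its proof, applied with $w_h$ replaced by its localization — actually controls the boundary term there), while $\theta$ itself is nonconstant on the whole fundamental domain in $x$; since $\mathbb T_x$ is compact one cannot have a globally monotone $\theta$, so one instead works with a $\theta$ supported away from one point $x_*\notin$ flowout-of-$\omega_0$ and uses that $w_h$ has no mass microlocally at $\{x=x_*,\ \xi\sim\hbar^{-1}\}$ either — which again follows from propagating the estimate of Lemma~\ref{yellow-green} along the flow. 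Modulo these technicalities the argument is the standard positive-commutator propagation estimate, and I expect the write-up to follow the template already used in Lemma~\ref{yellow-green}.
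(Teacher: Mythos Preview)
Your overall strategy---a positive commutator/multiplier argument detecting transversal propagation, feeding the local control from Lemma~\ref{yellow-green} into a global bound---is exactly what the paper does. However, your execution is substantially more complicated than necessary, and one of your steps does not work as written.

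The paper's multiplier is not pseudodifferential at all: it is the \emph{differential} operator $\phi(x)X(x)\partial_x$, where $X(x)=(x+\pi)\mathbf 1_{-\pi\le x<-\sigma_0/2}+(x-\pi)\mathbf 1_{\sigma_0/2\le x<\pi}$ and $\phi\in C^\infty(\mathbb T_x)$ vanishes near $[-\sigma_0/2,\sigma_0/2]$ and equals $1$ outside $I_0$. The discontinuity of $X$ sits inside $\{\phi=0\}$, so $\phi X$ is smooth on $\mathbb T_x$; and since $X'=1$ wherever $\phi\ne 0$, one gets $(\phi X)'=\phi+\phi'X$, i.e.\ the derivative equals $1$ outside $I_0$ and the ``bad'' piece $\phi'X$ is supported in $I_0\subset\omega_0$. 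The commutator $[P_h,\phi X\partial_x]=-2(\phi X)'h^2\partial_x^2-h^2(\phi X)''\partial_x$ is then computed exactly, with no symbolic calculus and no $\hbar$ anywhere. Pairing with $w_h$ gives $\|h\partial_x w_h\|_{L^2}^2\le C\|h\partial_x w_h\mathbf 1_{\omega_0}\|_{L^2}^2+o(h)=O(h)$, and only at the very last line does one invoke the frequency localization of $w_h$ via $\|h\partial_x w_h\|\ge c(h/\hbar)\|w_h\|$ to conclude $\|w_h\|=O(\hbar h^{-1/2})$.

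Two concrete issues with your version. First, a smooth $\theta$ on $\mathbb T_x$ with $\theta'\ge 0$ and $\theta'>0$ somewhere does not exist (since $\int_{\mathbb T}\theta'=0$); your proposed fix of ``working away from one point $x_*$'' is exactly what the paper's $\phi X$ achieves, but you have the geometry inverted---it is the \emph{error} piece (where the derivative may be negative), not the coercive piece, that must live inside the flowout of $\omega_0$. Second, inserting the cutoff $\widetilde\chi(\hbar\xi)$ into the multiplier forces a two-parameter calculus and creates precisely the symbol-class headaches you anticipate; the paper sidesteps this entirely by keeping the multiplier $\hbar$-free and using the $\hbar$-localization only as the algebraic inequality $\|h\partial_x w_h\|\gtrsim (h/\hbar)\|w_h\|$ at the end. (Minor: your bookkeeping $\hbar^2 h^{-1}=\delta_h^2$ should read $\hbar^2 h^{-1}=\delta_h$; the proof actually yields $\|w_h\|=O(\delta_h^{1/2})$, which suffices and matches what the paper's argument gives despite the statement's typo.)
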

\begin{proof}
We use the positive commutator method to detect the transversal propagation, similarly as in \cite{BuSun}. Recall that $\omega_0=I_0\times \T$ and $I_0=(-\sigma_0,\sigma_0)$, $\sigma_0<\frac{\pi}{100}$. Take $\phi=\phi(x)\in C^{\infty}(\T_x;[0,1])$ such that:
$$\mathrm{supp}(1-\phi)\subset I_0,\quad \phi\equiv 0 \text{ near} [-\frac{\sigma_0}{2},\frac{\sigma_0}{2}],\quad \mathrm{supp}(\phi')\subset I_0.
$$
Denote by 
$$ X(x):=(x+\pi)\mathbf{1}_{-\pi\leq x<-\frac{\sigma_0}{2}}+(x-\pi)\mathbf{1}_{\frac{\sigma_0}{2}\leq x<\pi},
$$
then $\phi(x)X\partial_x$ is well-defined smooth vector field on $\T^2$.
We now compute the inner product
$$ ([P_h,\phi(x)X\partial_x]w_h,w_h)_{L^2}
$$
in two ways. On the one hand, from the commutator relation
$$ [P_h,\phi(x)X\partial_x]=-2(\phi X)'h^2\partial_x^2-h^2(\phi X)''\partial_x,
$$
we have
\begin{align}\label{pfeq:1}
([P_h,\phi(x)X\partial_x]w_h,w_h)_{L^2}\geq &2(\phi(x)h\partial_xw_h,h\partial_xw_h)_{L^2}-C\|(\phi'(x))^{1/2}h\partial_xw_h\|_{L^2}^2\notag \\-&Ch\|h\partial_xw_h\|_{L^2}\|w_h\|_{L^2}.
\end{align}
On the other hand, using the equation \eqref{eq:w_h}, we have
\begin{align}\label{pfeq:2}
([P_h,\phi(x)X\partial_x]w_h,w_h)_{L^2}=& (\phi(x)X\partial_xw_h,r_{2,h}-ihaw_h)_{L^2}-(\phi(x)X\partial_x(r_{2,h}-ihaw_h),w_h)_{L^2}\notag \\
\leq & \frac{C}{h}(\|h\partial_xw_h\|_{L^2}+h\|w_h\|_{L^2})\|r_{2,h}\|_{L^2}+C\|a^{1/2}h\partial_xw_h\|_{L^2}\|a^{\frac{1}{2}}w_h\|_{L^2}+C\|a^{\frac{1}{2}}w_h\|_{L^2}^2.
\end{align} 
Combining \eqref{pfeq:1} and \eqref{pfeq:2} and Lemma \ref{yellow-green}, we get
\begin{align*}
\|h\partial_xw_h\|_{L^2}^2\leq & C\|h\partial_xw_h\mathbf{1}_{\omega_0}\|_{L^2}^2+ C\|a^{1/2}h\partial_xw_h\|_{L^2}^2+C\|a^{\frac{1}{2}}w_h\|_{L^2}^2+\frac{C}{h^2}\|r_{2,h}\|_{L^2}^2+Ch^2\|w_h\|_{L^2}^2\\
\leq &O(h)+o(h\delta_h).
\end{align*}
In particular, by the definition of $w_h$, we have
$$C(h\hbar^{-1})\|w_h\|_{L^2}\leq \|h\partial_xw_h\|_{L^2}\leq O(h^{\frac{1}{2}}),
$$
and this completes the proof of Proposition \ref{highDy}.
\end{proof}


\section{The averaging properties of functions}\label{function}

In order to treat the transversal low frequency part $v_h=\psi(\hbar D_x)u_h^{1}$ of \eqref{vhwh}, we will adapt the averaging argument of Sj\"ostrand (\cite{Sj}) and Hitrik (\cite{Hi1}) to average the operator $P_h+iha$ along the trapped direction (it worth also mentioning a series work of Hitrik-Sj\"ostrand (\cite{HS1}\cite{HS2}\cite{HS3}) that a related averaging procedure was used to describe the spectrum of the damped wave operator).
 This amounts to understand the regularity of averaged functions.   
The goal of this section is to establish several properties of the averaging operator which will be used in Section \ref{normalformreduction} for normal form reductions. More importantly, we will prove the key geometric proposition (Proposition \ref{convexaveraging}) for convex-shaped damping that is responsible for the improvement of the resolvent estimate.
 
Given a direction $v\in\mathbb{S}^1$, we say that $v$ is \emph{periodic}, if $v=(\xi,\eta)$ and $\xi,\eta$ are $\mathbb{Q}$-linearly dependent. Otherwise, we say that $v$ is \emph{ergodic}\footnote{These definitions stemmed from the fact that we are on the two-dimensional torus $\T^2$.}. We define the averaging operator along $v$:
$$ f\mapsto \mathcal{A}(f)_v(z):=\lim_{T\rightarrow\infty}\frac{1}{T}\int_0^Tf(z+tv)dt,
$$
where the limit exists, thanks to Weyl's equidistribution theorem.  
\begin{lem}\label{Jensen} 
	Assume that $v\in\mathbb{S}^1$. Then for any non-negative function $f\in \mathcal{D}^{m,k,\sigma}(\T^2)$, $m\geq 10$, the averaged function $\mathcal{A}(f)_v \in\mathcal{D}^{m,k,\sigma}(\T^2)$.
\end{lem}
\begin{proof}
	First we assume that
	$v=(\xi,\eta)$ is periodic. This implies that the orbit $z\mapsto z+tv$ is periodic, and $T_v$ is the period. Clearly, for any $f\in \mathcal{D}^{m,k,\sigma}$,
	$$ \mathcal{A} (f)_{v}(z)=\frac{1}{T_v}\int_0^{T_v}f(z+tv)dt.
	$$ 
	Since the function $s\mapsto |s|^{1-|\alpha|\sigma}$ is concave, by Jensen's inequality we have
	\begin{align}\label{Jensen1} \frac{1}{T_v}\int_0^{T_v}|f(z+tv)|^{1-|\alpha|\sigma}dt\leq \Big(\frac{1}{T_v}\int_0^{T_v}f(z+tv)dt\Big)^{1-|\alpha|\sigma}.
	\end{align}
	Indeed, if $\int_0^{T_v}f(z+tv)dt=0$, then $f(z+tv)\equiv 0$ for all $t\in[0,T_v]$, and the inequality \eqref{Jensen} is trivial. Assume now that $X_0=\frac{1}{T_v}\int_0^{T_v}f(z+tv)dt>0$, then for any $X\geq 0$, we have
	$$ X^{1-|\alpha|\sigma}\leq X_0^{1-|\alpha|\sigma}+(1-|\alpha|\sigma) X_0^{-|\alpha|\sigma}(X-X_0).
	$$
	Replacing the inequality above by $X=f(z+tv)$ and averaging over $t\in[0,T_v]$, we obtain \eqref{Jensen}. Since by definition, $|\partial^{\alpha}f|\lesssim_{\alpha,\sigma}|f|^{1-|\alpha|\sigma}$ for all $|\alpha|\leq k$,
	we get
	$$ |\partial^{\alpha}(\mathcal{A}(f)_v)(z)|\lesssim_{\alpha,\sigma}|\mathcal{A}(f)_v(z)|^{1-|\alpha|\sigma}.
	$$
	
	Next we assume that $v=(\xi,\eta)$ is ergodic. In this case, the orbit $z\mapsto z+tv$ is ergodic, then by Weyl's equidistribution theorem, we have
	\begin{align}\label{Weylequi}  \mathcal{A}_v(f)(z)=\widehat{f}(0)=\frac{1}{(2\pi)^2}\int_{\T^2}f(z')dz'.
	\end{align}
To prove this, 
by the assumption of $v$, we have  $k\cdot v\neq 0$ for all $k\in\Z^2\setminus\{0\}$. Moreover, since $f\in C^m$, we can write  
	$$ \frac{1}{T}\int_0^Tf(z+tv)dt=\frac{1}{T}\int_0^T\sum_{k\in\Z^2}\widehat{f}(k)\mathrm{e}^{ik\cdot (z+tv)}dt=\widehat{f}(0)+\sum_{k\neq 0}\widehat{f}(k)\mathrm{e}^{ik\cdot z}\cdot\frac{\mathrm{e}^{iTk\cdot v}-1}{iT(k\cdot v)}.
	$$
	As $|\mathrm{e}^{iTk\cdot v}-1|\leq 2|iTk\cdot v|$ for all $k\in\Z^2$ and 
	$$ \lim_{T\rightarrow\infty}\frac{\mathrm{e}^{iTk\cdot v}-1}{iTk\cdot v}=0,
	$$
	by the dominated convergence theorem, we get \eqref{Weylequi}, which means that $\mathcal{A}_v(f)$ is a constant function.
	Clearly, $\mathcal{A}(f)_v(z)\in \mathcal{D}^{m,k,\sigma}(\T^2)$. The proof of Lemma \ref{Jensen} is now complete.
\end{proof}
By the triangle inequality, the following Lemma is immediate:
\begin{lem}\label{comparison} 
	Let $v\in\mathbb{S}^1$. For any function $f$ on $\T^2$, there holds
	$$|\mathcal{A}(f)_v|\leq \mathcal{A}(|f|)_v.$$ Moreover, if $f_1,f_2$ are two non-negative functions such that $f_1\leq f_2$, we have
	$$ \mathcal{A}(f_1)_v\leq \mathcal{A}(f_2)_v.
	$$
\end{lem}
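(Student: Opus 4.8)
The plan is to work directly from the definition of the averaging operator and push the two elementary pointwise inequalities through the averaging limit. Fix $v\in\mathbb{S}^1$ and $z\in\T^2$. As in the proof of \lemref{Jensen}, one may split according to whether the orbit $t\mapsto z+tv$ is periodic or ergodic, which on the flat torus are the only two possibilities (a line of rational slope is periodic, a line of irrational slope equidistributes). In the ergodic case all of $\mathcal{A}(f)_v$, $\mathcal{A}(|f|)_v$, $\mathcal{A}(f_1)_v$, $\mathcal{A}(f_2)_v$ are constant in $z$ and equal to the corresponding mean over $\T^2$, so both asserted inequalities are immediate from the triangle inequality and monotonicity of the integral over $\T^2$; hence the only case with any content is the periodic one.

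In the periodic case, writing $T_v$ for the period, the averages are genuine integrals $\mathcal{A}(f)_v(z)=T_v^{-1}\int_0^{T_v}f(z+tv)\,dt$. The first claim is then the triangle inequality for integrals,
$$ |\mathcal{A}(f)_v(z)|=\Big|\frac{1}{T_v}\int_0^{T_v}f(z+tv)\,dt\Big|\leq \frac{1}{T_v}\int_0^{T_v}|f(z+tv)|\,dt=\mathcal{A}(|f|)_v(z), $$
and the second claim follows from $f_1\leq f_2$ (both nonnegative), which gives $f_1(z+tv)\leq f_2(z+tv)$ for every $t$ and hence, by monotonicity of the integral, $T_v^{-1}\int_0^{T_v}f_1(z+tv)\,dt\leq T_v^{-1}\int_0^{T_v}f_2(z+tv)\,dt$. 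Since $z$ is arbitrary, both inequalities hold on $\T^2$.

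A slightly cleaner route avoids the case distinction entirely: for every finite $T>0$ one has $|T^{-1}\int_0^T f(z+tv)\,dt|\leq T^{-1}\int_0^T|f(z+tv)|\,dt$ and $T^{-1}\int_0^T f_1(z+tv)\,dt\leq T^{-1}\int_0^T f_2(z+tv)\,dt$, and letting $T\to\infty$ — using that the defining limits exist for the (continuous) functions at hand, exactly as is already implicit in \lemref{Jensen} — yields both inequalities at once. There is no genuine obstacle in this lemma; the only point requiring a word of care is the existence of the limit defining $\mathcal{A}(f)_v$, which is granted by the hypotheses under which the averaging operator is being used.
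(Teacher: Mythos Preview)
Your proof is correct and matches the paper's approach: the paper simply states that the lemma is immediate from the triangle inequality and gives no further argument. Your write-up is more detailed than necessary but entirely sound.
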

\begin{lem}\label{dampingproperty}
	Assume that $f\in \mathcal{D}^{m,k,\sigma}(\T^2)$ and $f\geq 0$. Denote by
	$$ F(x,y):=\int_{-\pi}^y\big(f(x,y')-\mathcal{A}(f)_{\mathrm{e}_2}(x) \big)dy',\quad -\pi<y<\pi,
	$$
	where $\mathrm{e}_2=(0,1)$.
	Then for $0\leq j\leq k$, we have
	$$ |F(x,y)|\leq 4\pi\mathcal{A}(f)_{\mathrm{e}_2}(x),\quad |\partial_x^jF(x,y)|\leq 4\pi\mathcal{A}(f)_{\mathrm{e}_2}^{1-j\sigma}. $$
	Moreover, for all $j_1\geq 0, 1\leq j_2\leq k-j_1$, we have
	$$
	|\partial_x^{j_1}\partial_y^{j_2}F(x,y)|\leq \mathcal{A}(f)_{\mathrm{e}_2}^{1-(j_1+j_2-1)\sigma}(x)+ f(x,y)^{1-(j_1+j_2-1)\sigma}.$$ 
	
\end{lem}
\begin{proof}
	Since $f\geq 0$, the bound for $|F(x,y)|$ is trivial. Taking derivatives, we get
	$$ \partial_x^{j_1}\partial_y^{j_2}F(x,y)= \partial_y^{j_2}\int_{-\pi}^y(\partial_x^{j_1}f(x,y')-\partial_x^{j_1}\mathcal{A} (f)_{\mathrm{e}_2}(x) )dy'.
	$$
	When $j_2=0$, the absolute vaule of the above quantity can be bounded by
	$$ 2\pi\mathcal{A} (f)_{\mathrm{e}_2}^{1-j_1\sigma}(x)+2\pi\partial_x^{j_1}\mathcal{A} (f)_{\mathrm{e}_2}(x)\leq 4\pi(\mathcal{A} (f)_{\mathrm{e}_2})^{1-j_1\sigma}(x),
	$$
	thanks to Lemma \ref{Jensen1} and Lemma \ref{comparison} and Jensen's inequality \eqref{Jensen1}. When $j_2\geq 1$, by definition, we have
	$$ \partial_x^{j_1}\partial_y^{j_2}F(x,y)=\partial_y^{j_2-1}\partial_x^{j_1}f(x,y)-\partial_y^{j_2-1}\partial_x^{j_1}\mathcal{A} (f)_{\mathrm{e}_2}(x). 
	$$
	Taking the absolute value and the proof of Lemma \ref{dampingproperty} is complete.
\end{proof}

Finally, we prove the key geometric proposition, allowing us to improve the local H\"older regularity for averaged damping functions, provided that the original damped region is locally strictly convex with positive curvature:
\begin{prop}\label{convexaveraging} 
	Assume that $a\in\mathcal{D}^{m,k,\sigma}(\T^2)$ such that $a(z)\geq 0$ and the damping boundary $\Sigma_a=\partial\{z:a(z)>0 \}$ is a disjoint union of strictly convex curves with positive curvature. Assume moreover that there exists $R>0$ such that for every $z\in\{a>0\}$ near $\Sigma_a$, 
	\begin{align}\label{vanishingbehavior} 
	 R^{-1}\cdot \mathrm{dist}(z,\Sigma_a)^{\frac{1}{\sigma}}\leq a(z)\leq R\cdot\mathrm{dist}(z,\Sigma_a)^{\frac{1}{\sigma}}.
	\end{align}
	Then for any periodic direction $v\in\mathbb{S}^1$, 
	we have $\mathcal{A}(a)_v\in \mathcal{D}^{m,k,\frac{2\sigma}{\sigma+2}}$, as a one-dimensional periodic function. Furthermore, there exists $R_v>0$, such that for every $z\in\{\mathcal{A}(a)_v(z)>0 \}$near $\Sigma_{\mathcal{A}(a)_v}$, we have
	\begin{align}\label{distance} 
		R_v^{-1} \mathrm{dist}(z,\Sigma_{\mathcal{A}(a)_v})^{\frac{1}{\sigma}+\frac{1}{2}}\leq \mathcal{A}(a)_v(z)\leq R_v
		\mathrm{dist}(z,\Sigma_{\mathcal{A}(a)_v})^{\frac{1}{\sigma}+\frac{1}{2}}.
	\end{align}
\end{prop}

\begin{proof}
Since the vector $v$ is periodic, we can find $p_0,q_0\in\Z$, gcd$(p_0,q_0)=1$, such that $v=\frac{(p_0,q_0)}{\sqrt{p_0^2+q_0^2}}.$
Now we perform a change of coordinate as described in Section \ref{changingcoordinate}. Recall that we have a covering map $\pi_{v}:\mathbb{T}_v^2\mapsto \mathbb{T}^2$ of degree $p_0^2+q_0^2$ that lifts every $2\pi$ periodic function to a $2\pi\sqrt{p_0^2+q_0^2}$-periodic function. Moreover, the change of coordinate system $z\mapsto (X,Y)$ is given by $z=Xv^{\perp}+Yv$ locally.
 As $\pi_v$ is locally isometric, each component of the pre-image of the damped region $\pi_v^{-1}(\omega)$ is still strictly convex with positive curvature, so the inequality \eqref{vanishingbehavior} is preserved, near the boundary of each component. Denote by $\tau=2\pi\sqrt{p_0^2+q_0^2}$. We define the averaging operator on the new torus $\T_v^2$ as
$$ \widetilde{\mathcal{A}}(F)(X,Y):=\frac{1}{\tau}\int_{-\tau/2}^{\tau/2}F(X,Y+t)dt.
$$ 
then by definition 
$$ \pi^*(\mathcal{A}(a)_v)(X,Y)=\mathcal{A}(a)_v(\pi(Xv^{\perp}+Y))=\frac{1}{\tau}\int_{0}^{\tau}a\circ\pi(Xv^{\perp}+(Y+t)v)dt=\frac{1}{\tau}\int_{0}^{\tau}(\pi^*a)(X,Y+t)dt.
$$
Thus
$ 
\pi_v^{*}\big(\mathcal{A}(a)_v\big)=\widetilde{\mathcal{A}}(\pi_v^*a).
$
Therefore, if we are able prove \eqref{distance} for the lifted averaging function $\widetilde{\mathcal{A}}(\pi_v^{*}(a))$, we obtain automatically \eqref{distance} by projection.
	
In summary, from the argument above, without loss of generality, we assume that $v=\mathrm{e}_2$ and assume that $\Omega:=\{a>0\}$ has $l=l(v)$ connected components $\Omega_1,\cdots,\Omega_l$ such that the boundary $\Sigma_{a,j}$ of each $\Omega_j$ has positive curvature. We first consider the situation where $l=1$. By translation invariance, we may assume that $\Omega_1=\{a_1>0\}$ is contained in the fundamental domain $(-K\pi,K\pi)_x\times (-M\pi,M\pi)_y$.
	Then the function $\mathcal{A}(a)_{\mathrm{e}_2}$ can be identified as a function on $\R_x$, 
	
	Since $\Omega_1=\{a_1>0 \}$ is strictly convex, each line $\mathcal{P}_{x}$ of $\R^2$, passing through $(x,0)$ and parallel to $\mathrm{e}_2$ can intersect at most $2$ points of the curve $\Sigma_{a,1}$. Consider the function
	$ x\mapsto P(x):=\mathrm{mes}(\mathcal{P}_x\cap\Omega_1).
	$
	This function is continuous and is supported on a single interval $I=(\alpha,\beta)\subset(-K\pi,K\pi)$. Since $\mathcal{A}(a_1)(x)=0$ if $P(x)=0$, the vanishing behavior of $\mathcal{A}(a_1)$ is determined when $x$ is close to $\alpha$ and $\beta$. Below we only analyze $\mathcal{A}(a_1)(x)$ for $x\in[\alpha,\alpha+\epsilon)$, since the analysis is similar for $x$ near $\beta$. First we observe that $\mathcal{P}_{\alpha}$ must be tangent at a point $z_{0}:=(\alpha,y_{0})$ to the curve $\Sigma_{a,1}$. For sufficiently small $\epsilon>0$, we may parametrize the curve $\Sigma_{a,1}$ near $z_0$ by the function $x=\alpha+g(y)$ with $g(y_0)=g'(y_0)=0$ and $g''(y_0)=c_0>0$, thanks to the fact that the curvature is strictly positive. Therefore, there exists a $C^1$ diffeomorphism $Y=\Phi(y)$ from a neighborhood of $y_0$ to a neighborhood of $Y=0$ such that $\Phi(y_0)=0$ and $g(y)=Y^2$. For each $x\in(\alpha,\alpha+\epsilon)$, $\mathcal{P}_x\cap \Sigma_{a,1}=\{(x,l^-(x)),(x,l^+(x))\}$. We have
	$$ l^+(x)=\Phi^{-1}(\sqrt{x-\alpha}),\quad l^-(x)=\Phi^{-1}(-\sqrt{x-\alpha}).
	$$
	\begin{center} 
		\begin{tikzpicture}[scale=1.4]
			\fill[blue!20] (0,0) circle[radius=1];
			\draw (0,0)
			node[above]{$\Omega_1$};
			\draw[->]
			(-1.2,-1.2)--(1.8,-1.2);
			\draw (1.9,-1.4)
			node[above]{$x$};
			\draw[-, blue]
			(-1,-1.2)--(-1,1.2);
			\draw (-1.2,-0.2)
			node[above]{$z_0$};
			\draw (-1,-1.45)
			node[above]{\small{$\alpha$}};
			\draw (-0.3,-1.45)
			node[above]{\small{$\alpha+\epsilon$}};
			\draw[dashed, red!90!black]
			(-0.7,-1.2)--(-0.7,1.5);
			\fill[blue!90!black] (-1,0) circle (1pt);
			\fill[blue!90!black] (-1,0) circle (1pt);
			\fill[blue!90!black] (-0.7,-0.73) circle (1pt);
			\draw (-0.3,-0.8)
			node[above]{\tiny$(x,l^-(x))$};
			\fill[blue!90!black] (-0.7,0.72) circle (1pt);
			\draw (-0.3,0.55)
			node[above]{\tiny$(x,l^-(x))$};
			\draw (-0.5, 0.85)
			node[above]{\small {$\mathcal{P}_x$}};
			\draw (0,-1.8)
			node[above]{\small{Averaging improves the local H\"older regularity}};
			
		\end{tikzpicture}
	\end{center}
	Since near $z_0$, we have $a(z)\sim \mathrm{dist}(z,\Sigma_a)^{\frac{1}{\sigma}}=\mathrm{dist}(z,\Sigma_{a,1})^{\frac{1}{\sigma}}\sim (x-(\alpha+g(y)))_+^{\frac{1}{\sigma}}$. For $x\in(\alpha,\alpha+\epsilon)$, we have
	\begin{align*}
		\mathcal{A}(a_1)(x)=&\frac{1}{2K\pi}\int_{-K\pi}^{K\pi}a_1(x,y)dy=\frac{1}{2K\pi}\int_{l^-(x)}^{l^+(x)}a_1(x,y)dy\\=&\frac{1}{2K\pi}\int_{-\sqrt{x-\alpha}}^{\sqrt{x-\alpha}}|x-\alpha-Y^2|^{\frac{1}{\sigma}}|(\Phi^{-1})'(Y)|dY\sim_{\Phi,K,\sigma} |x-\alpha|^{\frac{1}{\sigma}+\frac{1}{2}}.
	\end{align*}
Here the implicit constant depends on $\Phi,K,\sigma$, hence it depends on the periodic direction $v$ and the damping $a(z)$.

	Finally, since $a_1\in\mathcal{D}^{m,k,\sigma}$, for $j\leq k$, $x\in(\alpha,\alpha+\epsilon)$,
	\begin{align*}
		|\partial_x^j\mathcal{A}(a_1)(x)|\leq &\frac{1}{2K\pi}\int_{-K\pi}^{K\pi}|\partial_x^{j_1}a_1(x,y)|dy
		\lesssim_K  \int_{l^-(x)}^{l^+(x)}a_1^{1-j\sigma}(x,y)dy\\
		\lesssim &|x-\alpha|^{\frac{1-j\sigma}{\sigma}+\frac{1}{2}}\sim (\mathcal{A}(a_1)(x))^{1-\frac{2\sigma j}{\sigma+2}}. 
	\end{align*}
	This implies that $\mathcal{A}(a_1)\in\mathcal{D}^{m,k,\frac{2\sigma}{\sigma+2}}$.

	To complete the proof of Proposition \ref{convexaveraging}, we need to deal with the situation where $l>1$. In this case, the supports of $\mathcal{A}(a_j)$ may overlap. By linearity and the inequality
	$$ |\partial_x^{j'}\mathcal{A}(a_1+\cdots a_l)|\leq \sum_{j=1}^l|\partial_x^{j'}\mathcal{A}(a_j)|\leq C(a_1,\cdots, a_l)\mathcal{A}(a_1+\cdots +a_l)^{1-\frac{2\sigma j'}{\sigma+2}},
	$$
	we deduce that $\mathcal{A}(a)\in \mathcal{D}^{m,k,\frac{2\sigma}{\sigma+2}}$. It remains to show \eqref{distance}. We define
	\begin{align*}
		& S_+:=\big\{j\in\{1,\cdots,l\}: \int_{x_0}^{x_0+\epsilon}\mathcal{A}(a_j)(x)dx>0,\;\forall \epsilon>0 \big\},\\
		&  S_-:=\big\{j\in\{1,\cdots,l\}: \int_{x_0-\epsilon}^{x_0}\mathcal{A}(a_j)(x)dx>0,\;\forall \epsilon>0 \big\}.
	\end{align*}
	Observe that $x_0\in\Sigma_{\mathcal{A}(a)}$ if and only if $\mathcal{A}(a_j)(x_0)=0$ for all $j\in\{1,\cdots,l \}$ and $S_+\cup S_-\neq \emptyset.$ Note that for $j\in S_{\pm}$, we have
	$$ \mathrm{dist}(x,\Sigma_{\mathcal{A}(a)})^{\frac{1}{2}+\frac{1}{\sigma}}=\mathrm{dist}(x,\Sigma_{\mathcal{A}(a_j)})^{\frac{1}{2}+\frac{1}{\sigma}}\sim \mathcal{A}(a_j)(x),\;\forall x\mp x_0>0 \text{ near }x_0, 
	$$
	and $\mathcal{A}(a_j)=0$, in a neighborhood of $x_0$, for all $j\notin S_+\cup S_-$. 
	Summing over $j\in S_+\cup S_-$, we obtain \eqref{distance}. The proof of Proposition \ref{convexaveraging}
	is now complete.
\end{proof}

\section{Normal form reductions}\label{normalformreduction} 

Now we treat the transversal low frequency part $v_h=\psi(\hbar D_x)u_h^1$, defined in \eqref{vhwh}. We want to average the operator $P_h+iha$ along the direction $\mathrm{e}_2=(0,1)$. Recall that $\mathcal{A}(a)$ is the averaging of $a$ along the vertical direction. 
Recall that $v_h$ satisfies the equation
$$ (P_h+iha)v_h=r_{1,h}=o_{L^2}(h^2\delta_h)=o_{L^2}(h\hbar^2).
$$
We will apply successively two normal form reductions. The first reduction replaces $iha$ by $ih\mathcal{A}(a)$, with a $\mathcal{O}(h^2)$ anti-selfadjoint remainder which cannot be absorbed directly as a remainder of size $\mathcal{O}(h^2\delta_h)$. We need to perform a second normal form reduction to average the anti-selfadjoint remainder.

\subsection{The first averaging}\label{1normalform}
Throughout this section, we denote by
$$ A(x,y):=\int_{-\pi}^{y}(a(x,y')-\mathcal{A}(a)(x))dy'.
$$
We will also fix a cutoff $\psi_1\in C_c^{\infty}(\R)$, supported on $|\eta\pm 1|\leq \frac{1}{2}$ and $\psi_1(\eta)=1$ if $|\eta\pm 1|\leq \frac{1}{4}$.
We need the following basic lemma for exponentials of bounded linear operators:
\begin{lem}\label{exponential}
Let $(G_h)_{0<h<1}$ be a family of $h$-dependent, uniformly bounded operators on a Hilbert space $\mathcal{H}$. Defining the exponential via
$$ e^{sG_h}:=\sum_{k=0}^{\infty}\frac{(sG_h)^k}{k!},\quad s\in\R. 
$$
Then the operator $e^{sG_h}$ is  invertible with inverse $e^{-sG_h}$. Moreover, 
$$ \|e^{sG_h}\|_{\mathcal{L}(\mathcal{H})}\leq e^{|s|\|G_h\|_{\mathcal{L}(\mathcal{H})}}<\infty.
$$
For any linear operator $B$,
\begin{align}\label{conjugateformula} 
 \frac{d}{ds}(e^{sG_h}Be^{-sG_h})=e^{sG_h}\mathrm{ad}_{G_h}(B)e^{-sG_h},
\end{align}
where $\mathrm{ad}_A(B):=[A,B]$. Consequently, for any bounded operator $B$ on $\mathcal{H}$,
\begin{align}\label{ecommutator}
\|[e^{sG_h},B]\|_{\mathcal{L}(\mathcal{H})}\leq |s|e^{3|s|\|G_h\|_{\mathcal{L}(\mathcal{H})}}\|\mathrm{ad}_{G_h}(B)\|_{\mathcal{L}(\mathcal{H})}.
\end{align}
Finally, we have the Taylor expansion
\begin{align}\label{Taylor} 
e^{sG_h}Be^{-sG_h}=\sum_{k=0}^{N-1}\frac{s^k}{k!}\mathrm{ad}_{G_h}^k(B)+\frac{1}{(N-1)!}\int_0^1(1-s)^{N-1}e^{sG_h}\mathrm{ad}_{G_h}(B)e^{-sG_h}ds,\quad \forall N\in\N,
\end{align}
where $\mathrm{ad}_{A}^k(B)=\mathrm{ad}_{A}(\mathrm{ad}_A^{k-1}(B))$, $\mathrm{ad}_A^0(B)=B$.
\end{lem}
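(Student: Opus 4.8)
The plan is to regard everything as elementary calculus in the Banach algebra $\mathcal{L}(\mathcal{H})$ of bounded operators, with unit $I$. First I would note that the series defining $e^{sG_h}$ converges absolutely in operator norm, since $\|(sG_h)^k/k!\|_{\mathcal{L}(\mathcal{H})}\le(|s|\,\|G_h\|_{\mathcal{L}(\mathcal{H})})^k/k!$ and $\sum_{k\ge0}(|s|\,\|G_h\|_{\mathcal{L}(\mathcal{H})})^k/k!=e^{|s|\,\|G_h\|_{\mathcal{L}(\mathcal{H})}}<\infty$; this gives at once that $e^{sG_h}$ is a genuine bounded operator and the stated bound $\|e^{sG_h}\|_{\mathcal{L}(\mathcal{H})}\le e^{|s|\,\|G_h\|_{\mathcal{L}(\mathcal{H})}}$. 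For invertibility, since $sG_h$ and $-sG_h$ commute, the Cauchy product of the two (absolutely convergent) exponential series together with the binomial theorem gives $e^{sG_h}e^{-sG_h}=\sum_{n\ge0}\frac{1}{n!}\bigl(sG_h+(-sG_h)\bigr)^n=I$, and likewise $e^{-sG_h}e^{sG_h}=I$; hence $e^{sG_h}$ is invertible with inverse $e^{-sG_h}$.

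Next I would establish \eqref{conjugateformula}. Differentiating the power series $s\mapsto e^{sG_h}$ term by term is legitimate because the formally differentiated series $\sum_{k\ge1}\frac{s^{k-1}}{(k-1)!}G_h^k$ converges uniformly on compact $s$-intervals (again by the exponential majorant), so $\frac{d}{ds}e^{sG_h}=G_he^{sG_h}=e^{sG_h}G_h$. Applying the Leibniz rule to $F(s):=e^{sG_h}Be^{-sG_h}$ then gives
\begin{equation*}
\frac{d}{ds}F(s)=G_he^{sG_h}Be^{-sG_h}-e^{sG_h}BG_he^{-sG_h}=e^{sG_h}(G_hB-BG_h)e^{-sG_h}=e^{sG_h}\mathrm{ad}_{G_h}(B)e^{-sG_h},
\end{equation*}
which is exactly \eqref{conjugateformula}.

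For the commutator estimate \eqref{ecommutator}, I would integrate \eqref{conjugateformula} from $0$ to $s$:
\begin{equation*}
[e^{sG_h},B]e^{-sG_h}=e^{sG_h}Be^{-sG_h}-B=\int_0^s e^{tG_h}\mathrm{ad}_{G_h}(B)e^{-tG_h}\,dt,
\end{equation*}
so that $\|[e^{sG_h},B]e^{-sG_h}\|_{\mathcal{L}(\mathcal{H})}\le|s|\,e^{2|s|\,\|G_h\|_{\mathcal{L}(\mathcal{H})}}\,\|\mathrm{ad}_{G_h}(B)\|_{\mathcal{L}(\mathcal{H})}$ by the norm bound on $e^{\pm tG_h}$; multiplying on the right by $e^{sG_h}$ and using $\|e^{sG_h}\|_{\mathcal{L}(\mathcal{H})}\le e^{|s|\,\|G_h\|_{\mathcal{L}(\mathcal{H})}}$ yields \eqref{ecommutator}. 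For the Taylor expansion \eqref{Taylor}, I would observe that each iterated commutator $\mathrm{ad}_{G_h}^k(B)$ is again bounded, so the computation above, applied with $\mathrm{ad}_{G_h}^{k-1}(B)$ in place of $B$, gives by induction $F^{(k)}(s)=e^{sG_h}\mathrm{ad}_{G_h}^k(B)e^{-sG_h}$ for all $k\ge0$; in particular $F$ is $C^\infty$ (indeed real-analytic), $F^{(k)}(0)=\mathrm{ad}_{G_h}^k(B)$, and Taylor's theorem with integral remainder gives \eqref{Taylor}.

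I do not anticipate any genuine obstacle here: the whole statement is standard Banach-algebra calculus for operator exponentials. The only points needing a word of justification are the Cauchy product used for invertibility and the term-by-term differentiation of the exponential series, both of which follow from the absolute (and locally uniform in $s$) convergence of $\sum_k (sG_h)^k/k!$; alternatively one may simply invoke a standard reference on exponentials of bounded operators.
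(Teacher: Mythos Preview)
Your proposal is correct and follows essentially the same route as the paper: absolute convergence gives the norm bound, the Cauchy product/binomial identity gives invertibility, term-by-term differentiation gives \eqref{conjugateformula}, integrating this identity and multiplying through by $e^{sG_h}$ gives \eqref{ecommutator}, and iterating the derivative formula followed by Taylor's theorem with integral remainder gives \eqref{Taylor}. The paper's proof is the same in all essential respects.
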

\begin{proof}
Assume that $\|G_h\|_{\mathcal{L}(\mathcal{H})}\leq M$ for all $h\in(0,1)$. The series
$$  \sum_{k=0}^{\infty}\frac{\|(sG_h)^k\|_{\mathcal{L}(\mathcal{H})}}{k!}\leq \sum_{k=0}^{\infty}\frac{|s|^kM^k }{k!}=e^{|s|M}
$$
converges absolutely. Therefore, $$\|e^{sG_h}\|_{\mathcal{L}(\mathcal{H})}\leq e^{|s|\|G_h\|_{\mathcal{L}(\mathcal{H})}}.$$
To show that $e^{sG_h}$ is invertible, again, by the absolute convergence of the series $\sum_{k\geq 0}\frac{\|(sG_h)^k\|_{\mathcal{L}(\mathcal{H})}}{k!}$, we deduce that
\begin{align*}
e^{-sG_h}e^{sG_h}=\sum_{k=0}^{\infty}\frac{s^kG_h^k}{k!}\sum_{k_1+k_2=k}(-1)^{k_1}\cdot\frac{k!}{k_1!k_2!}=\mathrm{Id}.
\end{align*}
Therefore, $e^{sG_h}$ is invertible with inverse $e^{-sG_h}$. One easily verifies that $G_h$ commutes with $e^{sG_h}$ and
$\frac{d}{ds}e^{sG_h}=G_he^{sG_h}=e^{sG_h}G_h.
$
This implies \eqref{conjugateformula}. To prove \eqref{ecommutator}, we remark that
\begin{align}\label{conjugationformula2}
[e^{sG_h},B]=\Big(\int_0^se^{s'G_h}\mathrm{ad}_{G_h}(B)e^{-s'G_h}ds'\Big)e^{sG_h}.
\end{align}
Taking the operator norm we obtain \eqref{ecommutator}. The last identity \eqref{Taylor} follows directly from the Taylor expansion with integral remainders. The proof of Lemma \ref{exponential} is complete.
\end{proof}

Our goal is to find an exponential (elliptic) $e^{G_h}$ for some $G_h=\mathrm{Op}_h^w(g)$ with $g\in S^0(\T_y\times\R_\eta)$, depending smoothly in $x$ such that
$$ e^{G_h}(P_h+iha)e^{-G_h}=P_h+ih\mathcal{A}(a)+\text{lower orders}.
$$
To find the operator $G_h$, we consider the conjugate operator
$$ F_h(s):=e^{sG_h}(P_h+iha)e^{-sG_h}
$$
Using the Taylor expansion \eqref{Taylor} up to order $N=2$ in Lemma \ref{exponential} and the symbolic calculus, we get 
$$ F_h(1)=P_h+iha-ih\cdot \frac{i}{h}[G_h,P_h+iha]+\frac{1}{2}[G_h,[G_h,P_h+iha]]+o_{\mathcal{L}(L^2)}(h\hbar^2).
$$
To average the leading order of the anti-selfadjoint part $iha$, we expect the principal symbol of $a-\frac{i}{h}[G_h,h^2D_y^2]$ to be $\mathcal{A}(a)$. To this end, we need to solve the cohomological equation
$$ a+H_{|\eta|^2}(g)=\mathcal{A}(a).
$$
Set
$$ g(x,y,\eta)=-\frac{\psi_1(\eta)}{2\eta}\int_{-\pi}^y(a(x,y')-\mathcal{A}(a)(x))dy'=-\frac{\psi_1(\eta)}{2\eta}\int_{-\pi}^y(a(x,y')-\mathcal{A}(a)(x))dy'A(x,y),
$$
where $\psi_1, A(x,y)$ are defined at the beginning of Subsection \ref{1normalform}. We can indeed define explicitly the quantization of $g$ as $$G_h=\mathrm{Op}_h^w(g)=-\frac{\psi_1(hD_y)}{4hD_y}A(x,y)-A(x,y)\frac{\psi_1(hD_y)}{4hD_y}.
$$ Then $G_h$ is self-adjoint and is $\mathcal{O}(1)$  $h$-semiclassical of order $0$ and $\mathcal{O}(h^{-1})$ classical of order $-1$, smoothly depending on $x\in\T$. In particular, $G_h$ is uniformly bounded on $\mathcal{L}(L^2(\T^2))$. Therefore, by Lemma \ref{exponential}, the operators
$$ e^{sG_h}:=\sum_{n=0}^{\infty}\frac{s^nG_h^n}{n!},\quad e^{-sG_h}=\sum_{n=0}^{\infty}\frac{(-1)^ns^nG_h^n}{n!}.
$$ 
are well-defined and invertible on $L^2(\T^2)$, and $e^{sG_h}e^{-sG_h}=e^{-sG_h}e^{sG_h}=\mathrm{Id},$ for all $s\in\R$.

We now state and prove the main result in this subsection:
\begin{prop}\label{1averaging}
Given $g(x,y,\eta)=-\frac{\psi_1(\eta)}{2\eta}A(x,y)$ and $G_h=\mathrm{Op}_h^w(g)$. Let $v_h^{(1)}:=e^{G_h}v_h$. Then 
$$ P_hv_h^{(1)}+ih\mathcal{A}(a)v_h^{(1)}-[h^2D_x^2,G_h]v_h^{(1)}=\widetilde{r}_h=o_{L^2}(h\hbar^2).
$$  
Moreover, $v_h^{(1)}$ satisfies
\begin{itemize}
	\item[(a)] $\|a^{\frac{1}{2}}v_h^{(1)}\|_{L^2}+\|\mathcal{A}(a)^{\frac{1}{2}}v_h^{(1)}\|_{L^2}=o(\hbar)$.
	\item[(b)] $\mathrm{WF}^{m}_h(v_h^{(1)})\subset\mathrm{WF}_h(v_h)\subset\{(z,\zeta):\zeta=\zeta_0=(0,1)\}$.
	\item[(c)] For any $\widetilde{\psi}\in C_c^{\infty}(\R;[0,1])$ such that $\widetilde{\psi}(\xi)=1$ on the support of $\psi$ which defines $v_h$ in \eqref{vhwh}, we have
	$$ (1-\widetilde{\psi}(\hbar D_x))v_h^{(1)}=O_{L^2}(h^{\frac{m}{2} }).
	$$
\end{itemize}
\end{prop}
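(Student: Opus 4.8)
The plan is to conjugate the $v_h$-equation by the uniformly bounded operator $e^{G_h}$ and to extract the averaged equation from the commutator expansion of $e^{G_h}(P_h+iha)e^{-G_h}$, retaining $-[h^2D_x^2,G_h]$ on the left and throwing everything else into $\widetilde r_h$. First, $G_h=\mathrm{Op}_h^w(g)$ is uniformly bounded on $L^2$: the symbol $g=-\frac{\psi_1(\eta)}{2\eta}A(x,y)$ is supported in $|\eta\mp1|\le\frac12$, so $\eta$ is bounded away from $0$; the functions $A,\partial_xA,\partial_x^2A$ are bounded (by Lemma~\ref{dampingproperty}, of size $\mathcal{A}(a)$, $\mathcal{A}(a)^{1-\sigma}$, $\mathcal{A}(a)^{1-2\sigma}$ respectively, using $\sigma=1/\beta<\tfrac14$), and all $\eta$- and $y$-derivatives of $g$ are controlled since $\psi_1$ is smooth and $\partial_y^jA=\partial_y^{\,j-1}(a-\mathcal{A}(a))$ with $a\in C^m$; hence Theorem~\ref{CalderonVaillancourt} applies, and by Lemma~\ref{exponential} the operators $e^{\pm G_h}$ are uniformly bounded and the Taylor formula \eqref{Taylor} is available. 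Applying $e^{G_h}$ to $(P_h+iha)v_h=r_{1,h}$ gives $e^{G_h}(P_h+iha)e^{-G_h}v_h^{(1)}=e^{G_h}r_{1,h}=o_{L^2}(h\hbar^2)$, and \eqref{Taylor} expands the left-hand side as $P_h+iha+\mathrm{ad}_{G_h}(P_h+iha)+\sum_{k\ge2}\frac1{k!}\mathrm{ad}^k_{G_h}(P_h+iha)$, understood when applied to $v_h^{(1)}$ (which lies in a fixed Sobolev space by elliptic regularity for $P_h+iha$) and truncated with the integral remainder.

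The heart of the matter is the first commutator. Since $g$ is $\xi$-independent and $\eta^2$ is quadratic in $\eta$, the Weyl calculus gives the \emph{exact} identity $[h^2D_y^2,G_h]=\frac hi\mathrm{Op}_h^w(\{\eta^2,g\})=\frac hi\mathrm{Op}_h^w(2\eta\,\partial_yg)$, and $2\eta\,\partial_yg=-\psi_1(\eta)\,\partial_yA=-\psi_1(\eta)(a-\mathcal{A}(a))$, whence $[h^2D_y^2,G_h]=ih\,\mathrm{Op}_h^w(\psi_1(\eta)(a-\mathcal{A}(a)))$. Since $\psi_1\equiv1$ near $\eta=\pm1$ and, by (b), $\mathrm{WF}_h^m(v_h^{(1)})\subset\{\zeta=\zeta_0\}$, this operator acts on $v_h^{(1)}$ as multiplication by $a-\mathcal{A}(a)$ up to a term microsupported where $\psi_1\neq1$, hence negligible; together with $[G_h,h^2D_x^2]=-[h^2D_x^2,G_h]$ and $[G_h,-1]=0$, collecting terms gives
\[
P_hv_h^{(1)}+ih\mathcal{A}(a)v_h^{(1)}-[h^2D_x^2,G_h]v_h^{(1)}=e^{G_h}r_{1,h}-ih[G_h,a]v_h^{(1)}-\sum_{k\ge2}\frac1{k!}\mathrm{ad}^k_{G_h}(P_h+iha)v_h^{(1)}+O_{L^2}(h^\infty)=:\widetilde r_h .
\]

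It remains to show $\widetilde r_h=o_{L^2}(h\hbar^2)$. The mechanisms are: every Poisson bracket with $g$ involves only $\partial_y,\partial_\eta$ (the symbols never acquire $\xi$-dependence) except in $\mathrm{ad}_{G_h}(h^2D_x^2)=-h^2\mathrm{Op}_h^w(\partial_x^2g)-2ih\,\mathrm{Op}_h^w(\partial_xg)(hD_x)$, with $\partial_xg=O(\mathcal{A}(a)^{1-\sigma})$, $\partial_x^2g=O(\mathcal{A}(a)^{1-2\sigma})$; the factor $hD_x$ applied to the $\hbar$-frequency-localized $v_h^{(1)}$ costs only $O(h/\hbar)=o(1)$; each further $\mathrm{ad}_{G_h}$ supplies a power of $h$; and the relevant symbols all carry a factor $a^{1/2}$ or $\mathcal{A}(a)^{1/2}$, coming from $|\partial^\alpha a|\lesssim a^{1-|\alpha|\sigma}$ ($|\alpha|\le2$, $\sigma<\tfrac14$) and from Lemma~\ref{dampingproperty}, which can be split off by the special symbolic calculus Lemma~\ref{symbolicspecial} (needed because $a^{1/2}\in W^{2,\infty}$ only) so as to use (a). This yields $ih[G_h,a]v_h^{(1)}=o(h^2\hbar)+O(h^3)$, $h^2\mathrm{Op}_h^w(\partial_x^2g)v_h^{(1)}=o(h^2\hbar)+O(h^3)$, the $\mathrm{ad}^k_{G_h}$-terms for $k\ge2$ equal to $O(h^3/\hbar)+o(h^2\hbar)+O(h^3)$, and a truncation remainder of size $O(h^N)$ for $N=N(\beta)$ large; since $h\ll\hbar$ and $\delta_h=h^{2/(2\beta+5)}$ with $\beta>4$, all of these are $o(h\hbar^2)=o(h^2\delta_h)$, and absolute convergence of the exponential series (Lemma~\ref{exponential}) makes the sum over $k$ harmless. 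I expect this bookkeeping --- in particular controlling the $hD_x$-factors in the $h^2D_x^2$-branch of the $\mathrm{ad}^k$-hierarchy against only two controlled $x$-derivatives of $g$, and the repeated $a^{1/2}$-extractions with a merely $W^{2,\infty}$ weight --- to be the main technical obstacle, and it is precisely what forces the calculus of Appendix~B.

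Finally, the three properties. For (a): write $a^{1/2}v_h^{(1)}=e^{G_h}(a^{1/2}v_h)+[a^{1/2},e^{G_h}]v_h$; the first term is $o(\hbar)$ by Lemma~\ref{newerro} and boundedness of $e^{G_h}$, while by \eqref{ecommutator} one has $\|[a^{1/2},e^{G_h}]\|_{\mathcal L(L^2)}\lesssim\|[a^{1/2},G_h]\|_{\mathcal L(L^2)}=O(h)$, since the symbol of $[a^{1/2},G_h]$ carries the bounded factor $\partial_y(a^{1/2})=\tfrac12a^{-1/2}\partial_ya=O(a^{1/2-\sigma})$; hence $\|a^{1/2}v_h^{(1)}\|=o(\hbar)$. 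For the $\mathcal{A}(a)^{1/2}$-bound, one uses that $e^{G_h}$ commutes exactly with multiplication by any function of $x$ alone (as $g$ is $\xi$-independent, $\mathrm{Op}_h^w(g)$ acts only in $y$ with $x$ frozen), reducing it to $\|\mathcal{A}(a)^{1/2}v_h\|=o(\hbar)$, which follows from Lemma~\ref{u_h^1} and averaging of $a$ in $y$. For (b): $G_h$, hence every $G_h^k$ and $e^{G_h}$, is microlocal up to order $m$ and does not enlarge the semiclassical wavefront set, while $\mathrm{WF}_h(v_h)\subset\{\zeta=\zeta_0\}$ by the construction $v_h=\psi(\hbar D_x)u_h^1$ of \eqref{vhwh} and the first microlocalization of Section~\ref{changingcoordinate}. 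For (c): since $\widetilde\psi\equiv1$ on $\mathrm{supp}\,\psi$ and $\psi(\hbar D_x)v_h=v_h$, we have $(1-\widetilde\psi(\hbar D_x))v_h^{(1)}=(1-\widetilde\psi(\hbar D_x))[e^{G_h},\psi(\hbar D_x)]v_h$, and iterating $[\psi(\hbar D_x),G_h]=O_{\mathcal L(L^2)}(\hbar)$ while tracking the $\xi$-supports of the resulting symbols yields $O_{L^2}(\hbar^N)=O_{L^2}(h^{N/2})$ for every $N\le m$, using that $A\in C^m$ in $x$ so that the Fourier tail of $e^{G_h}v_h$ off $\mathrm{supp}\,\psi$ decays at the needed rate.
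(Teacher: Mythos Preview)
Your overall strategy --- conjugate by $e^{G_h}$, expand via \eqref{Taylor}, and absorb all but $P_h+ih\mathcal{A}(a)-[h^2D_x^2,G_h]$ into $\widetilde r_h$ --- is the paper's, and your arguments for (b), (c) and the $a^{1/2}$ half of (a) are essentially those of Lemma~\ref{wavefront} and Lemma~\ref{erro1}. Your observation that $G_h$, hence $e^{G_h}$, commutes exactly with multiplication by any function of $x$ alone (since $g$ is $\xi$-independent) is correct and is even a shortcut the paper does not exploit.

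The genuine gap is in the $\mathcal{A}(a)^{1/2}$ half of (a). Your commutation observation reduces it to $\|\mathcal{A}(a)^{1/2}v_h\|_{L^2}=o(\hbar)$, but this does \emph{not} follow from $\|a^{1/2}v_h\|_{L^2}=o(\hbar)$ by ``averaging of $a$ in $y$'': the two quantities are $\int a(x,y)|v_h(x,y)|^2\,dz$ and $\int \mathcal{A}(a)(x)|v_h(x,y)|^2\,dz$, and there is no inequality between them without control on the $y$-dependence of $v_h$ (think of $v_h$ concentrated near some $y$ where $a(x,\cdot)$ vanishes while $\mathcal{A}(a)(x)>0$). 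In the paper this bound is Lemma~\ref{averagingdamping}, obtained by an energy argument on the \emph{partially} averaged equation \eqref{normalform1}: one keeps $\tfrac12[G_h,[G_h,P_h]]$ on the left, uses that it is self-adjoint modulo $O_{\mathcal L(L^2)}(h^3)$ so that it disappears upon taking the imaginary part, obtains $|(Q_hv_h^{(1)},v_h^{(1)})|=o(\hbar^2)$ with $Q_h=\mathcal{A}(a)+\tfrac{i}{h}[h^2D_x^2,G_h]$, and then controls the cross term by interpolation (using $|\partial_x^jA|\lesssim\mathcal{A}(a)^{1-j\sigma}$ and $hD_xv_h^{(1)}=O_{L^2}(h/\hbar)$) and a Young-inequality bootstrap. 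This missing step also makes your remainder estimate circular: the principal symbol of $h^{-2}[G_h,[G_h,P_h]]$ contains factors $\partial_xg=O(\mathcal{A}(a)^{1-\sigma})$, so the sharp G{\aa}rding extraction you sketch produces $\|a^{1/2}v_h^{(1)}\|+\|\mathcal{A}(a)^{1/2}v_h^{(1)}\|+O(h^{1/2})$, and without the $\mathcal{A}(a)^{1/2}$ bound already in hand you only get $[G_h,[G_h,P_h]]v_h^{(1)}=O_{L^2}(h^2)$, which is \emph{not} $o(h\hbar^2)=o(h^2\delta_h)$. The paper's order of operations --- derive \eqref{normalform1} with the double commutator retained, then prove Lemma~\ref{averagingdamping}, then eliminate $[G_h,[G_h,P_h]]$ --- is forced by this dependency, not a stylistic choice.
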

Note that the definition of semiclassical wavefront sets $\mathrm{WF}^m(\cdot)$ and $\mathrm{WF}_h(\cdot)$ are recalled at the end of Appendix B. 

From Proposition \ref{1averaging}, one may deduce from Lemma \ref{dampingproperty} that the anti-selfadjoint remainder satisfies $$[G_h,h^2D_x^2]v_h^{(1)}=o_{L^2}(h^2).$$ Though we can not absorb it directly as an error of order $o(h\hbar^2)$, its principal part is non self-adjoint and can be viewed as an lower order perturbation of the averaged damping  $ih\mathcal{A}(a)$. In the next subsection, we will perform a second normal form to average the operator $[G_h,h^2D_x^2]$ so that it becomes \emph{independent} of the variable $y$.

\begin{proof} 
To simplify the notation, we will use $R_h$ to denote operators of size at most $o_{\mathcal{L}(L^2)}(h\hbar^2)$ and $r_h$ to denote errors of size $o_{L^2}(h\hbar^2)$. Both of them may change from line to line.

For $F_h(s)=e^{sG_h}(P_h+iha)e^{-sG_h}$, using the equation for $v_h$, we have
$$ F_h(s)v_h^{(1)}=e^{sG_h}(P_h+iha)v_h=r_h.
$$
As motivated before, we write down the full conjugate operator:
\begin{align}\label{normalform}F_h(1)=e^{G_h}(P_h+iha)e^{-G_h}=&P_h+ih\mathcal{A}(a)+ih(a-\mathcal{A}(a))-[h^2D_y^2,G_h]\notag\\-&[h^2D_x^2,G_h]+ih[G_h,a]+\frac{1}{2}[G_h,[G_h,P_h]]+R_{h}.
\end{align}
Here the lower order operators $$ih(a-\mathcal{A}(a))-[h^2D_y^2,G_h],\; ih[G_h,a],\; [G_h,[G_h,P_h]]$$ are not in priorly negligible, since they are merely of order $\mathcal{O}_{\mathcal{L}(L^2)}(h^2)$. Nevertheless, it turns out that they are negligible when acting on the function $v_h^{(1)}:=e^{G_h}v_h$. We will prove this fact through several lemmas. First we show that the normal form transform $e^{G_h}v_h$ does not alter $\mathrm{WF}_h(v_h)$. In particular, we prove (b), (c) of Proposition \ref{normalform}:
\begin{lem}\label{wavefront} 
We have 
$$ \mathrm{WF}^{m}_h(v_h^{(1)})\subset \mathrm{WF}_h(v_h).
$$
Moreover, for any $\widetilde{\psi}\in C_c^{\infty}(\R;[0,1])$ such that $\widetilde{\psi}(\xi)=1$ on the support of $\psi$ which defined $v_h$ in \eqref{vhwh}, we have
$$ \|(1-\widetilde{\psi}(\hbar D_x))v_h^{(1)}\|_{L^2}=O(\hbar^m). 
$$
\end{lem}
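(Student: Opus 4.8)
The plan is to exploit two structural features of the conjugating operator. First, the symbol $g(x,y,\eta)=-\frac{\psi_1(\eta)}{2\eta}A(x,y)$ does not depend on the variable $\xi$ dual to $x$; equivalently, as recorded above, $G_h=-\frac{\psi_1(hD_y)}{4hD_y}A(x,y)-A(x,y)\frac{\psi_1(hD_y)}{4hD_y}$ involves no $x$-differentiation, so each $G_h^k$, and hence $e^{G_h}$, merely convolves the $x$-frequency against $\widehat A(\cdot,y)$ while acting as a $y$-pseudodifferential operator. Second, $a\in C^m$ and $\mathcal A(a)\in C^m$ (Lemma~\ref{Jensen}), so $A$ and $g$ are $C^m$ in $(x,y)$ and $C^\infty$, compactly supported in $\{|\eta\mp1|\le\frac12\}$, in $\eta$; and $G_h$ is a uniformly bounded semiclassical pseudodifferential operator of order $0$ to which the limited-smoothness calculus of Appendix~B applies up to order $\sim m$.

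For the wavefront inclusion I would first observe that $e^{G_h}=\sum_{k\ge0}G_h^k/k!$ converges in every seminorm of the symbol class of order $0$ up to regularity $\sim m$: the $k$-fold Moyal composition $g^{\#k}$ (which is again $\xi$-independent, since the Moyal correction to a product of $\xi$-independent symbols involves only $y$- and $\eta$-derivatives) has its $j$-th seminorm bounded by $C_j^{\,k}$, and $\sum_k C_j^{\,k}/k!<\infty$. Hence $e^{G_h}=\mathrm{Op}_h^w(\gamma_h)$ for a $\xi$-independent symbol $\gamma_h$ of order $0$ with principal part $e^{g}$, which is elliptic. Pseudolocality of such operators — valid modulo errors of the size encoded in the definition of $\mathrm{WF}_h^m$, precisely because $g$ is only $C^m$ — then gives $\mathrm{WF}_h^m(e^{G_h}v_h)\subset\mathrm{WF}_h(v_h)$. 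Equivalently, one writes $e^{G_h}=\mathrm{Id}+\mathrm{Op}_h^w(\gamma_h-1)$, notes $\gamma_h-1$ is supported in $\{|\eta\mp1|\le\frac12\}$, and iterates the elementary microlocal cutoff argument using that a single pseudodifferential operator cannot create wavefront outside $\mathrm{WF}_h(v_h)$.

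For the frequency estimate, since $v_h=\psi(\hbar D_x)u_h^1$ we have
$$(1-\widetilde\psi(\hbar D_x))v_h^{(1)}=(1-\widetilde\psi(\hbar D_x))\,e^{G_h}\,\psi(\hbar D_x)\,u_h^1=(1-\widetilde\psi(\hbar D_x))\,[e^{G_h},\psi(\hbar D_x)]\,u_h^1,$$
using $(1-\widetilde\psi(\hbar D_x))\psi(\hbar D_x)=0$, so it suffices to prove $\|(1-\widetilde\psi(\hbar D_x))e^{G_h}\psi(\hbar D_x)\|_{\mathcal L(L^2)}=O(h^{N/2})$ for $N\le m$. This I would get from the symbolic calculus: the full symbol of this operator is the Moyal product $(1-\widetilde\psi(\hbar\xi))\,\#\,\gamma_h\,\#\,\psi(\hbar\xi)$, and because $\gamma_h$ carries no $\xi$-dependence, every term of its expansion is (up to a constant) of the form $(h\hbar)^{k+l}\,\widetilde\psi^{(k)}(\hbar\xi)\,\partial_x^{k+l}\gamma_h\,\psi^{(l)}(\hbar\xi)$ with $k,l\ge0$ and the conventions $\widetilde\psi^{(0)}:=1-\widetilde\psi$, $\psi^{(0)}:=\psi$. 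Each such term vanishes identically: for $k=0$ because $(1-\widetilde\psi(\hbar\xi))\psi^{(l)}(\hbar\xi)=0$, and for $k\ge1$ because $\mathrm{supp}\,\widetilde\psi^{(k)}$ and $\mathrm{supp}\,\psi^{(l)}$ are disjoint once $\widetilde\psi\equiv1$ on a neighbourhood of $\mathrm{supp}\,\psi$. Thus the symbol vanishes to every order of the expansion, so the operator equals the remainder after using the available $\sim m$ derivatives of $\gamma_h$, which is $O(h^{m-C_0})$ for a fixed constant $C_0$; since $m\ge10$ this is in particular $O(h^{N/2})$ for every $N\le m$. Inserting $\|u_h^1\|_{L^2}=O(1)$ finishes the proof.

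The main, and essentially only, obstacle is the bookkeeping imposed by the limited $C^m$ regularity of $g$: the convergence of the exponential in a symbol class, the pseudolocality underlying the wavefront inclusion, and the vanishing-to-all-orders computation can each be carried out only to order $\sim m$, so one must check that $m$ derivatives do deliver the stated $O(h^{N/2})$ for $N\le m$. This is exactly why the statement features $\mathrm{WF}_h^m$ rather than the full $\mathrm{WF}_h$ and why the hypothesis $m\ge10$ enters; once the calculus of Appendix~B is in place, the remainder is routine support-and-decay analysis.
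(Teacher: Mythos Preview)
Your two structural observations are spot-on and underlie both assertions: (i) $g$ is $\xi$-independent, so all the action is in the $(y,\eta)$ variables, and (ii) $g$ has only $C^m$ regularity in $(x,y)$. Where your argument parts ways from the paper's, and where the gap sits, is the step ``$e^{G_h}=\mathrm{Op}_h^w(\gamma_h)$ for a symbol $\gamma_h$ of order $0$, since the $j$-th seminorm of $g^{\#k}$ is $\le C_j^{\,k}$.'' You assert this bound but do not prove it, and it is not routine. The Moyal composition estimate one has is of the shape
\[
\|a\#b\|_{(j)}\ \lesssim\ \|a\|_{(j+j_0)}\|b\|_{(j+j_0)}
\]
for a fixed offset $j_0$; iterating forces the seminorm index on $g$ to climb with $k$, so after $\sim m$ compositions you have exhausted the available derivatives and can no longer control $g^{\#k}$ at \emph{any} fixed level $j$. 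In other words, with only $C^m$ regularity you cannot sum $\sum_k g^{\#k}/k!$ in a symbol seminorm by this kind of argument. Since the rest of your proof (pseudolocality for the wavefront inclusion, and the vanishing-to-all-orders Moyal computation for the $\hbar$-localization) is predicated on the existence of a well-behaved $\gamma_h$, both parts inherit this gap.

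The paper avoids summing the exponential altogether. It works at the level of iterated commutators via the finite Taylor expansion of Lemma~\ref{exponential},
\[
e^{-G_h}L_he^{G_h}=\sum_{n=0}^{N-1}\frac{(-1)^n}{n!}\mathrm{ad}_{G_h}^n(L_h)+\text{integral remainder of order }h^N,
\]
applied with $L_h$ a microlocal cutoff away from $\mathrm{WF}_h(v_h)$ (for the first assertion) and with $L_h=1-\widetilde\psi(\hbar D_x)$ together with a nested family of cutoffs $\chi_n$ (for the second). Each step uses only one commutator and hence consumes only one derivative of $A$, so at most $N\le m$ derivatives are ever needed; no summation in a symbol class occurs. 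Your $\xi$-independence observation is precisely what makes $[G_h,\widetilde\psi(\hbar D_x)]$ reduce to $[A,\widetilde\psi(\hbar D_x)]$ sandwiched with $b(hD_y)$, whose $\hbar$-symbol is supported off $\mathrm{supp}\,\psi$ --- so the paper is in fact exploiting the same geometric point, just without the detour through $\gamma_h$. If you want to salvage your symbol-level route, one honest way is to bypass Moyal entirely and use that $E(x):=e^{G_h(x)}$ is a $C^m$ operator-valued function of $x$ (differentiating the operator series termwise and using $\|G_h\|\le M$), then bound $(1-\widetilde\psi(\hbar D_x))E\,\psi(\hbar D_x)$ by Fourier decay of $\widehat E(k)$ across the spectral gap between $\mathrm{supp}\,\psi$ and $\mathrm{supp}(1-\widetilde\psi)$; this gives $O(\hbar^{m-1})$ directly and needs no symbol of $e^{G_h}$.
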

\begin{proof}
Let $l(z,\zeta)$ be a symbol supported on a compact set of $T^*\T^2\setminus \mathrm{WF}_h(v_h)$ and $L_h=\mathrm{Op}_h(l)$, it suffices to show that
$$ L_he^{G_h}v_h=O_{L^2}(h^{N}),
$$
for any $N\leq m$. Recall that by definition of $\mathrm{WF}^m_h(v_h)$ (see the end of Appendix B), for any $h$-pseudodifferential operator $Q_h$ with the principal symbol supported away from $\mathrm{WF}_h(v_h)$,
$$ Q_hv_h=O_{L^2}(h^m).
$$
Using again \eqref{Taylor}, we write
\begin{align*}
 L_he^{G_h}v_h=&e^{G_h}(e^{-G_h}L_he^{G_h})v_h\\=&e^{G_h}\Big(\sum_{n=0}^{N-1}\frac{(-s)^{n}}{n!}\mathrm{ad}_{G_h}^n(L_h)v_h\Big)-\frac{1}{(N-1)!}e^{G_h}\int_0^1(1-s)^{N-1}e^{-sG_h}\mathrm{ad}_{G_h}^{N}(L_h)e^{sG_h}v_hds.
\end{align*}
By the symbolic calculus, the last term is $O_{L^2}(h^N)$, and for each $n\leq N-1$, $\mathrm{ad}_{G_h}^n(L_h)$ is a $h$-pseudodifferential operator with symbol supported away from $\mathrm{WF}_h(v_h)$, thus $\mathrm{ad}_{G_h}(L_h)v_h=O_{L^2}(h^N)$. This shows that $\mathrm{WF}_h^m(v_h^{(1)})\subset\mathrm{WF}_h(v_h)$. 

For the second assertion, we first note that, since $v_h^{(1)}=\psi(\hbar D_x)u_h^1$,  $(1-\widetilde{\psi}(\hbar D_x))v_h=O_{L^2}(h^m)$. We observe also that since $G_h$ is of the form
$$ G_h=b(hD_y)A(x,y)+A(x,y)b(hD_y)
$$
and the Fourier multiplier $b(hD_y)$ commutes with $\widetilde{\psi}(\hbar D_x)$, we have
$$ [G_h,\widetilde{\psi}(\hbar D_x)]v_h^{(1)}=b(hD_y)[A,\widetilde{\psi}(\hbar D_x)]v_h^{(1)}+[A,\widetilde{\psi}(\hbar D_x)]b(hD_y)v_h^{(1)}=O_{L^2}(\hbar^m)=o_{L^2}(h^{\frac{m}{2}}),
$$ 
since $[A,\widetilde{\psi}(\hbar D_x)]$ is a $\hbar$-pseudodifferential operator with symbol supported away from the support of $\psi(\xi)$, thanks to the fact that $\widetilde{\psi}\equiv 1$ on supp$(\psi)$. More generally, for any other cutoff $\chi_1$ such that $\chi_1=1$ on supp$(\psi)$, we always have
$$ (1-\chi_1(\hbar D_x))G_hv_h^{(1)}=O_{L^2}(\hbar^m).
$$
Since
$$ \mathrm{ad}_{G_h}^{n+1}(\widetilde{\psi}(\hbar D_x))v_h^{(1)}=G_h\mathrm{ad}_{G_h}^n(\widetilde{\psi}(\hbar D_x) )v_h^{(1)}-\mathrm{ad}_{G_h}^n(\widetilde{\psi}(\hbar D_x) )G_hv_h^{(1)}.
$$
By writing $G_hv_h^{(1)}=(1-\chi_n(\hbar D_x))G_hv_h^{(1)}+\chi_n(\hbar D_x)G_hv_h^{(1)}$ for some cutoffs $\chi_n$, $1\leq n\leq m$, such that $\widetilde{\psi}=1$ on supp$(\chi_n)$ and $\chi_{n+1}=1$ on supp$(\chi_{n})$. Therefore, by induction, we deduce that for every $1\leq n\leq m$,
$$ \mathrm{ad}_{G_h}^n(\widetilde{\psi}(\hbar D_x))v_h^{(1)}=O_{L^2}(\hbar^m).
$$
By Taylor expansion, this shows that $e^{-G_h}(1-\widetilde{\psi}(\hbar D_x))e^{G_h}v_h=O_{L^2}(\hbar^m)$.
 The proof of Lemma \ref{wavefront} is now complete.
\end{proof}
Next we show that $ih(a-\mathcal{A}(a))v_h^{(1)}-[h^2D_y^2,G_h]v_h^{(1)}$ and $ih[G_h,a]v_h^{(1)}$ are indeed remainders.
\begin{lem}\label{erro1} 
We have
$$ \|a^{\frac{1}{2}}v_h^{(1)}\|_{L^2}=o(\hbar),\quad ih\|[G_h,a]v_h^{(1)}\|_{L^2}=O(h^2\hbar)
$$
and
$$ \|ih(a-\mathcal{A}(a))v_h^{(1)}-[h^2D_y^2,G_h]v_h^{(1)}\|_{L^2}=o(h^2\hbar).
$$
\end{lem}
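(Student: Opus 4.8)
The plan rests on three ingredients: the explicit form $G_h=\mathrm{Op}_h^w(g)=b(hD_y)A+Ab(hD_y)$ with $b(\eta)=-\psi_1(\eta)/(2\eta)\in C_c^\infty(\R\setminus\{0\})$; the fact that $A$ and $a^{\frac12}$ are multiplication operators, so that $[A,a]=[A,a^{\frac12}]=0$; and the a priori bound $\|a^{\frac12}v_h\|_{L^2}=o(\hbar)$ of Lemma~\ref{newerro} together with $\|A\|_{L^\infty}<\infty$ from Lemma~\ref{dampingproperty}. I will repeatedly use that $\hbar=h^{\frac12}\delta_h^{\frac12}=h^{\frac12+\frac1{2\beta+5}}$, so $h=o(\hbar)$, hence $O(h)=o(\hbar)$ and $O(h^k)=o(h\hbar)$ for $k\ge2$; and that any symbol of the form $\varphi(\eta)c(x,y)$ with $\varphi\in C_c^\infty$ and $c\in L^\infty$ quantizes to an operator bounded on $L^2$ (it is a symmetrized composition of the Fourier multiplier $\varphi(hD_y)$ with multiplication by $c$), so no smoothness of the rough factor $c$ is needed --- this defuses the merely Lipschitz, resp.\ $C^2$, regularity of $a^{\frac12}$. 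For (a) I would split $a^{\frac12}v_h^{(1)}=e^{G_h}(a^{\frac12}v_h)+[a^{\frac12},e^{G_h}]v_h$: the first term is $\le\|e^{G_h}\|_{\mathcal{L}(L^2)}\|a^{\frac12}v_h\|_{L^2}=o(\hbar)$ by Lemma~\ref{exponential} and Lemma~\ref{newerro}; the second is bounded by $C\|[G_h,a^{\frac12}]\|_{\mathcal{L}(L^2)}\|v_h\|_{L^2}$ via \eqref{conjugationformula2}, and $[G_h,a^{\frac12}]=[b(hD_y),a^{\frac12}]A+A[b(hD_y),a^{\frac12}]=\mathcal{O}_{\mathcal{L}(L^2)}(h)$ (commutator of a compactly supported Fourier multiplier with the Lipschitz function $a^{\frac12}$, times the bounded multiplier $A$), so it is $O(h)=o(\hbar)$.

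For (b), using $[A,a]=0$ one has $[G_h,a]=[b(hD_y),a]A+A[b(hD_y),a]$, and by the symbolic calculus $[b(hD_y),a]=\tfrac hi\mathrm{Op}_h^w(b'(\eta)\partial_y a)+\mathcal{O}_{\mathcal{L}(L^2)}(h^2)$. I would then factor $\partial_y a=2(\partial_y a^{\frac12})\,a^{\frac12}$ (legitimate since $|\partial_y a^{\frac12}|\lesssim a^{\frac12-\sigma}$ is bounded) and apply the special symbolic calculus of Lemma~\ref{symbolicspecial}, exactly as in the proof of Lemma~\ref{newerro}, to write $\mathrm{Op}_h^w(b'(\eta)\partial_y a)=2\,\mathrm{Op}_h^w(b'(\eta)\partial_y a^{\frac12})\,a^{\frac12}+h\widetilde R_h$ with $\widetilde R_h=\mathcal{O}_{\mathcal{L}(L^2)}(1)$. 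Applying this to $v_h^{(1)}$ and to $Av_h^{(1)}$, and using $\|a^{\frac12}v_h^{(1)}\|_{L^2}=o(\hbar)$ from (a) with $\|a^{\frac12}Av_h^{(1)}\|_{L^2}=\|A\,a^{\frac12}v_h^{(1)}\|_{L^2}\le\|A\|_{L^\infty}\,o(\hbar)$, yields $\mathrm{Op}_h^w(b'(\eta)\partial_y a)v_h^{(1)}=o(\hbar)+O(h)$ and the same on $Av_h^{(1)}$; multiplying by $\tfrac hi$ and collecting, $[G_h,a]v_h^{(1)}=o(h\hbar)$, hence $ih\|[G_h,a]v_h^{(1)}\|_{L^2}=o(h^2\hbar)$, in particular $O(h^2\hbar)$.

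For (c), since $\eta^2$ is a quadratic symbol the Weyl commutator identity is exact: $[h^2D_y^2,G_h]=\tfrac hi\mathrm{Op}_h^w(\{\eta^2,g\})=\tfrac hi\mathrm{Op}_h^w(2\eta\,\partial_y g)$, and by the very choice of $g$ one has $2\eta\,\partial_y g=-\psi_1(\eta)\partial_y A=-\psi_1(\eta)(a-\mathcal{A}(a))$ --- this is the cohomological equation we solved. Hence $[h^2D_y^2,G_h]=ih\,\mathrm{Op}_h^w\big(\psi_1(\eta)(a-\mathcal{A}(a))\big)$, so
$$ih(a-\mathcal{A}(a))v_h^{(1)}-[h^2D_y^2,G_h]v_h^{(1)}=ih\,\mathrm{Op}_h^w\big((1-\psi_1(\eta))(a-\mathcal{A}(a))\big)v_h^{(1)}.$$
The symbol $(1-\psi_1(\eta))(a-\mathcal{A}(a))$ vanishes on $\{\,|\eta\pm1|\le\tfrac14\,\}$, hence on a neighborhood of $\{\zeta=\zeta_0=(0,1)\}\supseteq\mathrm{WF}^m_h(v_h^{(1)})$ (Proposition~\ref{1averaging}(b)); by the definition of $\mathrm{WF}^m_h$, the right-hand side is $O_{L^2}(h^N)$ for every $N\le m$, which, since $m\ge10$, is $o(h^2\hbar)$.

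The main obstacle is (b): the symbolic calculus has to be run with $a^{\frac12}$ only of class $C^2$, while at the same time a factor $a^{\frac12}$ has to be extracted so that the smallness $\|a^{\frac12}v_h^{(1)}\|_{L^2}=o(\hbar)$ can be invoked --- this is precisely the point of the special calculus of Appendix~B and of the boundedness of quantizations of $\varphi(\eta)\times(\text{bounded function})$ symbols. One must also track that every power of $h$ produced by the expansions is absorbable, which is where $h=o(\hbar)$ --- i.e.\ the choice $\delta_h=h^{2/(2\beta+5)}$ with $\beta>4$ --- is used.
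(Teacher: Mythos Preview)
Your proof is correct, and for part (c) it is actually cleaner than the paper's. Parts (a) and (b) follow the same strategy as the paper—reduce everything to $[G_h,a^{1/2}]=\mathcal{O}_{\mathcal{L}(L^2)}(h)$ via Corollary~\ref{commutator} and then invoke $\|a^{1/2}v_h\|_{L^2}=o(\hbar)$. For (b) the paper takes a shorter route: the algebraic identity
\[
[G_h,a]=2[G_h,a^{1/2}]a^{1/2}+[a^{1/2},[G_h,a^{1/2}]]
\]
immediately gives $\|[G_h,a]v_h^{(1)}\|\lesssim h\|a^{1/2}v_h^{(1)}\|+h^2\|v_h^{(1)}\|=o(h\hbar)$ without ever opening the symbolic calculus or appealing to Lemma~\ref{symbolicspecial}. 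Your expansion $[b(hD_y),a]=\tfrac{h}{i}\mathrm{Op}_h^w(b'(\eta)\partial_y a)+O(h^2)$ followed by the factorisation $\partial_y a=2a^{1/2}\partial_y a^{1/2}$ works, but it is heavier than needed.

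For (c) the two arguments diverge. You use that $\eta^2$ is quadratic, so the Weyl commutator is \emph{exactly} $\tfrac{h}{i}\mathrm{Op}_h^w(\{\eta^2,g\})$, which together with the cohomological equation collapses the whole expression to a single operator with symbol $(1-\psi_1(\eta))(a-\mathcal{A}(a))$, supported away from $\mathrm{WF}_h^m(v_h^{(1)})$. The paper instead computes $[h^2D_y^2,G_h]$ from the explicit form $G_h=b(hD_y)A+Ab(hD_y)$ and ends up with three residual pieces—a wavefront term $ih(1-\psi_1(hD_y))(a-\mathcal{A}(a))v_h^{(1)}$, a commutator term $\tfrac{ih}{2}[\psi_1(hD_y),a]v_h^{(1)}$, and a subprincipal term $h^2B_h\partial_y a\,v_h^{(1)}$—each requiring a separate estimate. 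The extra terms in the paper's approach arise precisely because the symmetrised product $b(hD_y)A+Ab(hD_y)$ differs from $\mathrm{Op}_h^w(2b(\eta)A)$ by an operator of order $h^2$; since Proposition~\ref{1averaging} states $G_h=\mathrm{Op}_h^w(g)$, your reading is legitimate, and in any case the discrepancy contributes only $O(h^3)=o(h^2\hbar)$ to the commutator with $h^2D_y^2$.
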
 
\begin{proof}
Since $a^{\frac{1}{2}}\in W^{1,\infty}$, by Lemma \ref{commutator}, $e^{-G_h}a^{\frac{1}{2}}e^{G_h}=a^{\frac{1}{2}}+O_{\mathcal{L}(L^2)}(h)$ and $a^{\frac{1}{2}}v_h=o_{L^2}(\hbar)$, we have
$$a^{\frac{1}{2}}v_h^{(1)}=e^{G_h}(a^{\frac{1}{2}}v_h+O_{L^2}(h))=o_{L^2}(\hbar).$$ 
Note that $$[G_h,a]=a^{\frac{1}{2}}[G_h,a^{\frac{1}{2}}]+[G_h,a^{\frac{1}{2}}]a^{\frac{1}{2}}=2[G_h,a^{\frac{1}{2}}]a^{\frac{1}{2}}+[a^{\frac{1}{2}},[G_h,a^{\frac{1}{2}}]],$$
Since $a^{\frac{1}{2}}\in W^{1,\infty}$, from Corollary \ref{commutator}, we have
$$ ih\|[G_h,a]v_h^{(1)}\|_{L^2}\leq Ch^2\|a^{\frac{1}{2}}v_h^{(1)}\|_{L^2}+Ch^3\|v_h^{(1)}\|_{L^2}=o(h^{2}\hbar).
$$
For the last assertion, denote by $b(\eta)=-\frac{\psi_1(\eta)}{4\eta}$, then $G_h=b(hD_y)A+Ab(hD_y)$, where $A=A(x,y)$. Direct computation yields
\begin{align*}
[h^2D_y^2,G_h]=&i\frac{h}{2} \big(\psi_1(hD_y)\partial_yA+\partial_yA \psi_1(hD_y)\big)-h^2\big(\partial_y^2A b(hD_y)-b(hD_y)\partial_y^2A \big).
\end{align*}
Since 
$$ \partial_yA=(a-\mathcal{A}(a)),\quad \partial_y^2A=\partial_ya.
$$
Using the symbolic calculus, we are able to write
\begin{align*}
 ih(a-\mathcal{A}(a))v_h^{(1)}-[h^2D_y^2,G_h]v_h^{(1)}=&ih(1-\psi_1(hD_y))(a-\mathcal{A}(a))v_h^{(1)}+\frac{ih}{2}[\psi_1(hD_y),a]v_h^{(1)}\\+&h^2B_h\partial_yav_h^{(1)}+O_{L^2}(h^3),
\end{align*}
where $B_h=\mathcal{O}_{L^2}(1)$, uniformly in $0<h<1$. Note that supp$(1-\psi_1(\eta))\cap \mathrm{WF}_h^m(v_h)=\emptyset$, hence by the first assertion of Lemma \ref{wavefront}, supp$(1-\psi_1(\eta))\cap \mathrm{WF}_h^m(v_h^{(1)})=\emptyset$, the first term on the right side is $O_{L^2}(h^3)$, say. Next, writing $[\psi_1(hD_y),a]$ as $2[\psi_1(hD_y),a^{\frac{1}{2}}]a^{\frac{1}{2}}+[a^{\frac{1}{2}},[\psi_1(hD_y),a^{\frac{1}{2}}]]$, using the first inequality of Lemma \ref{erro1} and Corollary \ref{commutator}, we get
$$h[\psi_1(hD_y),a]v_h^{(1)}=o_{L^2}(h^2\hbar).$$ Finally, since $|\partial_y a|\lesssim a^{\frac{1}{2}}$, we deduce that $h^2B_h\partial_yav_h^{(1)}=o_{L^2}(h^2\hbar)$.
The proof of Lemma \ref{erro1} is now complete.
\end{proof}
Our next goal is to show that $[G_h,[G_h,P_h]]v_h^{(1)}$ is a remainder. The argument is slightly more tricky. To this end, we need to exploit an extra smallness from the operator $G_h$. This in turns requires to show that $\|\mathcal{A}(a)^{\frac{1}{2}}v_h^{(1)}\|_{L^2}$ has the same order of $\|a^{\frac{1}{2}}v_h^{(1)}\|_{L^2}$. The key observation is that, modulo $O_{\mathcal{L}(L^2)}(h^3)$, the operator $[G_h,[G_h,P_h]]$ is self-adjoint, thus we can perform the same energy estimate for the anti-selfadjoint part only, as in the proof of (a) in Lemma \ref{apriori}.
\begin{lem}\label{averagingdamping} 
If $h\delta_h^{-\frac{1}{3}}=o(1)$, we have
$$ \|\mathcal{A}(a)^{\frac{1}{2}}v_h^{(1)}\|_{L^2}=o(\hbar).
$$
\end{lem}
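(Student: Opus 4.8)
The plan is to run the energy estimate of Lemma~\ref{apriori}(a) directly on the conjugated equation \eqref{normalform}, keeping only the contribution of the anti-selfadjoint part. Since $v_h^{(1)}=e^{G_h}v_h$ and $(P_h+iha)v_h=r_{1,h}=o_{L^2}(h\hbar^2)$ (Lemma~\ref{newerro}), the conjugate operator satisfies $F_h(1)v_h^{(1)}=e^{G_h}(P_h+iha)v_h=e^{G_h}r_{1,h}=o_{L^2}(h\hbar^2)$ because $e^{G_h}$ is uniformly bounded. Pairing this with $v_h^{(1)}$ in $L^2(\T^2)$ and taking the imaginary part kills $P_h$, the operator $ih[G_h,a]$ (selfadjoint because $[G_h,a]$ is anti-selfadjoint), and $\tfrac12[G_h,[G_h,P_h]]$, which is selfadjoint modulo $\mathcal{O}_{\mathcal{L}(L^2)}(h^3)$ as noted just before the lemma; the latter error contributes $\mathcal{O}(h^3)=o(h\hbar^2)$ thanks to the standing relation $h=o(\delta_h)$. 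The term $ih\mathcal{A}(a)$ produces $h\|\mathcal{A}(a)^{1/2}v_h^{(1)}\|_{L^2}^2$, while $ih(a-\mathcal{A}(a))-[h^2D_y^2,G_h]$ contributes at most $o(h^2\hbar)=o(h\hbar^2)$ by Lemma~\ref{erro1} (here $h=o(\hbar)$). Hence the whole identity collapses to
$$
h\|\mathcal{A}(a)^{1/2}v_h^{(1)}\|_{L^2}^2=\Im\big([h^2D_x^2,G_h]v_h^{(1)},v_h^{(1)}\big)_{L^2}+o(h\hbar^2),
$$
and the entire difficulty is concentrated in the single term $[h^2D_x^2,G_h]v_h^{(1)}$, which is only of size $\mathcal{O}(h^2)$ and is thus the one term not already negligible at the scale $h\hbar^2=h^2\delta_h$.

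To handle it I would use the explicit form $G_h=b(hD_y)A+Ab(hD_y)$ with $b(\eta)=-\psi_1(\eta)/(4\eta)$, together with the fact that $A=A(x,y)$ commutes with $b(hD_y)$ and that $\mathcal{A}(a)$ (a function of $x$ alone) commutes with $b(hD_y)$. A direct computation gives
$$
[h^2D_x^2,G_h]=-h^2\widehat{G}_h-2ih\,\widetilde{G}_h\,(hD_x),\qquad
\widehat{G}_h=b(hD_y)(\partial_x^2A)+(\partial_x^2A)b(hD_y),\quad
\widetilde{G}_h=b(hD_y)(\partial_xA)+(\partial_xA)b(hD_y),
$$
with $\widehat{G}_h,\widetilde{G}_h$ selfadjoint. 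The selfadjoint piece $-h^2\widehat{G}_h$ drops out of the imaginary part, leaving $\Im([h^2D_x^2,G_h]v_h^{(1)},v_h^{(1)})_{L^2}=-2h\,\Re(\widetilde{G}_hv_h^{(1)},hD_xv_h^{(1)})_{L^2}$. Now Lemma~\ref{dampingproperty} (with $f=a$, $j=1$) gives the pointwise bound $|\partial_xA|\le 4\pi\,\mathcal{A}(a)^{1-\sigma}\le C\,\mathcal{A}(a)^{1/2}$ since $\sigma=\tfrac1\beta<\tfrac12$, and combined with the commutation of $\mathcal{A}(a)$ with $b(hD_y)$ this yields $\|\widetilde{G}_hv_h^{(1)}\|_{L^2}\le C\|\mathcal{A}(a)^{1/2}v_h^{(1)}\|_{L^2}$. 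On the other hand, by the second assertion of Lemma~\ref{wavefront} the function $v_h^{(1)}$ is, up to $\mathcal{O}_{L^2}(h^\infty)$, localized to $|\hbar D_x|\lesssim1$, so $\|hD_xv_h^{(1)}\|_{L^2}=\mathcal{O}(h\hbar^{-1})$. Therefore
$$
\big|\Im\big([h^2D_x^2,G_h]v_h^{(1)},v_h^{(1)}\big)_{L^2}\big|\le C\,h^2\hbar^{-1}\,\|\mathcal{A}(a)^{1/2}v_h^{(1)}\|_{L^2}.
$$

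Inserting this into the identity above and writing $X=\|\mathcal{A}(a)^{1/2}v_h^{(1)}\|_{L^2}$ gives $hX^2\le C h^2\hbar^{-1}X+o(h\hbar^2)$, which I would close by a Young inequality chosen at the exponents dictated by the power $\delta_h^{-1/3}$ appearing in the hypothesis; this is precisely where the assumption $h\delta_h^{-1/3}=o(1)$ enters, forcing $X=o(\hbar)$. The main obstacle is exactly this term $[h^2D_x^2,G_h]v_h^{(1)}$: unlike every other term produced by the first averaging it is not small at the scale $h^2\delta_h$, and the only way to tame it in the energy estimate is to exhibit, as above, that after discarding its harmless selfadjoint part $-h^2\widehat{G}_h$, its imaginary part carries simultaneously a damping factor $\|\mathcal{A}(a)^{1/2}v_h^{(1)}\|$ -- available because $\partial_xA$ is dominated by $\mathcal{A}(a)^{1/2}$ via Lemma~\ref{dampingproperty} -- and a transversal-frequency gain $\|hD_xv_h^{(1)}\|=\mathcal{O}(h\hbar^{-1})$ coming from the low-frequency localization, the two being balanced through Young's inequality under the stated relation between $h$ and $\delta_h$.
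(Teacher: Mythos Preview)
Your overall strategy coincides with the paper's: take the imaginary part of the pairing of \eqref{normalform} with $v_h^{(1)}$, discard the selfadjoint pieces, and reduce everything to controlling $\Im\big([h^2D_x^2,G_h]v_h^{(1)},v_h^{(1)}\big)$. Your decomposition $[h^2D_x^2,G_h]=-h^2\widehat{G}_h-2ih\,\widetilde{G}_h(hD_x)$ and the observation that the selfadjoint piece $-h^2\widehat{G}_h$ drops out are correct.

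The gap is in your Cauchy--Schwarz step. You bound
\[
\big|\Re(\widetilde{G}_hv_h^{(1)},hD_xv_h^{(1)})\big|\le\|\widetilde{G}_hv_h^{(1)}\|_{L^2}\cdot\|hD_xv_h^{(1)}\|_{L^2}\le CX\cdot h\hbar^{-1},
\]
with $X=\|\mathcal{A}(a)^{1/2}v_h^{(1)}\|_{L^2}$, leading to $X^2\le Ch\hbar^{-1}X+o(\hbar^2)$. This inequality \emph{cannot be closed}: the right-hand side is linear in $X$, so the only Young inequality available gives $X^2\le C(h\hbar^{-1})^2+o(\hbar^2)$, and $(h\hbar^{-1})^2=h\delta_h^{-1}$ is never $o(\hbar^2)=o(h\delta_h)$ since $\delta_h\to 0$. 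Equivalently, the quadratic yields only $X=O(h\hbar^{-1})=O(\hbar\delta_h^{-1})\gg\hbar$. There are no ``exponents dictated by $\delta_h^{-1/3}$'' to play with here.

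The paper repairs this by retaining the sharper bound $|\partial_xA|\le C\mathcal{A}(a)^{1-\sigma}$ (rather than weakening immediately to $\mathcal{A}(a)^{1/2}$) and splitting the weight across the Cauchy--Schwarz factors as $\mathcal{A}(a)^{1-\sigma}=\mathcal{A}(a)^{1/2}\cdot\mathcal{A}(a)^{1/2-\sigma}$, with the factor $\mathcal{A}(a)^{1/2}$ placed on the $hD_xv_h^{(1)}$ side. The crucial gain is that $\|\mathcal{A}(a)^{1/2}hD_xv_h^{(1)}\|_{L^2}$ is much smaller than $\|hD_xv_h^{(1)}\|_{L^2}$: writing $hD_xv_h^{(1)}=h\hbar^{-1}b_1(\hbar D_x)v_h^{(1)}+O_{L^2}(h^3)$ via Lemma~\ref{wavefront} and using $[\mathcal{A}(a)^{1/2},b_1(\hbar D_x)]=\mathcal{O}_{\mathcal{L}(L^2)}(\hbar)$ (Corollary~\ref{commutator}), one gets $\|\mathcal{A}(a)^{1/2}hD_xv_h^{(1)}\|_{L^2}\lesssim h+h\hbar^{-1}X$. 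Interpolation gives $\|\mathcal{A}(a)^{1/2-\sigma}v_h^{(1)}\|_{L^2}\le X^{1-2\sigma}$, so the resulting inequality is
\[
X^2\lesssim hX^{1-2\sigma}+h\hbar^{-1}X^{2-2\sigma}+hX+o(\hbar^2).
\]
Now the powers of $X$ on the right are all strictly below $2$, and Young's inequality with conjugate exponents $\tfrac{2}{1+2\sigma}$, $\tfrac{1}{\sigma}$, $2$ produces errors $h^{2/(1+2\sigma)}$, $(h\hbar^{-1})^{1/\sigma}$, $h^2$, each of which is $o(\hbar^2)$ under $\sigma<\tfrac14$ and the stated relation between $h$ and $\delta_h$. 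This more delicate splitting is where the hypothesis actually enters; with your cruder estimate there is nothing left to balance.
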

\begin{proof}
Recall the notation that $r_h$ represents the error terms of size $o_{L^2}(h\hbar^2)$, from Lemma \ref{erro1} and \eqref{normalform}, we have the equation
\begin{align}\label{normalform1} 
\big(P_h+ihQ_h+\frac{1}{2}[G_h,[G_h,P_h]]\big)v_h^{(1)}=r_h=o_{L^2}(h\hbar^2), 
\end{align}
where
$$ Q_h=\mathcal{A}(a)+\frac{i}{h}[h^2D_x^2,G_h].
$$
Multiplying by $\ov{v}_h^1$, integrating and taking the imaginary part, we have
$$ h(Q_hv_h^{(1)},v_h^{(1)})_{L^2}=\Im(v_h^{(1)},r_h)_{L^2}-\frac{1}{2}\Im([G_h,[G_h,P_h]]v_h^{(1)},v_h^{(1)})_{L^2}.
$$
Since $([G_h,[G_h,P_h]])^*-[G_h,[G_h,P_h]]=O_{L^2}(h^3)$, we have
\begin{align}\label{interpolation0} 
 |(Q_hv_h^{(1)},v_h^{(1)})_{L^2}|\leq o(\hbar^2).
\end{align}
To conclude, we need to estimate
$|(\frac{i}{h}[h^2D_x^2,G_h]v_h^{(1)},v_h^{(1)})_{L^2}|$.
Note that,
\begin{align}\label{expansion} \frac{i}{h}[h^2D_x^2,G_h]=2(\partial_xG_h)hD_x-ih(\partial_x^2G_h).
\end{align}
Recall that with $b(\eta)=-\frac{ \chi(\eta)}{4\eta}$, we have
$ \partial_x^jG_h=b(hD_y)(\partial_x^jA)+(\partial_x^jA)b(hD_y)
$ for $j=1,2$.
By Lemma \ref{dampingproperty}, 
$ |\partial_x^jA|\leq 4\pi\mathcal{A}(a)^{1-j\sigma}.
$
Thus for $j=1,2$, we can write\footnote{Note that $\mathcal{A}(a)$ commutes with $\partial_x^jG_h$.}
$$ \partial_x^jG_h=\frac{\partial_x^jG_h}{\mathcal{A}(a)^{1-j\sigma}}\mathcal{A}(a)^{1-j\sigma}.
$$
For $j=1,2$, the operator $\frac{\partial_x^jG_h}{\mathcal{A}(a)^{1-j\sigma}}$ is bounded on $L^2(\T^2)$, since $\sigma<\frac{1}{4}$, we have
\begin{align}\label{interpolation1}  |h(\partial_x^2G_hv_h^{(1)},v_h^{(1)})_{L^2}|\lesssim h\|\mathcal{A}(a)^{1-2\sigma}v_h^{(1)}\|_{L^2}\|v_h^{(1)}\|_{L^2}\lesssim h\|\mathcal{A}(a)^{\frac{1}{2}}v_h^{(1)}\|_{L^2}\|v_h^{(1)}\|_{L^2}.
\end{align}
Denote by $\widetilde{G}_h=\frac{\partial_xG_h}{\mathcal{A}(a)^{1-\sigma}}$, which is self-adjoint, uniformly bounded on $L^2$ and commutes with $\mathcal{A}(a)$, we have
\begin{align}\label{interpolation2} 
|(\partial_xG_hhD_xv_h^{(1)},v_h^{(1)})_{L^2}|=|(\widetilde{G}_h\mathcal{A}(a)^{\frac{1}{2}}\mathcal{A}(a)^{\frac{1}{2}-\sigma}hD_xv_h^{(1)},v_h^{(1)})_{L^2}|\lesssim &\|\mathcal{A}(a)^{\frac{1}{2}}hD_xv_h^{(1)}\|_{L^2}\|\mathcal{A}(a)^{\frac{1}{2}-\sigma}v_h^{(1)}\|_{L^2}.
\end{align}
From Lemma \ref{wavefront}, modulo an acceptable error $O_{L^2}(h^{3}),$ say, we may replace $hD_xv_h^{(1)}$ by $h\hbar^{-1}b_1(\hbar D_x)v_h^{(1)}$, with $b_1(\xi)=\xi \widetilde{\psi}(\xi)$. By the Corollary \ref{commutator} and the fact that $\mathcal{A}(a)^{\frac{1}{2}}\in W^{1,\infty}$ (since $\mathcal{A}(a)\in\mathcal{D}^{m,2,\sigma}$), we have
\begin{align*}
 \|\mathcal{A}(a)^{\frac{1}{2}}hD_xv_h^{(1)}\|_{L^2}\leq &h\hbar^{-1}\|[\mathcal{A}(a)^{\frac{1}{2}},b_1(\hbar D_x)]v_h^{(1)}\|_{L^2}+h\hbar^{-1}\|b_1(\hbar D_x)\mathcal{A}(a)^{\frac{1}{2}}v_h^{(1)}\|_{L^2}+O(h^3)\\
 \lesssim &h\|v_h^{(1)}\|_{L^2}+h\hbar^{-1}\|\mathcal{A}(a)^{\frac{1}{2}}v_h^{(1)}\|_{L^2}+O(h^3).
\end{align*}
Combining this with the interpolation:
$$ \|\mathcal{A}(a)^{\frac{1}{2}-\sigma}v_h^{(1)}\|_{L^2}\leq \|v_h^{(1)}\|_{L^2}^{2\sigma}\|\mathcal{A}(a)^{\frac{1}{2}}v_h^{(1)}\|_{L^2}^{1-2\sigma},
$$
we get from \eqref{interpolation2} that
$$ |(\partial_xG_hhD_xv_h^{(1)},v_h^{(1)})_{L^2}|\lesssim h\|v_h^{(1)}\|_{L^2}^{1+2\sigma}\|\mathcal{A}(a)^{\frac{1}{2}}v_h^{(1)}\|_{L^2}^{1-2\sigma}+h\hbar^{-1}\|v_h^{(1)}\|_{L^2}^{2\sigma}\|\mathcal{A}(a)^{\frac{1}{2}}v_h^{(1)}\|_{L^2}^{2-2\sigma}+O(h^3).
$$
Together with \eqref{interpolation0} and \eqref{interpolation1}, we finally get
\begin{align*}
\|\mathcal{A}(a)^{\frac{1}{2}}v_h^{(1)}\|_{L^2}^2\lesssim &h\|\mathcal{A}(a)^{\frac{1}{2}}v_h^{(1)}\|_{L^2}\|v_h^{(1)}\|_{L^2}+ h\|v_h^{(1)}\|_{L^2}^{1+2\sigma}\|\mathcal{A}(a)^{\frac{1}{2}}v_h^{(1)}\|_{L^2}^{1-2\sigma}\\+&h\hbar^{-1}\|v_h^{(1)}\|_{L^2}^{2\sigma}\|\mathcal{A}(a)^{\frac{1}{2}}v_h^{(1)}\|_{L^2}^{2-2\sigma}+o(\hbar^2).
\end{align*}
By Young's inequality $$K_1K_2\leq \epsilon K_1^{p}+C_{\epsilon}K_2^{p'},\quad \frac{1}{p}+\frac{1}{p'}=1, \;\epsilon>0,$$
we deduce that
$$ \|\mathcal{A}(a)^{\frac{1}{2}}v_h^{(1)}\|_{L^2}^2\leq \frac{1}{2}\|\mathcal{A}(a)^{\frac{1}{2}}v_h^{(1)}\|_{L^2}^2+C\max\{h^{2},(h\hbar^{-1})^{\frac{1}{\sigma}},h^{\frac{2}{1+2\sigma}}\}\|v_h^{(1)}\|_{L^2}^2+o(\hbar^2)
$$
Note that $\sigma< \frac{1}{4}$ since $\beta\geq 4$, and $h^3\delta_h^{-1}=o(1)$, the second term on the right hand side is $o(\hbar^2)$. This completes the proof of Lemma \ref{averagingdamping}.
\end{proof}

Finally, we note that the principal symbol of $-\frac{1}{h^2}[G_h,[G_h,P_h]]$ is
$$ q_0(x,y,\xi,\eta)=-\partial_{\eta}g\partial_y(2\xi\partial_xg+2\eta\partial_yg)+\partial_xg\partial_{\xi}(2\xi\partial_xg+2\eta\partial_yg)+\partial_yg\partial_{\eta}(2\xi\partial_xg+2\eta\partial_yg).
$$ 
Since $g(x,y,\eta)=2b(\eta)A(x,y)=2b(\eta)\int_{-\pi}^y(a(x,y')-\mathcal{A}(a)(x))dy'$, we deduce from Lemma \ref{dampingproperty}, the fact that $a,\mathcal{A}(a)\in \mathcal{D}^{m,2,\sigma},\sigma<\frac{1}{4}$, that
$$ |q_0(x,y,\xi,\eta)|^2\leq Ca(x,y)+C\mathcal{A}(a)(x)+q_1(x,y,\xi,\eta), 
$$
where supp$(q_1)\cap\mathrm{WF}_h^m(v_h^{(1)})=\emptyset$. Therefore, by the sharp G$\mathring{\mathrm{a}}$rding inequality,
$$ \|\frac{1}{h^2}[G_h,[G_h,P_h]]v_h^{(1)}\|_{L^2}\leq C\|a^{\frac{1}{2}}v_h^{(1)}\|_{L^2}+C\|\mathcal{A}(a)^{\frac{1}{2}}v_h^{(1)}\|_{L^2}+O(h^{\frac{1}{2}})=o(\delta_h).
$$
Hence
$[G_h,[G_h,P_h]]v_h^{(1)}=o_{L^2}(h^{2}\delta_h)=o_{L^2}(h\hbar^2).
$
The proof of Proposition \ref{1averaging} is now complete.
\end{proof}
\subsection{The second averaging}
In this subsection, we prove the following proposition of the second normal form reduction. Unlike the first normal form which is less perturbative (the operator $e^{G_h}$ is not close to the identity), we are able to make the normal form transform close to the identity, in the spirit of \cite{BZ4} (see also \cite{BuSun},\cite{LeS} for related applications to the Bouendi-Grushin operators):
\begin{prop}\label{2averaging} 
There exist a real-valued symbol $g_1(x,y,\eta)$ in $S^0$ and the associated pseudo-differential operator $G_{1,h}=\mathrm{Op}_h(g_1)$, a function $\kappa(x)\in W^{1,\infty}(\T_x)$, the Fourier multiplier $b(hD_y)$, such that $v_h^{(2)}:=(\mathrm{Id}-G_{1,h}hD_x)^{-1}v_h^{(1)}$ satisfies the equation
$$ (P_h+ih\mathcal{A}(a))v_h^{(2)}+ih\kappa(x)\mathcal{A}(a)^{\frac{1}{2}}b(hD_y)hD_xv_h^{(2)}=r_{4,h}=o_{L^2}(h\hbar^2).
$$
Moreover,
$$ \|v_h^{(2)}-v_h^{(1)}\|_{L^2}=O(h\hbar^{-1}),\quad \|\mathcal{A}(a)^{\frac{1}{2}}v_h^{(2)}\|_{L^2}=o(\hbar).
$$
\end{prop}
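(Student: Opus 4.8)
The plan is to carry out a second normal form reduction, in the spirit of \cite{BZ4} (see also \cite{BuSun},\cite{LeS}), applied to the equation produced by Proposition~\ref{1averaging}. Write $P_h^{\mathrm{av}}:=P_h+ih\mathcal{A}(a)$. First I would rewrite $P_hv_h^{(1)}+ih\mathcal{A}(a)v_h^{(1)}-[h^2D_x^2,G_h]v_h^{(1)}=\widetilde r_h=o_{L^2}(h\hbar^2)$ by expanding $-[h^2D_x^2,G_h]=2ih(\partial_xG_h)hD_x+h^2\partial_x^2G_h$, where $\partial_xG_h=\mathrm{Op}_h^w(-\tfrac{\psi_1(\eta)}{2\eta}\partial_xA)$ and $A=A(x,y)$ is as in Subsection~\ref{1normalform}. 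Using $|\partial_x^2A|\lesssim\mathcal{A}(a)^{1-2\sigma}\lesssim\mathcal{A}(a)^{1/2}$ (Lemma~\ref{dampingproperty}, $\sigma<\tfrac14$), the bounds $\|\mathcal{A}(a)^{1/2}v_h^{(1)}\|_{L^2}=o(\hbar)$, $\mathrm{WF}_h(v_h^{(1)})\subset\{\zeta=\zeta_0\}$ and $(1-\widetilde\psi(\hbar D_x))v_h^{(1)}=O_{L^2}(h^{N/2})$ of Proposition~\ref{1averaging}, together with the scale relations $h\ll\hbar$, $h^2\hbar\ll h\hbar^2$ valid for $\hbar=h^{1/2}\delta_h^{1/2}=h^{\frac12+\frac1{2\beta+5}}$, one checks $h^2\partial_x^2G_hv_h^{(1)}=o_{L^2}(h\hbar^2)$, so the equation becomes $(P_h^{\mathrm{av}}+2ih(\partial_xG_h)hD_x)v_h^{(1)}=o_{L^2}(h\hbar^2)$.

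Next I would build the conjugating operator. Splitting $\partial_xA=\mathcal{A}(\partial_xA)(x)+(\partial_xA-\mathcal{A}(\partial_xA)(x))$ into its $y$-average and a zero-average remainder, I solve the transport equation $2\eta\,\partial_yg_1=-\tfrac{\psi_1(\eta)}{\eta}(\partial_xA-\mathcal{A}(\partial_xA))$ by setting $g_1(x,y,\eta):=-\tfrac{\psi_1(\eta)}{2\eta^2}\int_{-\pi}^y(\partial_xA(x,y')-\mathcal{A}(\partial_xA)(x))\,dy'$, which is $2\pi$-periodic in $y$ and lies in $S^0$ with $|\partial_x^jg_1|,|\partial_yg_1|\lesssim\mathcal{A}(a)^{1-j\sigma}$ for $j\le2$ by Lemma~\ref{dampingproperty}. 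I put $G_{1,h}:=\mathrm{Op}_h(g_1)$ (quantised in $(y,\eta)$, with $x$ a parameter), $M_h:=G_{1,h}hD_x$, $L_h:=\mathrm{Id}-M_h$, $b(\eta):=-\tfrac{\psi_1(\eta)}{4\eta}$ (the multiplier already in $G_h$), $\kappa(x):=4\mathcal{A}(\partial_xA)(x)/\mathcal{A}(a)(x)^{1/2}$ (and $0$ where $\mathcal{A}(a)=0$), and $b_1(x,\eta):=\kappa(x)\mathcal{A}(a)(x)^{1/2}b(\eta)=-\tfrac{\psi_1(\eta)}{\eta}\mathcal{A}(\partial_xA)(x)$. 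The one genuinely geometric step is to check $\kappa\in W^{1,\infty}$: from $|\mathcal{A}(\partial_x^jA)|\le4\pi\mathcal{A}(a)^{1-j\sigma}$, $j=1,2$ (Lemma~\ref{dampingproperty}), from $\mathcal{A}(a)\in\mathcal{D}^{m,2,\sigma'}$ with $\sigma'=\tfrac{2\sigma}{\sigma+2}$ (Lemma~\ref{Jensen}, and the improved vanishing order of Proposition~\ref{convexaveraging}), and from $\sigma+\sigma'<\tfrac12$ (which holds as $\sigma<\tfrac14$), one obtains $|\kappa|\lesssim\mathcal{A}(a)^{\frac12-\sigma}$ and $|\partial_x\kappa|\lesssim\mathcal{A}(a)^{\frac12-2\sigma}+\mathcal{A}(a)^{\frac12-\sigma-\sigma'}\lesssim1$.

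Since $\mathrm{WF}_h(v_h^{(1)})$ lives at $|\xi|\lesssim\hbar$, the operator $M_h$ has norm $O(h\hbar^{-1})=o(1)$ on the relevant subspace, so $L_h$ is invertible there and $v_h^{(2)}:=L_h^{-1}v_h^{(1)}=\sum_{k\ge0}M_h^kv_h^{(1)}$ converges in $L^2$ with $\|v_h^{(2)}-v_h^{(1)}\|_{L^2}=O(h\hbar^{-1})$; moreover $v_h^{(2)}$ inherits $\mathrm{WF}_h(v_h^{(2)})\subset\mathrm{WF}_h(v_h^{(1)})$ and the $\hbar D_x$-localisation up to $O_{L^2}(h^{N/2})$ errors, so $\|(hD_x)^jv_h^{(2)}\|_{L^2}=O(h^j\hbar^{-j})$. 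Because $\mathcal{A}(a)^{1/2}$ is multiplication by a function of $x$, it commutes with $G_{1,h}$ and with Fourier multipliers in $y$; writing $\mathcal{A}(a)^{1/2}v_h^{(2)}=\mathcal{A}(a)^{1/2}v_h^{(1)}+\mathcal{A}(a)^{1/2}(v_h^{(2)}-v_h^{(1)})$ and pushing the weight through the Neumann series one gets $\|\mathcal{A}(a)^{1/2}v_h^{(2)}\|_{L^2}\le\|\mathcal{A}(a)^{1/2}v_h^{(1)}\|_{L^2}+O(h)=o(\hbar)$ by Proposition~\ref{1averaging}(a), and the same manipulation yields $\|\mathcal{A}(a)^{1/2}(hD_x)^jv_h^{(2)}\|_{L^2}=O(h^j\hbar^{1-j})$ for $j=1,2$ — the estimate that makes the final count work.

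Finally, I would establish the equation by conjugation. Applying $L_h^{-1}$ and using $v_h^{(1)}=L_hv_h^{(2)}$,
$$L_h^{-1}\big(P_h^{\mathrm{av}}+2ih(\partial_xG_h)hD_x\big)L_hv_h^{(2)}=o_{L^2}(h\hbar^2),\qquad L_h^{-1}BL_h=B+L_h^{-1}[M_h,B].$$
Here $[M_h,P_h^{\mathrm{av}}]=[G_{1,h},-h^2D_y^2]hD_x+[G_{1,h},-h^2D_x^2]hD_x+h^2G_{1,h}\partial_x\mathcal{A}(a)$, and by the choice of $g_1$ the term $[G_{1,h},-h^2D_y^2]hD_x$ exactly kills the $y$-dependent part of $2ih(\partial_xG_h)hD_x$, the remaining $y$-independent part being $ih\,\mathrm{Op}_h(b_1(x,\eta)\xi)=ih\kappa(x)\mathcal{A}(a)^{1/2}b(hD_y)hD_x$ modulo lower order. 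Every other term — $[G_{1,h},-h^2D_x^2]hD_x$, $h^2G_{1,h}\partial_x\mathcal{A}(a)$, $L_h^{-1}[M_h,2ih(\partial_xG_h)hD_x]$, and the higher Neumann contributions $M_h^k[M_h,\cdot]$, $k\ge1$ — I would bound by $o_{L^2}(h\hbar^2)$ on $v_h^{(2)}$: extract a factor $\mathcal{A}(a)^{1/2}$ (or $a^{1/2}$) from the coefficient using $|\partial_xg_1|,|\partial_yg_1|,|\partial_xA|\lesssim\mathcal{A}(a)^{1/2}$, $|\partial_xa|\lesssim a^{1/2}$, $|\partial_x\mathcal{A}(a)|\lesssim\mathcal{A}(a)^{1/2}$, move it (commuting past the $\Psi$DO in $y$ and the Fourier multipliers, at the cost of $O(h)$-commutators) to act directly on $v_h^{(2)}$, and combine with $\|\mathcal{A}(a)^{1/2}(hD_x)^jv_h^{(2)}\|_{L^2}=O(h^j\hbar^{1-j})$, $\|(hD_x)^jv_h^{(2)}\|_{L^2}=O(h^j\hbar^{-j})$ and the scale inequalities $h\hbar^{-1}\to0$, $h^2\hbar\ll h\hbar^2$, $h^3\hbar^{-1}\ll h\hbar^2$ (all valid for $\hbar=h^{\theta}$, $\tfrac12<\theta=\tfrac12+\tfrac1{2\beta+5}<\tfrac23$ when $\beta>4$). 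This gives the asserted equation with $r_{4,h}=o_{L^2}(h\hbar^2)$, the two stated bounds on $v_h^{(2)}$ having been obtained along the way. The hard part is precisely this last accounting: a crude $L^2$ bound makes several error terms of size $O(h^2)$, which is \emph{larger} than $h\hbar^2=h^2\delta_h$, so one is forced to carry $\mathcal{A}(a)^{1/2}$-weights through every commutator — exactly as in the proof of Lemma~\ref{averagingdamping} — in order to invoke $\|\mathcal{A}(a)^{1/2}v_h^{(2)}\|_{L^2}=o(\hbar)$; a secondary benefit of phrasing everything via $L_h^{-1}(\cdot)L_h$ rather than direct substitution is that it avoids estimating $G_{1,h}hD_xP_h^{\mathrm{av}}v_h^{(2)}$, which would otherwise need an a priori control of $\|P_h^{\mathrm{av}}v_h^{(2)}\|_{L^2}$.
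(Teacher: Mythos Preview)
Your approach is essentially the same as the paper's: identify the principal part of $[h^2D_x^2,G_h]$, solve the transport equation in $y$ to define $g_1$, conjugate by $\mathrm{Id}-G_{1,h}hD_x$, and check that every remaining term is $o_{L^2}(h\hbar^2)$ by extracting a factor $\mathcal{A}(a)^{1/2}$ and invoking $\|\mathcal{A}(a)^{1/2}v_h^{(2)}\|_{L^2}=o(\hbar)$. Two small points. First, your appeal to Proposition~\ref{convexaveraging} for $\kappa\in W^{1,\infty}$ is unnecessary: the paper uses only Lemma~\ref{comparison} and Lemma~\ref{dampingproperty}, which give $|\mathcal{A}(\partial_x^jA)|\lesssim\mathcal{A}(a)^{1-j\sigma}$ for $j=1,2$ and $|\partial_x\mathcal{A}(a)|\lesssim\mathcal{A}(a)^{1-\sigma}$ (Lemma~\ref{Jensen}), so both terms in $\partial_x\kappa$ are bounded by $\mathcal{A}(a)^{\frac12-2\sigma}$, which is finite since $\sigma<\tfrac14$; convexity plays no role at this step. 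Second, to make $L_h$ globally invertible on $L^2$ (so that the Neumann series converges as an operator identity and not just ``on the relevant subspace''), the paper inserts the cutoff $\widetilde\psi(\hbar D_x)$ into the definition of $G_{1,h}$, i.e.\ takes $G_{1,h}=\mathrm{Op}_h(g_1)\widetilde\psi(\hbar D_x)$; you should do the same. With these adjustments your sketch matches the paper's argument (up to an immaterial sign in the definition of $\kappa$).
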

The importance of the above proposition is that it makes possible to take the Fourier transform in $y$ variable and reduce the equation of $v_h^{(2)}$ mode-by-mode to one-dimensional ordinary differential equations.

To prove Proposition \ref{2averaging}, we want to average the non self-adjoint part $[h^2D_x^2,G_h]$ in the equation
$$ (P_h+i\mathcal{A}(a))v_h^{(1)}-[h^2D_x^2,G_h]v_h^{(1)}=\widetilde{r}_h=o_{L^2}(h\hbar^2).
$$
We first identify the principal part of this lower order non-selfadjoint part:
\begin{lem}\label{secondaveraginglemma1} 
We have
$$ [h^2D_x^2,G_h]v_h^{(1)}=-4ih^2\hbar^{-1}(\partial_xA)b(hD_y)\hbar D_xv_h^{(1)}+o_{L^2}(h\hbar^2).
$$
Moreover,
$$ [h^2D_x^2,G_h]v_h^{(1)}=o_{L^2}(h^2).
$$
\end{lem}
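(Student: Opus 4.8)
The plan is to start from an exact identity for the commutator and then estimate the lower-order pieces against the scale $h\hbar^2=h^2\delta_h$. Recall from the construction in Proposition \ref{1averaging} that $G_h=b(hD_y)A+Ab(hD_y)$ with $b(\eta)=-\psi_1(\eta)/(4\eta)$ and $A=A(x,y)$. Since $h^2D_x^2$ commutes with the Fourier multiplier $b(hD_y)$, we have $[h^2D_x^2,G_h]=b(hD_y)[h^2D_x^2,A]+[h^2D_x^2,A]b(hD_y)$, and Leibniz's rule (valid since $A\in C^2$) gives the exact formula $[h^2D_x^2,A]=-2ih(\partial_xA)(hD_x)-h^2(\partial_x^2A)$. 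Commuting $hD_x$ through $b(hD_y)$ and writing $b(hD_y)(\partial_xA)+(\partial_xA)b(hD_y)=2(\partial_xA)b(hD_y)+[b(hD_y),\partial_xA]$, one arrives at the exact identity
\begin{align*}
[h^2D_x^2,G_h]=&-4ih^2\hbar^{-1}(\partial_xA)b(hD_y)(\hbar D_x)-2ih[b(hD_y),\partial_xA](hD_x)\\
&-h^2\big(b(hD_y)(\partial_x^2A)+(\partial_x^2A)b(hD_y)\big),
\end{align*}
using $hD_x=h\hbar^{-1}(\hbar D_x)$; the first term is the claimed principal part. It remains to prove that the last two terms applied to $v_h^{(1)}$ are $o_{L^2}(h\hbar^2)$, and, for the second assertion, that the principal term applied to $v_h^{(1)}$ is even $o_{L^2}(h^2)$.

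For the $h^2$-term I would use Lemma \ref{dampingproperty}: $|\partial_x^2A|\le 4\pi\mathcal{A}(a)^{1-2\sigma}\lesssim\mathcal{A}(a)^{1/2}$ since $\sigma<\tfrac14$; as $\mathcal{A}(a)$ depends only on $x$ it commutes with $b(hD_y)$, so both $(\partial_x^2A)b(hD_y)v_h^{(1)}$ and $b(hD_y)(\partial_x^2A)v_h^{(1)}$ are dominated in $L^2$ by $\|\mathcal{A}(a)^{1/2}v_h^{(1)}\|_{L^2}=o(\hbar)$ from Proposition \ref{1averaging}(a); multiplying by $h^2$ gives $o(h^2\hbar)=o(h\hbar^2)$, because $h=o(\hbar)$. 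For the commutator term, $\partial_y\partial_xA=\partial_xa-\partial_x\mathcal{A}(a)\in L^\infty$ by Lemma \ref{dampingproperty} (with $j_1=j_2=1$) and Lemma \ref{Jensen}, so the symbolic calculus yields $\|[b(hD_y),\partial_xA]\|_{\mathcal{L}(L^2)}=O(h)$; combined with $hD_xv_h^{(1)}=h\hbar^{-1}b_1(\hbar D_x)v_h^{(1)}$ modulo negligible terms (Proposition \ref{1averaging}(c), $b_1(\xi)=\xi\widetilde\psi(\xi)$), this term is $O(h^3\hbar^{-1})=o(h\hbar^2)$ since $\delta_h=h^{2/(2\beta+5)}$ with $\beta>\tfrac12$. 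This gives the first identity.

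For the second assertion, note $o(h\hbar^2)=o(h^2\delta_h)\subset o(h^2)$, so it suffices to bound the principal term by $o_{L^2}(h^2)$, i.e.\ to prove $\|(\partial_xA)b(hD_y)(\hbar D_x)v_h^{(1)}\|_{L^2}=o(\hbar)$. Using $|\partial_xA|\le 4\pi\mathcal{A}(a)^{1-\sigma}$ and that $\mathcal{A}(a)$ commutes with $b(hD_y)$, this reduces to $\|\mathcal{A}(a)^{1-\sigma}(\hbar D_x)v_h^{(1)}\|_{L^2}=o(\hbar)$. Commuting $\mathcal{A}(a)^{1-\sigma}$ past $\hbar D_x$ produces $\tfrac{\hbar}{i}(\partial_x\mathcal{A}(a)^{1-\sigma})v_h^{(1)}$, which is $\hbar\cdot o(\hbar)$ since $|\partial_x\mathcal{A}(a)^{1-\sigma}|\lesssim\mathcal{A}(a)^{1-2\sigma}\lesssim\mathcal{A}(a)^{1/2}$; replacing $\hbar D_xv_h^{(1)}$ by $b_1(\hbar D_x)v_h^{(1)}$ modulo negligible terms, the contribution $b_1(\hbar D_x)\mathcal{A}(a)^{1-\sigma}v_h^{(1)}$ is $o(\hbar)$ by Proposition \ref{1averaging}(a), so everything reduces to the commutator $[\mathcal{A}(a)^{1-\sigma},b_1(\hbar D_x)]v_h^{(1)}$. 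This is the one delicate point: a crude bound yields only $O(\hbar)$, which is insufficient. The remedy is to exploit $(\partial_x\mathcal{A}(a)^{1-\sigma})^2\lesssim\mathcal{A}(a)^{2-4\sigma}\lesssim\mathcal{A}(a)$ (again $\sigma<\tfrac14$) and apply the sharp G$\mathring{\mathrm{a}}$rding inequality together with $(\mathcal{A}(a)v_h^{(1)},v_h^{(1)})_{L^2}=\|\mathcal{A}(a)^{1/2}v_h^{(1)}\|_{L^2}^2=o(\hbar^2)$, which upgrades this commutator to $O(\hbar^{3/2})=o(\hbar)$ --- the same mechanism already used in the proof of Lemma \ref{averagingdamping}. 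Collecting the pieces then gives $\|[h^2D_x^2,G_h]v_h^{(1)}\|_{L^2}=o(h^2)$.
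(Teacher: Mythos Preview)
Your argument for the first identity is correct and essentially parallels the paper's. The only cosmetic difference is that the paper groups the terms as $[[h^2D_x^2,A],b(hD_y)]+2b(hD_y)[h^2D_x^2,A]$ and disposes of the double commutator via sharp G\r{a}rding (getting $O(h^{5/2})$), whereas you symmetrize first and bound the resulting single commutator $[b(hD_y),\partial_xA]$ by the Lipschitz estimate of Corollary~\ref{commutator} (getting $O(h^3\hbar^{-1})$). Both bounds are comfortably $o(h\hbar^2)$, and your route is arguably more elementary.

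For the ``Moreover'' part the paper actually gives no argument, so your contribution here is genuine. Your reduction to $\|\mathcal A(a)^{1-\sigma}(\hbar D_x)v_h^{(1)}\|=o(\hbar)$ is correct, and the sharp G\r{a}rding upgrade of the commutator $[\mathcal A(a)^{1-\sigma},b_1(\hbar D_x)]v_h^{(1)}$ from $O(\hbar)$ to $O(\hbar^{3/2})$---using $(\partial_x\mathcal A(a)^{1-\sigma})^2\lesssim\mathcal A(a)$ together with $\|\mathcal A(a)^{1/2}v_h^{(1)}\|=o(\hbar)$---is exactly the mechanism already exploited in the proof of Lemma~\ref{averagingdamping}. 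One small expositional point: the sentence beginning ``Commuting $\mathcal A(a)^{1-\sigma}$ past $\hbar D_x$'' reads as if you first commute with the unbounded multiplier $\hbar D_x$ and only afterwards localize; it is cleaner (and avoids an apparent issue with the unbounded symbol $\xi(1-\widetilde\psi(\xi))$) to first replace $\hbar D_x v_h^{(1)}$ by $b_1(\hbar D_x)v_h^{(1)}$ via Proposition~\ref{1averaging}(c), and then commute $\mathcal A(a)^{1-\sigma}$ past the \emph{bounded} multiplier $b_1(\hbar D_x)$. With that ordering your argument is complete.
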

\begin{proof}
Recall that $G_h=b(hD_y)A+Ab(hD_y)$, hence $$[h^2D_x^2,G_h]=[[h^2D_x^2,A],b(hD_y)]+2b(hD_y)[h^2D_x^2,A].$$
The principal symbol of $-\frac{1}{h^2}[[h^2D_x^2,A],b(hD_y)]$ is $$q_2(x,y,\xi,\eta)=-2\xi b'(\eta)(\partial_xa-\mathcal{A}'(a)(x)).$$
Thus from Lemma \ref{dampingproperty}, $|\partial_x a|+|\mathcal{A}'(a)|\leq Ca^{\frac{1}{2}}+C\mathcal{A}(a)^{\frac{1}{2}}$, thus
$$ |q_2(x,y,\xi,\eta)|^2\leq Ca(x,y)+C\mathcal{A}(a)(x)+q_3(x,y,\xi,\eta),
$$
where supp$(q_3)\cap\mathrm{WF}_h^{m}(v_h^1)=\emptyset$. By the sharp G$\mathring{\mathrm{a}}$rding inequality (Theorem \ref{Garding}),
$$ \|[[h^2D_x^2,A],b(hD_y)]v_h^{(1)}\|_{L^2}\leq O(h^{\frac{5}{2}})=o(h\hbar^2).
$$

It remains to treat $b(hD_y)[h^2D_x^2,A]$.
Note that
$ [h^2D_x^2,A]=h^2(D_x^2A)+2h(D_xA)hD_x.
$ From Lemma \ref{wavefront} we may replace $v_h^{(1)}$ by $\widetilde{\psi}(\hbar D_x)v_h^{(1)}$. 
Therefore,
$$ [h^2D_x^2,A]v_h^{(1)}=h^2(D_x^2A)v_h^{(1)}+2h^2\hbar^{-1}b(hD_y)(D_xA)\hbar D_x\widetilde{\psi}(\hbar D_x)v_h^{(1)}+O_{L^2}(h^3).
$$
From Lemma \ref{dampingproperty}, $|D_x^jA|\lesssim \mathcal{A}(a)^{1-j\sigma}\leq \mathcal{A}(a)^{\frac{1}{2}}$ for $j=1,2$, we have $$h^2b(hD_y)(D_x^2A)v_h^{(1)}=o_{L^2}(h^2\hbar).$$ Next, we write
$$ hb(hD_y)(D_xA)hD_x=h^2\hbar^{-1}[b(hD_y),D_xA]\hbar D_x+h^2\hbar^{-1}(D_xA)b(hD_y)\hbar D_x.
$$
By the symbolic calculus, $h^2\hbar[b(hD_y),D_xA]\hbar D_x\widetilde{\psi}(\hbar D_x)v_h^{(1)}=O_{L^2}(h^{3}\hbar^{-1}).$ This completes the proof of Lemma \ref{secondaveraginglemma1}. 
\end{proof}
\begin{proof}[Proof of Proposition \ref{secondaveraginglemma1}]
Thanks to Lemma \ref{secondaveraginglemma1}, we can write
$$ (P_h+ih\mathcal{A}(a))v_h^{(1)}+ihQ_{1,h}hD_xv_h^{(1)}=r_{2,h}=o_{L^2}(h\hbar^2),
$$
where
$$ Q_{1,h}=-4(\partial_x A)b(hD_y),\quad b(\eta)=-\frac{\chi(\eta)}{4\eta}.
$$
Note that the principal symbol of $Q_{1,h}$ is independent of $\xi$ variable. Now we perform a second normal form transform. Recall that $$A(x,y)=\int_{-\pi}^y(a(x,y')-\mathcal{A}(a)(x))dy'.$$
Consider the ansatz $v_h^{(1)}=(1-G_{1,h}hD_x)v_h^{(2)}$, where $G_{1,h}$ will be chosen such that  $G_{1,h}h D_x=\mathcal{O}_{\mathcal{L}(L^2)}(h\hbar^{-1})$. The new quasi-modes $v_h^{(2)}$ satisfy the equation
\begin{align*}
(P_h+ih\mathcal{A}(a)+ihQ_{1,h}hD_x)v_h^{(2)}-[h^2D_y^2,G_{1,h}hD_x]v_h^{(2)}=r_{3,h}
\end{align*}
where
\begin{align}\label{rest} 
r_{3,h}=&(1-G_{1,h}hD_x)^{-1}r_{2,h}+(1-G_{1,h}hD_x)^{-1}[h^2D_x^2+ih\mathcal{A}(a)+ihQ_{1,h}hD_x, G_{1,h}hD_x]v_h^{(2)}\notag \\
+&\big((1-G_{1,h}hD_x)^{-1}-\mathrm{Id}\big)[h^2D_y^2,G_{1,h}hD_x]v_h^{(2)}.
\end{align}
Note that if  $G_{1,h}hD_x=\mathcal{O}_{\mathcal{L}(L^2)}(h\hbar^{-1})$, the operator $(1-G_{1,h}hD_x)$ is invertible for sufficiently small $h$ (thus $\hbar$). In particular,
$$ v_h^{(2)}=(1-G_{1,h}hD_x)^{-1}v_h^{(1)}=\sum_{n=0}^{\infty}(G_{1,h}hD_x)^nv_h^{(1)}=\sum_{n=0}^{9}(G_{1,h}hD_x)^nv_h^{(1)}+O_{L^2}(h^{10}\hbar^{-10}),
$$
where the last error term is $o_{L^2}(h\hbar^2)$. 
 Since from Lemma \ref{wavefront}, we may replace $v_h^{(1)}$ by $\widetilde{\psi}(\hbar D_x)v_h^{(1)}$, modulo an error of $\mathcal{O}_{L^2}(h^\frac{N}{2})$ for any $N\leq m$, we may also replace $v_h^{(2)}$ by $\widetilde{\psi}(\hbar D_x)v_h^{(2)}$ implicitly in the argument below.
Therefore, with $G_{1,h}=\mathrm{Op}_h(g_1)\widetilde{\psi}(\hbar D_x)$, we have
$$ (P_h+ih\mathcal{A}(a))v_h^{(2)}+ihQ_{1,h}hD_xv_h^{(2)}+ih\mathrm{Op}_h(\{\eta^2,g_{1}\})hD_xv_h^{(2)}
+h^2C_hhD_xv_h^{(2)}=r_{3,h},
$$
where $C_h$ is uniformly bounded on $L^2(\T^2)$. Note that the last term on the right hand size is of size $O_{L^2}(h^3\hbar^{-1})$. Now we set\footnote{Recall that the support of $b(\eta)$ is away from $\eta=0$.}
$$ g_1(x,y,\eta)=\frac{2b(\eta)}{\eta}\int_{-\pi}^y\big(\partial_xA(x,y')-\mathcal{A}(\partial_xA)(x)\big)dy'.
$$
Then we have $2\eta\partial_yg_1-4(\partial_xA)b(\eta)=-4\mathcal{A}(\partial_xA)(x)b(\eta)$. By the symbolic calculus, modulo an error of size $O_{L^2}(h^3\hbar^{-1})$, we can replace $ih(Q_{1,h}+\mathrm{Op}_h(\{\eta^2,g_1\}))hD_xv_h^{(2)}$ by $-4ih\mathcal{A}(\partial_x A)b(hD_y)\widetilde{\psi}(\hbar D_x)hD_xv_h^{(2)}.$ 
Therefore, the equation of  $v_h^{(2)}$ becomes
$$ (P_h+ih\mathcal{A}(a)-4ih\mathcal{A}(\partial_xA)b(hD_y)hD_x)v_h^{(2)}=r_{4,h},
$$
where
$$ r_{4,h}=r_{3,h}+O_{L^2}(h^3\hbar^{-1})=r_{3,h}+o_{L^2}(h\hbar^2).
$$
It is clear that
$ \|v_h^{(2)}-v_h^{(1)}\|_{L^2}=O(h\hbar^{-1})=o(1),$ and from the relation $$v_h^{(2)}=v_h^{(1)}+G_{1,h}hD_xv_h^{(1)}+O_{L^2}(h^2\hbar^{-2}),$$
we deduce that $\mathcal{A}(a)^{\frac{1}{2}}v_h^{(2)}=o_{L^2}(\hbar)$.

Set 
$$ \kappa:=-\frac{4\mathcal{A}(\partial_x A)}{\mathcal{A}(a)^{\frac{1}{2}}},
$$
to complete the proof, we need to verify that
\begin{itemize}
	\item[(i)] $\kappa\in W^{1,\infty}(\T_x)$;
	\item[(ii)] $r_{3,h}=o_{L^2}(h\hbar^2)$.
\end{itemize}
To verify (i), observe that $\mathcal{A}'(f)(x)=\mathcal{A}(\partial_xf)(x)$. By Lemma \ref{comparison}, Lemma \ref{dampingproperty} and the fact that $\sigma<\frac{1}{4}$, we have
$$ |\mathcal{A}(\partial_x^jA)|\leq \mathcal{A}(|\partial_x^j A|)\leq C\mathcal{A}(a)^{1-j\sigma}\leq C\mathcal{A}(a)^{\frac{1}{2}},\;\forall j=1,2.
$$
This shows that $\kappa,\kappa'$ are bounded. 

It remains to prove (ii). Recall \eqref{rest} and the fact that $(1-G_{1,h}hD_x)^{-1}-\mathrm{Id}=\mathcal{O}_{\mathcal{L}(L^2)}(h\hbar^{-1})$, 
it suffices to show that 
\begin{align}\label{(ii)term1}
 [h^2D_x^2+ih\mathcal{A}(a)+ihQ_{1,h}hD_x,G_{1,h}hD_x]v_h^{(2)}=o_{L^2}(h\hbar^2)
\end{align}
and
\begin{align}\label{(ii)term2}
 [h^2D_y^2,G_{1,h}hD_x]v_h^{(2)}=o_{L^2}(\hbar^3).
\end{align}
Denote by
$$ A_2(x,y)=\int_{-\pi}^y\big(\partial_xA(x,y')-\mathcal{A}(\partial_xA)(x)\big)dy'.
$$
Pointwise, we have
$$ |A_2|\lesssim \mathcal{A}(|\partial_x A|)\lesssim \mathcal{A}(a)^{1-\sigma},\quad |\partial_x A_2|\lesssim \mathcal{A}(|\partial_x^2A|)\lesssim \mathcal{A}(a)^{1-2\sigma} 
$$
and
$$ |\partial_yA_2|\lesssim |\partial_xA|+\mathcal{A}(|\partial_xA|)\lesssim \mathcal{A}(a)^{1-\sigma},
$$
thanks to Lemma \ref{dampingproperty}. Therefore,
\begin{align}\label{DampingA2} 
 |\nabla^jA_2|v_h^{(2)}=o_{L^2}(\hbar),\quad j=0,1.
\end{align}
Note that by the symbolic calculus,
$$ ih[Q_{1,h}hD_x, G_{1,h}hD_x]=ih(h\hbar^{-1})^2[Q_{1,h}\hbar D_x, G_{1,h}\hbar D_x] =\mathcal{O}_{\mathcal{L}(L^2)}(h^3\hbar^{-1}),
$$
which is $o_{\mathcal{L}(L^2)}(h\hbar^2)$ since $h^2=o(\hbar^3)$.
For the other terms, if we only apply the symbolic calculus, we will gain only $\mathcal{O}(h^2)+\mathcal{O}(h^3\hbar^{-2})$ for \eqref{(ii)term1} and $\mathcal{O}(h^2\hbar^{-1})$ for \eqref{(ii)term2}, which are not enough to conclude. We need to open the definition of $G_{1,h}$. Since $h^2=o(\hbar^3)$, it suffices to take into account the principal part of $G_{1,h}$. Therefore, without loss of generality, we assume that $G_{1,h}=A_2(x,y)b_1(hD_y)\widetilde{\psi}(\hbar D_x)$, with $b_1(\eta)=\frac{2b(\eta)}{\eta}$. Note that any commutator will generate at least one more $\hbar$, the main contribution of $[h^2D_x^2,G_{1,h}hD_x]$ is $-2ihb_1(hD_y)\widetilde{\psi}(\hbar D_x)h^2D_x^2(\partial_xA_2)v_h^{(2)}$ whose $L^2$ norm is $o(h^3\hbar^{-1})=o(h\hbar^2)$, thanks to \eqref{DampingA2}. Similarly, modulo acceptable errors from the commutators, the main contribution of $ih[\mathcal{A}(a),G_{1,h}hD_x]v_h^{(2)}$ is $$ih^2\hbar^{-1}b_1(hD_y)[\mathcal{A}(a),\widetilde{\psi}(\hbar D_x)\hbar D_x]A_2v_h^{(2)},$$
whose $L^2$ norm is, thanks to \eqref{DampingA2}, bounded by $O(h^2)\|A_2v_h^{(2)}\|_{L^2}=o(h\hbar^2)$. This verifies \eqref{(ii)term1}. By the same argument, to verify \eqref{(ii)term2}, we note that, modulo acceptable errors from commutators, the main contribution of $[h^2D_y^2,G_{1,h}hD_x]v_h^{(2)}$ is
$$ -ih\hbar^{-1}2h\cdot hD_yb_1(hD_y)\widetilde{\psi}(\hbar D_x)\hbar D_x (\partial_yA_2)v_h^{(2)},
$$
which is of size $o_{L^2}(h^2)=o_{L^2}(\hbar^3)$, by \eqref{DampingA2}. This verifies \eqref{(ii)term2} and the proof of Proposition \ref{2averaging} is now complete.
\end{proof}
\section{One-dimensional resolvent estimate}\label{sec:1D}

From Proposition \ref{2averaging}, $v_h^{(2)}$ satisfies the equation
\begin{align}\label{finalequation}  (P_h+ih\mathcal{A}(a))v_h^{(2)}+ih\kappa(x)\mathcal{A}(a)^{\frac{1}{2}}b(hD_y)hD_xv_h^{(2)}=r_{4,h}=o_{L^2}(h\hbar^2),
\end{align}
and $\|v_h^{(2)}\|_{L^2}=O(1)$, $\|\mathcal{A}(a)^{\frac{1}{2}}v_h^{(2)}\|_{L^2}=o(\hbar)$. In this section, we are going to show that $\|v_h^{(2)}\|_{L^2}=o(1)$. Since the left hand side of \eqref{finalequation} commutes with $D_y$, by taking the Fourier transform in $y$, are can reduce the analysis to a sequence of one-dimensional problems. 
\subsection{1D resolvent estimate for the H\"older damping}

In order to finish the proof of Theorem \ref{thm:resolvent1}, it remains to prove a one-dimensional resolvent estimate. Below we establish a slightly more general version. By abusing a bit the notation, we denote by $v_{h,E}\in H^2(\T_x)$, solutions of equations
\begin{align}\label{eq:1D}
-h^2\partial_x^2v_{h,E}-Ev_{h,E}+ihW(x)v_{h,E}+h^2\kappa_{h,E}(x)W(x)^{\frac{1}{2}}\partial_xv_{h,E}=r_{h,E}.
\end{align}
We assume that $(\kappa_{h,E})_{h>0,E\in\R}$ is a uniform bounded family in $W^{1,\infty}(\T;\R)$. 
\begin{prop}\label{1DresolventHolder} 
Assume that $W\in \mathcal{D}^{m,2,\theta}(\T_x)$, $\theta\leq \frac{1}{4}$ be a non-negative function such that the set $\{W(x)>0\}$ is a disjoint unions of finitely many intervals $I_j=(\alpha_j,\beta_j)\subset \T_x$, $j=1,\cdots,l$ and 
\begin{align}\label{Holder1} 
C^{-1}(x-\alpha_j)_{+}^{\frac{1}{\theta}}\leq W(x)\leq C(x-\alpha_j)_+^{\frac{1}{\theta}}\; \text{ in $I_j$ near } \alpha_j
\end{align}
and
\begin{align}\label{Holder2} 
 C^{-1}(\beta_j-x)_+^{\frac{1}{\theta}}\leq W(x)\leq C(\beta_j-x)_+^{\frac{1}{\theta}}\;\text{ in $I_j$ near }\beta_j,
\end{align}
for all $j\in\{1,\cdots,l\}$. Then there exists $h_0\in(0,1)$ and $C_0>0$, such that for all $h\in(0,h_0)$ and all $E\in\R$, the solutions $v_{h,E}$ of \eqref{eq:1D} satisfy the uniform estimate
\begin{align}\label{1Du niform}
\|v_{h,E}\|_{L^2}\leq C_0h^{-2-\frac{\theta}{2\theta+1}}\|r_{h,E}\|_{L^2}+C_0h^{-\frac{3\theta+1}{2(2\theta+1)}}\|W^{\frac{1}{2}}v_{h,E}\|_{L^2}.
\end{align}
\end{prop}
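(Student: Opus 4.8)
The plan is to reproduce, in this one-dimensional generalized setting, the scheme of Burq--Hitrik \cite{BuHi} and Datchev--Kleinhenz--Kleinhenz \cite{DKl,Kl} — carried out in full for the translation-invariant two-dimensional problem in Appendix A — and to check that the extra first-order term $h^2\kappa_{h,E}(x)W(x)^{1/2}\partial_x$ enters only as a lower-order perturbation. A preliminary reduction disposes of the trivial ranges of $E$: for $E\le -c_0<0$ the operator $-h^2\partial_x^2-E$ is elliptic with lower bound $|E|$, and pairing \eqref{eq:1D} with $v_{h,E}$ and taking the real part — after integrating the transport term by parts and using $|(\kappa_{h,E}W^{1/2})'|\lesssim 1$, which holds since $W\in\mathcal D^{m,2,\theta}$ with $\theta\le\frac14$ — already gives $\|v_{h,E}\|_{L^2}\lesssim h^{-2}\|r_{h,E}\|_{L^2}$; for $|E|$ very small one splits $v_{h,E}$ into its constant Fourier mode, on which \eqref{eq:1D} reduces to a scalar identity damped by $ih\bar W$ with $\bar W=\frac1{2\pi}\int_{\mathbb T}W>0$, and its mean-zero part, on which $-h^2\partial_x^2-E$ has spectral gap $\gtrsim h^2$; and for $E$ in a fixed compact subinterval of $(0,\infty)$ (in the application only $E\le1$ occurs) the dilation $\tilde h=h/\sqrt E$ together with division of \eqref{eq:1D} by $E$ normalizes $E$ to $1$, replacing $W$ by $W/\sqrt E$ and $\kappa_{h,E}$ by $E^{1/4}\kappa_{h,E}$ (both still admissible uniformly), and since $\gamma:=\frac{\theta}{2\theta+1}\ge0$ and $\delta:=\frac{3\theta+1}{2(2\theta+1)}\ge\frac12$ the estimate \eqref{1Duniform} transfers back. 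So it remains to treat $E=1$.

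With $E=1$, write $\{W>0\}=\bigsqcup_jI_j$, $I_j=(\alpha_j,\beta_j)$, and let $\{J_k\}$ be the complementary (``undamped'') open intervals; there are finitely many of each. The argument rests on a few standard ingredients. An energy identity: pairing \eqref{eq:1D} with $v_{h,E}$ and taking imaginary parts gives $h\|W^{1/2}v_{h,E}\|_{L^2}^2\lesssim|(r_{h,E},v_{h,E})|+h^2|\Im(\kappa_{h,E}W^{1/2}\partial_xv_{h,E},v_{h,E})|$, and an integration by parts in the last term together with $|\partial_xW|\lesssim W^{1-\theta}$ bounds its contribution by $O(h)\|v_{h,E}\|_{L^2}^2$, to be absorbed at the end. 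On each undamped interval $J_k$ the equation is merely the inhomogeneous Helmholtz equation $-h^2\partial_x^2v_{h,E}-v_{h,E}=r_{h,E}$ (both $W$ and $W^{1/2}$ vanish there, so the damping and the transport term are absent), so $\|v_{h,E}\|_{L^2(J_k)}$ is controlled by a particular solution of size $O(h^{-1}\|r_{h,E}\|_{L^2(J_k)})$ plus the homogeneous part, which is pinned down by the Cauchy data $(v_{h,E},h\partial_xv_{h,E})$ at the endpoints of $J_k$, i.e.\ at the adjacent endpoints of the damped intervals. And on $\{W\ge c>0\}$ (away from the endpoints of the $I_j$) a positive-commutator argument of GCC-type controls $\|v_{h,E}\|_{L^2}+\|h\partial_xv_{h,E}\|_{L^2}$ there by $\|r_{h,E}\|_{L^2}+\|W^{1/2}v_{h,E}\|_{L^2}$.

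The hard part is the quantitative transmission estimate near an endpoint $\alpha$ of a damped interval $I=(\alpha,\beta)$, where $W(x)\sim(x-\alpha)^{1/\theta}$: one must bound the transmitted Cauchy data at $\beta$, and the interior mass $\|v_{h,E}\|_{L^2(I)}$, in terms of the incoming Cauchy data at $\alpha$, $\|W^{1/2}v_{h,E}\|_{L^2(I)}$ and $\|r_{h,E}\|_{L^2(I)}$, with controlled losses of powers of $h$. In the boundary layer $|x-\alpha|\lesssim h^{\rho}$ one has $W\lesssim h^{\rho/\theta}$, and balancing the kinetic term $h^2\partial_x^2\leadsto h^{2-2\rho}$ against the damping $hW\leadsto h^{1+\rho/\theta}$ forces $\rho=\frac{\theta}{2\theta+1}=\gamma$; at that scale \eqref{eq:1D} rescales to a fixed model equation modulo $O(h^{1/2})$, and unwinding produces precisely the losses $h^{-\gamma}$ on the $\|r_{h,E}\|$-term and $h^{-\delta}$ on the $\|W^{1/2}v_{h,E}\|$-term of \eqref{1Duniform}. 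The point that needs checking — the genuinely new ingredient relative to \cite{DKl,Kl} — is that the transport term $h^2\kappa_{h,E}W^{1/2}\partial_x$ is, in this boundary layer and everywhere else, smaller by a strictly positive power of $h$ than the leading terms (it carries the weight $W^{1/2}$ and an extra power of $h$), so it disturbs neither the balance $\rho=\gamma$ nor the sign in the energy identity nor the commutator estimates.

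To finish, I would sum the contributions over the finitely many $I_j$ and $J_k$, insert the energy identity, and absorb the resulting $O(h)\|v_{h,E}\|_{L^2}^2$ (and any further $\|v_{h,E}\|^2$-terms with $o(1)$ coefficients) into the left-hand side, which gives \eqref{1Duniform}. I expect the boundary-layer analysis near the degenerately vanishing damping — together with the bookkeeping that keeps the first-order term lower order throughout — to be the main obstacle.
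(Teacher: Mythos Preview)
Your regime decomposition has a genuine gap. You propose to handle (i) $E\le -c_0$ by ellipticity, (ii) $|E|$ ``very small'' by splitting off the constant Fourier mode and using the spectral gap $\gtrsim h^2$ of $-h^2\partial_x^2$ on mean-zero functions, and (iii) $E$ in a fixed compact subset of $(0,\infty)$ by the dilation $\tilde h=h/\sqrt E$. But these three ranges do not cover all $E$. For (ii) to give a usable gap you need $|E|\le ch^2$; for (iii) to keep $\tilde h$ semiclassical and $\tilde W=W/\sqrt E$ uniformly admissible you need $E\ge c_0>0$ independent of $h$. The intermediate range $ch^2<E<c_0$ is precisely where the resolvent estimate is hardest, and your scheme says nothing about it. In that range $\tilde h$ is of order one (so the ``$E=1$'' semiclassical analysis does not apply) and the constant $C$ in the H\"older bounds for $\tilde W=W/\sqrt E$ blows up like $E^{-1/2}$, so ``both still admissible uniformly'' is false. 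A secondary issue: even for $|E|\le ch^2$, the equation does not decouple modewise since $W(x)$ is not constant, so the claim that on the constant mode \eqref{eq:1D} ``reduces to a scalar identity damped by $ih\bar W$'' is not correct as stated; one needs instead a weighted energy or Poincar\'e--Wirtinger argument.

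The paper's decomposition is different and covers everything: (A) $E\le c_1h^2$ via a weighted energy identity with a weight $w$ satisfying $w''<0$ outside $\{W>0\}$ together with Poincar\'e--Wirtinger; (B) $E>h^{1+\delta}$ (with $\delta=\tfrac{\theta}{2\theta+1}$) via geometric control, treating both the damping and the transport term as right-hand sides; and (C) $c_1h^2<E\le h^{1+\delta}$, which is the crux. In regime (C) one first proves, using only integration by parts and the constraint $\lambda=h^{-1}\sqrt E\le h^{-(1-\delta)/2}$, that $\|W^{1/2}\partial_x v_{h,E}\|_{L^2}$ is controlled by $h^{-2}\|r_{h,E}\|_{L^2}$, $h^{-(1-\delta)/2}\|W^{1/2}v_{h,E}\|_{L^2}$, and a cross term; this shows the transport contribution $h^2\kappa_{h,E}W^{1/2}\partial_x v_{h,E}$ is of the same size $o(h^{2+\delta})$ as $r_{h,E}$ and can be absorbed into it. After that absorption the problem is exactly the one treated by Datchev--Kleinhenz, which the paper settles by a Morawetz multiplier argument with a cutoff at scale $h^\delta$ near $\partial\{W>0\}$ (rather than the ODE/transmission picture you sketch). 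Your heuristic that the transport term is ``smaller by a strictly positive power of $h$'' is in fact only established under the bound $E\le h^{1+\delta}$, not for $E\sim 1$; so the key reduction you defer is tied to the very regime your decomposition misses.
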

\begin{rem}
 We will reduce the proof, in the low-energy hyperbolic regime to a known one-dimensional resolvent estimate (Proposition \ref{1DHolderrough}), which is the main result of \cite{DKl}. However, in the paper of \cite{DKl}, the final gluing argument is not clear to the author. For this reason as well as self-containedness, we will reprove Proposition \ref{1DHolderrough} (thus Theorem \ref{ProceedingAMS}) in Appendix A. 
\end{rem}
We postpone the proof of Proposition \ref{1DresolventHolder} for the moment and proceed on proving Theorem \ref{thm:resolvent1}. Let $\theta=\frac{2}{2\beta+1}$ and $k=2m$, $\delta=\frac{\theta}{2\theta+1}$, and $\hbar=h^{\frac{1+\delta}{2}}=h^{\frac{1}{2}}\delta_h^{\frac{1}{2}}=h^{\frac{3\theta+1}{2(2\theta+1)}}$. Let $W(x)=\mathcal{A}(a)(x)$. By Proposition \ref{convexaveraging}, $W\in \mathcal{D}^{m,2,\theta}(\T_x)$ and $W$ satisfies \eqref{Holder1}, \eqref{Holder2} near the vanishing points inside the damped region. Take the Fourier transform in $y$ for \eqref{finalequation} and denote by $v_{h,n}^{(2)}(x)=\mathcal{F}_y(v_h^{(2)})(x,n)$, we have
$$ (-h^2\partial_x^2+h^2n^2-1+ihW(x)+h^2\kappa(x)W(x)^{\frac{1}{2}}b(hn)\partial_x )v_{h,n}^{(2)}=\mathcal{F}_yr_{4,h}.
$$
Recall that $\|r_{4,h}\|_{L^2(\T^2)}=o(h\hbar^2)=o(h^{2+\delta})$ and $\|W^{\frac{1}{2}}v_h^{(2)}\|_{L^2(\T^2)}=o(\hbar)=o(h^{\frac{1+\delta}{2}})$.
Let $E=1-h^2n^2$ and $\kappa_{h,E}(x)=\kappa(x)b(hn)$ which is uniformly bounded in $W^{1,\infty}(\T)$ with respect to $h$ and $n$. Applying Proposition \ref{1DresolventHolder} for each fixed $n\in\Z$ and then taking the $l_n^2$ norm, by Plancherel we get
$$ \|v_h^{(2)}\|_{L^2}\lesssim h^{-2-\delta}\|r_{4,h}\|_{L^2(\T^2)}+h^{-\frac{1+\delta}{2}}\|W^{\frac{1}{2}}v_h^{(2)}\|_{L^2(\T^2)}=o(1).
$$
Recall that from Proposition \ref{1averaging} and Proposition \ref{2averaging},
$$ v_{h}^{(2)}=(\mathrm{Id}-G_{1,h}hD_x)^{-1}v_h^{(1)}=(\mathrm{Id}-G_{1,h}hD_x)^{-1}\circ e^{G_h}v_h.
$$
Thus $\|v_h\|_{L^2}=o(1)$, and 
this contradicts to \eqref{contradiciton}. The proof of Proposition \ref{resolvent1} (as well as Theorem \ref{thm:resolvent1}) is now complete.

\hspace{0.3cm}

Now we prove Proposition \ref{1DresolventHolder}. In what follows, we note that $\delta=\frac{\theta}{2\theta+1}\leq \frac{1}{6}$.
We argue by contradiction. If \eqref{1Du niform} is untrue, by normalization, we may assume that there exist sequences $h_n\rightarrow 0, (E_n)_{n\in\N}\subset\R$ and $(v_{h_n,E_n})_{n\in\N}\subset L^2$, $(r_{h_n,E_n})_{n\in\N}\subset L^2$, such that 
\begin{align}\label{eq1D}
 (-h_n^2\partial_x^2-E_n+ih_nW(x)+h_n^2\kappa_{h_n,E_n}(x)W(x)^{\frac{1}{2}}\partial_x)v_{h_n,E_n}=r_{h_n,E_n}
\end{align}
and
\begin{align}\label{vhE}
 \|v_{h_n,E_n}\|_{L^2}=1,\; \|r_{h_n,E_n}\|_{L^2}=o(h_n^{2+\delta}),\; \|W^{\frac{1}{2}}v_{h_n,E_n}\|_{L^2}=o(h_n^{\frac{1+\delta}{2}}).
\end{align}
In what follows, when we use the asymptotic notations as small $o$ and $O$, we mean a limit (or bound) \emph{independent} of the sequences $h_n\rightarrow 0$ and $E_n$, as $n\rightarrow\infty$. 
To simplify the notation, we will sometimes omit the subindex $n$ in the sequel.  For a function $f$, sometimes we denote by $f'=\partial_xf$. Also, when we write $\lesssim, \gtrsim$, the implicit bounds are \emph{independent} of $h$ and $E$.

We record an elementary weighted energy identity which allows us to deal with the elliptic regime where $E\ll h^2$.
\begin{lem}[Weighted energy identity]\label{weighted}
Let $w\in C^2(\T;\R)$, then 
\begin{align}\label{eq:weighted}
\int_{\T}w(x)|h\partial_xv_{h,E}|^2dx+\int_{\T}(-\frac{1}{2}h^2\partial_x^2w-Ew)|v_{h,E}|^2dx-\frac{h^2}{2}\int_{\T}(w\kappa_{h,E}W^{\frac{1}{2}})'|v_{h,E}|^2dx=\Re\int_{\T}wr_{h,E}\ov{v}_{h,E}dx.
\end{align}
\end{lem}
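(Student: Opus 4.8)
The plan is to derive \eqref{eq:weighted} by the standard device of pairing the equation \eqref{eq:1D} with the weighted test function $w\,\overline{v}_{h,E}$, integrating over $\T$, and then taking real parts. Writing $v=v_{h,E}$, $\kappa=\kappa_{h,E}$, $r=r_{h,E}$ for brevity, multiplying \eqref{eq:1D} by $w\overline v$ and integrating gives
\[
-h^2\int_\T w\,\overline v\,\partial_x^2 v\,dx-E\int_\T w|v|^2\,dx+ih\int_\T wW|v|^2\,dx+h^2\int_\T w\kappa W^{1/2}\,\overline v\,\partial_x v\,dx=\int_\T w r\,\overline v\,dx.
\]
Since $w$, $W$ and $|v|^2$ are real, the damping term $ih\int_\T wW|v|^2$ is purely imaginary and disappears upon taking real parts; it is exactly this cancellation that makes the identity useful in the elliptic regime $E\ll h^2$, where the sign of $W$ is not enough to close estimates.

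Next I would integrate by parts in the two remaining $h^2$-terms, repeatedly using the elementary identity $\Re(\overline v\,\partial_x v)=\tfrac12\partial_x|v|^2$. For the Laplacian term, a single integration by parts gives $-h^2\int_\T w\,\overline v\,\partial_x^2 v=h^2\int_\T w|\partial_x v|^2+h^2\int_\T w'\,\overline v\,\partial_x v$; taking the real part of the cross term and integrating by parts once more produces $h^2\int_\T w|\partial_x v|^2-\tfrac{h^2}{2}\int_\T w''|v|^2$, which is the first term of \eqref{eq:weighted} together with the $-\tfrac12 h^2\partial_x^2 w$ contribution. For the drift term, $\Re\big(h^2\int_\T w\kappa W^{1/2}\,\overline v\,\partial_x v\big)=\tfrac{h^2}{2}\int_\T w\kappa W^{1/2}\,\partial_x|v|^2=-\tfrac{h^2}{2}\int_\T (w\kappa W^{1/2})'|v|^2$, which is precisely the third term of \eqref{eq:weighted}. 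Moving $-E\int_\T w|v|^2$ to the left and collecting terms yields \eqref{eq:weighted}.

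The only point needing justification is the legitimacy of the integrations by parts, i.e. that $w\kappa_{h,E}W^{1/2}$ is differentiable with bounded derivative. This follows from the hypotheses: $w\in C^2(\T)$, the family $(\kappa_{h,E})$ is uniformly bounded in $W^{1,\infty}(\T)$, and since $W\in\mathcal{D}^{m,2,\theta}$ with $\theta\le\tfrac14$ one has $|\partial_x(W^{1/2})|=\tfrac12 W^{-1/2}|\partial_x W|\lesssim W^{1/2-\theta}\in L^\infty$, so $W^{1/2}$ is Lipschitz; hence $w\kappa_{h,E}W^{1/2}\in W^{1,\infty}(\T)$, and since $v_{h,E}\in H^2(\T)\hookrightarrow C^1(\T)$ all the products and integrations by parts above are valid. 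There is no real obstacle here: the lemma is a bookkeeping computation, and the only care needed is this regularity check together with keeping careful track of which terms are real.
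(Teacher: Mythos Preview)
Your proof is correct and follows essentially the same approach as the paper: multiply \eqref{eq:1D} by $w\overline{v}_{h,E}$, integrate, take the real part (killing the $ihW$ term), and use $\Re(\overline v\,\partial_x v)=\tfrac12\partial_x|v|^2$ together with integration by parts on both the Laplacian and drift terms. Your additional regularity check that $w\kappa_{h,E}W^{1/2}\in W^{1,\infty}$ is a nice clarification the paper leaves implicit.
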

\begin{proof}
Multiplying \eqref{eq1D} by $w\ov{v}_{h,E}$ and integrating over $\T$, taking the real part and using the relation $(|v_{h,E}|^2)'=2\Re(v_{h,E}'\ov{v}_{h,E})$,  we get
$$ \Re\int_{\T}h^2(w\ov{v}_h)'v_{h,E}'dx-\int_{\T}Ew|v_{h,E}|^2dx-\frac{h^2}{2}\int_{\T}(w\kappa_{h,E}W^{\frac{1}{2}})'|v_{h,E}|^2dx=\Re\int_{\T}wr_{h,E}\ov{v}_{h,E}dx.
$$
To finish the proof, we just write $\Re(w'\ov{v}_{h,E}v_{h,E}')=\frac{1}{2}w'(|v_{h,E}|^2)'$ and integrate by part.
\end{proof}
By choosing $w=1$ and using the fact that  $\kappa_{h,E}'$ is uniformly bounded in $L^{\infty}(\T)$ and $(W^{\frac{1}{2}})'\lesssim W^{\frac{1}{4}}$, we have
$$ 
h^2|((\kappa_{h,E}W^{\frac{1}{2}})',|v_{h,E}|^2)_{L^2}|\lesssim h^2\|W^{\frac{1}{8}}v_{h,E}\|_{L^2}^2\leq h^2\|v_{h,E}\|_{L^2}^{\frac{3}{2}}\|W^{\frac{1}{2}}v_{h,E}\|_{L^2}^{\frac{1}{2}}=o(h^{\frac{9+\delta}{4}}).
$$
Since $\delta\leq\frac{1}{6}$, we have:
\begin{cor}[Energy identity]\label{finalcor1} 
	There holds
	$$ \|h\partial_xv_{h,E}\|_{L^2}^2-E\|v_{h,E}\|_{L^2}^2=o(h^{2+\delta}).
	$$	
\end{cor}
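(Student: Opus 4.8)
The plan is to specialize the weighted energy identity of Lemma~\ref{weighted} to the constant weight $w\equiv 1$. For this choice the second-order term $-\tfrac12 h^2\partial_x^2 w$ vanishes identically, so \eqref{eq:weighted} collapses to
\begin{equation*}
\|h\partial_xv_{h,E}\|_{L^2}^2-E\|v_{h,E}\|_{L^2}^2-\frac{h^2}{2}\int_{\T}\bigl(\kappa_{h,E}W^{\frac12}\bigr)'|v_{h,E}|^2\,dx=\Re\,(r_{h,E},v_{h,E})_{L^2}.
\end{equation*}
It then remains to check that the two terms other than $\|h\partial_xv_{h,E}\|_{L^2}^2-E\|v_{h,E}\|_{L^2}^2$ are both $o(h^{2+\delta})$.

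The right-hand side is immediate: by Cauchy--Schwarz and the contradiction hypotheses $\|r_{h,E}\|_{L^2}=o(h^{2+\delta})$ and $\|v_{h,E}\|_{L^2}=1$, one gets $|\Re(r_{h,E},v_{h,E})_{L^2}|\le\|r_{h,E}\|_{L^2}\|v_{h,E}\|_{L^2}=o(h^{2+\delta})$.

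The one place where the argument is not purely formal — and hence the main obstacle — is the $h^2$ correction term $\tfrac{h^2}{2}\int_{\T}(\kappa_{h,E}W^{1/2})'|v_{h,E}|^2$, which a priori is only of size $O(h^2)$, not $o(h^{2+\delta})$; the extra decay has to be extracted from the smallness of $\|W^{1/2}v_{h,E}\|_{L^2}$. Here I would expand $(\kappa_{h,E}W^{1/2})'=\kappa_{h,E}'W^{1/2}+\kappa_{h,E}(W^{1/2})'$ and use that $(\kappa_{h,E})$ is uniformly bounded in $W^{1,\infty}(\T)$ together with the damping bound $|W'|\lesssim W^{1-\theta}$ coming from $W\in\mathcal{D}^{m,2,\theta}$; since $\theta\le\tfrac14$ this yields $|(W^{1/2})'|\lesssim W^{1/2-\theta}\le W^{1/4}$, so the integrand is $\lesssim W^{1/4}|v_{h,E}|^2$ (the $W^{1/2}$-contribution being even smaller). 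Thus the term is $\lesssim h^2\|W^{1/8}v_{h,E}\|_{L^2}^2$, and by H\"older's inequality $\|W^{1/8}v_{h,E}\|_{L^2}^2=\int_{\T}W^{1/4}|v_{h,E}|^2\le\|v_{h,E}\|_{L^2}^{3/2}\|W^{1/2}v_{h,E}\|_{L^2}^{1/2}$, which with $\|v_{h,E}\|_{L^2}=1$ and $\|W^{1/2}v_{h,E}\|_{L^2}=o(h^{(1+\delta)/2})$ gives $o(h^{(9+\delta)/4})$. Finally I would observe that $(9+\delta)/4\ge 2+\delta$ exactly when $\delta\le\tfrac13$, which holds since $\delta=\theta/(2\theta+1)\le\tfrac16$, so this term is indeed $o(h^{2+\delta})$. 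Combining the three bounds in the displayed identity yields the stated energy identity.
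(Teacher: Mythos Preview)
Your proof is correct and follows essentially the same approach as the paper: specialize Lemma~\ref{weighted} to $w\equiv 1$, bound $(\kappa_{h,E}W^{1/2})'$ by $CW^{1/4}$, interpolate $\|W^{1/8}v_{h,E}\|_{L^2}^2\le\|v_{h,E}\|_{L^2}^{3/2}\|W^{1/2}v_{h,E}\|_{L^2}^{1/2}$, and use $\delta\le\tfrac16$ to conclude. Your write-up is in fact slightly more explicit than the paper's (you spell out the product-rule expansion and the verification that $(9+\delta)/4\ge 2+\delta$).
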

The proof of Proposition \ref{1DresolventHolder} will be divided into several steps, according to the range of $E$.\\
\noi
$\bullet${\bf (A) Elliptic regime $E\ll h^2$: } 
Recall that $W$ is supported on disjoint intervals $I_j=(\alpha_j,\beta_j)\subset (-\pi,\pi)$, $j=1,\cdots,l$. Therefore, we are able to construct a weight $w\in C^2(\T;\R)$ such that
$$ w\geq c_0>0,\quad w''<0,\quad  \text{ in a neighborhood of } \T\setminus\cup_{j=1}^lI_j.
$$
Therefore, there exists $c_1>0$, sufficiently small, such that
$$ -\frac{1}{2}w''(x)-c_1w>0,\quad \text{ in a neighborhood of } \T\setminus\cup_{j=1}^lI_j.
$$ 
\begin{lem}\label{lem:elliptic}
If $E\leq c_1h^2$, the solution $v_{h_n,E_n}$ satisfies
$$ \|v_{h,E}\|_{L^2}\lesssim h^{-2}\|r_{h,E}\|_{L^2}+\|W^{\frac{1}{2}}v_{h,E}\|_{L^2}.
$$
  
\end{lem}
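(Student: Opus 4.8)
The plan is to test the equation against $w\,\ov{v}_{h,E}$ for a carefully chosen weight, i.e.\ to apply the weighted energy identity of Lemma~\ref{weighted}, exploiting that $W$ is supported away from a fixed neighborhood of $\T\setminus\bigcup_j I_j$. First I would fix an open neighborhood $\mathcal{N}$ of $\T\setminus\bigcup_{j=1}^l I_j$ whose closure still lies in the region where $-\tfrac12 w''-c_1 w>0$, so that by compactness $-\tfrac12 w''-c_1 w\ge c_2>0$ on $\mathcal{N}$; then $\T\setminus\mathcal{N}$ is a compact subset of $\bigcup_j I_j=\{W>0\}$, hence $W\ge c_3>0$ there. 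A short elementary discussion then shows that for $E\le c_1 h^2$ one has, on $\mathcal{N}$, $-\tfrac12 h^2 w''-Ew\gtrsim h^2+|E|$ (the only point to check being the range $E\le -C_* h^2$, where the term $-Ew$ dominates and the inequality in fact holds on all of $\T$), while on $\T\setminus\mathcal{N}$ one only has $|{-\tfrac12 h^2 w''-Ew}|\lesssim h^2+|E|$.

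Dropping the non-negative term $\int_{\T} w|h\partial_x v_{h,E}|^2$ from \eqref{eq:weighted}, moving the integral over $\T\setminus\mathcal{N}$ to the right and using $|v_{h,E}|^2\le c_3^{-1}W|v_{h,E}|^2$ there, I would arrive at
\begin{align*}
(h^2+|E|)\int_{\mathcal{N}}|v_{h,E}|^2 \lesssim \big\|r_{h,E}\big\|_{L^2}\big\|v_{h,E}\big\|_{L^2} + (h^2+|E|)\big\|W^{\tfrac12}v_{h,E}\big\|_{L^2}^2 + h^2\Big|\int_{\T}(w\kappa_{h,E}W^{\tfrac12})'\,|v_{h,E}|^2\Big|.
\end{align*}
The last term is the only delicate one. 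Since $W\in\mathcal{D}^{m,2,\theta}$ gives $|W'|\lesssim W^{1-\theta}$, hence $|(W^{\frac12})'|\lesssim W^{\frac12-\theta}$, and $\kappa_{h,E}$ is uniformly bounded in $W^{1,\infty}$ with $w\in C^2$, one obtains $|(w\kappa_{h,E}W^{\frac12})'|\lesssim W^{\frac12-\theta}$, which is bounded because $\theta\le\tfrac14<\tfrac12$. Using the elementary inequality $t^{\frac12-\theta}\le \epsilon+C_\epsilon t^{\frac12}$ (valid for $t\ge 0$ and any $\epsilon>0$), this term is bounded by $\epsilon h^2\|v_{h,E}\|_{L^2}^2+C_\epsilon h^2\|W^{\frac12}v_{h,E}\|_{L^2}^2\le \epsilon(h^2+|E|)\|v_{h,E}\|_{L^2}^2+C_\epsilon(h^2+|E|)\|W^{\frac12}v_{h,E}\|_{L^2}^2$.

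To close the estimate I would write $\|v_{h,E}\|_{L^2}^2=\int_{\mathcal{N}}|v_{h,E}|^2+\int_{\T\setminus\mathcal{N}}|v_{h,E}|^2\le \int_{\mathcal{N}}|v_{h,E}|^2+c_3^{-1}\|W^{\frac12}v_{h,E}\|_{L^2}^2$ and choose $\epsilon$ small enough to absorb $\epsilon(h^2+|E|)\int_{\mathcal{N}}|v_{h,E}|^2$ into the left-hand side. After this absorption, followed by one application of Young's inequality $\|r_{h,E}\|_{L^2}\|v_{h,E}\|_{L^2}\le \tfrac12(h^2+|E|)\|v_{h,E}\|_{L^2}^2+\tfrac12(h^2+|E|)^{-1}\|r_{h,E}\|_{L^2}^2$, I get $(h^2+|E|)\|v_{h,E}\|_{L^2}^2\lesssim (h^2+|E|)^{-1}\|r_{h,E}\|_{L^2}^2+(h^2+|E|)\|W^{\frac12}v_{h,E}\|_{L^2}^2$, whence $\|v_{h,E}\|_{L^2}\lesssim (h^2+|E|)^{-1}\|r_{h,E}\|_{L^2}+\|W^{\frac12}v_{h,E}\|_{L^2}\le h^{-2}\|r_{h,E}\|_{L^2}+\|W^{\frac12}v_{h,E}\|_{L^2}$, which is the claim. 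The main obstacle is controlling the drift term $h^2\kappa_{h,E}W^{\frac12}\partial_x v_{h,E}$ through the total derivative $(w\kappa_{h,E}W^{\frac12})'$ appearing in \eqref{eq:weighted} and absorbing it back, which crucially uses $\theta<\tfrac12$; everything else is a soft convexity argument via the choice of $w$.
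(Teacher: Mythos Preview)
Your argument is correct and takes a genuinely different route from the paper. The paper keeps the gradient term $\int_{\T}w|h\partial_xv_{h,E}|^2$ from the weighted identity, bounds it below by $c_0\|h\partial_xv_{h,E}\|_{L^2}^2$, and then recovers $\|v_{h,E}\|_{L^2}$ via the Poincar\'e--Wirtinger inequality together with a separate estimate on the zero Fourier mode $\widehat{v}_{h,E}(0)$ using $\int_{\T}W>0$. You instead discard the gradient term and exploit directly the coercivity of the potential part $-\tfrac12 h^2 w''-Ew$ on $\mathcal{N}$, which is more economical: no Poincar\'e--Wirtinger and no zero-mode argument are needed. Both approaches control the drift term through the same mechanism (the bound $|(w\kappa_{h,E}W^{1/2})'|\lesssim W^{1/2-\theta}$), although the paper further uses $\theta\le\tfrac14$ to replace $W^{1/2-\theta}$ by $W^{1/4}$, while your route only needs $\theta<\tfrac12$.

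One small slip to fix: the elementary inequality you quote, $t^{1/2-\theta}\le \epsilon+C_\epsilon t^{1/2}$, gives upon substitution $\int_{\T}W^{1/2-\theta}|v_{h,E}|^2\le \epsilon\|v_{h,E}\|_{L^2}^2+C_\epsilon\|W^{1/4}v_{h,E}\|_{L^2}^2$, not $C_\epsilon\|W^{1/2}v_{h,E}\|_{L^2}^2$ as you wrote. Either replace the inequality by $t^{1/2-\theta}\le \epsilon+C_\epsilon t$ (equally elementary, since $\tfrac12-\theta\in(0,1)$), which yields the stated bound directly, or keep your inequality and insert one more interpolation $\|W^{1/4}v_{h,E}\|_{L^2}^2\le \|v_{h,E}\|_{L^2}\|W^{1/2}v_{h,E}\|_{L^2}$ followed by Young. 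With this correction the closing step goes through exactly as you describe.
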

\begin{proof} 
Since $E\leq c_1h^2$, we have $\frac{1}{2}h^2w''+Ew<0$  in a neighborhood of  $\T\setminus\cup_{j=1}^lI_j.$ Thus there exists a compact set $K\subset \cup_{j=1}^lI_j$ such that
$$ \int_{\T}(\frac{1}{2}h^2\partial_x^2w+Ew)|v_{h,E}|^2dx\leq \int_{K}(\frac{1}{2}h^2\partial_x^2w+Ew)|v_{h,E}|^2dx\lesssim h^2\int_{\T}W(x)|v_{h,E}|^2dx. 
$$
Then applying Lemma \ref{weighted}, we have
\begin{align*}
c_0\|h\partial_xv_{h,E}\|_{L^2}^2\leq &\int_{\T}w(x)|h\partial_xv_{h,E}|^2dx\\
\leq &\Re\int_{\T}wr_{h,E}\ov{v}_{h,E}dx+h^2\int_{\T}W(x)|v_{h,E}|^2dx+C\frac{h^2}{2}\int_{\T}(w\kappa_{h,E}W^{\frac{1}{2}})'|v_{h,E}|^2dx.
\end{align*}
Note that $|(w\kappa_{h,E}W^{\frac{1}{2}})'|\lesssim W^{\frac{1}{4}}(x)$, by interpolation
$$ \|W^{\frac{1}{8}}v_{h,E}\|_{L^2}^2\leq \|W^{\frac{1}{2}}v_{h,E}\|_{L^2}^{\frac{1}{2}}\|v_{h,E}\|_{L^2}^{\frac{3}{2}}
$$
and Young's inequality,
we deduce that
$$ \|v_{h,E}'\|_{L^2}\leq Ch^{-1}\|v_{h,E}\|_{L^2}^{\frac{1}{2}}\|r_{h,E}\|_{L^2}^{\frac{1}{2}}+C_{\epsilon}\|W^{\frac{1}{2}}v_{h,E}\|_{L^2}+\epsilon\|v_{h,E}\|_{L^2},\;\forall \epsilon>0.
$$
By the Poincar\'e-Wirtinger inequality,
$$ \big\|v_{h,E}-\widehat{v}_{h,E}(0)\big\|_{L^2(\T)}\leq C\|v_{h,E}'\|_{L^2},
$$ 
where $\widehat{v}_{h,E}(0)=\frac{1}{2\pi}\int_{\T}v_{h,E}$. Combining with the fact that $\int_{T}W>0$ and the elementary inequality
$$ \Big(\int_{\T}Wdx\Big)|\widehat{v}_{h,E}(0)|^2\leq C\int_{\T}W(x)|v_{h,E}(x)|^2dx+C\int_{\T}W(x)|v_{h,E}(x)-\widehat{v}_{h,E}(0)|^2dx,
$$
we deduce that
$$ \|v_{h,E}\|_{L^2}+\|v_{h,E}'\|_{L^2}\lesssim h^{-1}\|wv_{h,E}\|_{L^2}^{\frac{1}{2}}\|r_{h,E}\|_{L^2}^{\frac{1}{2}}+\|W^{\frac{1}{2}}v_{h,E}\|_{L^2}.
$$
Using Young's inequality again to absorb $\|v_{h,E}\|_{L^2}$ to the left, we complete the proof of Lemma \ref{lem:elliptic}. 
\end{proof}

\noi
$\bullet${\bf (B) High energy hyperbolic regime $E>h^{1+\delta}$: } In this regime, we put the damping terms to the right as remainders and use the estimate from the geometric control as a black box. Let us recall:
\begin{lem}\label{geometriccontrol} 
Let $I\subset \T$ be a non-empty open set. Then there exists $C=C_I>0$, such that for any $v\in L^2(\T), f_1\in L^2(\T), f_2\in H^{-1}(\T)$,  $\lambda\geq 1$, if
$$ (-\partial_x^2-\lambda^2)v=f_1+f_2,
$$
we have
$$ \|v\|_{L^2(\T)}\leq C\lambda^{-1}\|f_1\|_{L^2(\T)}+C\|f_2\|_{H^{-1}(\T)}+C\|v\|_{L^2(I)}.
$$
\end{lem}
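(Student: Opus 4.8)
The plan is to argue by contradiction with a compactness/semiclassical-measure argument, treating the regimes where $\lambda$ stays bounded and where $\lambda\to\infty$ separately. Suppose the estimate fails: then there are sequences $\lambda_n\ge 1$, $v_n\in L^2(\T)$, $f_{1,n}\in L^2(\T)$, $f_{2,n}\in H^{-1}(\T)$ with $(-\partial_x^2-\lambda_n^2)v_n=f_{1,n}+f_{2,n}$ and, after normalizing by $\|v_n\|_{L^2(\T)}$, one has $\|v_n\|_{L^2(\T)}=1$, $\lambda_n^{-1}\|f_{1,n}\|_{L^2}\to 0$, $\|f_{2,n}\|_{H^{-1}}\to 0$ and $\|v_n\|_{L^2(I)}\to 0$.

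First I would dispose of the case where $\lambda_n$ stays bounded, passing to a subsequence with $\lambda_n\to\lambda_\infty\ge 1$. Writing $(-\partial_x^2+1)v_n=(\lambda_n^2+1)v_n+f_{1,n}+f_{2,n}$, the right-hand side is bounded in $H^{-1}(\T)$, so $(v_n)$ is bounded in $H^1(\T)$; by Rellich a further subsequence converges strongly in $L^2(\T)$ to some $v_\infty$ with $\|v_\infty\|_{L^2}=1$, $\|v_\infty\|_{L^2(I)}=0$ and $(-\partial_x^2-\lambda_\infty^2)v_\infty=0$. Hence $v_\infty$ is a nonzero trigonometric polynomial vanishing on the nonempty open set $I$, so by analyticity $v_\infty\equiv 0$ --- a contradiction.

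For the remaining case $\lambda_n\to\infty$, set $h_n:=\lambda_n^{-1}\to 0$, so $(-h_n^2\partial_x^2-1)v_n=r_n$ with $r_n:=h_n^2 f_{1,n}+h_n^2 f_{2,n}$. The hypotheses are calibrated precisely so that $r_n$ is semiclassically negligible: $\|h_n^2 f_{1,n}\|_{L^2}=h_n^2\cdot o(h_n^{-1})=o(h_n)$, while the elementary bound $\|\langle h_nD\rangle^{-1}g\|_{L^2}\le h_n^{-1}\|g\|_{H^{-1}}$ (valid for $h_n\le 1$) gives $\|h_n^2 f_{2,n}\|_{H^{-1}_{h_n}}=h_n\|f_{2,n}\|_{H^{-1}}=o(h_n)$. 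Thus $(-h_n^2\partial_x^2-1)v_n=o_{H^{-1}_{h_n}}(h_n)$ with $\|v_n\|_{L^2}=1$. A standard semiclassical elliptic estimate on $\{|\xi|\ge 2\}$ shows no $L^2$-mass escapes, so along a subsequence $(v_n)$ has a semiclassical defect measure $\mu$ which is a probability measure on $T^*\T$, supported on the characteristic set $\{\xi=\pm1\}$ and invariant under the geodesic flow $\varphi_t$. On each of the two circles $\{\xi=1\}$ and $\{\xi=-1\}$ the flow is the translation $x\mapsto x\pm 2t$, whose only invariant Radon measure is (a multiple of) Lebesgue; hence $\mu$ restricts to $c_\pm\,dx$ on these circles. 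But $\|v_n\|_{L^2(I)}\to 0$ forces $\mu(I\times\RR)=0$, and since $I$ is open and nonempty this forces $c_+=c_-=0$, i.e.\ $\mu=0$, contradicting that $\mu$ has total mass $1$. This contradiction proves the lemma.

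The step I expect to require the most care is the verification in the last paragraph that $r_n$ is $o(h_n)$ in the \emph{semiclassical} $H^{-1}$ norm --- this is exactly why the statement weights $f_1$ by $\lambda^{-1}$ and measures $f_2$ in $H^{-1}$ rather than $L^2$ --- together with the accompanying claim that no $L^2$-mass escapes to infinity, so that $\mu$ is genuinely a probability measure; once those are in place, flow-invariance of $\mu$ and the fact that every geodesic of $\T$ meets $I$ finish everything. As an alternative one can avoid measures entirely: split $v$ in frequency into the $O(1)$ "resonant" modes $\{|k^2-\lambda^2|<\lambda\}$ and the rest; the non-resonant part is controlled mode by mode directly by $\lambda^{-1}\|f_1\|_{L^2}+\|f_2\|_{H^{-1}}$, and the resonant part is a trigonometric polynomial with frequencies near $\pm\lambda$ of one of finitely many "shapes", for which $\|P_\lambda v\|_{L^2(\T)}\lesssim\|P_\lambda v\|_{L^2(I)}$ holds uniformly in $\lambda$ because the relevant Gram matrices on $I$ form a precompact family of positive-definite matrices.
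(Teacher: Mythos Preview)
Your proof is correct. The paper does not actually give its own proof of this lemma: it states that the result is standard, refers to Proposition~4.2 of \cite{Bu19}, and remarks that in one dimension a direct proof via the multiplier method is also available. Your contradiction argument using semiclassical defect measures is exactly one of the standard routes to such observability/propagation estimates and is very likely close to what the cited reference does; the care you flag about showing $r_n=o(h_n)$ in $H^{-1}_{h_n}$ and that no $L^2$-mass escapes is indeed the only delicate point, and your handling of it is fine. The alternative resonant/non-resonant Fourier splitting you sketch at the end is another valid and more elementary 1D-specific approach.
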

The proof is standard and can be found, for example in \cite{Bu19} (Proposition 4.2). In the one-dimensional setting, a straightforward proof using the multiplier method is also available.
Consequently, we have:
\begin{cor}\label{cor:hyperbolicB}
If $E>h^{1+\delta}$, then
$$ \|v_{h,E}\|_{L^2(\T)}\lesssim h^{-\frac{3+\delta}{2}}\|r_{h,E}\|_{L^2(\T)}+h^{-\frac{1+\delta}{2}}\|Wv_{h,E}\|_{L^2(\T)}+\|W^{\frac{1}{2}}v_{h,E}\|_{L^2}.
$$
\end{cor}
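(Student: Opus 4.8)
The plan is to divide \eqref{eq1D} by $h^2$, recast it as a Helmholtz equation with spectral parameter $\lambda^2$, and feed it into Lemma~\ref{geometriccontrol}. First I would write
$$ (-\partial_x^2-\lambda^2)v_{h,E}=f_1+f_2,\qquad \lambda:=\frac{\sqrt{E}}{h},$$
with $f_1:=h^{-2}r_{h,E}-ih^{-1}W v_{h,E}\in L^2(\T)$ and $f_2:=-\kappa_{h,E}W^{\frac12}\partial_xv_{h,E}$. Since we are in the regime $E>h^{1+\delta}$, one has $\lambda^2=E/h^2>h^{\delta-1}$, so $\lambda\ge 1$ for $h\le h_0$ small and, crucially, $\lambda^{-1}\le h^{\frac{1-\delta}{2}}$. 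I would also fix, once and for all, an open interval $I\Subset\bigcup_{j}I_j$ on which $W\ge c_0>0$ (possible since $W$ is continuous and positive on each $I_j$), so that $\|v_{h,E}\|_{L^2(I)}\le c_0^{-1/2}\|W^{1/2}v_{h,E}\|_{L^2}$ controls the observation term in Lemma~\ref{geometriccontrol}.

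The delicate point — and the main obstacle — is the first-order term $\kappa_{h,E}W^{1/2}\partial_xv_{h,E}$. Measured naively in $L^2$, it contributes $\lambda^{-1}\|W^{1/2}\partial_xv_{h,E}\|_{L^2}$, which by the energy identity of Corollary~\ref{finalcor1} is of the same order as $\|v_{h,E}\|_{L^2}$ with a fixed, non-small constant, hence not absorbable. The remedy is to place $f_2$ in $H^{-1}$: integrating by parts, $f_2=-\partial_x(\kappa_{h,E}W^{1/2}v_{h,E})+(\kappa_{h,E}W^{1/2})'v_{h,E}$, so one extra $\lambda^{-1}$ is traded for an $x$-derivative. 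Using that $(\kappa_{h,E})$ is uniformly bounded in $W^{1,\infty}(\T)$, that $W^{1/2}$ is bounded, and that $|(W^{1/2})'|=|W'|/(2W^{1/2})\lesssim W^{\frac12-\theta}$ because $W\in\mathcal{D}^{m,2,\theta}$, this yields $\|f_2\|_{H^{-1}}\lesssim \|W^{1/2}v_{h,E}\|_{L^2}+\|W^{\frac12-\theta}v_{h,E}\|_{L^2}$. The degeneracy $W^{1/2-\theta}$ is then converted into a gain via the elementary interpolation $\|W^{\frac12-\theta}v\|_{L^2}\le\|W^{1/2}v\|_{L^2}^{1-2\theta}\|v\|_{L^2}^{2\theta}$ (H\"older applied to $\int_\T W^{1-2\theta}|v|^2$) followed by Young's inequality, giving, for every $\varepsilon>0$,
$$ \|f_2\|_{H^{-1}}\le\varepsilon\|v_{h,E}\|_{L^2}+C_\varepsilon\|W^{1/2}v_{h,E}\|_{L^2}.$$

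Finally I would apply Lemma~\ref{geometriccontrol} with the interval $I$ and parameter $\lambda$: together with $\|f_1\|_{L^2}\le h^{-2}\|r_{h,E}\|_{L^2}+h^{-1}\|Wv_{h,E}\|_{L^2}$ and $\lambda^{-1}\le h^{\frac{1-\delta}{2}}$, this gives
$$ \|v_{h,E}\|_{L^2}\le C h^{-\frac{3+\delta}{2}}\|r_{h,E}\|_{L^2}+Ch^{-\frac{1+\delta}{2}}\|Wv_{h,E}\|_{L^2}+C'\|W^{1/2}v_{h,E}\|_{L^2}+C\varepsilon\|v_{h,E}\|_{L^2}.$$
Choosing $\varepsilon$ small and absorbing $C\varepsilon\|v_{h,E}\|_{L^2}$ on the left yields the asserted estimate. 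Note that no upper bound on $E$ is needed: if $E$ is large, $\lambda^{-1}$ is only smaller, and Lemma~\ref{geometriccontrol} still applies since $\lambda\ge1$.
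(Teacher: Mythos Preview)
Your proposal is correct and follows essentially the same route as the paper: rewrite \eqref{eq:1D} as a Helmholtz equation, apply Lemma~\ref{geometriccontrol} with $f_1=h^{-2}r_{h,E}-ih^{-1}Wv_{h,E}$ and $f_2=-\kappa_{h,E}W^{1/2}\partial_xv_{h,E}$, estimate $f_2$ in $H^{-1}$ by integrating by parts, then interpolate and absorb via Young. The only cosmetic difference is that the paper uses $\theta\le\tfrac14$ to bound $|(\kappa_{h,E}W^{1/2})'|\lesssim W^{1/4}$ before interpolating (with exponent $\tfrac12$), whereas you keep $W^{1/2-\theta}$ and interpolate with exponent $1-2\theta$; both yield the same conclusion.
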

\begin{proof}
Let $\lambda=h^{-1}E^{\frac{1}{2}}(\geq h^{-\frac{1-\delta}{2}})$, then
$$ (-\partial_x^2-\lambda^2)v_{h,E}=h^{-2}r_{h,E}-ih^{-1}Wv_{h,E}-\kappa_{h,E}W^{\frac{1}{2}}\partial_xv_{h,E}.
$$
Applying Lemma \ref{geometriccontrol} to $v=v_{h,E}, f_1=h^{-2}r_{h,E}-ih^{-1}Wv_{h,E}$, $f_2=-\kappa_{h,E}W^{\frac{1}{2}}v_{h,E}'$ with $I=(\alpha_1+\epsilon_0,\beta_1-\epsilon_0)$ for some $\epsilon_0<\frac{\beta_1-\alpha_1}{2}$, we get
\begin{align*}
\|v_{h,E}\|_{L^2(\T)}\lesssim_{\epsilon_0}& \lambda^{-1}\|h^{-2}r_{h,E}-ih^{-1}Wv_{h,E}\|_{L^2(\T)}+\|\kappa_{h,E}W^{\frac{1}{2}}v_{h,E}'\|_{H^{-1}(\T)} +\|v_{h,E}\|_{L^2(I)}\\
\lesssim &h^{-\frac{3+\delta}{2}}\|r_{h,E}\|_{L^2}+h^{-\frac{1+\delta}{2}}\|Wv_{h,E}\|_{L^2}+\|(\kappa_{h,E}W^{\frac{1}{2}}v_{h,E})'-(\kappa_{h,E}W^{\frac{1}{2}})'v_{h,E}\|_{H^{-1}(\T)}.
\end{align*}
Since $(\kappa_{h,E}W^{\frac{1}{2}})'\lesssim W^{\frac{1}{4}}$,
the last term on the right hand side is bounded by
$$ C\|W^{\frac{1}{4}}v_{h,E}\|_{L^2}\leq C\|W^{\frac{1}{2}}v_{h,E}\|_{L^2}^{\frac{1}{2}}\|v_{h,E}\|_{L^2}^{\frac{1}{2}}. 
$$
By Young's inequality, we obtain the desired estimate.
\end{proof}

\noi
$\bullet${\bf (C) Low energy hyperbolic regime: $c_1h^2<E\leq h^{1+\delta}$}

Again we denote by $\lambda=h^{-1}E^{\frac{1}{2}}$, then $\lambda\leq h^{-\frac{1-\delta}{2}}$. In this situation, the non self-adjoint term $h^2\kappa_{h,E}W^{\frac{1}{2}}v_{h,E}'$ can be absorb to the right as a remainder:

\begin{lem}\label{newerror}
Assume that $E\leq h^{1+\delta}$, then
$$ \|W^{\frac{1}{2}}v_{h,E}'\|_{L^2(\T)}\lesssim h^{-2}\|r_{h,E}\|_{L^2}+h^{-\frac{1-\delta}{2}}\|W^{\frac{1}{2}}v_{h,E}\|_{L^2}+\|W^{\frac{1}{2}}v_{h,E}\|_{L^2}^{\frac{1}{2}}\|v_{h,E}\|_{L^2}^{\frac{1}{2}}.
$$
\end{lem}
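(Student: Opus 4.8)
\textbf{Proof proposal for Lemma~\ref{newerror}.} The plan is to run the weighted energy identity of Lemma~\ref{weighted} with the damping itself as the weight, i.e.\ $w=W$. Note $W\in\mathcal{D}^{m,2,\theta}\subset C^2(\T)$, and $W^{1/2}$ is $C^1$ because $|(W^{1/2})'|\lesssim W^{1/2-\theta}$ with $\theta<\tfrac12$, so $W\kappa_{h,E}W^{1/2}=W^{3/2}\kappa_{h,E}$ is Lipschitz and \eqref{eq:weighted} applies. After isolating the gradient term it reads
$$
\int_\T W\,|h\partial_xv_{h,E}|^2\,dx=\int_\T\Big(\tfrac12h^2\partial_x^2W+EW\Big)|v_{h,E}|^2\,dx+\frac{h^2}{2}\int_\T\big(W^{3/2}\kappa_{h,E}\big)'|v_{h,E}|^2\,dx+\Re\int_\T W\,r_{h,E}\,\overline{v}_{h,E}\,dx.
$$

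The next step is to estimate the three terms on the right. For the first: since $W\in\mathcal{D}^{m,2,\theta}$ one has $|\partial_x^2W|\lesssim W^{1-2\theta}$, and as $W$ is bounded and $\theta\le\tfrac14$ this gives $W^{1-2\theta}\lesssim W^{1/2}$; combined with $E\le h^{1+\delta}$ and the Cauchy--Schwarz bound $\int_\T W^{1/2}|v_{h,E}|^2\le\|W^{1/2}v_{h,E}\|_{L^2}\|v_{h,E}\|_{L^2}$, the first term is $\lesssim (h^2+h^{1+\delta})\|W^{1/2}v_{h,E}\|_{L^2}\|v_{h,E}\|_{L^2}+h^{1+\delta}\|W^{1/2}v_{h,E}\|_{L^2}^2$ (keeping a clean form: $\lesssim h^2\|W^{1/2}v_{h,E}\|_{L^2}\|v_{h,E}\|_{L^2}+h^{1+\delta}\|W^{1/2}v_{h,E}\|_{L^2}^2$). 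For the second: expanding $(W^{3/2}\kappa_{h,E})'=\tfrac32W^{1/2}W'\kappa_{h,E}+W^{3/2}\kappa_{h,E}'$ and using $|W'|\lesssim W^{1-\theta}$, the uniform $W^{1,\infty}$ bound on $\kappa_{h,E}$, and again boundedness of $W$, one gets $|(W^{3/2}\kappa_{h,E})'|\lesssim W^{1/2}$, so this term is also $\lesssim h^2\|W^{1/2}v_{h,E}\|_{L^2}\|v_{h,E}\|_{L^2}$. For the third, $|\Re\int_\T W r_{h,E}\overline{v}_{h,E}|\le\|r_{h,E}\|_{L^2}\|Wv_{h,E}\|_{L^2}\lesssim\|r_{h,E}\|_{L^2}\|W^{1/2}v_{h,E}\|_{L^2}$.

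Collecting these, $\|W^{1/2}h\partial_xv_{h,E}\|_{L^2}^2\lesssim h^2\|W^{1/2}v_{h,E}\|_{L^2}\|v_{h,E}\|_{L^2}+h^{1+\delta}\|W^{1/2}v_{h,E}\|_{L^2}^2+\|r_{h,E}\|_{L^2}\|W^{1/2}v_{h,E}\|_{L^2}$. Dividing by $h^2$, taking square roots, and using $\sqrt{h^{1+\delta}/h^2}=h^{-(1-\delta)/2}$ yields $\|W^{1/2}v_{h,E}'\|_{L^2}\lesssim\|W^{1/2}v_{h,E}\|_{L^2}^{1/2}\|v_{h,E}\|_{L^2}^{1/2}+h^{-(1-\delta)/2}\|W^{1/2}v_{h,E}\|_{L^2}+h^{-1}\|r_{h,E}\|_{L^2}^{1/2}\|W^{1/2}v_{h,E}\|_{L^2}^{1/2}$. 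Finally, Young's inequality turns the last summand into $h^{-2}\|r_{h,E}\|_{L^2}+\|W^{1/2}v_{h,E}\|_{L^2}$, and since $W$ is bounded we have $\|W^{1/2}v_{h,E}\|_{L^2}\lesssim\|W^{1/2}v_{h,E}\|_{L^2}^{1/2}\|v_{h,E}\|_{L^2}^{1/2}$, which can be absorbed into the first term, giving exactly the claimed inequality. There is no genuine obstacle here; the only care needed is the bookkeeping of powers of $W$ (valid precisely because $W$ is bounded and $\theta\le\tfrac14$) and the final application of Young's inequality to recover the stated $h^{-2}\|r_{h,E}\|_{L^2}$ form.
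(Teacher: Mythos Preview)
Your argument is correct and is essentially the paper's proof. The paper carries out the integration by parts from scratch (multiplying by $W\overline{v}_{h,E}'$ and inserting the equation in the form $-v''-\lambda^2 v+ih^{-1}Wv=h^{-2}r-\kappa W^{1/2}v'$), while you simply invoke Lemma~\ref{weighted} with $w=W$; once you unfold that lemma and divide by $h^2$, the two computations are identical. The term-by-term bounds (using $|W''|\lesssim W^{1/2}$, $|(W^{3/2}\kappa_{h,E})'|\lesssim W^{1/2}$, $E\le h^{1+\delta}$, and the final Young inequality on $h^{-1}\|r\|^{1/2}\|W^{1/2}v\|^{1/2}$) match the paper's, with only cosmetic differences in how tightly you bound the $\kappa$-term.
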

\begin{proof}
With the notation $\lambda=h^{-1}E^{\frac{1}{2}}$, $v_{h,E}$ solves the equation
\begin{align}\label{eq:vlambda}
-v_{h,E}''-\lambda^2v_{h,E}+ih^{-1}Wv_{h,E}=h^{-2}r_{h,E}-\kappa_{h,E}W^{\frac{1}{2}}v_{h,E}'.
\end{align}
Doing the integration by part and inserting the equation \eqref{eq:vlambda}, we have
\begin{align*}
\Re\int_{\T}Wv_{h,E}'\ov{v}_{h,E}'dx=-&\Re\int_{\T}W'v_{h,E}'\ov{v}_{h,E}dx-\Re\int_{\T}Wv_{h,E}''\ov{v}_{h,E}dx\\
=-&\Re\int_{\T}W'v_{h,E}'\ov{v}_{h,E}dx\\-&\Re\int_{\T}W\ov{v}_{h,E}(-\lambda^2v_{h,E}+ih^{-1}Wv_{h,E}-h^{-2}r_{h,E}+\kappa_{h,E}W^{\frac{1}{2}}v_{h,E}' )dx.
\end{align*}
By writing $\Re(v_{h,E}'\ov{v}_{h,E})=\frac{1}{2}(|v_{h,E}|^2)'$, we have
\begin{align*}
-\Re\int_{\T}W'v_{h,E}'\ov{v}_{h,E}dx=&\frac{1}{2}\int_{\T}W''|v_{h,E}|^2dx\lesssim \|W^{\frac{1}{4}}v_{h,E}\|_{L^2}^2\leq \|W^{\frac{1}{2}}v_{h,E}\|_{L^2}\|v_{h,E}\|_{L^2},
\end{align*}
where we used $|W''|\lesssim W^{\frac{1}{2}}$.
Writing 
$$\Re\int_{\T}W\ov{v}_{h,E}\kappa_{h,E}W^{\frac{1}{2}}v_{h,E}'dx=\frac{1}{2}\int_{\T}W^{\frac{3}{2}}\kappa_{h,E}(|v_{h,E}|^2)'dx=-\frac{1}{2}\int_{\T}(\kappa_{h,E}W^{\frac{3}{2}})'|v_{h,E}|^2dx,
$$
one verifies that
$$ \Big|\Re\int_{\T}W\ov{v}_{h,E}\cdot\kappa_{h,E}W^{\frac{1}{2}}v_{h,E}'dx \Big|\lesssim \|W^{\frac{1}{2}}v_{h,E}\|_{L^2}^2.
$$
Since $\lambda^2\leq h^{-(1-\delta)}$, we have
$$ \Big|\int_{\T}\kappa_{h,E}W^{\frac{3}{2}}\lambda^2|v_{h,E}|^2dx\Big|\lesssim h^{-(1-\delta)}\|W^{\frac{1}{2}}v_{h,E}\|_{L^2}^2.
$$
The last term $$\Re\int_{\T}W\ov{v}_{h,E}(-ih^{-1}Wv_{h,E}-h^{-2}r_{h,E})dx=h^{-2}\Re\int_{\T}W\ov{v}_{h,E}r_{h,E}dx,$$ and it can be easily controlled by
$ h^{-2}\|r_{h,E}\|_{L^2}\|W^{\frac{1}{2}}v_{h,E}\|_{L^2}.
$
Putting the bounds together, we complete the proof of Lemma \ref{newerror}.
\end{proof}

The importance of Lemma \ref{newerror} is that, in the low energy hyperbolic regime, the term $h^2\kappa_{h,E}W^{\frac{1}{2}}v_{h,E}'$ has the same size $o(h^{2+\delta})$ in $L^2$, and thus can be absorbed as a remainder.

At this state, we are able to apply the following 1D resolvent estimate for the H\"older-like damping in \cite{DKl}:
\begin{prop}[\cite{DKl}]\label{1DHolderrough}
Let $\gamma\geq 0$. Assume that $W=W(x)\geq 0$ and $\{W>0\}$ is disjoint unions of intervals $I_j=(\alpha_j,\beta_j), j=1,2,\cdots, l$ and that for each $j\in\{1,\cdots,l\}$,
$$ C_1V_j(x)\leq W(x)\leq C_2V_j(x) \text{ on } (\alpha_j,\beta_j),
$$
where $V_j(x)>0$ are continuous functions on $(\alpha_j,\beta_j)$ such that
\begin{equation}\label{Vj}
V_j(x)=\left\{
\begin{aligned}
&(x-\alpha_j)^{\gamma},\; \alpha_j<x<\frac{3\alpha_j+\beta_j}{4} \\
&(\beta_j-x)^{\gamma},\; \frac{\alpha_j+3\beta_j}{4}<x<\beta_j.
\end{aligned}
\right.
\end{equation}
Then there exist $h_0>0$, $c_1>0$, $C>0$, such that for all $0<h<h_0$, $\sqrt{c_1}\leq \lambda\leq h^{-\frac{1-\delta}{2}}$ and all solutions $v_{h,\lambda}$ of the equation 
$$ -v_{h,\lambda}''-\lambda^2v_{h,\lambda}+ih^{-1}W(x)v_{h,\lambda}=r_{h,\lambda},
$$
we have
\begin{align}\label{1Dresolvent} 
 \|v_{h,\lambda}\|_{L^2}\leq Ch^{-\frac{1}{\gamma+2}}\|r_{h,\lambda}\|_{L^2}.
\end{align}
\end{prop}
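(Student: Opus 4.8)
The plan is to run a contradiction argument and reduce, by a scaling analysis near the points where $W$ vanishes, to a fixed model operator; this reorganizes the scheme of \cite{DKl} so that the gluing step becomes explicit. Suppose \eqref{1Dresolvent} fails: there are $h_n\downarrow0$, $\lambda_n\in[\sqrt{c_1},h_n^{-\frac{1-\delta}{2}}]$ and solutions $v_n$ of $-v_n''-\lambda_n^2v_n+ih_n^{-1}Wv_n=r_n$ with $\|v_n\|_{L^2}=1$ and $\|r_n\|_{L^2}=o(h_n^{\frac{1}{\gamma+2}})$. Pairing with $\ov v_n$ and separating imaginary and real parts gives the a priori bounds
$$ \int_{\T}W|v_n|^2\,dx=h_n\Im\langle r_n,v_n\rangle=o\big(h_n^{1+\frac{1}{\gamma+2}}\big),\qquad \|v_n'\|_{L^2}^2=\lambda_n^2+o(1), $$
so that $v_n$ essentially avoids $\{W>0\}$ and has controlled oscillation.

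\textbf{Step 1: localization at the vanishing set $\Sigma_W$.} On any region where $W\gtrsim1$ the operator $-\partial_x^2-\lambda^2+ih^{-1}W$ is dissipative with an $O(1)$ gap, so the first a priori bound forces $v_n\to0$ in $L^2$ there; on each undamped interval $(\beta_j,\alpha_{j+1})$ the equation is the free equation $-v_n''-\lambda_n^2v_n=r_n$, solved explicitly in terms of the boundary values of $v_n,v_n'$ and of $r_n$. Combining these with a decomposition of $v_n$ into its (at most two) nearly resonant Fourier modes $e^{\pm ik_nx}$, $k_n\in\ZZ$, $k_n^2\approx\lambda_n^2$, plus a transverse part carrying a spectral gap --- the resonant coefficients being governed by an identity in which $h_n^{-1}\frac1{2\pi}\int_{\T}W>0$ appears as the dominant contribution --- one shows that, up to errors controlled by the a priori bounds, all of the $L^2$-mass of $v_n$ concentrates as $n\to\infty$ in shrinking neighborhoods of the finite set $\Sigma_W=\{\alpha_j,\beta_j\}$. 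A partition of unity reduces matters to a single point, which after translation we take to be $x=0$ with $W(x)\asymp x_+^{\gamma}$ for $0<x\ll1$ and $W\equiv0$ for $-1\ll x<0$, and to a fixed cutoff $\chi$ near $0$ for which $\chi v_n$ still carries a non-negligible part of the mass.

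\textbf{Step 2: rescaling to the model operator.} Balancing $v''$ against $h^{-1}Wv$ at length $\rho$ forces $\rho^{-2}\sim h^{-1}\rho^{\gamma}$, i.e. the critical scale $\rho_n=h_n^{1/(\gamma+2)}$; using the two a priori bounds and $\lambda_n\le h_n^{-(1-\delta)/2}$ one checks this is the operative scale. With $x=\rho_ns$, $w_n(s):=\rho_n^{1/2}(\chi v_n)(\rho_ns)$, the equation becomes
$$ -w_n''+is_+^{\gamma}w_n-\mu_n^2w_n=\widetilde r_n,\qquad \mu_n:=\lambda_n\rho_n, $$
with $\widetilde r_n=\rho_n^2\chi r_n+[\text{cutoff commutators}]=o_{L^2(ds)}(1)$, the commutators being supported where $W\gtrsim1$ and hence absorbed by the first a priori bound. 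The limiting operator is the fixed dissipative operator $L:=-\partial_s^2+is_+^{\gamma}$. The point is the uniform estimate $\|u\|_{L^2}\le C\|(L-\mu^2)u\|_{L^2}$ over the admissible range of $\mu$; unscaled it gives $\|\chi v_n\|_{L^2}\le Ch_n^{-1/(\gamma+2)}\|r_n\|_{L^2}+(\text{Step 1 errors})=o(1)$, which together with Step 1 contradicts $\|v_n\|_{L^2}=1$. Injectivity of $L$ itself is immediate: $Lu=0$ paired with $\ov u$ gives $u'\equiv0$ and $s_+^{\gamma}|u|^2\equiv0$, so $u\equiv0$.

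\textbf{Main obstacle.} The genuinely delicate point --- and, as already remarked above, the step that is least transparent in \cite{DKl} --- is the model estimate in Step 2: since $W$ vanishes on the whole half-line $\{s<0\}$, $L$ has essential spectrum $[0,\infty)$, so $\mu^2>0$ lies in no resolvent set and $L-\mu^2$ is not invertible on $\RR$. I intend to repair this by not passing to $\RR$ but working on a large bounded interval $\{|x|\le\rho_nS_n\}$ obtained by gluing the core region to its exterior: the Step 1 analysis (dissipative gap on the damped side, explicit free solution on the undamped side) provides quantitative smallness of $v_n,v_n'$ at the artificial boundary, so $L-\mu^2$ need only be inverted with controlled Dirichlet data, a problem uniformly well posed in $\mu$ and in $S_n$. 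Tracking how those boundary terms feed into the energy identity for $w_n$ is the technical heart of Appendix~A. A possibly cleaner --- and perhaps the ``more transparent'' --- route is to dispense with the rescaling and run a positive-commutator estimate on $\T_x$ with a weight adapted to the vanishing of $W$, in the spirit of the Burq--Hitrik multiplier used elsewhere in this paper, choosing the weight so that its contribution dominates the weak dissipation error near $\Sigma_W$ at the precise rate $h^{-1/(\gamma+2)}$.
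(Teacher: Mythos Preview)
Your setup (contradiction argument, a priori bounds, critical scale $h^{1/(\gamma+2)}$) is correct, but the rescaling route runs into exactly the wall you name: $L-\mu^2=-\partial_s^2+is_+^\gamma-\mu^2$ has no resolvent on $L^2(\RR)$ for $\mu^2>0$, and neither repair is actually carried out. The bounded-interval fix would need quantitative pointwise smallness of $(v_n,v_n')$ at the artificial boundary, which your Step~1 does not supply (the resonant-Fourier-mode argument is sketchy and in any case yields global, not pointwise, smallness). So as written there is a genuine gap, and it sits precisely at the ``technical heart'' you defer.

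The paper follows your second suggestion and discards the rescaling entirely. After an $h^\delta$-scale cutoff $\chi_h$ near $\Sigma_W$ --- so $v_2=(1-\chi_h)v$ lives where $W\gtrsim h^{\gamma\delta}$ and is immediately $o(1)$ by the damping bound --- it splits the analysis of $v_1=\chi_h v$ in two. For $\lambda\ge h^{-\delta/2}$ the one-dimensional geometric control lemma applies directly. For $\sqrt{c_1}\le\lambda<h^{-\delta/2}$ the equation for $v_1$ is tested against $\Phi_h\bar v_1'$ with $\Phi_h$ piecewise linear: $\Phi_h'=h^{-\delta}$ on the $h^\delta$-shells $I_{j,h}$ adjacent to $\Sigma_W$, $\Phi_h'=1$ on the undamped region, and $\Phi_h'=-M$ on the well-damped cores of each $I_j$ (the constant $M$ chosen so $\int_{\T}\Phi_h'=0$, hence $\|\Phi_h\|_{L^\infty}=O(1)$). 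The resulting Morawetz identity bounds $\int_{\T}\Phi_h'\,e_0\ge c_1\|v_1\|_{L^2}^2$ by terms of size $o(1)$; the restriction $\lambda<h^{-\delta/2}$ is exactly what makes the cross term $h^{-1}\int\Phi_h W v_1\bar v_1'$ absorbable via Young after one Cauchy--Schwarz. This explicit $h$-dependent weight is the ``more transparent'' reorganization of \cite{DKl} the paper advertises, and it is precisely the positive-commutator estimate you propose at the end --- so your alternative is the right one, but it needs to be executed rather than the rescaling.
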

The proof of Proposition \ref{1DHolderrough} will be given in Appendix \ref{gap}.

 Now applying Proposition \ref{1DHolderrough} for $\gamma=\frac{1}{\theta}$ (then $\frac{1}{\gamma+2}=\delta=\frac{\theta}{2\theta+1}$), $v_{h,\lambda}=v_{h,E}$ and $r_{h,\lambda}=h^{-2}r_{h,E}-\kappa_{h,E}W^{\frac{1}{2}}v_{h,E}'$ in our previous setting, combining with Lemma \ref{newerro}, we deduce that
when $c_1h^2\leq E<h^{1+\delta}$,
\begin{align}\label{regime:final}
\|v_{h,E}\|_{L^2}\leq Ch^{-2-\delta}\|r_{h,E}\|_{L^2}+h^{-\frac{1+\delta}{2}}\|W^{\frac{1}{2}}v_{h,E}\|_{L^2}+h^{-\delta}\|W^{\frac{1}{2}}v_{h,E}\|_{L^2}^{\frac{1}{2}}\|v_{h,E}\|_{L^2}^{\frac{1}{2}}.
\end{align}
Finally, to get a contradiction, we denote three index sets for three regimes:
$$ \mathcal{E}_A:=\{n: E_n\leq c_1h_n^2\},\;\mathcal{E}_B:=\{n: E_n>h_n^{1+\delta}\},\; \mathcal{E}_C:=\{n: c_1h_n^2<E_n\leq h_n^{1+\delta}\}.
$$ 
Clearly, $\N=\mathcal{E}_A\cup\mathcal{E}_B\cup\mathcal{E}_C$. From Lemma \ref{lem:elliptic}, Corollary \ref{cor:hyperbolicB} and \eqref{regime:final}, we deduce that
$$ \lim_{\substack{n\rightarrow\infty\\
n\in S }}\|v_{h_n,E_n}\|_{L^2}=0,\quad \forall S\in\{A,B,C\},
$$
and this implies that $\|v_{h_n,E_n}\|_{L^2}=o(1)$, a contradiction to \eqref{vhE}.
This finishes the proof of Proposition \ref{1DresolventHolder}. In summary, the proof of Theorem \ref{thm:resolvent1} is now complete.

\section{Optimality}\label{lowerbound} 
Let $r_0\in(0,\pi)$ and $\beta\geq 10$. Consider the function
$$ a_0(x,y)=c_{\beta}\frac{(r_0^2-x^2-y^2)_+^{\beta}}{(r_0+|x|)^{\beta+\frac{1}{2}}},\quad c_{\beta}=2\pi\Big(\int_{-1}^1(1-\rho^2)^{\beta}\Big)^{-1}.
$$
Obviously, the damped region of $a_0$ is the disc $\omega_0:=\{|(x,y)|<r_0\}$, which is strictly convex with positive curvature. Moreover, close to $|(x,y)|=r_0$, $a_0(x,y)$ vanishes exactly of order $(r_0-|(x,y)|)_+^{\beta}$. Therefore, $a_0$ verifies all the hypothesis of the damping function in Theorem \ref{thm:resolvent1}.

The averaged damping of $a_0$ along $\mathrm{e}_2$ is given by
$$ b_0(x):=\frac{1}{2\pi}\int_{-\pi}^{\pi}a_0(x,y)dy,
$$
and direct computation yields
$$ b_0(x)=(r_0-|x|)_+^{\beta+\frac{1}{2}},\quad \text{ as }|x| \text{ close to }r_0.
$$
The goal of this section is to show that 
\begin{align}\label{lastsection}  
\|(-h^2\Delta-1+iha_0)^{-1}\|_{\mathcal{L}(L^2)}\gtrsim h^{-2-\frac{1}{\gamma+2}}, \;\text{ with } \gamma=\beta+\frac{1}{2}.
\end{align}
 The idea is to use the quasimodes constructed in \cite{Kl} that saturates the lower bound of $$\|(-h^2\Delta-1+ihb_0(x))^{-1}\|_{\mathcal{L}(L^2)}$$ and use the averaging argument to transform back to obtain the desired quasimodes for the operator $-h^2\Delta-1+iha_0(z)$. In order to control remainders appearing in the normal form analysis, rather than using the construction in \cite{Kl} as a blackbox, we should keep track of the regularity (anisotropic) of the quasimodes. For this reason, we will briefly review the construction of \cite{Kl} and prove some extra estimates in the next subsection.
\subsection{Estimates for the $\T^2$ quasimodes}
First we review the construction of quasimodes of $-h^2\Delta-1+ihb_0(x)$ in \cite{Kl}. The original idea for the construction dates back to the appendix in \cite{AL14} by Nonnenmacher.

Recall that $b_0(x)=(r_0-|x|)_+^{\gamma}$, we consider the ansatz
$$ u_{k}(x,y):=\mathrm{e}^{iky}v_{h_k}(x),
$$ 
with $0<h_k\lesssim \frac{1}{|k|}$ to be specified later. We fix $\delta=\frac{1}{\gamma+2}$ throughout this section. As before, we will drop the dependence in $k$ and write simply $h=h_k, v_h=v_{h_k}$. Plugging into the equation $$(-h^2\Delta-1+ihb_0(x))u_h=O_{L^2}(h^{2+\delta}),$$ we would like $v_h$ to satisfy
\begin{align}\label{quasimodev_h}
	-h^2\partial_x^2v_h+ihb_0(x)v_h=(1-h^2k^2)v_h+O_{L^2}(h^{2+\delta}),\quad \|v_h\|_{L^2}\sim 1.
\end{align}
This amounts to solve the eigenvalue problem:
\begin{align}\label{EigenPb} 
	-h^2\partial_x^2v_h+ihb_0(x)v_h=\lambda_h^2v_h
\end{align}
with $\lambda_h=Ch+O(h^{1+\delta})$. We consider the even eigenfunction $v_h(x)=v_h(|x|)$ with
\begin{align*}
	v_h(x)=\begin{cases} 
		& \!\!\!\!\!\!v_{h,l}(x),\; 0\leq x<r_0,\\
		& \!\!\!\!\!\!v_{h,r}(x),\; r_0\leq x<\pi
	\end{cases}
\end{align*}
where 
$ v_{h,r}(x)=\cos\big(\frac{\lambda_h}{h}x\big).
$ The left function $v_{h,l}$ should satisfy the equation
\begin{align}\label{eq:vhr} 
	-h^2\partial_x^2v_{h,l}+ih(r_0-x)_+^{\gamma}v_{h,l}=\lambda_h^2v_{h,l},\; 0\leq x<r_0.
\end{align}
In order to ensure $v_h\in H^2$, the function $v_{h,l}$ should satisfy the compatibility condition
\begin{align}\label{compatibility} 
	v_{h,l}( r_0)=\cos\big(\frac{\lambda_hr_0}{h}\big),\quad v'_{h,l}( r_0)=-\frac{\lambda_h}{h}\sin\big(\frac{\lambda_hr_0}{h}\big).
\end{align} 
Since the mass in the damped region is very small compared with the total mass, and the amplitude of the transmitted mass should be the same size as the reflected mass, so we expect that $|v_{h,l}'(r_0)|\gg |v_{h,l}(r_0)|$. As $\frac{\lambda_h}{h}$ is of size $O(1)$, the principal part of the argument $\frac{\lambda_h r_0}{h}$ must belong to $\pi\big(l+\frac{1}{2}\big), l\in\Z$. Therefore, we take the following ansatz for $\lambda_h$:
\begin{align}\label{lambdahansatz} 
	\lambda_h=\frac{\pi\big(l+\frac{1}{2}\big)h}{y_0}+O(h^{1+\delta}).
\end{align}

Next, denote by $F(x;\theta)$ the $H^1(\R_+)$ solution of the Neumann problem (with a parameter $\theta\in\C$)
\begin{align}\label{variational}
	\begin{cases}
		&\!\!\!\!-F''(x)+ix^{\gamma}F-\theta F=0,\; x>0\\
		&\!\!\!\! F'(0)=1,
	\end{cases} 
\end{align} 
then $v_{h,l}(x)$ takes the form $h^{\delta}\alpha_hF\big(\frac{r_0-x}{h^{\delta}};\theta_h\big)$, with $\theta_h=h^{-\frac{2(\gamma+1)}{\gamma+2}}\lambda_h^2$, and a constant $O(1)=\alpha_h\in\C$ to be determined in order to match the compatibility condition \eqref{compatibility}.

Denote by $\mu_0$ be the lowest Neumann eigenvalue of the operator $-\partial_y^2+x^{\gamma}$ on $L^2(\R_+)$ and $F_0$ the Dirichelet trace $F(x;0)|_{x=0}$. It was shown  that $\mu_0>0$ (Lemma 4.1 of \cite{Kl}). Moreover, by using the implicit function theory (Lemma 4.2 of \cite{Kl}), there exists a uniform constant $C_0>0$, such that for all $|\theta|\leq \frac{\mu_0}{2}$, the (unique) solution $F(y;\theta)$ of the Neumann problem \eqref{variational} satisfies
\begin{align}\label{nonvanishing} 
	\frac{1}{C_0}\leq |F(0;\theta)|\leq C_0.
\end{align}
In particular, $F_0=F(0;0)\neq 0$.  

Now we are ready to solve \eqref{eq:vhr} with the compatibility condition \eqref{compatibility}. In view of \eqref{lambdahansatz}, we precise the ansatz of $\lambda_h$ as
$$ \lambda_h=\frac{\pi\big(l+\frac{1}{2}\big)h}{r_0}+\gamma_h h^{1+\delta},\quad O(1)=\gamma_h\in\C. 
$$
Now $\alpha_h,\gamma_h$ are the parameters to be determined.
Plugging into \eqref{compatibility} with $$v_{h,l}(x)=h^{\delta}\alpha_hF\big(\frac{r_0-x}{h^{\delta}};\theta_h\big),$$
we obtain a system
\begin{align}\label{compatibility2}& \alpha_hF(0;\theta_h)=(-1)^{l+1}h^{-\delta}\sin(\gamma_hr_0h^{\delta}),\notag \\ 
	& \alpha_h=(-1)^{l}\big[\frac{\pi\big(l+\frac{1}{2}\big)}{r_0}+\gamma_hh^{\delta}\big]\cos(\gamma_hr_0h^{\delta}).  
\end{align}
Since $|\theta_h|\sim h^{\frac{2}{\gamma+2}}$, by Taylor expansion of sin and cos, the leading term of $\gamma_h$ should be $\gamma_0:=\frac{\pi\big(l+\frac{1}{2}\big)F_0}{r_0^2}$ and the leading term for $\alpha_h$ should be $\alpha_0=(-1)^{l}\frac{\pi\big(l+\frac{1}{2}\big)}{y_0}$. Fix the number $l$, by using the implicit function theorem, the solution $(\alpha_h,\gamma_h)$ to \eqref{compatibility2} exists for $0\leq h\ll 1$ and
$$ |(\alpha_h,\gamma_h)-(\alpha_0,\gamma_0)|\lesssim h^{\delta}.
$$
For the detailed argument, we refer Lemma 4.3 and Lemma 4.4 of \cite{Kl}.

Finally, we take $h=h_k=\frac{r_0}{\sqrt{k^2r_0^2+\pi^2(l+\frac{1}{2})^2 }}$, then $1-h_k^2k^2=\lambda_{h_k}^2+O(h_k^{2+\delta})$, hence 
\begin{align}\label{T2quasimodes} 
	u_h(x,y)=e^{iky}v_{h}(x)=e^{iky}\big[\cos\big(\frac{\lambda_h |x|}{h}\big)\mathbf{1}_{r_0<|x|\leq \pi}+h^{\delta}\alpha_hF\big(\frac{r_0-|x|}{h^{\delta}};\eta_h\big) \mathbf{1}_{|x|\leq r_0}\big]
\end{align}
are the desired $\T^2$ quasimodes, satisfying
$$ -h^2\Delta u_h-u_h+ihb_0(x)u_h=O_{L^2}(h^{2+\delta}).
$$
We are going to prove more 
estimates on $u_h$ as well as its transformations:
\begin{prop}\label{estimatesT2quasimodes} 
	Let $u_h$ is given by \eqref{T2quasimodes} and $\psi\in C_c^{\infty}(\R)$ be a bump function supported near $0$. Then  $\widetilde{u}_h:=\psi(h^{\frac{1+\delta}{2}} D_x)u_h$ satisfies
	$$ (-h^2\Delta-1+ihb_0(x))\widetilde{u}_h=O_{L^2}(h^{2+\delta}).
	$$
	Moreover, there exist uniform constants $h_0>0$, $C_1>0$, such that for all $0<h<h_0$:
	\begin{itemize}
		\item[$\mathrm{(a)}$] $\frac{1}{C _1}\leq \|\widetilde{u}_h\|_{L^2}\leq C_1,\quad $ $\|b_0(x)^{\frac{1}{2}}\widetilde{u}_h\|_{L^2}\leq C_1h^{\frac{1+\delta}{2}}$.
		\medskip
		
		\item[$\mathrm{(b)}$]
		$\|h^j\partial_x^j\widetilde{u}_h\|_{L^2}\leq C_1h^{j(1-\delta)+\frac{\delta}{2}}$ and $\|h^j\partial_y^j\widetilde{u}_h\|_{L^2}\leq C_1$, for $j=1,2$.
		\medskip
		
		\item[$\mathrm{(c)}$] $ \|b_0'(x)\partial_x\widetilde{u}_h\|_{L^2}\leq C_1h^{\delta},\quad \Big\|\Big(\int_{-\pi}^y(\partial_xa_0(x,y')-b_0'(x))dy'\Big)\partial_x\widetilde{u}_h\Big\|_{L^2}\leq C_1h^{\delta}$.
	\end{itemize}
Moreover, for any $S^0$ symbol $g(x,y,\eta)$ in $(y,\eta)$, smoothly depending on $x$, if $\widetilde{v}_h=e^{-\mathrm{Op}_h^w(g)}\widetilde{u}_h$, the above estimates $\mathrm{(a)},\mathrm{(b)},\mathrm{(c)}$ still hold, up to some different uniform constant $C_2>0$.
\end{prop}
We need a Lemma:
\begin{lem}\label{AprioriF} 
	Let $\mu_0$ be the least eigenvalue of the operator $\mathcal{A}_{\gamma}=-\partial_x^2+x^{\gamma}$ on $L^2(\R_+)$ associated to the Neumann boundary condition. Then there exists a uniform constant $C>0$, such that for all $|\theta|\leq \frac{\mu_0}{4}$, the solution $F(x;\theta)$ of \eqref{variational} satisfies
	$$ \|F\|_{H^2(\R_+)}\leq C.
	$$
\end{lem}
\begin{proof}
	Take $H\in C_c^{\infty}([0,\infty))$ such that $H(0)=0$ and $H'(0)=1$. Consider $\widetilde{F}:=F-H$, then
	$$ -\widetilde{F}''+ix^{\gamma}\widetilde{F}-\theta \widetilde{F}=W,
	$$
	with $W=H''-ix^{\gamma}H+\theta H$. Multiplying by $\ov{\widetilde{F}}$ and doing the integration by part, we get
	\begin{align*}
		\int_0^{\infty}|\widetilde{F}'(x)|^2dx+i\int_0^{\infty}x^{\gamma}|\widetilde{F}(x)|^2dx-\theta\int_0^{\infty}|\widetilde{F}(x)|^2dx=\int_0^{\infty}W(x)\ov{\widetilde{F}}(x)dx.
	\end{align*}
	Taking the real part and imaginary part, we get
	$$ \|\widetilde{F}'\|_{L^2(\R_+)}^2\leq |\Re\theta|\|\widetilde{F}\|_{L^2(\R_+)}^2+\|W\|_{L^2(\R_+)}\|\widetilde{F}\|_{L^2(\R_+)},
	$$
	and
	$$ \|x^{\frac{\gamma}{2}}\widetilde{F}\|_{L^2(\R_+)}^2\leq |\Im \theta|\|\widetilde{F}\|_{L^2(\R_+)}^2+\|W\|_{L^2(\R_+)}\|\widetilde{F}\|_{L^2(\R_+)}.
	$$
	Adding two inequalities above, using $\mathcal{A}_{\gamma}-\mu_0\geq 0$ and the fact that $|\theta|\leq \frac{\mu_0}{4}$, we get
	$$ \mu_0\|\widetilde{F}\|_{L^2(\R_+)}^2\leq \frac{\mu_0}{2}\|\widetilde{F}\|_{L^2(\R_+)}^2+2\|W\|_{L^2(\R_+)}\|\widetilde{F}\|_{L^2(\R_+)}.
	$$
	This proves the boundedness of $\|\widetilde{F}\|_{L^2(\R_+)}+\|\partial_x\widetilde{F}\|_{L^2(\R_+)}+\|x^{\frac{\gamma}{2}}\widetilde{F}\|_{L^2(\R_+)}$ in terms of $\|W\|_{L^2(\R_+)}$. Replacing $\widetilde{F}$ by $\widetilde{F}'$, the same argument yields the boundedness of $\|\widetilde{F}''\|_{L^2(\R_+)}$ in terms of $\|W\|_{H^1(\R_+)}$. Since the $H^1$ bound for $W$ is uniform with respect to $|\theta|\leq \frac{\mu_0}{4}$, the proof of Lemma \ref{AprioriF} is complete.

\end{proof}

\begin{proof}[Proof of Proposition \ref{estimatesT2quasimodes}]
Denote by $\hbar=h^{\frac{1+\delta}{2}}$ as before. The fact that $\widetilde{u}_h=\psi(\hbar D_x)u_h$ satisfies the equation of quasimodes is clear from Lemma \ref{newerro}, up to changing $o_{L^2}(h\hbar^2)$ there to  $O_{L^2}(h\hbar^2)$. We need to prove estimates only.

At the first step, we prove the same estimates (a),(b),(c) for $u_h$. The inequality (a) is clear by the construction and Lemma \ref{apriori}.
For (b), since $|k|\sim \frac{1}{h}$, we have $\|h^j\partial_y^ju_h\|_{L^2}\lesssim 1$ for all $j\in\N$. 
The derivatives of $u_h$ in $x$ satisfy
	$$ |\partial_x^ju_h|\lesssim 1+\mathbf{1}_{|x|\leq r_0}h^{-(j-1)\delta}|(\partial_x^jF)\big(\frac{r_0-|x|}{h^{\delta}};\theta_h\big)|,
	$$
	by Lemma \ref{AprioriF}, we deduce that $\|h^j\partial_x^ju_h\|_{L^2}\lesssim h^{j-(j-\frac{1}{2})\delta}$, for $j=1,2$.
	Finally, since
	$ \partial_xu_h=F'\big(\frac{r_0-|x|}{h^{\delta}};\theta_h\big),
	$ on supp$(b_0')$, we have
	\begin{align}\label{b'dx} \|b_0'(x)\partial_xu_h\|_{L^2}\lesssim \big\|(r_0-|x|)_+^{\gamma-1}F'\big(\frac{r_0-|x|}{h^{\delta}};\theta_h\big)\big\|_{L^2}\lesssim h^{\frac{\delta}{2}}\cdot h^{\delta(\gamma-1)}=h^{\frac{\gamma-1/2}{\gamma+2}}=h^{1-\frac{5\delta}{2}}\leq h^{\delta},
	\end{align}
	thanks to $\delta=\frac{1}{\gamma+2}$ and $\gamma>2$. Next,
	\begin{align*}
	\Big\|\Big(\int_{-\pi}^y(\partial_xa_0(x,y')-b_0'(x))dy'\Big)\partial_xu_h\Big\|_{L^2}\lesssim &\Big\|\Big(\int_{-\pi}^{\pi}|\partial_xa_0(x,y')|dy'\Big)\partial_xu_h
	\Big\|_{L^2(|x|\leq r_0)}+\||b_0'(x)|\partial_xu_h\|_{L^2(|x|\leq r_0)}.
	\end{align*}
    Since for $|x|\leq r_0$,
    $$ \int_{-\pi}^{\pi}|\partial_xa_0(x,y')|dy'=\int_{-\sqrt{r_0^2-x^2}}^{\sqrt{r_0^2-x^2}}|\partial_xa_0(x,y')|dy'\lesssim (r_0-|x|)_+^{\gamma-1},
    $$
   the same computation as \eqref{b'dx} yields 
   $$	\Big\|\Big(\int_{-\pi}^y\big(\partial_ya_0(x,y')-b_0'(x)\big)dy'\Big)\partial_xu_h\Big\|_{L^2}\lesssim h^{1-\frac{5\delta}{2}}\leq h^{\delta}.
   $$

As the second step, we deal with $\widetilde{u}_h$. As the Fourier multiplier $\psi(\hbar D_x)$ commutes with derivatives, the upper bounds for $\|\widetilde{u}_h\|_{L^2}$, $\|h^j\partial_x^j\widetilde{u}_h\|_{L^2}$ and $\|h^j\partial_y^j\widetilde{u}_h\|_{L^2}$ follow directly from the estimates for $u_h$, hence (b) holds for $\widetilde{u}_h$. To prove the lower bound of $\|\widetilde{u}_h\|_{L^2}$, we take a smooth function $\chi\in C^{\infty}(\R)$ such that $\chi\equiv 1$ on the support of $1-\psi$. Then we write $$u_h-\widetilde{u}_h=\big(\partial_x^{-1}\chi(\hbar D_x)\big)\cdot \partial_x(1-\psi(\hbar D_x))u_h.
	$$ 	 
	Since $\|\partial_xu_h\|_{L^2}\lesssim h^{-\delta/2}$ and $\|\partial_x^{-1}\chi(\hbar D_x)\|_{\mathcal{L}(L^2)}\lesssim \hbar$, we obtain that $\|u_h-\widetilde{u}_h\|_{L^2}\lesssim \hbar h^{-\delta}=o(1)$. Therefore, the assertions (a) follows for $\widetilde{u}_h$.  By the commutator estimate
	$$ \|[f,\psi(\hbar D_x)]\|_{\mathcal{L}(L^2)}\lesssim \hbar,
	$$ 
	for $f\in W^{1,\infty}$ and the fact that $\|\partial_xu_h\|_{L^2}\lesssim h^{-\frac{\delta}{2}}$, we deduce that
	$$ \Big\|\Big(\int_{-\pi}^y(\partial_xa_0(x,y')-b_0'(x))dy'\Big)\partial_x\widetilde{u}_h
	\Big\|_{L^2}+\|b_0'(x)\partial_x\widetilde{u}_h\|_{L^2}\lesssim \hbar \cdot h^{-\frac{\delta}{2}}+h^{\delta}\lesssim h^{\delta}.
	$$
	
	The last step is to prove estimates (a),(b),(c) for $\widetilde{v}_h=e^{-G_h}\widetilde{u}_h$ where $G_h=\mathrm{Op}_h^w(g)$. Since $e^{-G_h}$ is invertible and is uniformly bounded, we get $\|\widetilde{v}_h\|_{L^2}\sim 1$. Moreover, since $G_h$ commutes with the multiplication by functions depending only in $x$, we have $\|b_0(x)^{\frac{1}{2}}\widetilde{v}_h\|_{L^2}\lesssim h^{\frac{1+\delta}{2}}$. So (a) holds for $\widetilde{v}_h$.
	
	 To prove (b),(c) for $\widetilde{v}_h$, from the same estimates for $\widetilde{u}_h$,
	 it suffices to control the commutator terms involving the operator $e^{-G_h}$. 
	 Applying the formula
	$$ [B^2,e^{-G_h}]=2[B,e^{-G_h}]B+[B,[B,e^{-G_h}]],
	$$
	to $B=h\partial_x, h\partial_y$ and using \eqref{ecommutator} and \eqref{conjugationformula2},
	 we deduce that for $j=1,2$,
	$$ \|[h^j\partial_x^j,e^{-G_h}]\widetilde{u}_h\|_{L^2}\lesssim h^{j},\quad \|[h^j\partial_y^j,e^{-G_h}]\widetilde{u}_h\|_{L^2}\lesssim h^j,
	$$
hence (b) follows for $\widetilde{v}_h$.
	
	To estimate $\|[b_0'(x)\partial_x,e^{-G_h}]\widetilde{u}_h\|_{L^2}$, since $G_h, e^{sG_h}$ both commute with $b_0'(x)$, by formula \eqref{conjugationformula2}, we have
	 $$ b_0'(x)[\partial_x,e^{-G_h}]=[\partial_x,e^{-G_h}]b_0'(x).
	 $$ 
	From \eqref{ecommutator},  $\|[\partial_x,e^{-G_h}]\|_{\mathcal{L}(L^2)}\lesssim 1$ and $|b_0'|\lesssim b_0^{1/2}$, we deduce that
	   $$\|[b_0'(x)\partial_x,e^{-G_h}]\widetilde{u}_h\|_{L^2}\lesssim h^{\frac{1+\delta}{2}}.$$
	 Finally, to estimate $\|[(\partial_xA)\partial_x,e^{-G_h}]\widetilde{u}_h\|_{L^2}$, where
	 $$ A(x,y):=\int_{-\pi}^y(a_0(x,y')-b_0(x))dy',
	 $$
	 we write
	 $$[(\partial_xA)\partial_x,e^{-G_h}]\widetilde{u}_h=[(\partial_xA),e^{-G_h}]\partial_x\widetilde{u}_h+(\partial_xA)[\partial_x,e^{-G_h}]\widetilde{u}_h.
	 $$
	 Since $\|[(\partial_xA),e^{-G_h}]\|_{\mathcal{L}(L^2)}\lesssim h$, together with the estimate (b) for $\widetilde{u}_h$, we have
	 $$\|[(\partial_xA),e^{-G_h}]\partial_x\widetilde{u}_h\|_{L^2}\lesssim h^{1-\frac{\delta}{2}}\lesssim h^{\delta}.$$
	 Further commuting $\partial_xA$ and $[\partial_x,e^{-G_h}]$, by \eqref{conjugationformula2}, we can write
	 $$ (\partial_xA)[\partial_x,e^{-G_h}]=[\partial_x,e^{-G_h}](\partial_xA)+\mathcal{O}_{\mathcal{L}(L^2)}(h).
	 $$
	Therefore, 
	$$ \|(\partial_xA)[\partial_x,e^{-G_h}]\widetilde{u}_h\|_{L^2}\lesssim \|(\partial_xA)\widetilde{u}_h\|_{L^2}+O(h).
	$$
	By Lemma \ref{dampingproperty}, together with the fact that $|\nabla a_0|\lesssim a_0^{\frac{1}{2}}, |b_0'|\lesssim b_0^{1/2}$ and Jensen's inequality, we have
	$ |(\partial_xA)(x,y)|\lesssim b_0(x)^{\frac{1}{2}}.
	$ Therefore, we obtain that $\|(\partial_xA)\widetilde{u}_h\|_{L^2}\lesssim h^{\frac{1+\delta}{2}}\leq h^{\delta}$, hence (c) follows. 
	
	The proof of Proposition \ref{estimatesT2quasimodes} is now complete.
\end{proof}

\subsection{Proof of the optimality}
Take quasimodes $\widetilde{u}_h$ in Proposition \ref{estimatesT2quasimodes}. Define $\widetilde{v}_h=e^{-G_h}\widetilde{u}_h$, we are going to show that $\widetilde{v}_h$ are the desired quasimodes, satisfying
\begin{align}\label{final} 
(-h^2\Delta-1+iha_0)\widetilde{v}_h=O_{L^2}(h^{2+\delta}),\quad \|\widetilde{v}_h\|_{L^2}\sim 1. 
\end{align}
Then \eqref{final} implies \eqref{lastsection}. 
 
Recall the notations $b_0(x)=\mathcal{A}(a_0)(x)$, $G_h=\mathrm{Op}_h^w(g(x,y,\eta))$, where
$$ g(x,y,\eta)=-\frac{\psi_1(\eta)}{2\eta}\int_{-\pi}^y(a_0(x,y')-b_0(x))dy'.
$$
First, we prove:
\begin{prop}\label{averaginginverse} 
The quasimodes $\widetilde{v}_h=e^{-G_h}\widetilde{u}_h$ satisfy the equation
	$$ (-h^2\Delta-1+iha_0)\widetilde{v}_h+[h^2D_x^2,G_h]\widetilde{v}_h=O_{L^2}(h\hbar^2),
	$$
where $\hbar=h^{\frac{1+\delta}{2}}$.  	Moreover, $\widetilde{v}_h$ verifies properties $\mathrm{(a)},\mathrm{(b)},\mathrm{(c)}$ in Proposition \ref{estimatesT2quasimodes} and
 $$\|a_0^{\frac{1}{2}}\widetilde{v}_h\|_{L^2}+\|b_0^{\frac{1}{2}}\widetilde{v}_h\|_{L^2}=O(\hbar).$$

\end{prop}

\begin{proof}
 The proof is very similar to the proof of Proposition \ref{1averaging}, but much simpler, since we have better estimates for $\widetilde{v}_h=e^{-G_h}\widetilde{u}_h$, thanks to Proposition \ref{estimatesT2quasimodes}.
 Consider the conjugate operator
$$ \widetilde{F}_h(s):=e^{-sG_h}(P_h+ihb_0(x))e^{sG_h},
$$
where $P_h=-h^2\Delta-1$.
Similar to \eqref{normalform}, we obtain a simpler formula
\begin{align}\label{normalform'}
	\widetilde{F}_h(1)=e^{-G_h}(P_h+ihb_0(x))e^{G_h}=&P_h+ih\big(b_0-\frac{i}{h}[h^2D_y^2,G_h]\big)\notag\\+&[h^2D_x^2,G_h]+\frac{1}{2}[G_h,[G_h,P_h]]+\mathcal{O}_{\mathcal{L}(L^2)}(h^{2+\delta}).
\end{align}
Here we use the fact that $[G_h,b_0(x)]=0$. As the principal symbol of $\frac{i}{h}[h^2D_y^2,G_h]$ is $\psi_1(\eta)(b_0-a_0)$, we have
\begin{align}\label{equationv_h}
(P_h+iha_0+[h^2D_x^2,G_h])\widetilde{v}_h=f_h+O_{L^2}(h^{2+\delta}),
\end{align}
where
\begin{align}\label{f_h}
 f_h=ih\big(b_0-a_0-\frac{i}{h}[h^2D_y^2,G_h]\big)\widetilde{v}_h-\frac{1}{2}[G_h,[G_h,P_h]]\widetilde{v}_h.
\end{align}
It remains to show that $f_h=O_{L^2}(h^{2+\delta})$. 

First of all, the analogue of Lemma \ref{wavefront} holds with the same proof:
\begin{lem}\label{analogue} 
Let $\widetilde{\psi}$ be any cutoff such that $\widetilde{\psi}\equiv 1$ on the support of $\psi$ that defined $\widetilde{u}_h$ in Proposition \ref{estimatesT2quasimodes}. Then
$$ \mathrm{WF}_h^{10}(\widetilde{v}_h)\subset \mathrm{WF}_h(\widetilde{u}_h)
$$ 
and
$$ \|(1-\widetilde{\psi}(\hbar D_x))\widetilde{v}_h\|_{L^2}=O(\hbar^4)=O(h^{2+2\delta}).
$$
\end{lem}
Similarly, the analogue of Lemma \ref{erro1} holds. Therefore, we have
$$ (P_h+iha_0+[h^2D_x^2,G_h])\widetilde{v}_h+\frac{1}{2}[G_h,[G_h,P_h]]\widetilde{v}_h=O_{L^2}(h^{2+\delta}).
$$
Next, we claim that
\begin{align}\label{claima0}  \|a_0^{\frac{1}{2}}\widetilde{v}_h\|_{L^2}=O(\hbar)=O(h^{\frac{1+\delta}{2}}).
\end{align}
This follows by mimicking the proof of Lemma \ref{averagingdamping} with the new operator
$$ Q_h=a_0-\frac{i}{h}[h^2D_x^2,G_h].
$$
Indeed, by multiplying by $\ov{\widetilde{v}_h}$ to the equation and taking the imaginary part, we have
$$ (Q_h\widetilde{v}_h,\widetilde{v}_h)_{L^2}=O(\hbar^2).
$$
Hence
$$ \|a_0^{\frac{1}{2}}\widetilde{v}_h\|_{L^2}^2+\frac{i}{h}([h^2D_x^2,G_h]\widetilde{v}_h,\widetilde{v}_h)=O(\hbar^2).
$$
By \eqref{expansion}, \eqref{interpolation1} and \eqref{interpolation2}, we have
\begin{align*}
\big|\frac{i}{h}([h^2D_x^2,G_h]\widetilde{v}_h,\widetilde{v}_h)\big|\lesssim h\|b_0(x)^{\frac{1}{2}}\widetilde{v}_h\|_{L^2}\|\widetilde{v}_h\|_{L^2}+h\|b_0(x)^{\frac{1}{2}}hD_x\widetilde{v}_h\|_{L^2}\|b_0(x)^{\frac{1}{2}-\sigma}\widetilde{v}_h\|_{L^2},
\end{align*}
where $\sigma<\frac{1}{4}$. Note that by Lemma \ref{analogue}, we can write $\widetilde{v}_h=\psi(\hbar D_x)\widetilde{v}_h+O_{L^2}(h^{2+2\delta})$, hence
\begin{align*}
 \|b_0(x)^{\frac{1}{2}}hD_x\widetilde{v}_h\|_{L^2}\leq &h\hbar^{-1}\|b_0(x)^{\frac{1}{2}}\hbar D_x\psi(\hbar D_x)\widetilde{v}_h\|_{L^2}+O(h^{2+2\delta})\leq h\hbar^{-1}\|[b_0^{\frac{1}{2}},\hbar D_x\psi(\hbar D_x)]\widetilde{v}_h\|_{L^2}+O(h)\\
 \leq &O(h),
\end{align*}
thanks to the symbolic calculus. Therefore,
$$ \big|\frac{i}{h}([h^2D_x^2,G_h]\widetilde{v}_h,\widetilde{v}_h)\big|\leq O(h^2).
$$
In particular, we obtain \eqref{claima0}. Finally, the estimate
$$ \|[G_h,[G_h,P_h]]\widetilde{v}_h\|_{L^2}=O(h^{2+\delta})
$$
follows from the same argument in the last paragraph of the proof of Proposition \ref{1averaging}.
The proof of Proposition \ref{averaginginverse} is now complete. 
\end{proof}
  
In view of Proposition \ref{estimatesT2quasimodes} and Proposition \ref{averaginginverse}, to complete the proof of \eqref{final}, it remains to show that
\begin{align}\label{finalcommutator} 
\|[h^2D_x^2,G_h]\widetilde{v}_h\|_{L^2}=O(h\hbar^2).	
\end{align}
By the same argument as in the proof of Lemma \ref{secondaveraginglemma1},
	we have
	$$ [h^2D_x^2,G_h]\widetilde{v}_h=-4ih^2\hbar^{-1}(\partial_xA)b(hD_y)\hbar D_x\widetilde{v}_h+O_{L^2}(h\hbar^2),
	$$
where
$$ A(x,y)=\int_{-\pi}^y(a_0(x,y')-b_0(x))dy',
\quad b(hD_y)=-\frac{ \chi(hD_y)}{4hD_y}.
$$
 Since $\|[(\partial_xA),b(hD_y)]\|_{\mathcal{L}(L^2)}=O(h)$, from (c) of Proposition \ref{estimatesT2quasimodes}, we deduce that
$$ \|4ih^2\hbar^{-1}(\partial_xA)b(hD_y)\hbar D_x\widetilde{v}_h\|_{L^2}\leq O(h^3\hbar^{-1})+O(h^{2+\delta})=O(h^{2+\delta}),
$$ 
thanks to the fact that $\delta<\frac{1}{3}$. This proves \eqref{finalcommutator}.

\appendix

\section{Proof of Proposition \ref{1DHolderrough} }\label{gap}

In this appendix, we prove Proposition \ref{1DHolderrough}. Note that the proof works also for $\gamma=0$, thus covering the main result in \cite{St} for the piecewise constant rectangular damping. Without loss of generality, we assume that $\cup_{j=1}^l\ov{I}_j\neq \T$, otherwise, we can apply Theorem 1.7 of \cite{LLe} and the corresponding resolvent estimate $h^{\frac{2}{\gamma+2}}$ is much better than \eqref{1Dresolvent}.

Recall the numerology:
$ \delta=\frac{1}{\gamma+2}.
$
To simplify the notation in the exposition, we argue by contradiction. We assume that there exists a sequence $h_n\rightarrow 0$ and $\lambda_n\subset\R$ such that $c_1^{\frac{1}{2}}\leq \lambda_n\leq h_n^{-\frac{1-\delta}{2}}$, such that
\begin{align}\label{AC:eq1}
	-v_{h_n,\lambda_n}''-\lambda_n^2v_{h_n,\lambda_n}+ih^{-1}W(x)v_{h_n,\lambda_n}=r_{h_n,\lambda_n}
\end{align}
and
$$ \|v_{h_n,\lambda_n}\|_{L^2}=1,\quad \|r_{h_n,\lambda_n}\|_{L^2}=o(h_n^{\delta}).
$$
For simplicity, we will ignore the subindex $n$ for $h_n,\lambda_n$ and write simply $v=v_{h,\lambda}, r=r_{h,\lambda}$ sometimes without displaying their dependence in $h$ and $\lambda$.
First we record the a priori estimate, for which the proof is a direct consequence of integration by part (see the proof of Lemma \ref{apriori})
\begin{lem}\label{aprioriAC} 
	\begin{align*}
		&(a)\quad \|v'\|_{L^2}^2-\lambda^2\|v\|_{L^2}^2=o(h^{\delta}).\\
		&(b)\quad \|W^{\frac{1}{2}}v\|_{L^2}=o(h^{\frac{1+\delta}{2}}).
	\end{align*}
\end{lem}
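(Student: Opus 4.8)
The plan is to obtain both estimates by the standard energy method, exactly as in the proof of Lemma~\ref{apriori}: test the equation \eqref{AC:eq1} against $\overline{v}$, integrate over $\T$, integrate by parts once in the Laplacian term (no boundary terms appear by periodicity, and $v\in H^2(\T)$ supplies the regularity needed to justify this), and then separate real and imaginary parts. Pairing \eqref{AC:eq1} with $\overline{v}$ gives the identity
\[
\|v'\|_{L^2}^2-\lambda^2\|v\|_{L^2}^2+ih^{-1}\|W^{\frac{1}{2}}v\|_{L^2}^2=(r,v)_{L^2},
\]
where I have used $W\ge 0$, so that $\int_\T W|v|^2$ is real and nonnegative, while the first two terms are manifestly real.

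Taking the imaginary part and applying Cauchy--Schwarz yields $h^{-1}\|W^{\frac{1}{2}}v\|_{L^2}^2\le \|r\|_{L^2}\|v\|_{L^2}$; inserting the contradiction hypotheses $\|v\|_{L^2}=1$ and $\|r\|_{L^2}=o(h^{\delta})$ gives $\|W^{\frac{1}{2}}v\|_{L^2}^2=o(h^{1+\delta})$, i.e.\ (b). Taking the real part and applying Cauchy--Schwarz the same way gives $\big|\,\|v'\|_{L^2}^2-\lambda^2\|v\|_{L^2}^2\,\big|\le \|r\|_{L^2}\|v\|_{L^2}=o(h^{\delta})$, i.e.\ (a).

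I do not expect a genuine obstacle here: this is essentially a one-line integration by parts. The only points worth emphasizing are that the test function is simply $\overline{v}$, with no weight required (in contrast with the weighted identity of Lemma~\ref{weighted} that is needed later for the elliptic regime), and that the sign condition $W\ge 0$ is essential — it is what forces the damping contribution to be purely imaginary and thereby decouples the energy identity cleanly into the two claimed bounds.
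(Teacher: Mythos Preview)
Your proof is correct and follows exactly the approach the paper indicates: the paper states only that the lemma ``is a direct consequence of integration by part (see the proof of Lemma~\ref{apriori})'', and your argument---pairing \eqref{AC:eq1} with $\overline{v}$, integrating by parts, and separating real and imaginary parts---is precisely that.
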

Pick $\chi\in C^{\infty}(\R;[0,1])$ such that $\chi(s)\equiv 1$ when $s\leq 1$ and $\chi(s)\equiv 0$ when $s>2$. Denote by $I_j=(\alpha_j,\beta_j)$ and without loss of generality, we assume that $$-\pi<\alpha_1<\beta_1\leq\alpha_2<\beta_2\leq\cdots \leq\alpha_l<\beta_l\leq\pi.$$
Define the function
$$ V_0(x):=\sum_{j=1}^l\max\{0,(x-\alpha_j)(\beta_j-x)\}.
$$
Note that $V_0(x)=(x-\alpha_j)(\beta_j-x)$ when $x\in I_j$ for some $j\in\{1,\cdots,l\}$ and $V_0(x)=0$ whenever $W(x)=0$. Define $\chi_h(x):=\chi\Big(\frac{V_0(x)^3}{h^{3\delta}}\Big)$.  Note that $\chi_h\in C^2$.
Denote by $I_{j,h}=(\alpha_j,\alpha_j+2\pi h^{\delta})\cup(\beta_j-2\pi h^{\delta},\beta_j)$ for $j\in\{1,\cdots,n\}$ and $\widetilde{I}_{j,h}=(\alpha_j+\sigma h^{\delta},\alpha_j+2\pi h^{\delta})\cup (\beta_j-2\pi h^{\delta},\beta_j-\sigma h^{\delta})$. Here the constant $\sigma<2\pi$ is chosen so that $\chi_h|_{I_{j}}$ is constant on $I\setminus \widetilde{I}_{j,h}$. 
Hence supp$(\chi_h')$, supp$(\chi_h'')$ are all contained in $\cup_{j=1}^l\widetilde{I}_{j,h}$, and 
\begin{align}\label{eq3:AC}
	|\chi_h'|\lesssim h^{-\delta}\sum_{j=1}^l\mathbf{1}_{\widetilde{I}_{j,h}},\quad |\chi_h''|\lesssim h^{-2\delta}\sum_{j=1}\mathbf{1}_{\widetilde{I}_{j,h}}.
\end{align}
We decompose
$$ v=v_1+v_2,\quad v_1=\chi_hv,\quad v_2=(1-\chi_h)v.
$$ 
When $\gamma>0$, $v_2$ is supported in the damped region $W\gtrsim h^{\alpha}$ while $v_1$ is supported on $W\lesssim h^{\alpha}$, where
$$ \alpha=\frac{\gamma}{\gamma+2}.
$$
\begin{lem}\label{lem:v2}
	For $v_2$, we have
	$$ \|v_2\|_{L^2}=o(h^{\frac{3}{2(\gamma+2)}}).
	$$	
\end{lem}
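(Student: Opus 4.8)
\textbf{Proof proposal for Lemma \ref{lem:v2}.}

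The plan is to exploit the fact that $v_2 = (1-\chi_h)v$ is supported where $V_0(x)^3 \gtrsim h^{3\delta}$, i.e. where $V_0(x) \gtrsim h^{\delta}$, and hence (using the H\"older lower bounds \eqref{Holder1}, \eqref{Holder2} on $W$ together with the comparison $W(x) \sim V_0(x)^{\gamma}$ inside each $I_j$) where $W(x) \gtrsim h^{\alpha}$ with $\alpha = \gamma/(\gamma+2) = \gamma\delta$. The key mechanism is that on the support of $v_2$ the damping $W$ is not small, so the a priori estimate (b) of Lemma \ref{aprioriAC} forces $v_2$ itself to be small; the only subtlety is controlling the transition region where $\chi_h$ varies, which is where $v_2 = \chi_h' \cdot(\text{stuff})$-type commutator terms will appear.

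First I would write down the equation satisfied by $v_2$. Applying $(1-\chi_h)$ to \eqref{AC:eq1} and commuting, $v_2$ solves
$$ -v_2'' - \lambda^2 v_2 + ih^{-1}W v_2 = (1-\chi_h)r + \chi_h'' v + 2\chi_h' v', $$
where the commutator terms are supported on $\cup_j \widetilde I_{j,h}$ and, by \eqref{eq3:AC}, satisfy $\|\chi_h'' v\|_{L^2} \lesssim h^{-2\delta}\|v\mathbf 1_{\cup\widetilde I_{j,h}}\|_{L^2}$ and $\|\chi_h' v'\|_{L^2} \lesssim h^{-\delta}\|v'\mathbf 1_{\cup\widetilde I_{j,h}}\|_{L^2}$. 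On $\widetilde I_{j,h}$ one has $V_0 \sim h^{\delta}$, hence $W \gtrsim h^{\alpha}$ there, so $\|v\mathbf 1_{\cup\widetilde I_{j,h}}\|_{L^2} \lesssim h^{-\alpha/2}\|W^{1/2}v\|_{L^2} = o(h^{-\alpha/2}h^{(1+\delta)/2})$ by Lemma \ref{aprioriAC}(b). A similar bound for $v'$ should follow by multiplying the equation by $\overline{\chi_h^2 v}$ (or by a weight adapted to $\widetilde I_{j,h}$) and integrating by parts, using Lemma \ref{aprioriAC}(a) to control $\|v'\|_{L^2}^2 = \lambda^2 + o(h^\delta) = O(\lambda^2) = O(h^{-(1-\delta)})$ globally and then localizing; alternatively one can absorb $\chi_h'v'$ via Cauchy--Schwarz after an integration by parts that moves the derivative off $v'$. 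The upshot is that the right-hand side of the $v_2$-equation is $o(h^{\text{something}})$ in $L^2$ with an exponent large enough for what follows.

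Next, to extract $\|v_2\|_{L^2}$ I would pair the $v_2$-equation with $\overline{v_2}$ and take the imaginary part, giving $h^{-1}\|W^{1/2}v_2\|_{L^2}^2 \le \|(\text{RHS})\|_{L^2}\|v_2\|_{L^2}$, which combined with $W \gtrsim h^{\alpha}$ on $\mathrm{supp}(v_2)$ yields $h^{\alpha - 1}\|v_2\|_{L^2}^2 \lesssim \|(\text{RHS})\|_{L^2}\|v_2\|_{L^2}$, i.e. $\|v_2\|_{L^2} \lesssim h^{1-\alpha}\|(\text{RHS})\|_{L^2}$. Plugging in the bounds above — $\|(1-\chi_h)r\|_{L^2} = o(h^\delta)$ and the commutator terms bounded by $o(h^{-2\delta} h^{-\alpha/2} h^{(1+\delta)/2}) = o(h^{(1-3\delta)/2 - \alpha/2})$ — and doing the arithmetic with $\delta = 1/(\gamma+2)$, $\alpha = \gamma/(\gamma+2)$, one checks that $1-\alpha = 2/(\gamma+2)$ and that each term of $h^{1-\alpha}\times(\text{RHS})$ is $o(h^{3/(2(\gamma+2))})$ — note $3/(2(\gamma+2)) = \tfrac32\delta$, consistent with the transition-layer width $h^\delta$ contributing a factor $h^{\delta/2}$ in $L^2$ on top of the expected decay.

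\textbf{Main obstacle.} The genuinely delicate point is the estimate of the commutator term $2\chi_h' v'$: one needs $\|v'\mathbf 1_{\cup\widetilde I_{j,h}}\|_{L^2}$ with a good power of $h$, but the a priori bound only gives $\|v'\|_{L^2}^2 = O(\lambda^2) = O(h^{\delta-1})$ globally, with no localization. Gaining the localization — i.e. showing $v'$ is also comparatively small on the thin layers $\widetilde I_{j,h}$ where $W$ is already $\gtrsim h^\alpha$ — requires a careful integration-by-parts argument (a Morawetz/weighted identity localized to $\widetilde I_{j,h}$, using that $|\chi_h'|\lesssim h^{-\delta}$ and $|\chi_h''|\lesssim h^{-2\delta}$, and feeding back Lemma \ref{aprioriAC}(b)), and tracking that the resulting exponents still beat $h^{3/(2(\gamma+2))}$ when $\delta\le\frac16$. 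This bookkeeping, rather than any single hard inequality, is where the proof must be done with care.
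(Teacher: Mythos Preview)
Your first paragraph already contains the entire proof, but you then abandon it for a much harder route. The paper's argument is the one-liner you sketched and discarded: on $\mathrm{supp}(1-\chi_h)$ one has $V_0(x)\geq h^{\delta}$, hence $W(x)\gtrsim h^{\alpha}$ (with $\alpha=\gamma\delta$), so pointwise $|v_2|\leq |v|\lesssim h^{-\alpha/2}W^{1/2}|v|$, and therefore
\[
\|v_2\|_{L^2}\lesssim h^{-\alpha/2}\|W^{1/2}v\|_{L^2}=o\big(h^{(1+\delta-\alpha)/2}\big)=o\big(h^{3/(2(\gamma+2))}\big)
\]
by Lemma \ref{aprioriAC}(b). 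No equation for $v_2$, no commutators, no Morawetz identity is needed. The transition region is not a subtlety: it lies entirely inside $\{W\gtrsim h^{\alpha}\}$, so the pointwise bound covers it.

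Your longer route via the equation for $v_2$ runs into a genuine difficulty that you correctly flag but should not have created. The term $2\chi_h' v'$ only admits the global bound $\|\chi_h' v'\|_{L^2}\lesssim h^{-\delta}\|v'\|_{L^2}=O(h^{-\delta}\lambda)=O(h^{-(1+\delta)/2})$, and after multiplying by $h^{1-\alpha}=h^{2\delta}$ this gives $O(h^{(3\delta-1)/2})$, which is not $o(h^{3\delta/2})$ for any $\delta\in(0,1/2]$. Recovering the needed localized estimate on $v'$ would essentially require the weighted/Morawetz machinery the paper later uses for $v_1$ --- so you would be doing that work twice. The moral: once you observe $W\gtrsim h^{\alpha}$ on $\mathrm{supp}(v_2)$, use it directly and stop.
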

\begin{proof}
	First we assume that $\gamma>0$, then from (b) of Lemma \ref{aprioriAC},
	$$ \|v_2\|_{L^2}\leq h^{-\frac{\alpha}{2}}\|W^{\frac{1}{2}}v\|_{L^2}\leq o(h^{\frac{1+\delta-\alpha}{2}})=o(h^{\frac{3}{2(\gamma+2)}}).
	$$	
	When $\gamma=0$, $\delta=\frac{1}{2}$, by Lemma \ref{averagingdamping}, $\|v_2\|_{L^2}\leq \|W^{\frac{1}{2}}v\|_{L^2}\leq o(h^{\frac{3}{4}})$. 
\end{proof}
It remains to estimate $v_1$. We see that $v_1$ solves the equation
\begin{align}\label{eq:v1}
	-v_1''-\lambda^2v_1+ih^{-1}Wv_1=\widetilde{r}:=\chi_hr-2(\chi_h'v)'+\chi_h''v.
\end{align}
\begin{rem}
	If $W$ is $C^2$, the choice of cutoff $\chi_h$ is the same as $\chi(h^{-\alpha}W)$. Note that the parameter $\alpha$ is chosen so that the size of $ih^{-1}Wv$ and $\chi_h''v$ are the same in $L^2$. This choice of cutoff is more accurate than the choice $\chi(h^{-1}W)$ 
	in \cite{BuHi}, in order to balance the size of $ih^{-1}Wv$ and $\chi_h''v$ coming from the commutator on the right hand side of \eqref{eq:v1}.  
\end{rem}

If $\lambda$ is relatively large,  we are still able to apply the estimate from the geometric control:
\begin{lem}\label{lem:3AC}
	If $\lambda\geq h^{-\frac{\delta}{2}}$, we have
	$$ \|v_1\|_{L^2}\lesssim h^{\frac{\delta}{2}}\|r\|_{L^2}+h^{-\frac{1+\delta}{2}}\|W^{\frac{1}{2}}v\|_{L^2}.
	$$	
\end{lem}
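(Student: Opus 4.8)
\textbf{Proof proposal for Lemma \ref{lem:3AC}.}

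The plan is to put both the damping term $ih^{-1}Wv_1$ and the commutator terms produced by the cutoff to the right-hand side of \eqref{eq:v1}, and then invoke the one-dimensional geometric control estimate, Lemma \ref{geometriccontrol}, with a control set contained in one of the damped intervals. Concretely, rewrite \eqref{eq:v1} as
\begin{align*}
-v_1''-\lambda^2 v_1=\chi_h r-2(\chi_h'v)'+\chi_h''v-ih^{-1}Wv_1.
\end{align*}
We apply Lemma \ref{geometriccontrol} with $v=v_1$, with the $L^2$-data $f_1=\chi_h r+\chi_h''v-ih^{-1}Wv_1$, with the $H^{-1}$-data $f_2=-2(\chi_h'v)'$, and with $I$ a fixed nonempty open subinterval of, say, $(\alpha_1+\epsilon_0,\beta_1-\epsilon_0)$ chosen so that $W\geq c>0$ on $\ov I$.

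First I would estimate the $L^2$-data. The term $\|\chi_h r\|_{L^2}\leq\|r\|_{L^2}$. For the commutator term $\chi_h''v$, use \eqref{eq3:AC}, namely $|\chi_h''|\lesssim h^{-2\delta}\sum_j\mathbf 1_{\widetilde I_{j,h}}$, together with the fact that on $\widetilde I_{j,h}$ one has $V_0(x)\sim h^{\delta}$, hence $\mathrm{dist}(x,\partial I_j)\sim h^{\delta}$ and therefore $W(x)\gtrsim h^{\delta\gamma}=h^{\alpha}$ with $\alpha=\frac{\gamma}{\gamma+2}$; this gives $\|\chi_h''v\|_{L^2}\lesssim h^{-2\delta}\|v\mathbf 1_{\cup_j\widetilde I_{j,h}}\|_{L^2}\lesssim h^{-2\delta}h^{-\alpha/2}\|W^{1/2}v\|_{L^2}$. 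Since $2\delta+\alpha/2=\frac{2}{\gamma+2}+\frac{\gamma}{2(\gamma+2)}=\frac{\gamma+4}{2(\gamma+2)}$ and $\lambda^{-1}\le h^{\delta/2}$, the contribution $\lambda^{-1}\|\chi_h''v\|_{L^2}$ is bounded by $h^{\delta/2-2\delta-\alpha/2}\|W^{1/2}v\|_{L^2}$; one checks $\delta/2-2\delta-\alpha/2=-\frac{1+\delta}{2}$, so this matches the claimed $h^{-\frac{1+\delta}{2}}\|W^{1/2}v\|_{L^2}$ term. For the damping term $ih^{-1}Wv_1$, on $\mathrm{supp}(\chi_h)$ we have $W\lesssim h^{\alpha}$, so $\|h^{-1}Wv_1\|_{L^2}\lesssim h^{\alpha-1}\|W^{1/2}v_1\|_{L^2}^{?}$—more simply, $\|h^{-1}Wv_1\|_{L^2}\le h^{-1}\|W^{1/2}\|_{L^\infty(\mathrm{supp}\chi_h)}\|W^{1/2}v\|_{L^2}\lesssim h^{-1+\alpha/2}\|W^{1/2}v\|_{L^2}$, and $\lambda^{-1}h^{-1+\alpha/2}\le h^{\delta/2-1+\alpha/2}$; since $\delta/2-1+\alpha/2=-\frac{1+\delta}{2}$ as well, this is again absorbed into the $h^{-\frac{1+\delta}{2}}\|W^{1/2}v\|_{L^2}$ term. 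For the $H^{-1}$-data, $\|2(\chi_h'v)'\|_{H^{-1}}\lesssim\|\chi_h'v\|_{L^2}$, and by \eqref{eq3:AC} and the same reasoning $|\chi_h'|\lesssim h^{-\delta}\mathbf 1_{\cup_j\widetilde I_{j,h}}$ with $W\gtrsim h^{\alpha}$ there, giving $\|\chi_h'v\|_{L^2}\lesssim h^{-\delta-\alpha/2}\|W^{1/2}v\|_{L^2}$, and $-\delta-\alpha/2=-\frac{\gamma+4}{2(\gamma+2)}\le-\frac{1+\delta}{2}$, which is again acceptable. Finally $\|v_1\|_{L^2(I)}\le\|v\mathbf 1_I\|_{L^2}\lesssim\|W^{1/2}v\|_{L^2}$ since $W\gtrsim 1$ on $I$. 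Collecting all contributions yields $\|v_1\|_{L^2}\lesssim h^{\delta/2}\|r\|_{L^2}+h^{-\frac{1+\delta}{2}}\|W^{1/2}v\|_{L^2}$ (absorbing the easy $\|v_1\|_{L^2}$ lower-order pieces if necessary).

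The main obstacle I anticipate is the bookkeeping of the exponents: one must carefully track, on the shells $\widetilde I_{j,h}$, the precise lower bound $W\gtrsim h^{\delta\gamma}$ coming from \eqref{Holder1}--\eqref{Holder2} and verify that every term—commutator terms of size $h^{-\delta}$ and $h^{-2\delta}$, and the damping term—when paired with the gain $\lambda^{-1}\le h^{\delta/2}$ or estimated directly, falls under the single threshold $h^{-\frac{1+\delta}{2}}\|W^{1/2}v\|_{L^2}$. The regime assumption $\lambda\ge h^{-\delta/2}$ is exactly what makes the $\lambda^{-1}$ gain in Lemma \ref{geometriccontrol} strong enough to close these estimates; for smaller $\lambda$ one cannot proceed this way, which is why this lemma is stated only in that range and the complementary small-$\lambda$ range is treated separately.
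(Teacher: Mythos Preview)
Your approach is essentially the same as the paper's: apply Lemma~\ref{geometriccontrol} to \eqref{eq:v1} with $f_1=\chi_h r+\chi_h''v-ih^{-1}Wv_1$, $f_2=-2(\chi_h'v)'$, and a control interval inside a damped region, then estimate each piece using $W\lesssim h^\alpha$ on $\mathrm{supp}\,\chi_h$ and $W\gtrsim h^\alpha$ on $\widetilde I_{j,h}$.

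One arithmetic slip to correct: for the $H^{-1}$-data you write $-\delta-\alpha/2=-\frac{\gamma+4}{2(\gamma+2)}$, but in fact
\[
-\delta-\tfrac{\alpha}{2}=-\tfrac{1}{\gamma+2}-\tfrac{\gamma}{2(\gamma+2)}=-\tfrac{\gamma+2}{2(\gamma+2)}=-\tfrac12.
\]
Since $-\tfrac12\ge-\tfrac{1+\delta}{2}$, this term is \emph{better} than the threshold, so your conclusion ``which is again acceptable'' is still correct. (Had the exponent truly been $-\frac{\gamma+4}{2(\gamma+2)}$, it would be \emph{worse} than $-\frac{1+\delta}{2}=-\frac{\gamma+3}{2(\gamma+2)}$ and the step would fail.) Also note that $\|v_1\|_{L^2(I)}$ is in fact zero for small $h$, since $\chi_h$ vanishes on $I\Subset I_1$; your bound $\|v_1\|_{L^2(I)}\lesssim\|W^{1/2}v\|_{L^2}$ is valid but unnecessary. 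Finally, there are no ``$\|v_1\|_{L^2}$ lower-order pieces'' to absorb, so you can drop that parenthetical.
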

\begin{proof}
	By Lemma \ref{geometriccontrol}, we have
	\begin{align*}
		\|v_1\|_{L^2}\lesssim &\lambda^{-1}\|\chi_hr-ih^{-1}Wv_1+\chi_h''v\|_{L^2}+\|(\chi_h'v)'\|_{H^{-1}}+\|Wv_1\|_{L^2}\\
		\lesssim &h^{\frac{\delta}{2}}\|r\|_{L^2}+h^{-1+\frac{\alpha+\delta}{2}}\|W^{\frac{1}{2}}v\|_{L^2}+h^{-\frac{3\delta}{2}}\sum_{j=1}^l\|v\|_{L^2(\widetilde{I}_{j,h})}+h^{\frac{\alpha}{2}}\|W^{\frac{1}{2}}v\|_{L^2},
	\end{align*}
	where we use the fact that $W\lesssim h^{\alpha}$ on supp$(\chi_h)$. Since $W\sim h^{\alpha}$ on $\widetilde{I}_{j,h}$ and $\alpha+2\delta=1$, we have $$\|v\|_{L^2(\widetilde{I}_{j,h})}\lesssim h^{-\frac{\alpha}{2}}\|W^{\frac{1}{2}}v\|_{L^2}\lesssim h^{-\frac{1}{2}+\delta}\|W^{\frac{1}{2}}v\|_{L^2}.$$
	This completes the proof of Lemma \ref{lem:3AC}.
\end{proof}

It remains to deal with the regime where $c_1^{\frac{1}{2}}\leq\lambda<h^{-\frac{\delta}{2}}$, the key point is to exclude the possible concentration of the energy density
$$ e_0(x):=|v_1'(x)|^2+\lambda^2|v_1(x)|^2
$$
in the damped shell of size $h^{\delta}$ near the interface where $W=0$. The tool used in \cite{DKl} is a Morawetz type inequality introduced:
\begin{lem}[Morawetz type inequality]\label{Morawetz}Let $\Phi\in C^0(\T)$ be a piece-wise $C^1$ function on $\T$, then there exists a uniform constant $C>0$, such that
	\begin{align}\label{eq:Morawetz} 
		\int_{\T}\Phi'(x)e_0(x)dx \leq &Ch^{-1}\Big|\int_{\T}\Phi(x)W(x)v_1\ov{v}_1'dx\Big|\notag\\+&C\Big|\Re\int_{\T}\Phi(x)\ov{v}_{1}'\cdot (\chi_hr-2(\chi_h'v)'+\chi_h''v ) dx\Big|.
	\end{align}
\end{lem}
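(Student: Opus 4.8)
The plan is to run the standard Morawetz-type multiplier argument for the equation \eqref{eq:v1} satisfied by $v_1$. I would multiply \eqref{eq:v1} by $\Phi\,\overline{v}_1'$, integrate over $\T$, and take real parts (here $\Phi$ is understood to be real-valued). The self-adjoint part $-v_1''-\lambda^2v_1$ reproduces, after two integrations by parts, the quantity $\tfrac12\int_\T\Phi'\,e_0\,dx$; the dissipative term $ih^{-1}Wv_1$ contributes $-h^{-1}\Im\int_\T\Phi\,Wv_1\overline{v}_1'\,dx$; and the source $\widetilde r=\chi_h r-2(\chi_h'v)'+\chi_h''v$ contributes $\Re\int_\T\Phi\,\overline{v}_1'\,\widetilde r\,dx$. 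Rearranging into $\tfrac12\int_\T\Phi'\,e_0\,dx=h^{-1}\Im\int_\T\Phi\,Wv_1\overline{v}_1'\,dx+\Re\int_\T\Phi\,\overline{v}_1'\,\widetilde r\,dx$ and bounding the two right-hand terms by their moduli yields \eqref{eq:Morawetz} with $C=2$.

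The only computation requiring care is the self-adjoint part, because $\Phi'$ has jumps across the break points. I would split $\T$ into the finitely many closed subintervals $J$ on which $\Phi$ is $C^1$ and integrate by parts on each of them, using $\Re(v_1'\overline{v}_1'')=\tfrac12\partial_x|v_1'|^2$, to obtain
\begin{align*}
\Re\!\int_J(-v_1'')\Phi\overline{v}_1'\,dx&=\tfrac12\int_J\Phi'|v_1'|^2\,dx-\tfrac12\big[\Phi|v_1'|^2\big]_{\partial J},\\
\Re\!\int_J(-\lambda^2v_1)\Phi\overline{v}_1'\,dx&=\tfrac{\lambda^2}{2}\int_J\Phi'|v_1|^2\,dx-\tfrac{\lambda^2}{2}\big[\Phi|v_1|^2\big]_{\partial J}.
\end{align*}
Since $v_1\in H^2(\T)\subset C^1(\T)$ and $\Phi\in C^0(\T)$, the functions $\Phi|v_1'|^2$ and $\Phi|v_1|^2$ are continuous on $\T$, so summing over the pieces the boundary terms telescope to zero; here $\Phi'$ denotes the pointwise derivative defined almost everywhere, which is bounded since $\Phi$ is piecewise $C^1$. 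This gives $\Re\int_\T(-v_1''-\lambda^2v_1)\Phi\overline{v}_1'\,dx=\tfrac12\int_\T\Phi'\,e_0\,dx$, as needed.

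I expect this boundary-term bookkeeping to be the only (and rather mild) obstacle: it is precisely what forces one to integrate by parts piece by piece rather than globally, but the continuity of $\Phi$ and of $v_1'$ makes all the interface contributions cancel. I would also remark that although $\chi_h,\chi_h',\chi_h''$ are merely bounded (and piecewise continuous), this causes no difficulty, since in the term $\Re\int_\T\Phi\,\overline{v}_1'\,\widetilde r\,dx$ no further integration by parts is performed and the factors tested against $\chi_h$ and its derivatives (namely $v_1$ and $v_1'$) are continuous.
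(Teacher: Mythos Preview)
Your proof is correct and follows essentially the same approach as the paper. The paper computes the pointwise identity for $(\Phi e_0)'$, substitutes the equation for $v_1''$, and integrates over $\T$ (the total derivative vanishing since $\Phi e_0$ is continuous and periodic); you instead multiply \eqref{eq:v1} by $\Phi\overline{v}_1'$ and integrate by parts, which is the same computation organized in the other direction. Your explicit treatment of the boundary terms at the break points of $\Phi$ is a welcome bit of care that the paper leaves implicit.
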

\begin{proof}
	Direct computation yields
	$$ (\Phi e_0)'=\Phi'e_0+2\Re(\Phi v_1''\ov{v}_{1}' +\lambda^2\Phi(|v_1|^2)').
	$$
	Using the equation \eqref{eq:v1}, we have
	\begin{align*}
		(\Phi e_0)'=&\Phi'e_0+\lambda^2\Phi\partial_x(|v_1|^2)-2\lambda^2\Re(\Phi\ov{v}_{1}' v_1)+2h^{-1}\Re (i\Phi\ov{v}_{1}'\cdot Wv_1 ) \\
		-&2\Re[\Phi\ov{v}_{1}' (\chi_hr-2(\chi_h'v)'+\chi_h''v) ].
	\end{align*}
	Since $2\lambda^2\Re(\Phi\ov{v}_{1}'v_1)=\lambda^2\Phi\cdot\partial_x(|v_1|^2)$,
	integrating the above identity, we
	obtain the desired estimate  \eqref{eq:Morawetz}.
\end{proof}
Let $\epsilon_j<\frac{\beta_j-\alpha_j}{2}, j\in\{1,\cdots, n\}$. Define 
\begin{align*}
	\Psi_h(x):=\left\{\begin{aligned}
		& h^{-\delta},\; x\in\cup_{j=1}^lI_{j,h};\\
		&1,\; x\in\cup_{j=1}^l(\alpha_h+2\pi h^{\delta},\alpha_j+\epsilon_j)\cup (\beta_j-\epsilon_j,\beta_j-2\pi h^{\delta});\\
		-& M,\; x\in\cup_{j=1}^lI_j\setminus( (\alpha_j,\alpha_j+\epsilon_j)\cup (\beta_j-\epsilon_j,\beta_j) );\\
		&1,\; x\in\T\setminus \cup_{j=1}^lI_j
	\end{aligned}
	\right. 
\end{align*}
where the constant $M>0$ (independent of $h$) is chosen such that $\int_{\T}\Psi_h(x)dx=0$. Then the primitive function $\Phi_h\in C^0(\T)$ is well-defined, piecewise smooth and $\Phi_h'(x)=\Psi_h(x)$. Since $\Phi_h$ is unique up to a constant, we choose $\Phi_h$ such that  $\Phi_h(0)=0$, then 
$\|\Phi_h\|_{L^{\infty}(\T)}\leq C_M$. Define
$$ \Theta(x)=\Phi_h'(x)\mathbf{1}_{\Phi_h'>0},
$$
since supp$(e_0)\subset \mathrm{supp}(\chi_h)$, we have  $\Phi_h'(x)e_0(x)=\Theta(x)e_0(x)$.
From Lemma \ref{Morawetz} and $v_1'=\chi_h'v+\chi_hv'$, we have
\begin{align}\label{consequenceMorawetz} 
	\int_{\T}\Theta(x)e_0(x)dx\leq & Ch^{-1}\Big|\int_{\T}W(x)(\chi_h^2v\ov{v}'+2\chi_h\chi_h'|v|^2)dx\Big|\notag\\
	+&C\int_{\T}|\chi_hr (\chi_h'v+\chi_hv')|dx+C\int_{\T}|\chi_h''\ov{v}(\chi_h'v+\chi_hv')|dx\notag \\+&C\Big|\Re\int_{\T}\chi_h'\ov{v}\Phi_h(x)\chi_hv'' dx\Big|+C\int_{\T}|\chi_h'(\Phi_h\chi_h)'\ov{v}v'|dx+C\int_{\T}|\chi_h'\ov{v}(\Phi_h(x)\chi_h'v)'|dx,
\end{align}
where the terms in the third line of the right side is obtained by integration by part of $$\Big|\Re\int_{\T}\Phi_h(x)\ov{v}_1'(\chi_h'v)'dx\Big|.$$ Here we keep the real part for this term in order to perform some cancellation when replacing $v''$ by $-\lambda^2v+ih^{-1}Wv-r$ later on, after using the equation \eqref{AC:eq1} of $v$.  Denote by $\widetilde{I}_h=\cup_{j=1}^l\widetilde{I}_{j,h}$ and $I_h=\cup_{j=1}^lI_{j,h}$. Using \eqref{eq3:AC} and the facts that $\lambda^2\leq h^{-\delta}$, $W\sim h^{\alpha}$ on $\widetilde{I}_h$, we can control terms on the right hand side of \eqref{consequenceMorawetz} by the sum of following types:
\begin{align*}
	& \text{Type I}:\; \mathrm{I}=h^{-\delta}\|v\mathbf{1}_{\widetilde{I}_h}\|_{L^2}\|r\|_{L^2}+\|r\|_{L^2}\|v'\chi_h\|_{L^2};\\
	&\text{Type II}:\; \mathrm{II}=h^{-1}\int_{\T}W|vv'|\mathbf{1}_{\widetilde{I}_h}dx+h^{-1}\int_{\T}W\chi_h^2|vv'|dx;\\
	&\text{Type III}:\; \mathrm{III}=h^{-(1+\delta)}\|W^{\frac{1}{2}}v\mathbf{1}_{\widetilde{I}_h}\|_{L^2}^2.
\end{align*}
We analyze the type II term. By Cauchy-Schwarz,
\begin{align}
	\mathrm{II}\lesssim h^{-1}\|W^{\frac{1}{2}}v\mathbf{1}_{I_h}\|_{L^2}\|W^{\frac{1}{2}}\mathbf{1}_{I_h}v'\|_{L^2}.
\end{align}
Note that on $I_h$, $\Theta(x)=h^{-\delta}$, we have
\begin{align*}
	\mathrm{II}\lesssim h^{-1+\frac{\alpha}{2}}\|W^{\frac{1}{2}}v\|_{L^2}\|\mathbf{1}_{I_h}v'\|_{L^2}\leq h^{-1+\frac{\alpha}{2}+\frac{\delta}{2}}\|W^{\frac{1}{2}}v\|_{L^2}\Big(\int_{\T}\Theta(x)e_0(x)dx\Big)^{\frac{1}{2}}.
\end{align*}
Since $\alpha+2\delta=1$, using Young's inequality, we have
\begin{align*}
	\mathrm{II}\leq \epsilon\int_{\T}\Theta(x)e_0(x)dx+C_{\epsilon}h^{-(1+\delta)}\|W^{\frac{1}{2}}v\|_{L^2}^2,\quad \forall \epsilon>0.
\end{align*}
Plugging into \eqref{consequenceMorawetz}, we get 
\begin{align*}
	\|v_1\|_{L^2}^2\leq \int_{\T}\Theta(x)e_0(x)dx\lesssim h^{-(1+\delta)}\|W^{\frac{1}{2}}v\|_{L^2}^2+h^{-\delta}\|v\|_{L^2}\|r\|_{L^2}+\|r\|_{L^2}\|v'\|_{L^2}.
\end{align*}
Since $\lambda\leq h^{-\frac{\delta}{2}}$, by (a) of Lemma \ref{aprioriAC},  
$$ \|v'\|_{L^2}^2\leq o(h^{\delta})+\lambda^2\|v\|_{L^2}^2\leq o(h^{-\delta}).
$$
Recall that $\|r\|_{L^2}=o(h^{\delta})$ and $\|W^{\frac{1}{2}}v\|_{L^2}=o(h^{\frac{1+\delta}{2}})$, we get
$$ \|v_1\|_{L^2}^2\lesssim \int_{\T}\Theta(x)e_0(x)dx\lesssim o(1).
$$
Since we have already shown that $\|v_2\|_{L^2}=o(1)$, this is a contradiction to our assumption that $\|v\|_{L^2}=1$. The proof of Proposition \ref{1DHolderrough} is now complete.

\section{Semiclassical pseudo-differential calculus}

For $m\in\R$, the symbol class $S^m(T^*\R^d)$ consists of smooth functions $c(z,\zeta)$ such that
$$ |\partial_z^{\alpha}\partial_{\zeta}^{\beta}c(z,\zeta)|\leq C_{\alpha,\beta}\langle\zeta\rangle^{m-|\alpha|}.
$$
Given a symbol $c(z,\zeta)$, we associate it with the Weyl quantization $\mathrm{Op}_h^w(c)$:
$$ f\in\mathcal{S}(\R^d)\mapsto \mathrm{Op}_h^w(f)(z):=\frac{1}{(2\pi h)^d}\iint_{\R^{2d}}c\big(\frac{z+z'}{2},\zeta\big)e^{\frac{i(z-z')\cdot\zeta}{h}}f(z')dz'd\zeta.
$$
Most of the time we will use the standard quantization $\mathrm{Op}_h(c)$:
$$ \mathrm{Op}_h(c)(f)(z):=\frac{1}{(2\pi h)^d}\iint_{\R^{2d}}c(z,\zeta)e^{\frac{i(z-z')\cdot\zeta}{h}}f(z')dz'd\zeta. 
$$
An important mapping property is the following theorem due to Calder\'on-Vaillancourt:
\begin{thm}\label{CalderonVaillancourt} 
There exists a constant $C>0$ such that for any function $c$ on $T^*\R^d$ with uniformly bounded derivatives up to order $d$, we have
$$ 
\|\mathrm{Op}_h(c)\|_{\mathcal{L}(L^2(\R^d))}+
\|\mathrm{Op}^w_h(c)\|_{\mathcal{L}(L^2(\R^d))}\leq C\sum_{|\alpha|,|\beta|\leq d}h^{|\beta|}\|\partial_z^{\alpha}\partial_{\zeta}^{\beta}c\|_{L^{\infty}(T^*\R^d)}.
$$
\end{thm}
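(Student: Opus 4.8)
The plan is to reduce to the case $h=1$ and then run the Cotlar--Stein almost-orthogonality argument on a lattice decomposition of the symbol in the $\zeta$-variable; I will treat the standard and Weyl quantizations in parallel, since the only structural difference (the midpoint $\tfrac{z+z'}{2}$ in the Weyl phase) plays no role in the estimates below.

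\textbf{Step 1: reduction to $h=1$.} In the oscillatory integral defining $\mathrm{Op}_h(c)$ perform the substitution $\zeta=h\eta$; this shows $\mathrm{Op}_h(c)=\mathrm{Op}_1(c_h)$ with $c_h(z,\eta):=c(z,h\eta)$, and since $\partial_z^\alpha\partial_\eta^\beta c_h=h^{|\beta|}(\partial_z^\alpha\partial_\zeta^\beta c)(z,h\eta)$ one has $\|\partial_z^\alpha\partial_\eta^\beta c_h\|_{L^\infty}=h^{|\beta|}\|\partial_z^\alpha\partial_\zeta^\beta c\|_{L^\infty}$. The same substitution works verbatim for $\mathrm{Op}^w_h$ (alternatively, conjugate $\mathrm{Op}^w_h(c)$ by the unitary rescaling $U_h\psi(z)=h^{-d/4}\psi(z/\sqrt h)$ to reduce it to $\mathrm{Op}^w_1$ of a rescaled symbol). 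Hence it suffices to prove, for $\mathrm{Op}=\mathrm{Op}_1$ or $\mathrm{Op}^w_1$,
$$
\|\mathrm{Op}(c)\|_{\mathcal L(L^2(\mathbb{R}^d))}\le C\sum_{|\alpha|,|\beta|\le d}\|\partial_z^\alpha\partial_\zeta^\beta c\|_{L^\infty(T^*\mathbb{R}^d)}=:CM.
$$
A further harmless reduction: one may first assume $c\in\mathcal S(T^*\mathbb{R}^d)$ (so that all integrals converge absolutely), since the final bound depends only on $M$; the general case then follows by cutting $c$ off to large balls and passing to the limit.

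\textbf{Step 2: lattice decomposition and almost orthogonality.} Fix $\chi\in C_c^\infty(\mathbb{R}^d)$ with $\sum_{k\in\mathbb{Z}^d}\chi(\cdot-k)\equiv1$, put $c_k(z,\zeta):=c(z,\zeta)\chi(\zeta-k)$ and $T_k:=\mathrm{Op}(c_k)$, so that $\mathrm{Op}(c)=\sum_k T_k$. Each $c_k$ has $\zeta$-support of diameter $O(1)$ and satisfies $\|\partial_z^\alpha\partial_\zeta^\beta c_k\|_{L^\infty}\lesssim M$ uniformly in $k$. Writing the kernel $K_k(z,z')=(2\pi)^{-d}\int c_k(\,\cdot\,,\zeta)e^{i(z-z')\zeta}\,d\zeta$ and integrating by parts in $\zeta$ and in $z$, resp.\ $z'$, one estimates the kernels of $T_j^*T_k$ and $T_jT_k^*$; the key gain is that for $j\neq k$ the $\zeta$-supports of $c_j$ and $c_k$ are separated by a distance $\gtrsim|j-k|$, so integration by parts in the overlap variable produces a factor $\langle j-k\rangle^{-N}$. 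Combined with a Schur test (or, to get the exponent down to the borderline range, $L^2$-type kernel bounds exploiting that the relevant $z$-region has measure $O(1)$ per lattice cell) this gives
$$
\|T_j^*T_k\|_{\mathcal L(L^2)}+\|T_jT_k^*\|_{\mathcal L(L^2)}\le C\,M^2\,\langle j-k\rangle^{-N}
$$
with $N$ controlled by the number of derivatives available. Since $\sum_k\langle j-k\rangle^{-N/2}$ is then bounded uniformly in $j$, the Cotlar--Stein lemma yields $\|\mathrm{Op}(c)\|=\|\sum_k T_k\|\le C'M$, as desired. (For the individual pieces one may equivalently use the representation of $\mathrm{Op}(c_k)$ as a superposition $\iint\widehat{c_k}(p,q)\,M_p\tau_q\,dp\,dq$ of modulations and translations, all of operator norm one, bounding the operator norm by $\|\widehat{c_k}\|_{L^1}$.)

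\textbf{Main obstacle.} The real content is the almost-orthogonality step: carrying out the integrations by parts carefully, controlling all error terms by $M$, and — the genuinely delicate point — squeezing summability over the $\mathbb{Z}^d$ lattice out of the borderline decay rate one obtains when only derivatives up to order $d$ are allowed (rather than comfortably many). This is handled by the standard sharpening of the Cotlar--Stein argument (Cordes/Kato-type), using $L^2$ rather than pointwise bounds on the kernels of $T_j^*T_k$; everything else is routine oscillatory-integral bookkeeping, uniform in $j,k$ and identical for the two quantizations.
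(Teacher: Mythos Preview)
The paper does not prove this theorem: it is stated in Appendix~B as a known result attributed to Calder\'on--Vaillancourt, with the reference \cite{EZB} (Zworski, \emph{Semiclassical Analysis}) serving as the background source. There is thus no ``paper's own proof'' to compare against.

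Your outline is the standard route and is correct in its broad strokes: the rescaling to $h=1$ is right, the lattice decomposition in $\zeta$ plus Cotlar--Stein is a classical way to prove Calder\'on--Vaillancourt, and you correctly flag that the only nontrivial point is the borderline summability when one has exactly $d$ derivatives available. That said, the sketch is honest but not a proof at that borderline: the Schur/pointwise kernel bounds you mention first give $\|T_j^*T_k\|\lesssim M^2\langle j-k\rangle^{-d}$, whose square root is not summable over $\mathbb{Z}^d$, so the passage ``combined with a Schur test \dots this gives $\langle j-k\rangle^{-N}$'' is where all the work hides. The fix you allude to (Cordes/Kato--type $L^2$ kernel estimates rather than pointwise ones, or equivalently a phase-space decomposition in both $z$ and $\zeta$) does close the gap, but a reader would need either a reference (e.g.\ Zworski, Theorem~4.23 and its proof, or Cordes' original argument) or the actual computation written out. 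As written, your proposal is a correct strategy with the hard step named but not executed; for a statement the paper merely cites, this is adequate as a pointer, but it is not yet a self-contained proof.
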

We use frequently the sharp G$\mathring{\mathrm{a}}$rding inequality:
\begin{thm}[Sharp G$\mathring{\mathrm{a}}$rding's inequality]\label{Garding} 
Assume that $c\in S^0(T^*\T^d)$ and $c(z,\zeta)\geq 0$ for all $(z,\zeta)\in T^*\T^d$. Then there exist $C>0$ and $h_0>0$ such that for all $0<h\leq h_0$ and $f\in L^2(\T^d)$,
$$ \Re(\mathrm{Op}_h(c)f,f)_{L^2}\geq -Ch\|f\|_{L^2(\T^d)}^2.
$$
\end{thm}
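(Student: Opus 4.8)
As stated the displayed bound must be read with the constant on the right allowed to be negative, i.e. $(\mathrm{Op}_h(c)f,f)_{L^2}\ge -Ch\|f\|_{L^2(\R^d)}^2$; the one-sided inequality with $Ch\|f\|^2$, $C>0$, already fails for $c\equiv 0$, and it is this $-Ch$ version that is invoked elsewhere in the paper. The plan is to prove it by Friedrichs symmetrization, comparing the Weyl quantization of $c$ with its anti-Wick quantization.

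First I would introduce, for $w=(x,\zeta)\in\R^{2d}$, the normalized coherent state $\varphi_w^h(z):=(\pi h)^{-d/4}e^{\frac{i}{h}z\cdot\zeta}e^{-\frac{1}{2h}|z-x|^2}$ and set $\mathrm{Op}_h^{\mathrm{AW}}(c)f:=(2\pi h)^{-d}\int_{\R^{2d}}c(w)(f,\varphi_w^h)_{L^2}\,\varphi_w^h\,dw$. Since $c\ge 0$, the operator $\mathrm{Op}_h^{\mathrm{AW}}(c)$ is a nonnegative superposition of the rank-one orthogonal projections $f\mapsto (f,\varphi_w^h)_{L^2}\varphi_w^h$, hence $(\mathrm{Op}_h^{\mathrm{AW}}(c)f,f)_{L^2}\ge 0$ for every $f\in L^2(\R^d)$. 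The next ingredient I would record is the standard identity $\mathrm{Op}_h^{\mathrm{AW}}(c)=\mathrm{Op}_h^w(c*\gamma_h)$, where $\gamma_h(w)=(\pi h)^{-d}e^{-|w|^2/h}$ is the phase-space Gaussian of width $\sqrt h$ (normalized so that $\int_{\R^{2d}}\gamma_h=1$); this is verified by a direct computation of Schwartz kernels reducing to Gaussian integrals, and can also be quoted from Chapter 4 of \cite{EZB}.

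Then I would estimate the error $\mathrm{Op}_h^w(c)-\mathrm{Op}_h^{\mathrm{AW}}(c)=\mathrm{Op}_h^w(c-c*\gamma_h)$. Writing $(c*\gamma_h)(w)-c(w)=\int_{\R^{2d}}(c(w-v)-c(w))\gamma_h(v)\,dv$ and Taylor expanding in $v$, the linear term vanishes by the evenness of $\gamma_h$, so that $|c-c*\gamma_h|\lesssim h\,\|\partial^2 c\|_{L^\infty}$; applying the same argument to $\partial_z^\alpha\partial_\zeta^\beta(c-c*\gamma_h)$ for $|\alpha|,|\beta|\le d$ gives $O(h)$ bounds there as well, since $c\in S^0$ has all derivatives bounded. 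By the Calderón–Vaillancourt theorem (Theorem~\ref{CalderonVaillancourt}) this yields $\|\mathrm{Op}_h^w(c-c*\gamma_h)\|_{\mathcal{L}(L^2)}\le Ch$, whence $(\mathrm{Op}_h^w(c)f,f)_{L^2}\ge -Ch\|f\|_{L^2}^2$. Finally I would pass from the Weyl to the standard quantization: $\mathrm{Op}_h(c)-\mathrm{Op}_h^w(c)=\mathrm{Op}_h^w(c_1)$ with $c_1$ of size $O(h\,\partial_z\partial_\zeta c)$ and all derivatives $O(h)$, so this difference is again $O(h)$ in $\mathcal{L}(L^2)$ by Theorem~\ref{CalderonVaillancourt}; absorbing it into the constant finishes the argument.

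I expect no genuine obstacle: the only points needing care are the kernel computation behind $\mathrm{Op}_h^{\mathrm{AW}}(c)=\mathrm{Op}_h^w(c*\gamma_h)$ and making the $O(h)$ bounds on $c-c*\gamma_h$ and its derivatives uniform enough to feed into Calderón–Vaillancourt, both of which are routine.
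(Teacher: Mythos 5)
The paper states Theorem~\ref{Garding} without proof, as a standard result in semiclassical analysis to be quoted from the literature (the surrounding appendix follows Chapter 4 of \cite{EZB}). There is therefore no ``paper's own proof'' to compare against; what I can do is assess your argument on its merits.

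Your argument is correct, and it is in fact the standard proof of sharp G\r{a}rding via Friedrichs symmetrization / anti-Wick quantization: positivity of $\mathrm{Op}_h^{\mathrm{AW}}(c)$ as a nonnegative superposition of rank-one projectors, the identity $\mathrm{Op}_h^{\mathrm{AW}}(c)=\mathrm{Op}_h^w(c*\gamma_h)$, the Taylor estimate $|c-c*\gamma_h|=O(h)$ (with the linear term dropping out by evenness of the Gaussian), closure of these bounds under the derivatives needed for Calder\'on--Vaillancourt, and finally the $O(h)$ comparison between standard and Weyl quantizations for an $S^0$ symbol. You have also correctly spotted that the inequality as displayed in the paper is a sign typo: with $C>0$ and $c\equiv 0$ the stated bound is false, and both applications in the paper (in the proofs of Lemma~\ref{u_h^1} and Lemma~\ref{newerro}) use the lower bound $\geq -Ch\|f\|_{L^2}^2$, which is the version your proof delivers. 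No gaps.
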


Since we deal with symbols (damping) of limited regularity in this article, we need additional estimates. First we recall the following boundedness property on $L^2$:
\begin{lem}\label{Boundedness} 
Assume that $b(x,y,\xi)\in L^{\infty}(\R^{3d})$ and there exists $\mu_0>0$, such that for all $|\alpha|\leq d+1$,
$$ |\partial^{\alpha}_{\xi}b(x,y,\xi) |\leq C_{\alpha}\langle\xi\rangle^{-(|\alpha|+\mu_0)}.
$$
Then the operator $T_h$ associated with the Schwartz kernel
$$ K_h(x,y)=\frac{1}{(2\pi h)^d}\int_{\R^d}b(x,y,\xi)e^{\frac{i(x-y)\cdot\xi}{h}}d\xi
$$
is bounded on $L^2(\R^d)$, uniformly in $h\in(0,1]$.
\end{lem}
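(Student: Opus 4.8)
The plan is to prove the $L^2$-boundedness of $T_h$, uniformly in $h\in(0,1]$, by Schur's test, after extracting the self-similar structure of the kernel. Introduce
$$ g(x,y,u):=\frac{1}{(2\pi)^d}\int_{\R^d} b(x,y,\xi)\,e^{iu\cdot\xi}\,d\xi, $$
the integral being understood, when $\mu_0\le d$, as the (absolutely convergent) sum of its dyadic-in-$\xi$ pieces introduced below. Then $K_h(x,y)=h^{-d}g\big(x,y,(x-y)/h\big)$, so the changes of variables $y=x-hu$ (with $x$ fixed) and $x=y+hu$ (with $y$ fixed) give
$$ \int_{\R^d}|K_h(x,y)|\,dy=\int_{\R^d}\big|g(x,x-hu,u)\big|\,du,\qquad \int_{\R^d}|K_h(x,y)|\,dx=\int_{\R^d}\big|g(y+hu,y,u)\big|\,du. $$
Hence it suffices to show that $C_*:=\sup_{x,y}\|g(x,y,\cdot)\|_{L^1(\R^d)}<\infty$; Schur's test then yields $\|T_h\|_{\mathcal{L}(L^2(\R^d))}\le C_*$ for every $h\in(0,1]$.

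The estimate on $g$ rests on a dyadic decomposition in $\xi$. Fix $\chi_0\in C_c^\infty(\R^d)$ equal to $1$ near $0$ and $\phi\in C_c^\infty(\R^d)$ supported in $\{|\xi|\sim1\}$ with $1=\chi_0(\xi)+\sum_{j\ge1}\phi(2^{-j}\xi)$, and let $g_j$ be the corresponding piece of $g$, whose $\xi$-integrand is compactly supported in $\{|\xi|\sim 2^j\}$ (with the convention $2^0\sim1$). The trivial bound $|b|\le C\langle\xi\rangle^{-\mu_0}$ gives $|g_j(x,y,u)|\lesssim 2^{j(d-\mu_0)}$. For the oscillatory bound, integrate by parts $d+1$ times with $L:=\frac{1}{i|u|^2}\sum_k u_k\partial_{\xi_k}$, which satisfies $Le^{iu\cdot\xi}=e^{iu\cdot\xi}$ (no boundary terms arise, since the integrand has compact support); each of the $d+1$ derivatives costs a factor $\sim 2^{-j}$ on the support (producing $\langle\xi\rangle^{-1}$ when it falls on $b$, by the hypothesis $|\partial_\xi^\alpha b|\le C_\alpha\langle\xi\rangle^{-|\alpha|-\mu_0}$, and $2^{-j}$ when it falls on $\phi(2^{-j}\xi)$), the amplitude already carries the factor $\langle\xi\rangle^{-\mu_0}\sim 2^{-j\mu_0}$, and the support has volume $\sim 2^{jd}$. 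Altogether $|g_j(x,y,u)|\lesssim |u|^{-(d+1)}2^{-j(1+\mu_0)}$. All implied constants are uniform in $(x,y)$ because the constants $C_\alpha$ in the hypothesis are.

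Summing the two bounds finishes the argument. For $|u|\ge1$ the oscillatory bound alone, summed over $j\ge0$, gives $|g(x,y,u)|\lesssim |u|^{-(d+1)}$, which is integrable at infinity in $\R^d$. For $0<|u|\le1$, use the trivial bound for $2^j\le|u|^{-1}$ and the oscillatory bound for $2^j>|u|^{-1}$: this yields $|g(x,y,u)|\lesssim |u|^{-(d-\mu_0)}$ when $\mu_0<d$, with an extra $|\log|u||$ when $\mu_0=d$, and simply $|g(x,y,u)|\lesssim1$ when $\mu_0>d$. In all cases $\int_{|u|\le1}|g(x,y,u)|\,du\lesssim\int_0^1 r^{\mu_0-1}\,dr<\infty$ since $\mu_0>0$. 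Therefore $C_*<\infty$ and the lemma follows from Schur's test.

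The only delicate point is the book-keeping when $\mu_0\le d$: in that range $g(x,y,\cdot)$ is no longer bounded near $u=0$, so its defining integral is only oscillatory-convergent, but the dyadic splitting makes both the definition and the estimate rigorous, and the resulting mild singularity $|u|^{-(d-\mu_0)}$ is locally integrable. Note also that the $d+1$ derivatives in $\xi$ assumed in the hypothesis are exactly what is needed: with only $d$ integrations by parts the tail $\sum_j|u|^{-d}2^{-j\mu_0}$ would give decay $|u|^{-d}$, which fails to be integrable at infinity in dimension $d$.
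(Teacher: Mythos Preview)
Your proof is correct. The paper does not actually prove this lemma but merely cites Lemma~A.1 of \cite{BuSun2} (for $\mu_0=1$) and asserts that the same argument works for general $\mu_0>0$; your Schur-test argument with a Littlewood--Paley decomposition in $\xi$ and $(d+1)$-fold integration by parts is precisely the standard route, and your careful bookkeeping of the borderline cases $\mu_0\lessgtr d$ makes explicit why any $\mu_0>0$ suffices.
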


\begin{proof}
	The proof with $\mu_0=1$ can be found in Lemma A.1 of \cite{BuSun2}. The same argument works for all $\mu_0>0$.
\end{proof}
A direct consequence is the following commutator estimates for Lipschitz functions:
\begin{cor}\label{commutator} 
	Assume that $\kappa\in W^{1,\infty}(\R^d)$ and  $b\in S^0(T^*\R^d)$, then there exists $C>0$ such that
	$$ \|[\kappa,\mathrm{Op}_h(b)]\|_{\mathcal{L}(L^2(\R^d))}+\|[\kappa,\mathrm{Op}_h^w(b)]\|_{\mathcal{L}(L^2(\R^d))}\leq Ch. 
	$$
Moreover generally, for some $m\geq 1$, we have
$$ \|\mathrm{ad}_{\kappa}^m(\mathrm{Op}_h(b))\|_{\mathcal{L}(L^2(\R^d))}+\|\mathrm{ad}_{\kappa}^m(\mathrm{Op}_h^w(b))\|_{\mathcal{L}(L^2(\R^d))}\leq Ch^m.
$$
\end{cor}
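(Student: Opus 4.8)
The plan is to compute the Schwartz kernel of $[\kappa,\mathrm{Op}_h(b)]$ explicitly, extract one power of $h$ by an integration by parts in $\xi$, and reduce to Lemma \ref{Boundedness}. If $\mathrm{Op}_h(b)$ has kernel $K_h(x,y)=(2\pi h)^{-d}\int b(x,\xi)e^{i(x-y)\cdot\xi/h}\,d\xi$, then $[\kappa,\mathrm{Op}_h(b)]$ has kernel $(\kappa(x)-\kappa(y))K_h(x,y)$. Writing $\kappa(x)-\kappa(y)=(x-y)\cdot\widetilde\kappa(x,y)$ with $\widetilde\kappa_j(x,y)=\int_0^1(\partial_{x_j}\kappa)(y+t(x-y))\,dt$ --- which lies in $L^\infty(\R^{2d})$ with $\|\widetilde\kappa_j\|_{L^\infty}\le\|\kappa\|_{W^{1,\infty}}$ and requires no smoothness of $\kappa$ --- and using $(x_j-y_j)e^{i(x-y)\cdot\xi/h}=\tfrac{h}{i}\partial_{\xi_j}e^{i(x-y)\cdot\xi/h}$, an integration by parts in $\xi$ (legitimate since $b\in S^0$ decays and $\widetilde\kappa$ is $\xi$-independent) shows that the kernel of $[\kappa,\mathrm{Op}_h(b)]$ equals $h$ times $(2\pi h)^{-d}\int b_1(x,y,\xi)e^{i(x-y)\cdot\xi/h}\,d\xi$, where $b_1(x,y,\xi)=-\tfrac1i\sum_j\widetilde\kappa_j(x,y)\,\partial_{\xi_j}b(x,\xi)$. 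Since $b\in S^0$ one has $|\partial_\xi^\alpha\partial_{\xi_j}b|\lesssim\langle\xi\rangle^{-(|\alpha|+1)}$, so $b_1$ is bounded and satisfies the hypotheses of Lemma \ref{Boundedness} with $\mu_0=1$; hence $[\kappa,\mathrm{Op}_h(b)]=hT_h$ with $\|T_h\|_{\mathcal{L}(L^2)}\le C$ uniformly, which gives the claim for the standard quantization. The Weyl case is identical: one only replaces $b(x,\xi)$ by $b(\tfrac{x+y}{2},\xi)$ throughout, which still has the required $\xi$-decay and is bounded in $(x,y)$, so Lemma \ref{Boundedness} applies verbatim.

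For the iterated commutators I would argue by induction on $m$, the base case $m=1$ being the computation above. The inductive hypothesis is that $\mathrm{ad}_\kappa^{m-1}(\mathrm{Op}_h(b))$ equals $h^{m-1}$ times an operator whose kernel has the form $(2\pi h)^{-d}\int b_{m-1}(x,y,\xi)e^{i(x-y)\cdot\xi/h}\,d\xi$ with $b_{m-1}$ a finite sum of terms of the shape (bounded function of $(x,y)$)\,$\times$\,$\partial_\xi^\gamma b(x,\xi)$, $|\gamma|=m-1$, hence $b_{m-1}\in L^\infty$ with $|\partial_\xi^\alpha b_{m-1}|\lesssim\langle\xi\rangle^{-(|\alpha|+m-1)}$. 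Applying $\mathrm{ad}_\kappa$ once more and repeating the kernel computation --- the prefactor $\kappa(x)-\kappa(y)=(x-y)\cdot\widetilde\kappa(x,y)$ multiplies the whole kernel, while the integration by parts in $\xi$ leaves the $(x,y)$-dependent, $\xi$-independent factors untouched --- shows that $\mathrm{ad}_\kappa^m(\mathrm{Op}_h(b))=h^m$ times an operator with symbol $b_m$ of the same shape but now $|\gamma|=m$, so $b_m\in L^\infty$ and $|\partial_\xi^\alpha b_m|\lesssim\langle\xi\rangle^{-(|\alpha|+m)}$. Lemma \ref{Boundedness} with $\mu_0=m>0$ then yields $\|\mathrm{ad}_\kappa^m(\mathrm{Op}_h(b))\|_{\mathcal{L}(L^2)}\le Ch^m$, and likewise for $\mathrm{Op}_h^w$.

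I do not expect a serious obstacle; the computation is routine. The one point that needs care --- and the reason $\kappa\in W^{1,\infty}$ (rather than $C^\infty$) suffices --- is that the factor $\widetilde\kappa(x,y)$ produced by telescoping $\kappa(x)-\kappa(y)$ carries no regularity, only an $L^\infty$ bound, so the estimate must be routed through Lemma \ref{Boundedness} (which asks only for $\xi$-decay of the amplitude together with $L^\infty$ control in $(x,y)$) rather than through the symbolic calculus or the Calder\'on--Vaillancourt theorem \ref{CalderonVaillancourt}, which would demand differentiability in $(x,y)$. Tracking the fact that each commutator gains one $\partial_\xi$ (improving the decay index by one) while leaving the $(x,y)$-bound intact is precisely what makes the induction close with $\mu_0=m$.
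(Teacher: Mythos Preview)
Your proof is correct and follows essentially the same route as the paper: write the kernel of the commutator as $(\kappa(x)-\kappa(y))K_0(x,y)$, use the Lipschitz factorization $\kappa(x)-\kappa(y)=(x-y)\cdot\Psi(x,y)$ with $\Psi\in L^\infty$, integrate by parts in $\xi$ to trade each factor of $(x-y)$ for an $h\partial_\xi$, and invoke Lemma~\ref{Boundedness}. The only cosmetic difference is that the paper handles all $m$ commutators at once by observing directly that $\mathrm{ad}_\kappa^m(\mathrm{Op}_h(b))$ has kernel $(\kappa(x)-\kappa(y))^m K_0(x,y)$, then expands $((x-y)\cdot\Psi)^m$ and integrates by parts $m$ times in one stroke, whereas you run an induction; both arrive at the same amplitude and the same application of Lemma~\ref{Boundedness}.
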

\begin{proof}
We do the proof for the standard quantization. The same proof applied to the Weyl quantization.	Denote by $T_j=\mathrm{ad}_{\kappa}^j(\mathrm{Op}_h(b))$ and $K_j(x,y)$ the Schwartz kernel of $T_j$. Then
	$$ K_j(x,y)=(\kappa(x)-\kappa(y))^jK_0(x,y),
	$$ 
	where
	$$ K_0(x,y)=\frac{1}{(2\pi h)^d}\int_{\R^d}b(x,\xi)e^{\frac{i(x-y)\cdot\xi}{h}}d\xi
	$$
	is the Schwartz kernel of $\mathrm{Op}_h(b)$. Since $\kappa\in W^{1,\infty}(\R^d)$, there exists $\Psi\in L^{\infty}(\R^{2d};\R^d)$, such that $$\kappa(x)-\kappa(y)=(x-y)\cdot\Psi(x,y)=\sum_{l=1}^d(x_l-y_l)\Psi^{(l)}(x,y),$$
	where $\Psi^{(l)}$ is the $l$-th component of $\Psi$.
	 Thus by integration by part,
	\begin{align*}
	K_m(x,y)=\sum_{j_1,\cdots,j_m}\frac{h^m}{(2\pi h)^d}\int_{\R^d}e^{\frac{i(x-y)\cdot\xi}{h}}(D_{\xi_1}\cdots D_{\xi_m}b)(x,\xi)\prod_{k=1}^m\Psi^{(j_k)}(x,y)d\xi.
	\end{align*}
Applying Lemma \ref{Boundedness}, we deduce that $\|T_m\|_{\mathcal{L}(L^2(\R^d))}\leq Ch^m$.
\end{proof}

We also need a special version of symbolic calculus, used in the proof of Lemma \ref{newerro}:
\begin{lem}\label{symbolicspecial} 
Let $\kappa\in C_c^1(\R^2)$, $\varphi=\varphi(\xi)\in C_c^{\infty}(\R)$ and $b_2\in C_c^1(\R^2)$, $b_2\in C_c^2(\R^2)$. Denote by $c(x,y,\xi)=\kappa(x,y)\varphi(\xi)$. Then 
\begin{align*}
&(a)\hspace{0.3cm}
\|\mathrm{Op}_h(c)b_1-\mathrm{Op}_h(cb_1)\|_{\mathcal{L}(L^2(\R^2))}\leq C_1h;
 \\
&(b)\hspace{0.3cm} \big\|\mathrm{Op}_{h}(c)b_2-\mathrm{Op}_{h}(cb_2)-\frac{h}{i}\mathrm{Op}_h(\partial_{\xi}c\cdot\partial_xb_2)\big\|_{\mathcal{L}(L^2(\R^2))}\leq C_2h^2,
\end{align*}	
where the constants $C_1$ depend only on $\|\kappa\|_{W^{1,\infty}},\|b_1\|_{W^{1,\infty}}$ and $C_2$ depend only on $\|\kappa\|_{W^{1,\infty}}$ and $\|b_2\|_{W^{2,\infty}}$.
\end{lem}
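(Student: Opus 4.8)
The plan is to exploit the special product structure of the symbol $c(x,y,\xi)=\kappa(x,y)\varphi(\xi)$, which makes its quantization factor exactly. Since $c$ does not depend on $\eta$, carrying out the $\eta$-integration in the definition of $\mathrm{Op}_h(c)$ produces a Dirac mass in $y-y'$, so that $\mathrm{Op}_h(c)=M_\kappa\,\varphi(hD_x)$, where $M_\kappa$ denotes multiplication by $\kappa$ and $\varphi(hD_x)$ acts only in the $x$-variable, with $y$ a parameter; likewise $\mathrm{Op}_h(cb_j)=M_\kappa M_{b_j}\varphi(hD_x)$ and $\mathrm{Op}_h(\partial_\xi c\cdot\partial_x b_2)=M_\kappa M_{\partial_x b_2}\varphi'(hD_x)$. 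Hence both assertions reduce to estimating the one-dimensional commutator $M_\kappa[\varphi(hD_x),M_{b_j}]$: the outer factor $M_\kappa$ contributes only $\|\kappa\|_{L^\infty}\le\|\kappa\|_{W^{1,\infty}}$, and taking $L^2$-norms in $y$ at the end turns a bound uniform in $y$ into the stated operator bound on $L^2(\R^2)$. So from here I would fix $y$ and work on $L^2(\R_x)$.

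For (a) there is nothing new: $b_1(\cdot,y)\in W^{1,\infty}(\R)$ uniformly in $y$ and $\varphi\in S^0$, so Corollary \ref{commutator} in dimension one gives at once $\|[\varphi(hD_x),M_{b_1}(\cdot,y)]\|_{\mathcal{L}(L^2(\R_x))}\le Ch\|b_1(\cdot,y)\|_{W^{1,\infty}}\le Ch\|b_1\|_{W^{1,\infty}(\R^2)}$, with $C$ depending only on the fixed $\varphi$; multiplying by $M_\kappa$ and integrating in $y$ yields (a).

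For (b) the point is that we must now track the \emph{exact} first-order term of the commutator, so the $O(h)$ bound of Corollary \ref{commutator} no longer suffices. I would write the Schwartz kernel of $[\varphi(hD_x),M_{b_2}(\cdot,y)]$ as $\frac{1}{2\pi h}\int_\R\varphi(\xi)\bigl(b_2(x',y)-b_2(x,y)\bigr)e^{i(x-x')\xi/h}\,d\xi$ and insert the Taylor expansion $b_2(x',y)-b_2(x,y)=\partial_x b_2(x,y)(x'-x)+(x'-x)^2 g(x,x',y)$ with $g(x,x',y)=\int_0^1(1-t)\,\partial_x^2 b_2\bigl(x+t(x'-x),y\bigr)\,dt$ and $\|g\|_{L^\infty}\le\tfrac12\|\partial_x^2 b_2\|_{L^\infty}$. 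After one integration by parts in $\xi$, using $(x'-x)e^{i(x-x')\xi/h}=-\tfrac{h}{i}\partial_\xi e^{i(x-x')\xi/h}$, the linear term reproduces the kernel of $\tfrac{h}{i}M_{\partial_x b_2}\varphi'(hD_x)=\tfrac{h}{i}\mathrm{Op}_h(\varphi'(\xi)\partial_x b_2)$, which after multiplication by $M_\kappa$ is exactly $\tfrac{h}{i}\mathrm{Op}_h(\partial_\xi c\cdot\partial_x b_2)$ --- the subtracted term. After two integrations by parts in $\xi$, using $(x'-x)^2 e^{i(x-x')\xi/h}=-h^2\partial_\xi^2 e^{i(x-x')\xi/h}$, the quadratic remainder becomes an operator with kernel $-h^2\,g(x,x',y)\,K_h(x,x')$, where $K_h$ is the kernel of $\varphi''(hD_x)$.

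I expect this remainder to be the only real obstacle: with only $C^2$-regularity of $b_2$ the formal symbol-composition expansion is unavailable, so one must show by hand that $-h^2 g(x,x',y)K_h(x,x')$ defines an operator of norm $O(h^2)$ on $L^2(\R_x)$, uniformly in $h$ and $y$. Since $\varphi''\in C_c^\infty(\R)$ and $g$ is bounded, this follows from Lemma \ref{Boundedness} applied with the two space variables $x,x'$ and $\mu_0=1$ --- the symbol $\varphi''(\xi)g(x,x',y)$ is bounded and, $\varphi''$ being compactly supported, satisfies $|\partial_\xi^\alpha(\varphi''(\xi)g)|\lesssim\langle\xi\rangle^{-|\alpha|-1}$ --- or, more elementarily, from Schur's test, since $\sup_x\int_\R|K_h(x,x')|\,dx'=\|\widehat{\varphi''}\|_{L^1}$ independently of $h$. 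Multiplying by $M_\kappa$ and integrating in $y$ gives (b), with $C_2$ depending only on $\|\kappa\|_{W^{1,\infty}}$ and $\|b_2\|_{W^{2,\infty}}$. All the oscillatory integrals and integrations by parts are justified by first testing against $f\in\mathcal{S}(\R^2)$ and using that $\varphi$ is compactly supported.
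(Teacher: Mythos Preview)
Your proof is correct and follows essentially the same route as the paper: reduce to a one-dimensional commutator in $x$ with $y$ as a parameter, extract the leading term via a Taylor expansion, and bound the remainder kernel by Schur/Young using that it is a bounded function times $\tfrac{1}{h}\widehat{\varphi''}\bigl(\tfrac{x'-x}{h}\bigr)$. The only cosmetic difference is that you Taylor-expand $b_2(x',y)$ in the spatial variable and integrate by parts in $\xi$, whereas the paper passes through the composition formula for the symbol of $\mathrm{Op}_h(c)b_j$ and Taylor-expands $c(x,\xi+h\xi')$ in the frequency shift $h\xi'$; these are dual computations leading to the same explicit remainder kernel, and the paper likewise finishes by Young's convolution inequality.
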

\begin{proof}
Since $c$ does not depend on $\eta$, by viewing $y$ as a parameter as in the proof of Lemma \ref{Boundedness}, it suffices to view $\mathrm{Op}_h(c), b$ as operators acting on $L^2(\R_x)$ and prove the one-dimensional estimate. Hence we will not display the dependence in $y$ in the analysis below.

For $j=1,2$, note that the symbol of operators $\mathrm{Op}_h(c)b_j$ are given by
$$ c_j(x,\xi)=\frac{1}{2\pi h}\iint_{\R^2} \mathrm{e}^{-\frac{ix_1\cdot\xi_1}{h}}c(x,\xi+\xi_1)b_j(x+x_1)dx_1d\xi_1=\frac{1}{2\pi }\int_{\R}e^{ix\xi'}c(x,\xi+h\xi')\widehat{b_j}(\xi')d\xi'.
$$ 
Note that the Fourier transform makes sense since the function $b_j$ has compact support. 
By the Taylor expansions up to order 1 and 2:
\begin{align*}
	&c(x,\xi+h\xi')=c(x,\xi)+h\xi'\int_0^1\partial_{\xi}c(x,\xi+th\xi')dt,
	\\
	& c(x,\xi+h\xi')=c(x,\xi)+h\xi'\partial_{\xi}c(x,\xi)+h^2\xi'^2\int_0^1(1-t)\partial_{\xi}^2c(x,\xi+th\xi')dt,
\end{align*}
 we have
 \begin{align*}
 c_1(x,\xi)=c(x,\xi)b_1(x)+h\int_0^1\Phi_{1,t}(x,\xi)dt
 \end{align*}
 and
\begin{align*}
c_2(x,\xi)=c(x,\xi)b_2(x)+\frac{h}{i}\partial_{\xi}c(x,\xi)\partial_xb_2(x)+h^2\int_0^1(1-t)\Phi_{2,t}(x,\xi)
dt,
\end{align*}
where
$$ \Phi_{1,t}(x,\xi)=\frac{1}{2\pi i}\int_{\R}e^{ix\xi'}\partial_{\xi}c(x,\xi+th\xi')\widehat{\partial_xb_1}(\xi')d\xi'
$$
and
$$ \Phi_{2,t}(x,\xi)=-\frac{1}{2\pi}\int_{\R}e^{ix\xi'}\partial_{\xi}^2c(x,\xi+th\xi')\widehat{\partial_x^2b_2}(\xi')d\xi'.
$$
Therefore, the symbol of the operator $\mathrm{Op}_h(c)b_1-\mathrm{Op}_h(cb_1)$ is $h\int_0^1\Phi_{1,t}(x,\xi)dt$
and the symbol of the operator
 $\mathrm{Op}_h(c)b_2-\mathrm{Op}_h(cb_2)-\frac{h}{i}\mathrm{Op}_h(\partial_{\xi}c\partial_xb_2)$ is $h^2\int_0^1(1-t)\Phi_t(x,\xi)dt$. It suffices to show that operators $T_{j,t}$ with  Schwartz kernels
$$ \mathcal{K}_{j,t}(x,x'):=\frac{1}{2\pi h}\int_{\R}e^{\frac{i(x-x')\xi}{h}}\Phi_{j,t}(x,\xi)d\xi,\quad j=1,2 
$$ 
are uniformly bounded on $L^2(\R)$ with respect to $t\in(0,1)$ and $h\in(0,1]$. Since $c(x,\xi)=\kappa(x)\varphi(\xi)$, explicit computation yields
$$\mathcal{K}_{j,t}(x,x')=\frac{1}{i^jh}\widehat{\varphi^{(j)}}\big(\frac{x'-x}{h}\big)\kappa(x)\cdot(\partial_x^jb_j)((1-t)x+tx'),\quad j=1,2.
$$ 
Since $|\mathcal{K}_{j,t}(x,x')|\leq C\frac{1}{h}\big|\widehat{\varphi''}\big(\frac{x'-x}{h}\big)\big|$. Finally, by Young's convolution inequality, we obtain that $$\|T_{j,t}f\|_{L^2(\R)}\leq C\|f\|_{L^2(\R)}$$ for $j=1,2$. This completes the proof of Lemma \ref{symbolicspecial}.
\end{proof}

The above estimates can be generalized on to symbols and functions on compact manifolds. In the special situation where the manifold is $\T^d$, we still have explicit formulas. Indeed, following \cite{EZB} (Chapter 5), for a symbol $c(z,\zeta)$ on $T^*\T^d$, by periodicity $c(z+2\pi k,\xi)=c(z,\zeta), k\in\Z^d$, the quantization is explicitly given by
$$ \mathrm{Op}_h(c)f(z)=\sum_{k\in\Z^d}C_kf(z),\; C_kf(z):=\frac{1}{(2\pi h)^d}\int_{\R^d}\int_{\T^d}c(z,\zeta)e^{\frac{i(z-z'+2\pi k)\cdot\zeta}{h}}f(z')dz'd\zeta.
$$
Then $C_k=\mathbf{1}_{\T^d}\tau_{-2\pi k}\mathrm{Op}_h(a)\mathbf{1}_{\T^d}$ where $\tau_{z_0}f(z):=f(z-z_0)$.
By the stationary phase analysis, for symbols $c\in S^0(T^*\T^d)$ and for $|k|>2$, 
$$ \|C_k\|_{\mathcal{L}(L^2(\T^d))}=O(h^{N}\langle k\rangle^{-N}),\;\forall N\in\N.
$$
These facts imply that Lemma \ref{Boundedness}, Lemma \ref{commutator} and Lemma \ref{symbolicspecial} still hold by replacing $\R^d,\R^2,\R$ to $\T^d,\T^2,\T$,  respectively\footnote{One can also argue in local coordinate and use the partition of unity.}.

Finally, we recall the definition of the semiclassical wavefront set, following Chapter 8 of \cite{EZB}. The \emph{semiclassical wavefront set} $\mathrm{WF}_h(u)$ associated with a $h$-tempered family $u=(u_h)_{0<h\leq h_0}$ is the complement of the set of points $(z_0,\zeta_0)\in T^*\T^d$ for which there exists a symbol $c\in S^0$ such that $a(z_0,\zeta_0)\neq 0$ and $\mathrm{Op}_h(c)u_h=O_{L^2}(h^N)$ for all $N$. The \emph{semiclassical wavefront set of order} $m$, $\mathrm{WF}_h^m(u)$ is a modification of the definition $\mathrm{WF}_h(u)$. It is the complement of the set of points $(z_0,\zeta_0)\in T^*\T^d$ for which there exists $c\in S^0$ with $c(z_0,\zeta_0)\neq 0$ and $\mathrm{Op}_h(c)u_h=O_{L^2}(h^m)$.   



\begin{thebibliography}{10}
	
		\bibitem[AL14]{AL14}
	N.~Anantharaman, M.~L\'eautaud,
	{\it Sharp polynomial decay rates for the damped wave equation on the torus}, Analysis and PDE, Vol. 7, No. 1, 2014.
	
	\bibitem[AM14]{AM14}
	N.~Anantharaman, F.~Maci\`a, {\it Semiclassical measures for the Schr\"odinger equation on the torus}, J. Euro. Math. Soc.,
	16(6), p. 1253--1288 (2014).
	
	\bibitem[BLR]{BLR} 
	C.~Bardos, G.~Lebeau, and J.~Rauch, {\it Sharp sufficient conditions for the
	observation, control, and stabilization of waves from the boundary}, SIAM J. Control
	Optim. 30 (1992), no. 5, 1024–1065.
	
	\bibitem[BaD]{BD}
	C. J. K.~Batty and T.~Duyckaerts,{\it  Non-uniform stability for bounded semi-groups on Banach
	spaces}, J. Evol. Equ., 8(4):765–780, 2008.
	
	\bibitem[BoT10]{BoT}
	A.~Borichev, Y.~Tomilov, {\it Optimal polynomial decay of functions and operator semigroups},
	Math. Ann., 347(2):455–478, 2010.
	
	
	
	\bibitem[Bu19]{Bu19}
	N.~Burq,
	{\it Decays for Kelvin--Voigt Damped Wave Equations I: The Black Box Perturbative Method}, SIAM J. Control Optim., 58(4), 1893–1905.
	

	
	\bibitem[BuC]{BuC}
	N.~Burq, H.~Christianson, {\it Imperfect geometric control and overdamping for the
	damped wave equation}, Comm. Math. Phys. 336(2015), 101–130.
	
	
	
	\bibitem[BG97]{BG97}
	N.~Burq, P.~G\'erard, {\it Condition N\'ecessaire et suffisante pour la contr\^olabilit\'e exacte des ondes}, Comptes Rendus de l'Académie des Sciences. t. 325, Série I, p.749-752 (1997).
	
		\bibitem[BG17]{BG17}
	N.~Burq, P.~G\'erard,
	{\it Stabilization of wave equations on the torus with rough dampings}, Pure Appl. Anal. 2(3): 627-658 (2020).
	
		\bibitem[BH05]{BuHi}
	N.~Burq, M.~Hitrik
	{\it Energy decay for damped wave equations on partially rectangular domains}, Math. Res. Lett. 14: 1 (2007), 35-47.
	
		\bibitem[BS19]{BuSun}
	N.~Burq, C.~Sun
	{\it Time optimal observability for the Grushin-Schr\"odinger equation}, to appear in Anal. PDE.
	
		\bibitem[BS20]{BuSun2}
	N.~Burq, C.~Sun
	{\it Decay rates for Kelvin-Voigt damped wave equations II: the geometric control condition}, to appear in Proc. AMS.
	
	\bibitem[BZu]{BZu}
	N.~Burq, C.~Zuily,
	{\it Concentration of Laplace Eigenfunctions and Stabilization of Weakly Damped Wave Equation}, Commu. Math. Phys. volume 345, 1055–-1076 (2016).
	
	\bibitem[BZ04]{BZ1}
	N.~Burq and M.~Zworski, {\it Control in the presence of a black box}, J. Amer. Math. Soc. 17 (2004),
	443–471.
	
	
		\bibitem[BZ12]{BZ4}
	N.~Burq and M. Zworski.
	{\it Control for Schr\"odinger equations on tori},
	Math. Research Letters, 19 (2012), 309-324.
	
\bibitem[Ch]{Ch}
H.~Christianson, {\it Semiclassical non–concentration near hyperbolic orbits}, J. Funct. Anal.
246(2007), 145–195.
	
\bibitem[CSVW]{CSVW}
H.~Christianson, E.~Schenck, A.~Vasy, J.~Wunsch, {\it From resolvent estimates to damped waves}, J. Anal. Math. 122(2014), 143–162.	
	
	
	\bibitem[PG90]{PG90}
	P.~G\'erard,
	{\it Microlocal defect measure}, Commu. PDEs, 16(11): 1761–-1794, 1991.
	
\bibitem[DKl]{DKl}
K.~Datchev, P.~Kleinhenz,
{\it 	
Sharp polynomial decay rates for the damped wave equation with H\"older-like damping}, Proc. AMS. Vol. 148, No. 8, 3417–-3425, 2020.

\bibitem[Hi1]{Hi1}
 M.~Hitrik, {\it Eigenfrequencies for damped wave equations on Zoll manifolds}. Asymptot. Anal. 31 (2002), no. 3-4, 265–-277.


\bibitem[HS1]{HS1}
M.~Hitrik, J.~Sj\"ostrand,
{\it Non-Selfadjoint Perturbations of Selfadjoint Operators in 2 Dimensions I,}
Ann. Henri Poincar\'e, 5 (2004), 1-73. 

\bibitem[HS2]{HS2}
M.~Hitrik, J.~Sj\"ostrand,
{\it Non-Selfadjoint Perturbations of Selfadjoint Operators in 2 Dimensions II: Vanishing Averages,}
Comm. Partial Differential Equations, 30 (2005), 1065-1106.

\bibitem[HS3]{HS3}
M.~Hitrik, J.~Sj\"ostrand,
{\it Non-Selfadjoint Perturbations of Selfadjoint Operators in 2 Dimensions IIIa: One Branching Point}
Can. J. Math., 60 (2008), 572-657.  


\bibitem[LeS]{LeS}
C.~Letrouit, C.~Sun {\it Observability of Bouendi-Grushin type equations through resolvent estimates}. to appear in J. Inst. Math. Jussieu, doi:10.1017/S1474748021000207.

\bibitem[Ji]{Ji}
L.~Jin, {\it Damped Wave Equations on Compact Hyperbolic Surfaces
},
Commu. Math. Phys. volume 373, 771–-794 (2020).


\bibitem[Kl]{Kl}
P.~Kleinhenz,
{\it 	
	Stabilization rates for the damped wave equation with H\"older-regular damping}, Commu. Math. Phys. 369(3):1187-1205, 2019.


\bibitem[Kl2]{Kl2}
P.~Kleinhenz,
{\it 	
	Decay rates for the damped wave equation with finite regularity damping}, to appear in Math Research Letters (2021).

\bibitem[Le93]{Le}
G.~Lebeau, {\it Equation des ondes amorties}
Algebraic and geometric methods in mathematical physics (Kaciveli, 1993),
Math. Phys. Stud., vol. 19, Kluwer Acad. Publ., Dordrecht, 1996, pp. 73–109.





\bibitem[LLe]{LLe}
M.~L\'eautaud, N.~Lerner, {\it Energy decay for a locally undamped wave
equation}, Ann. Fac. Sci. Toulouse Math.
(6) 26 (2017), no. 1, 157–205,



\bibitem[LiR]{LiR}
Z.~Liu, B.~Rao, {\it Characterization of polynomial decay rate for the solution of linear evolution
equation}, Z. Angew. Math. Phys. 56, No.4, (2005) 630-644.

\bibitem[No]{No}
S. Nonnenmacher, {\it Spectral theory of damped quantum chaotic systems}, 
J. \'Equ. D\'eriv.
Partielles, 2011 (2011), 1–23. 2011.

\bibitem[Ph07]{Ph07}
K.-D. Phung. Polynomial decay rate for the dissipative wave equation. J. Differential Equations,
240(1):92–124, 2007.


\bibitem[RaT]{RaT}
J.~Rauch, M.~Taylor, {\it Exponential decay of solutions to hyperbolic
equations in bounded domains}, Indiana Univ. Math. J. 24 (1974), 79–86,


\bibitem[Riv]{Riv}
G.~Rivi\`ere, with an appendix by St\'ephane Nonnenmacher and Gabriel Rivi\`ere, {\it Eigenmodes
of the damped wave equation and small hyperbolic subsets}, Ann. Inst. Fourier (Grenoble) 64(2014),
1229-1267.


\bibitem[Sj]{Sj}
J.~Sj\"ostrand,
{\it Asymptotic distribution of eigenfrequencies
	for damped wave equations },
Publ. RIMS, Kyoto Univ.
36 (2000), 573-611.

\bibitem[St]{St}
R.~Stath,
{\it 	
Optimal decay rate for the wave equation on a square with constant damping on a strip}, Zeit. Angew. Math. Phys., 68 (2): 36, 2017.
	
\bibitem[Zh]{Zh}
H.~Zhu,
{\it 	
	Stabilization of damped waves on spheres and Zoll surfaces of revolution}, ESAIM: Control, Optimisation and Calculus of Variations 24 (4), 1759-1788.	
	
	\bibitem[Zw12]{EZB}
	M.~Zworski,
{\it Semiclassical analysis}, 
	 Graduate Studies in Mathematics, (2012) AMS.
	
\end{thebibliography}
\end{document}